\documentclass[a4paper,11pt]{article}

\usepackage[utf8]{inputenc}
\usepackage[T1]{fontenc}
\usepackage{lmodern}
\usepackage{amsmath,amssymb,amsthm}
\usepackage[margin=2cm]{geometry}
\usepackage{color}
\usepackage{mathtools}
\usepackage{appendix}
\usepackage[french, english]{babel}
\usepackage{dsfont}
\usepackage{mathrsfs}
\usepackage{stmaryrd}
\usepackage{hyperref}
\usepackage{graphicx}
\usepackage{comment}
\usepackage{soul}

\usepackage{multicol}

\usepackage{enumitem}

\newcommand\MTkillspecial[1]{
  \bgroup
  \catcode`\&=9
  \let\\\relax%
  \scantokens{#1}%
  \egroup
}
\newcommand\DeclarePairedDelimiterMultiline[3]{
  \DeclarePairedDelimiter{#1}{#2}{#3}
  \reDeclarePairedDelimiterInnerWrapper{#1}{star}{
    \mathopen{##1\vphantom{\MTkillspecial{##2}}\kern-\nulldelimiterspace\right.}
    ##2
    \mathclose{\left.\kern-\nulldelimiterspace\vphantom{\MTkillspecial{##2}}##3}}
}

\DeclarePairedDelimiterMultiline{\abs}{\lvert}{\rvert}
\DeclarePairedDelimiterMultiline{\norm}{\lVert}{\rVert}
\DeclarePairedDelimiterMultiline{\pare}{(}{)}

\newcommand{\inter}{\overset{\circ}} 

\renewcommand{\epsilon}{\varepsilon}

\newcommand{\N}{\mathbb{N}}

\newcommand{\R}{\mathbb{R}}

\providecommand{\keywords}[1]
{
  \textbf{\textit{Keywords---}} #1
}

\providecommand{\MSCclass}[1]
{
  \textbf{\textit{MSC 2020 Classification---}} #1
}

\renewcommand{\phi}{\varphi}
\renewcommand{\tilde}{\widetilde}

\numberwithin{figure}{section}

\DeclareMathOperator*{\esssup}{ess\,sup}
\DeclareMathOperator*{\essinf}{ess\,inf}

\newcommand{\ceil}[1]{\lceil {#1} \rceil}
\newcommand{\floor}[1]{\lfloor {#1} \rfloor}

\newtheorem{theo}{Theorem}[section]
\newtheorem{lemm}[theo]{Lemma}
\newtheorem{coro}[theo]{Corollary}
\newtheorem{prop}[theo]{Proposition}

\newtheorem{defi}[theo]{Definition}
\newtheorem{fact}[theo]{Fact}

\theoremstyle{definition}
\newtheorem{rema}[theo]{Remark}
\newtheorem{notation}[theo]{Notation}

\title{Measure of maximal entropy for $H$-flows on non-compact manifolds}
\author{Anna Florio\thanks{CEREMADE, Université Paris-Dauphine, Université PSL, UMR CNRS 7534, 75016 Paris, France},  Barbara Schapira\thanks{IMAG, Université de Montpellier, UMR CNRS 5149, 34090 Montpellier, France}, Anne Vaugon\thanks{Université Paris-Saclay, CNRS, Laboratoire de mathématiques d’Orsay, 91405, Orsay, France} }
\date{version \today}

\begin{document}
\selectlanguage{english} \maketitle

\begin{abstract}
In this work, we introduce a natural class of chaotic flows on non-compact manifolds, called {\em $H$-flows}, which includes geodesic flows on non-compact manifolds with pinched negative curvature. We show that, under the additional assumption, called {\em  strong positive recurrence}, that their entropy at infinity is strictly smaller than the topological entropy, such flows admit an invariant probability measure maximizing entropy. In particular, we compare several notions of entropy in a non-compact setting.

\end{abstract}

\keywords{Hyperbolic dynamics, entropy, measure of maximal entropy, periodic orbits.}
\vspace{5pt}

\MSCclass{37A10, 37A35, 37C27, 37C35, 37C50, 37D20.}

\tableofcontents


\section{Introduction}

\subsection{Anosov flows on compact manifolds}
Anosov flows are the archetype of chaotic dynamics.
Defined by Anosov in \cite{AnosovBook}, an Anosov flow  is a differentiable flow on a compact manifold such that the tangent bundle of the manifold splits into the direct sum of three invariant subbundles, the (one-dimensional) direction of the flow,  the stable bundle, uniformly contracted in the future by the differential of the flow, and the unstable bundle, uniformly contracted in the past.
Anosov flows exhibit many remarkable properties. The {\em shadowing property}, also known as the pseudo-orbit tracing property, tells us that any path not too far from an orbit is very well approximated by  a true  trajectory. These flows are {\em expansive}: two points   on different orbits  separate one from another at some time, in the future or in the past. They also satisfy the {\em closing lemma}: near any almost closed orbit, one can find a periodic one.
As a consequence, there are infinitely many periodic orbits  and they are dense in the non-wandering set of the  flow. This is a manifestation of chaotic behaviour.

From the ergodic point of view, an Anosov flow admits infinitely many (ergodic) invariant probability measures, whose typical points describe infinitely many different typical behaviours. It is another manifestation of chaotic behaviour.

Anosov flows have {\em positive topological entropy}.  Topological entropy is the exponential growth rate, when the time $T$ goes to infinity, of the maximal number of different ``behaviours'' of orbits of period $T$. Therefore,  positive entropy is another typical property of very chaotic dynamics.

The closing lemma allows to show that for Anosov flows, this topological entropy coincides with the {\em Gurevic entropy}, i.e., the exponential growth rate when $T\to +\infty$, of the number of periodic orbits with period at most $T$.  This does not hold in general: there exist  examples of minimal flows (so, with no periodic orbits) of positive topological entropy, see \cite{Rees,BCLR}.

The \emph{variational principle} (see \cite[\S 8.2]{Walters}) states that  for any continuous dynamical system on a compact metric space, the  topological entropy equals the {\em variational entropy}, that is the supremum of the measure-theoretic entropies of all invariant (ergodic) probability measures. When it exists, a measure that realizes the supremum, i.e., whose entropy equals the topological entropy, is called a \emph{measure of maximal entropy}. Typical orbits of such a measure reflect the most chaotic behaviour of the dynamics.

For transitive Anosov flows  (on a compact manifold), in \cite{Bowen72} Bowen showed that there exists a   measure of maximal entropy, obtained as a limit of measures equidistributed on longer and longer periodic orbits. Bowen-Ruelle \cite{Bowen-Ruelle} obtain the uniqueness  (in the more general context of equilibrium states) for transitive Axiom A flows. Bowen's proof crucially relies on the \emph{specification property} of Anosov flows: for it, the compactness of the manifold is essential.
Margulis~\cite{Margulis, Margulis2} provided an alternative construction, using equidistribution of larger and larger pieces of unstable manifolds, where compactness is also used to guarantee the convergence of the construction.
Sullivan \cite{Sullivan} proposed a geometric construction that holds only in the particular case of geodesic flows in negative curvature. His construction is very robust and extends to noncompact manifolds, see for example \cite{OP,PPS,ST19}, but only for geodesic flows.


\subsection{Dynamics on non-compact manifolds, motivations}
In the non-compact setting, the picture is less clear, for many reasons. Let us highlight some of the main difficulties and questions.
\begin{itemize}
    \item The notion of  Anosov flow does not generalize easily to the non-compact case. The definition and several resulting properties, closing lemma in particular, depend  strongly on the compactness of the manifold.
    \item Under what kind of assumptions do   topological entropy and   Gurevic entropy coincide?
    \item There exist many different, useful notions of measure-theoretic entropy, such as Kolmogorov-Sinai entropy, Katok entropy, Brin-Katok upper and lower local entropies. Do they coincide in general?
    \item Thanks to \cite{HanKit}, the variational principle still holds for a non-compact manifold, and so we can still talk about measures of maximal entropy. Nevertheless, by lack of compactness, there is no easy argument to ensure neither the existence nor the uniqueness of an invariant probability measure maximizing entropy.
\end{itemize}

Although  these questions do not have clear answers in general, there are two very important  classes of (non-compact) dynamical systems for which some results exist: suspension flows over a symbolic dynamics with countable alphabet on the one hand, and  geodesic flows on the unit tangent bundle of (non-compact) negatively curved manifolds on the other hand.
In both cases, different relevant notions of entropy coincide. See for example~\cite{OP, PPS, ST19, GST} for the geodesic flow, and \cite{VereJones, Sarig99, MauUrb, Sarig03, BBG06, CyrSar, BBG} for subshifts and suspension flows over infinite alphabets.

In the case of geodesic flows, under an assumption called {\em strong positive recurrence}, see \cite{ST19, GST}, one can prove the existence of a (unique) invariant probability measure that maximizes entropy.
This notion appeared in different ways in different contexts: for Markov shifts~\cite{VereJones, Sarig03}, for geodesic flows~\cite{ST19,GST}, for diffeomorphisms on closed manifolds~\cite{BuzCroSar}, in geometric group theory \cite{ACT15,Yang}, for Hénon-like diffeomorphisms~\cite{Berger}, \dots

{\em Strong positive recurrence} means that the exponential growth of the complexity of trajectories that spend most of the time close to infinity (or to any other bad zone, in different contexts), also called {\em entropy at infinity}, is strictly smaller  than the global exponential growth of the dynamics, i.e., the {\em entropy}. In \cite{GST}, the authors introduce different notions of entropy at infinity, in terms of periodic orbits, or invariant measures, or critical exponent, i.e., orbital growth of the fundamental group, and show that they coincide. However, as in \cite{ST19}, the core of the proof of the existence (and uniqueness) of a measure of maximal entropy relies on geometric arguments specific to geodesic flows, since it uses  the {\em critical exponent} and the {\em critical exponent at infinity}. These critical exponents make sense only in this geometric setting, and the measure of maximal entropy is obtained in a geometric way, through the so-called Patterson-Sullivan construction.

The initial motivation of this work was to propose a general  dynamical framework where this kind of results holds. More precisely, our initial goals were the following ones.

\begin{enumerate}
    \item\label{uno:lista} Propose a relevant definition of Anosov flow in the non-compact setting, that includes geodesic flows of non-compact hyperbolic manifolds.
    \item\label{due:lista} Compare different notions of entropy in this context.
    \item\label{tre:lista} Give one or several definition(s) of entropy at infinity and compare them.  \item\label{quattro:lista} Under a \emph{strongly positive recurrent} hypothesis, construct a measure of maximal entropy.
    \item\label{cinque:lista} Study the uniqueness and ergodicity of this measure.
    \item\label{sei:lista} Construct new families of examples.
\end{enumerate}

\subsection{Our results}
In the present work, we address points \ref{uno:lista}, \ref{due:lista}, \ref{tre:lista} and \ref{quattro:lista}. We postpone point \ref{sei:lista} to \cite{FSV2}.

The definition of an Anosov flow (on a compact manifold) strongly uses the Riemannian metric. It does not matter in the compact case, where all metrics are equivalent, but becomes an important choice in the noncompact setting.

In \cite{Das-etc}, the authors avoid this choice by proposing a definition of \emph{topological Anosov flows}, relying on the idea that  two points $(x,y)\in M^2$ are said {\em  close} if they belong to a small neighbourhood of the diagonal in $M\times M$.   A topological Anosov flow is then a topologically expansive flow, which satisfies a topological shadowing property. Nevertheless, they observe that this definition does not have clear relations with the usual one, even in the compact case.

In Section~\ref{section:H-flow} we address point~\ref{uno:lista} with another approach, inspired by \cite{CS10}. We propose an axiomatic definition of a \emph{H-flow} on a (non compact) manifold, by requiring  dynamical properties that mimic properties of compact uniformly hyperbolic flows in the non compact setting. These properties are stronger than those of \cite{CS10}. Observe that this axiomatic definition finds echos in \cite{Mann}, where the authors introduce the notion of Anosov-like group actions by asking for the occurrence of certain properties.
More precisely, in Definition~\ref{def:H-flow}, we define a \emph{$H$-flow} on a (non compact) Riemannian   manifold $M$ as a $C^1$ flow $\phi$ with lipschitz bounds that is \emph{transitive}, \emph{expansive}, satisfies a \emph{closing lemma} and a suitable \emph{shadowing property}. We observe in Theorem~\ref{geodesic-flow-H-flow}  that the geodesic flow of a negatively curved manifold with pinched negative curvature, a lower bound on the injectivity radius and a full nonwandering set is a $H$-flow.

Observe that we require a $H$-flow to be transitive on the whole manifold. It is likely that we could weaken this property, defining what could be thought of as a noncompact Axiom A flow, with good uniformly hyperbolic behaviour in restriction to a closed invariant attractor  instead of  a noncompact generalization of an Anosov flow on the whole manifold, and get the same kind of results. This should heuristically work but presents some technical difficulties that we hope to solve in the future.

The definition of H-flow uses a little bit the Riemannian metric and more deeply the induced distance $d$ on $M$, but not so deeply as the usual definition of Anosov flow. A natural alternative approach would be to adapt the definition of Anosov flow in the noncompact setting, by requiring  some contraction and expansion of stable/unstable bundles, with a uniform control on the angle between them. We refer to \cite{FSV2} to compare this approach with the present one, and show that a noncompact Anosov flow  with further natural uniform assumptions is a $H$-flow.
In \cite{FSV2} we will also construct new families of examples, besides geodesic flows, of H-flows on non compact manifolds, providing then answers to point \ref{sei:lista}. These examples will be non compact versions of the systems studied in \cite{FouHas} and \cite{FouHasVau}.

\par
On the way towards the construction of a  measure of maximal entropy for $H$-flows, we needed to define, clarify and compare different notions of topological and measure-theoretic entropies in this noncompact setting. We describe them briefly in this introduction and refer to sections \ref{sec:entropies}, \ref{known}  \ref{chords} for  details.

Let $\phi$ be a flow on $M$ and let $\mu$ be a $\phi$-invariant probability measure. The classical \emph{Kolmogorov-Sinai} entropy $h_\mathrm{KS}(\mu)$ measures the asymptotic growth of the average information of a finite measurable partition iterated under the dynamics (see Definition \ref{KS entropy}).
We will also  largely use the \emph{Katok entropy} $h_{\mathrm{Kat}}(\mu)$, that uses Bowen's definition of dynamical balls. A dynamical ball $B(x,\epsilon,T)$ is the set of points $y\in M$ whose trajectory follows the one of $x$ at the precision $\epsilon$ during a time $T$. Katok entropy is the asymptotic growth rate of the minimal number of dynamical balls needed to cover a set  of a given positive measure, see Definition \ref{Katok entropy}. Given $\mu$, we can also consider the asymptotic lower and upper exponential decay rates of the measure $\mu$ of a typical dynamical ball, called the \emph{Brin-Katok lower and upper local entropies} $\underline{h}_{BK}(\mu)$ and $\overline{h}_{BK}(\mu)$, see Definition \ref{Brin Katok entropies}.
Collecting those of the results of \cite{Katok, BK, Riquelme} that hold in the noncompact setting, we  observe in Corollary~\ref{entropie-var} that when a measure $\mu$ is ergodic, all these notions of entropies coincide.
Comparison between these entropies for a nonergodic measure is less clear, partial results are recalled in Theorem~\ref{entropies-ergodic-measures}.

The well-known variational principle asserts (in the compact setting) that the {\em topological entropy} of a dynamical system coincides with the supremum of Kolmogorov-Sinai entropies over all invariant measures. In the noncompact setting, the historical notion of topological entropy through open covers, due to Adler-Konheim-Weiss~\cite{AKW} is too often infinite, and therefore not relevant. Bowen's definition of topological entropy~\cite{Bowen} strongly relies on the metric on $M$. To bypass this dependence, the noncompact variational entropy proven by Handel and Kitchens~\cite{HK} shows that for a dynamical system on a noncompact manifold, the supremum of Kolmogorov-Sinai entropies over all invariant probability  measures is equal to the infimum of Bowen metric entropies over all distances defining the topology.
Therefore, as in \cite{ST19,GST}, we call  \emph{variational entropy} of the flow, denoted by $h_\mathrm{var}(\phi)$, this supremum of Kolmogorov-Sinai measured entropies over all $\phi$-invariant probability measures, see Definition~\ref{variational entropy}. When it exists, a measure realizing the supremum is called  a \emph{measure of maximal entropy}. The existence of such a measure allows to build orbits that achieve in some sense the maximal possible chaotic behaviour of the dynamics.

In hyperbolic dynamics, topological entropy is often measured through the so-called \emph{Gurevic entropy} $h_\mathrm{Gur}(\phi)$, i.e., the exponential growth rate of the number of periodic orbits of period at most $T$, when $T\to +\infty$.
We study its basic properties in the context of $H$-flows in Theorem~\ref{theo:Gurevic}.
In Theorem~\ref{comparison-Kat-BK}, we show that for a H-flow $\phi$, the Katok entropy of a $\phi$-invariant probability measure is always bounded from above by the Gurevic entropy:
\[
 h_\mathrm{Kat}(\mu)\le h_{\mathrm Gur}(\phi)\,,\]
which implies immediately
\[
h_\mathrm{var}(\phi)\le h_{\mathrm Gur}(\phi)\,.
\]

In Section~\ref{chords}, we introduce a notion of entropy that has not been used yet, to our knowledge.  The \emph{chord entropy} $h_{\mathcal{C}}(\phi)$ of a H-flow is the exponential growth rate of the maximal cardinality of a separated set of chords from (the neighbourhood of) a point to (the neighbourhood of) another point.  We prove in Theorem~\ref{thm h cord=h gur} that this entropy coincides with Gurevic entropy :
\[
h_{\mathcal{C}}(\phi)=h_{\mathrm Gur}(\phi)\,.
\]
It turns out that this definition through chords is much more flexible, and therefore more convenient to use in several arguments.

The results mentioned above answer question~\ref{due:lista}.  In Sections~\ref{sec:entropy-at-infinity}, \ref{chords infinity} and \ref{gur et chords infinity} we address the different possible definitions of entropies at infinity, as proposed in point~\ref{tre:lista}. We are particularly concerned with two of them:
\begin{itemize}
    \item the \emph{Gurevic entropy at infinity} $h_\mathrm{Gur}^\infty(\phi)$  is the exponential growth rate of the number of periodic orbits that spend most of their time outside a large compact set (see Definition~\ref{definition_gurevic_infini});
    \item the \emph{chord entropy at infinity} $h_{\mathcal{C}}^\infty(\phi)$ is the exponential growth rate of the number of separated paths which remain outside a large compact set.
\end{itemize}
In Theorem~\ref{theorem gur infty egal chords infty}, we prove that, for a H-flow, these two notions of entropy at infinity   coincide:
\[
h_\mathrm{Gur}^\infty(\phi)=h_{\mathcal{C}}^\infty(\phi)\,.
\]

\par
The heart of the paper is the construction of a measure of maximal entropy, as asked in the above point~\ref{quattro:lista}. As in \cite{ST19,GST} for geodesic flows, we say that a $H$-flow  $\phi$ is {\em strongly positively recurrent} if its Gurevic entropy at infinity is strictly smaller than the Gurevic entropy:
\[
h_{\mathrm Gur}^\infty(\phi)<h_{\mathrm Gur}(\phi)\,.
\]
All results mentioned above are interesting and useful. However, the main result of the paper is the following Theorem.

\begin{theo}\label{theo:main}
Let $\phi\colon M\to M$ be a $H$-flow on a Riemannian manifold $(M,g)$ such that $h_\mathrm{Gur}^\infty(\phi)<h_\mathrm{Gur}(\phi)$. Then, there exists a $\phi$-invariant probability measure $m_\mathrm{max}$ on $M$ maximizing entropies:
\[
h_\mathrm{KS}(m_\mathrm{max})=\underline{h}_\mathrm{BK}(m_\mathrm{max})=\overline{h}_\mathrm{BK}(m_\mathrm{max})=h_\mathrm{Kat}(m_\mathrm{max})=h_\mathrm{Gur}(\phi)=h_\mathrm{var}(\phi)\,.
\]
\end{theo}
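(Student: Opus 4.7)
The overall strategy is a Bowen--Margulis-type construction of $m_\mathrm{max}$, adapted to the noncompact $H$-flow setting by using \emph{chords} rather than periodic orbits. Since, by Theorem~\ref{thm h cord=h gur} and Theorem~\ref{theorem gur infty egal chords infty}, the chord entropies $h_{\calC}(\phi)$ and $h_{\calC}^\infty(\phi)$ coincide with their Gurevic counterparts, the hypothesis $h_\mathrm{Gur}^\infty(\phi)<h_\mathrm{Gur}(\phi)$ translates into a quantitative exponential advantage of \emph{generic} chords over those that linger near infinity. Because the upper bound $h_\mathrm{var}(\phi)\le h_\mathrm{Gur}(\phi)$ is already furnished by Theorem~\ref{comparison-Kat-BK}, the task reduces to producing a single $\phi$-invariant probability measure whose Kolmogorov--Sinai (equivalently, Brin--Katok lower) entropy is at least $h_\mathrm{Gur}(\phi)$.

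Step 1 (empirical measures on chords). Fix a base point $x_0$ in the transitive part and a small scale $\varepsilon>0$ compatible with the expansivity, closing and shadowing constants of the $H$-flow. For each large $T$, pick a maximal $(\varepsilon,T)$-separated family $\calC_T$ of chords starting and ending in a small ball around $x_0$, of length in $[T,T+\tau_0]$ for a fixed $\tau_0$; by the chord formulation of $h_\mathrm{Gur}$, $\tfrac{1}{T}\log \#\calC_T\to h_\mathrm{Gur}(\phi)$. Define the candidate
\[
\nu_T \;=\; \frac{1}{\#\calC_T}\sum_{c\in\calC_T}\frac{1}{T}\int_0^T \delta_{\phi^t(c_0)}\,dt,
\]
a probability measure on $M$, where $c_0$ is the starting point of $c$.

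Step 2 (tightness). Strong positive recurrence enters here. For a large compact $K\subset M$ and $\alpha>0$, the mass $\nu_T(M\setminus K)$ is controlled by the proportion of chords in $\calC_T$ spending a fraction $\ge\alpha$ of their time outside $K$; by Theorem~\ref{theorem gur infty egal chords infty} and the SPR assumption, this proportion is at most $e^{(h_\mathrm{Gur}^\infty(\phi)-h_\mathrm{Gur}(\phi)+o(1))T}\to 0$. Hence $(\nu_T)$ is tight, and any weak-$*$ accumulation point $m_\mathrm{max}$ is a probability measure on $M$. Invariance of $m_\mathrm{max}$ follows from the standard estimate that $\nu_T-\phi^s_*\nu_T$ is supported in boundary segments of the chords, with total variation $O(|s|/T)$.

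Step 3 (entropy lower bound and conclusion). The decisive step is a Frostman-type bound on dynamical balls. The $(\varepsilon,T)$-separation of $\calC_T$, combined with expansivity of the $H$-flow, implies that any Bowen ball $B(x,\delta,T)$ with $\delta<\varepsilon/2$ intersects the supports of at most a uniformly bounded number of chord orbits in $\calC_T$; passing to the weak-$*$ limit along a diagonal subsequence in $T$ yields
\[
m_\mathrm{max}\!\left(B(x,\delta,T)\right)\;\le\; C\,e^{-h_\mathrm{Gur}(\phi)T}
\]
for $m_\mathrm{max}$-a.e.\ $x$ and all large $T$. Brin--Katok's theorem then gives $\underline{h}_\mathrm{BK}(m_\mathrm{max})\ge h_\mathrm{Gur}(\phi)$. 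Combined with Theorem~\ref{comparison-Kat-BK} ($h_\mathrm{Kat}(m_\mathrm{max})\le h_\mathrm{Gur}(\phi)$) and the general chain of inequalities between the four measure-theoretic entropies recalled in Corollary~\ref{entropie-var}, this forces all of $h_\mathrm{KS}(m_\mathrm{max})$, $\underline{h}_\mathrm{BK}(m_\mathrm{max})$, $\overline{h}_\mathrm{BK}(m_\mathrm{max})$, $h_\mathrm{Kat}(m_\mathrm{max})$ to equal $h_\mathrm{Gur}(\phi)$, and the variational entropy is then sandwiched between $h_\mathrm{KS}(m_\mathrm{max})$ and $h_\mathrm{Gur}(\phi)$.

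The hard part is Step~3: converting the combinatorial $(\varepsilon,T)$-separation of $\calC_T$ into a \emph{pointwise} upper bound on $m_\mathrm{max}$-masses of Bowen balls, since one needs the limit measure to still ``see'' the separation of the approximants at arbitrarily large scales $T$. This is where the axioms of an $H$-flow---in particular, the uniform shadowing and closing lemma from Definition~\ref{def:H-flow}---are used in an essential way. Tightness in Step~2 is also delicate, and genuinely requires the chord (rather than periodic-orbit) formulation of the entropy at infinity, because chords coming back near $x_0$ need not close up exactly.
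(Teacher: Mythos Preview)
Your construction in Step~1 (empirical measures on separated families of \emph{chords}) differs from the paper's (averages over \emph{periodic orbits} in $\mathcal{P}_{K_0}(L,L+5\tau_{K_0})$), but that choice is not the problem; the paper's Propositions~\ref{chords et orbite periodique} and~\ref{orbite periodique et chords} show the two counts are interchangeable. Your Step~2 is also plausible, though note that the paper does not prove tightness: it only shows (Proposition~\ref{limit measure SPR}) that the vague limit $m_\infty$ is nonzero, and then renormalizes.

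The genuine gap is in Step~3. Your separation estimate gives $\nu_T(B(x,\delta,T))\le C/\#\calC_T$, but $m_\mathrm{max}$ is a weak limit of $\nu_L$ as $L\to\infty$, so to bound $m_\mathrm{max}(B(x,\delta,T))$ you must control $\nu_L(B(x,\delta,T))$ for $L\gg T$, not for $L=T$. A single chord (or periodic orbit) of length $L$ can revisit $B(x,\delta,T)$ many times, so the contribution of each element of $\calC_L$ to $\nu_L(B(x,\delta,T))$ is not $O(1/L)$ but rather proportional to the number of returns; counting these returns is precisely a count of chords of length $\approx L-T$ from $\phi_T(x)$ back to $x$. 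No ``diagonal subsequence'' collapses this difficulty. The paper handles it by proving $m_{K_0,L}(B(x,\epsilon,T))\asymp \mathcal N_{\calC}(\phi_T(x),x,\eta,L-T,\cdot)/\bigl(L\,\#\mathcal P_{K_0}(L,\cdot)\bigr)$ (Lemmas~\ref{ineq Anna} and~\ref{ineq Anne}) and then invoking two subadditivity statements: the easy Proposition~\ref{prop cirm 1} for the upper bound, and the much harder Proposition~\ref{prop cirm 2} for the lower bound. The latter is the crux: it asserts that $L\cdot\#\mathcal P_{K_0}(L,\cdot)$ is bounded by a product of chord counts at scales $T$ and $L-T$, and its proof \emph{uses the already-constructed measure $m_\infty$} to guarantee that a positive proportion of long periodic orbits return to $K$ at the prescribed intermediate time $T$ (Proposition~\ref{prop_pre_coding}). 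This bootstrap---the measure is needed to prove the estimate that computes its own entropy---is the essential idea your sketch is missing. Finally, even granting your upper bound on Bowen balls, you would still need the lower bound (i.e., $m_\mathrm{max}(B(x,\epsilon,T))\ge D^- e^{-h_\mathrm{Gur}(\phi)T}$) to pin down $\overline{h}_\mathrm{BK}(m_\mathrm{max})$ and to run the $h_\mathrm{Kat}$ argument in Theorem~\ref{theo-computation-entropy}, since $m_\mathrm{max}$ is not known to be ergodic and the chain of inequalities in Corollary~\ref{entropie-var} does not by itself force all four measured entropies to coincide.
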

Note that we are interested in $H$-flows, but we could have defined a very similar notion of $H$-diffeomorphisms and proven the same result. We let the verification to the interested reader. Moreover,  as said above and suggested to us by L. Flaminio, it is likely that this Theorem could be extended to flows that satisfy transitivity, expansivity, closing lemma, and shadowing on a closed invariant subset of the Riemannian manifold. This should be checked.

\par
The main idea for the proof of Theorem~\ref{theo:main} is inspired by the approach of Bowen in~\cite{Bowen72}. We construct a sequence of $\phi$-invariant probability measures, each one obtained by normalizing the sum of all measures supported on periodic orbits of (almost) given period. Up to extracting a subsequence, we consider a weak limit of this (sub)sequence. One of the main difficulties on  non compact spaces arises here: the sequence could loose its whole mass at infinity and converge to the zero measure. Strong positive recurrence prevents a total loss of mass: the limit measure $m_\infty$ is non zero. Therefore it can be renormalized into a probability measure $m_\mathrm{max}$, and the latter is the good candidate to be a measure of maximal entropy. It remains to compute carefully its entropy. This is done through  a strong uniform control of the $m_\mathrm{max}$ measure of all dynamical balls of the manifold. More precisely, in Theorem~\ref{prop_boule_dynamique}, we prove a rigorous version of the following heuristics : on every  compact set $K$, up to uniform constants, for every $x\in K$,  the measure of every  dynamical ball $B(x,\epsilon,T)$ such that  $\phi_T(x)$ belongs in $K$ satisfies
\[
m_\mathrm{max}(B(x,\epsilon,T))\asymp \frac{1}{\mathcal N_{\mathcal C}(x, T,\delta)}\,,
\]
where the denominator is the cardinality of a $\delta$-separated set of chords of time-length $T$ from a neighbourhood of $x$ to itself. This strong uniform statement allows easily to compute the entropy of the measure and deduce Theorem~\ref{theo:main}.

Theorem~\ref{prop_boule_dynamique} is proven through subtle subadditivity statements: Propositions~\ref{prop cirm 1}   and \ref{prop cirm 2}.
These subadditivity statements are quite classical for hyperbolic flows on compact manifolds, or in a symbolic context and say essentially that the number of periodic orbits of period $L$ is essentially comparable to the product of the number of periodic orbits of period $L-T$ times the number of periodic orbits of period $T$. This is done by cutting / concatenating periodic orbits at good points.  The main difficulty that we have to deal with is that a very long periodic orbit does not necessarily come back often in a given compact set, so that it is very hard to play the usual game and cut it into two smaller periodic orbits intersecting the same compact set. Indeed, to compare the number of long periodic orbits intersecting a given compact set with the number of shorter periodic orbits, we need to show that most periodic orbits come back almost regularly in a compact set. This is done in Proposition~\ref{prop_pre_coding}.

\par Our approach does not allow us yet to explore whether such a maximal entropy measure is unique or ergodic.   
We would like to address this question in the future. Moreover, as in the case of geodesic flow, it is likely that the strong positive recurrence assumption is not necessary for the existence of a measure of maximal entropy. This should also be investigated.

\par
The structure of the paper is the following.
In Section~\ref{section:H-flow}, we define $H$-flows. In Section~\ref{sec:entropies}, we define different entropies of a $\phi$-invariant probability measure. In Section~\ref{known}, we compare them. In Section~\ref{chords}, we introduce the notion of chord entropy and compare it to the Gurevic entropy. Section~\ref{sec:subadditivity} is the technical heart of the paper. Under the strong positive recurrence assumption, we construct the measure that is the natural candidate to be the measure of maximal entropy, and we deduce from its existence subtle subadditivity properties on the number of periodic orbits of a given period. These properties are classical in a compact setting, but absolutely non trivial without compactness. In the last Section~\ref{sec:finale}, we deduce   that the measure that we constructed is the required measure of maximal entropy.
\bigskip
\par
{\bf Aknowledgements}\\
The authors thank warmly J. Buzzi, Y. Coudene, S. Crovisier, L. Flaminio, S. Gouëzel, M. Herzlich, D. Hulin for enlightening discussions and useful references.  B. Schapira thanks Dr Carton, Dr Dhalluin and the CHP St Grégoire.
The authors have been supported by the ANR grant GALS ANR-23-CE40-0001. A. Florio has been supported by the ANR grant CoSyDy ANR-21-CE40-0014 and  PEPS project ``Jeunes chercheurs et jeunes
chercheuses'' 2025. B. Schapira has been supported by the  ANR Grant GOFR ANR-22-CE40-0004 and the IUF. A. Vaugon has been supported by the ANR grant CoSy ANR-21-CE40-0002.


\section{\texorpdfstring{$H$}{TEXT}-flows on non-compact manifolds}\label{section:H-flow}


We  propose here a definition of hyperbolic flows on non-compact Riemannian manifolds through some important dynamical properties that they satisfy. The definition involves   the distance, but not the Riemannian structure.
We call them {\em $H$-flows}, thinking in particular to {\em hyperbolic flows}, but this word has too many significations so that we prefer to avoid it.

We first describe the dynamical properties used in the description of $H$-flows.

\subsection{Notations}

Let $(M,g)$  be a complete connected Riemannian manifold and denote by $d$ the associated distance.
Let $(\phi_t)_{t\in\R}$ be a $\mathcal C^1$-flow on $M$ generated by a vector field $X$.

If $c: [a,b]\to M$ is a piece of orbit of the flow $\varphi$, i.e. $c(t)=\phi_t(x)$ for some $x\in M$, and $t\in[a,b]$,   denote by $\ell(c)=b-a$ its ``length''. By extension, if $\gamma$ is a periodic orbit (possibly with multiplicity), we denote by $l(\gamma)$ its period (with multiplicity); we will sometimes refer to $\ell(\gamma)$ also as the ``length'' of the periodic orbit.
\label{p:ell}
For $A\subset M$ any measurable subset, and $\gamma$ a periodic orbit, we define also
$\ell(\gamma\cap A)=\{t\in [0,\ell(\gamma)], \gamma(t)\in A\}$.

\subsection{Bounds on  the flow}
We require that for every $\tau\in[-1,1]$, the time $\tau$-map  $\varphi_\tau$  of the flow is {\em Lipschitz continuous} with uniform Lipschitz constant: there exists a constant $lip(\phi)$ such that for all $x,y\in M$, and $\tau\in[-1,1]$,
\begin{equation}\label{lipsch}
d(\phi_\tau(x),\phi_\tau(y))\le lip(\phi)\, d(x,y)\,.
\end{equation}

We also assume that there exist $0<a\le b$     such that for every $x\in M$, there exists $\rho>0$ such that for every $t\in [-\rho,\rho]$,  we have
\begin{equation}\label{eqn:minoration}
a|t|\le d(x,\phi_t(x)) \le b|t|
\end{equation}
We could probably weaken the constraint on the lower bound, with the existence of such a constant $a$ on every compact set. On the contrary, we really need $b$ to be uniform.
Notice that the lower bound is true locally for $t\in[-\rho,\rho]$, whereas triangular inequality implies that the upper bound $d(x,\phi_t(x)) \le b|t|$ is satisfied for every $t\in\R$.

Property (\ref{eqn:minoration}) can be obtained by a uniform control on the norm of the infinitesimal generator $X$ of the flow as stated in the following lemma.

\begin{lemm}
If there exist $a',b'>0$ such that for every $x\in M$,
\begin{equation}\label{norm_bounded}
0<a'\le \|X(x)\|\le b' <\infty\, ,
\end{equation}
then there exist $0<a<b$ such that property (\ref{eqn:minoration}) is satisfied.
\end{lemm}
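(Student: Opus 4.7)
The plan is to prove the two bounds separately: the upper bound will follow from a direct length estimate and will actually hold for all $t \in \R$ (not just locally), while the lower bound requires a Taylor-type argument in a normal coordinate chart at the base point.

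For the upper bound I would take $b = b'$. Since $\phi$ is a $C^1$-flow generated by $X$, the curve $s \mapsto \phi_s(x)$ is a $C^1$ curve whose velocity is $X(\phi_s(x))$. Its Riemannian length on $[0,|t|]$ equals $\int_0^{|t|} \|X(\phi_s(x))\|\,\dd s \le b'|t|$, and the distance $d(x,\phi_t(x))$ is bounded by this length. This gives $d(x,\phi_t(x)) \le b'|t|$ with no restriction on $t$.

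For the lower bound the key point is to show, pointwise in $x$, that
\[
\lim_{t\to 0}\frac{d(x,\phi_t(x))}{|t|}=\|X(x)\|,
\]
and then to take $a = a'/2$ (or any fixed fraction of $a'$). To prove this, I would work in the exponential chart $\exp_x:T_xM\to M$ at the base point $x$: since $\exp_x$ is a diffeomorphism onto a normal neighbourhood $U_x$ of $x$ and the Riemannian distance from $x$ is given there by $d(x,y)=\|\exp_x^{-1}(y)\|$, it suffices to estimate $\|\exp_x^{-1}(\phi_t(x))\|$ for small $t$. Since $d\exp_x|_0=\mathrm{Id}_{T_xM}$ and $\phi$ is $C^1$, the map $t\mapsto \exp_x^{-1}(\phi_t(x))$ is differentiable at $0$ with derivative $X(x)$, so $\exp_x^{-1}(\phi_t(x)) = tX(x) + o(t)$, hence $d(x,\phi_t(x)) = |t|\,\|X(x)\| + o(t)$. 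Using $\|X(x)\|\ge a'$, we obtain a $\rho = \rho(x) > 0$ such that $d(x,\phi_t(x)) \ge (a'/2)|t|$ for all $t\in[-\rho,\rho]$. Setting $a := a'/2$ then yields (\ref{eqn:minoration}) with uniform constants $0 < a \le b$, as required.

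I do not expect real obstacles here: the statement is local in $t$ for the lower bound and allows $\rho$ to depend on $x$, so no uniformity of the exponential chart radius across $M$ is needed. The only point requiring a little care is checking that the $o(t)$ in the expansion of $\exp_x^{-1}(\phi_t(x))$ is harmless — which it is because we only need the linearisation at $t=0$ and the constant $a$ is chosen strictly smaller than $a'$.
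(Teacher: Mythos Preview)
Your proof is correct and follows essentially the same route as the paper's: both use the exponential chart at $x$, the identity $d(x,y)=\|\exp_x^{-1}(y)\|$, and the fact that in these coordinates the pulled-back vector field has first component bounded below by a fraction of $a'$, leading to $a=a'/2$. The only real difference is in the upper bound: you use the global arc-length estimate $d(x,\phi_t(x))\le \int_0^{|t|}\|X(\phi_s(x))\|\,\dd s\le b'|t|$, which is cleaner and gives $b=b'$, whereas the paper also argues via the exponential chart and ends up with $b=2b'$.
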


\begin{proof}
Let $x\in M$. Consider the exponential map $\exp_x : U\subset T_xM\to V\subset M$ and let $Y=\exp_x^*X$. Choose an orthonormal basis $(e_1,\cdots, e_n)$ on $T_xM$   endowed with the scalar product $g_x$ given by the metric. This allows to identify $T_xM$ with the canonical euclidean space $\R^n$. Without loss of generality, we may assume $Y(0) = \| X(x) \| \frac{\partial}{\partial x_1}$.
By definition of the exponential map, for every point $y\in V$, $d(x,y) = \| \exp_x^{-1}(y)\|_{\mathrm{eucl}}$ where $\|\cdot\|_\mathrm{eucl}$ is the Euclidean norm on $\mathbb R^n$.
Write $Y=(Y_1,\dots,Y_n)$.
One may shrink $U$ to ensure that, for all $v\in U$, $\| Y(v) \|_\mathrm{eucl}\leq 2b'$ and $Y_1(v)\geq a'/2$.
Therefore, for $t$ small enough
\[
\frac{a'}{2}|t|\leq \left|\int_0^t Y_1(\exp_x^{-1}(\phi_s(x)))ds \right| \leq d(x,\phi_t(x))= \left\|\int_0^t Y(\exp_x^{-1}(\phi_s(x)))ds \right\|_\mathrm{eucl} \leq 2b'|t|\,.
\]
\end{proof}


\subsection{Topological transitivity}

\begin{defi}[Transitivity]\label{def-transitive}
The flow $\phi$ is {\em topologically transitive} on $M$  if for all open sets $U,V\subset M$ and every $T>0$, there exists $t\ge T$ such that $\phi_t( U)\cap V\neq \emptyset$.
\end{defi}

Most of the time, we will  use the stronger property of Lemma~\ref{transitivite prop}, where   $t$ can be made almost constant for open sets $U,V$ inside a given compact set $K$.

\subsection{Closing lemma}

\begin{defi}[Closing lemma]\label{closing lemma} The flow satisfies the {\em closing lemma} if for every $\epsilon>0$ and $x\in M$ there exists $\delta>0$ and $T_\mathrm{min}>0$ such that for every $y\in B(x,\epsilon)$ and $t\geq T_\mathrm{min}$ with $d(\phi_t( y),y)\le \delta$, there exist $z\in B(y,\epsilon)$ and $\tau \in [t-\epsilon,t+\epsilon]$ such that $\phi_\tau( z)=z$ and $d(\phi_s (z), \phi_s (y))\le \epsilon $ for every $0\le s \le t $.
\end{defi}

\begin{figure}[ht]
    \centering
    \includegraphics{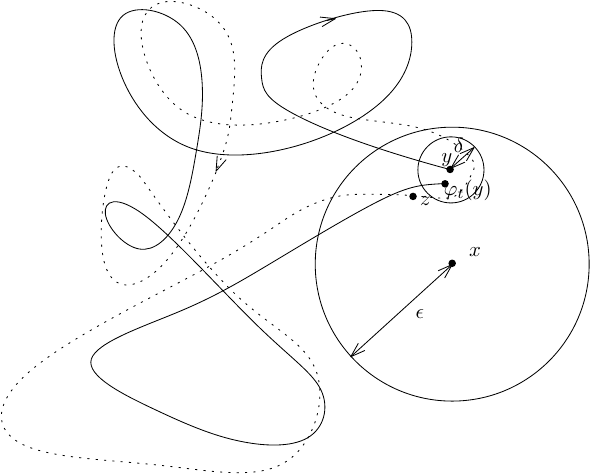}
    \caption{Closing Lemma}
    \label{fig:closing_lemma}
\end{figure}

\subsection{Expansivity}

We follow \cite[Definition 1.7.2]{Fisher-Hasselblatt}.
\begin{defi}[Expansivity]\label{def:expansive} The flow $\phi$ is said expansive if for every $\nu>0$ there exists $\epsilon>0$ such that for all $x,y\in M$, if there exists a continuous map $s:\mathbb{R}\to\mathbb{R}$ such that $s(0)=0$ and for every $t\in\mathbb{R}$, $d(\phi_t(x), \phi_{s(t)}(y))\le \epsilon$, then there exists $\tau\in [-\nu,\nu]$ such that $y=\phi_\tau (x)$.
\end{defi}

 \begin{prop}\label{geod-flow-expansive} The  geodesic flow $(g_t)_{t\in\R}$ on the unit tangent bundle $M=T^1N$ of a (not necessarily compact) pinched negatively curved manifold $N$ whose injectivity radius is bounded from below by a positive constant is expansive with respect to the distance on $T^1N$ induced by the Sasaki metric on $TTN$.
 \end{prop}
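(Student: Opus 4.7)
The plan is to lift the situation to the universal cover $\pi\colon\tilde N\to N$ and exploit the classical rigidity of geodesics in a Hadamard manifold of pinched negative curvature: two complete geodesics that stay at bounded distance for all times must coincide as point sets, the strictly negative upper bound on curvature ruling out flat strips.

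Let $i_0>0$ be the lower bound on the injectivity radius of $N$. With the Sasaki metric, the covering $T^1\tilde N\to T^1N$ is a Riemannian covering, and there is a constant $\eta>0$ depending only on $i_0$ such that any two distinct lifts of the same point of $T^1N$ are at Sasaki distance at least $2\eta$. Given $\nu>0$, I would choose $\epsilon\le \min(\nu,\eta)$. Assume $v,w\in T^1N$ and a continuous $s\colon\R\to\R$ with $s(0)=0$ satisfy $d(g_tv,g_{s(t)}w)\le\epsilon$ for every $t\in\R$. Pick any lift $\tilde v\in T^1\tilde N$ of $v$, and the unique lift $\tilde w$ of $w$ with $d(\tilde v,\tilde w)\le\epsilon$. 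For every $t$ there is likewise a unique lift $\tilde u_t$ of $g_{s(t)}w$ with $d(g_t\tilde v,\tilde u_t)\le\epsilon$. By construction $\tilde u_0=\tilde w$, and since $t\mapsto g_{s(t)}\tilde w$ is also a continuous lift of $t\mapsto g_{s(t)}w$ starting at $\tilde w$, uniqueness of path lifting gives $\tilde u_t = g_{s(t)}\tilde w$ for every $t$. Consequently
\[
d\bigl(g_t\tilde v,\, g_{s(t)}\tilde w\bigr)\le \epsilon \qquad\text{for every } t\in\R.
\]

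Projecting to $\tilde N$ via the $1$-Lipschitz submersion $\pi$, the two complete unit-speed geodesics $c_{\tilde v}$ and $c_{\tilde w}$ of $\tilde N$ satisfy $d_{\tilde N}(c_{\tilde v}(t), c_{\tilde w}(s(t)))\le\epsilon$. Since $c_{\tilde v}(t)$ escapes every compact set as $|t|\to\infty$, the same must hold for $c_{\tilde w}(s(t))$, forcing $|s(t)|\to\infty$; continuity of $s$ then sends each endpoint at infinity of $c_{\tilde v}$ to an endpoint of $c_{\tilde w}$. In pinched negative curvature, two geodesics of $\tilde N$ sharing both endpoints at infinity must coincide as unparametrized curves, otherwise convexity of the distance between them would produce a flat strip, contradicting $K\le -a^2<0$. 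Therefore $\tilde w=g_\tau \tilde v$ for some $\tau\in\R$, hence $w=g_\tau v$. Finally, since $\pi(\tilde v)$ and $\pi(\tilde w)=\pi(g_\tau\tilde v)$ lie on a unit-speed geodesic of $\tilde N$ that minimizes length, $|\tau| = d_{\tilde N}(\pi(\tilde v),\pi(\tilde w)) \le d(\tilde v,\tilde w)\le \epsilon\le\nu$, as required.

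The main obstacle is the simultaneous lifting step, and this is precisely where the injectivity radius hypothesis is essential. Without a uniform positive lower bound on $i_0$, nothing would prevent the lift of $g_{s(t)}w$ closest to $g_t\tilde v$ from jumping from one deck orbit to another as $t$ varies, which would destroy the global closeness of $c_{\tilde v}$ and $c_{\tilde w}$ in $\tilde N$ and block the rigidity argument. The choice $\epsilon\le\eta$ guarantees uniqueness of the close lift at every time, and the rest of the argument is then the standard Hadamard-manifold comparison.
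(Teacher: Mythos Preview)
Your proof is correct and follows essentially the same route as the paper's: lift to the universal cover using the injectivity-radius bound to control distinct lifts, then invoke the absence of distinct parallel geodesics in a Hadamard manifold of pinched negative curvature. The paper phrases the lifting step as a direct contradiction (if the lifted distance ever reaches $\epsilon$, a second nearby lift would violate the injectivity radius), whereas you package it as uniqueness of path lifting; and the paper simply cites ``no distinct parallel geodesics'' where you unpack it via endpoints at infinity and the flat-strip obstruction---but these are the same argument at different levels of detail.
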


 We refer for example to \cite[p.18-19]{PPS} for definitions and elementary useful facts on the Sasaki metric.

\begin{proof} Let $\rho>0$ be the positive infimum of all injectivity radii $\rho_{inj}(x)$ at all points $x\in N$.
Choose $\epsilon<\rho/2$. Consider two vectors $v,w\in M=T^1N$ and a map $s:\R\to \R$ that  satisfy $d(g_tv,g_{s(t)} w)<\epsilon$ for every $t\in\R$.

Denote by $\tilde N$ the universal covering of $N$. We claim that there exist $\tilde v, \tilde w\in T^1\widetilde N$ such that $d(g_t\tilde v,g_{s(t)} \tilde w)<\epsilon$ for every $t\in\R$.
Indeed, choose $\tilde v$ and $\tilde w$ two lifts of $v$ and $w$ such that $d(\tilde v , \tilde w)<\epsilon$. Assume by contradiction that there exists $t$ such that $d(g_t\tilde v,g_{s(t)} \tilde w) =\epsilon$. Then, as $d(g_tv,g_{s(t)} w)<\epsilon$, we can find a lift $\tilde w_t$ of $g_{s(t)} w$ such that $d(g_t\tilde v,\tilde w_t)<\epsilon$. Therefore, the two lifts  $g_{s(t)} \tilde w$ and $\tilde w_t$ of $g_{s(t)} w$ satisfy $d(g_{s(t)} \tilde w , \tilde w_t) <2\epsilon <\rho$. This is in contradiction with the definition of injectivity radius.

Now, recall that $\widetilde N$ is a Hadamard manifold with pinched negative curvature. It   implies  that there are no distinct parallel geodesics. Therefore, as   $d(g_t\tilde v,g_{s(t)} \tilde w)<\epsilon$ for every $t\in\R$, there exists $\tau\in\R$ such that $w=g_\tau v$. Moreover, we have $|\tau|<\epsilon$ by assumption.
\end{proof}

This condition of positive lower bound on the injectivity radius can fail for different reasons. When it fails, the expansivity property can still hold or it can also fail.
If the manifold admits one cusp, and no other end, expansivity still holds. If there are infinitely many periodic orbits with lengths arbitrarily small, then one can build pairs of distinct orbits that stay arbitrarily close, one turning around a small periodic orbit and not the other, so that expansivity fails.
\subsection{Shadowing properties}

\begin{defi}[Finite exact shadowing]\label{weak shadowing property}
The flow $\phi\colon M\to M$ satisfies the \textit{finite exact shadowing property} if for every compact subset $K\subset M$, every $\delta>0$ and every integer $N\in\mathbb{N}^*$, there exists $\eta>0$ such that the following shadowing holds. Given $N$ orbits  $(\phi_t (x_i))_{0\le t\le t_i}\in K$ for $i=1,\dots,N$ starting in $K$ that satisfy
\[
d(\phi_{t_i}(x_i),x_{i+1})<\eta\,, \forall i=1,\dots, N-1\, ,
\]
there exists $y\in M$ such that for  every $1\le i\le N$, and $0\le s\le t_i$,
\[
d(\phi_{s+\sum_{j=1}^{i-1}t_j }(y),\phi_s(x_i))<\delta \,.
\]
\end{defi}

\begin{figure}[ht]
    \centering
    \includegraphics{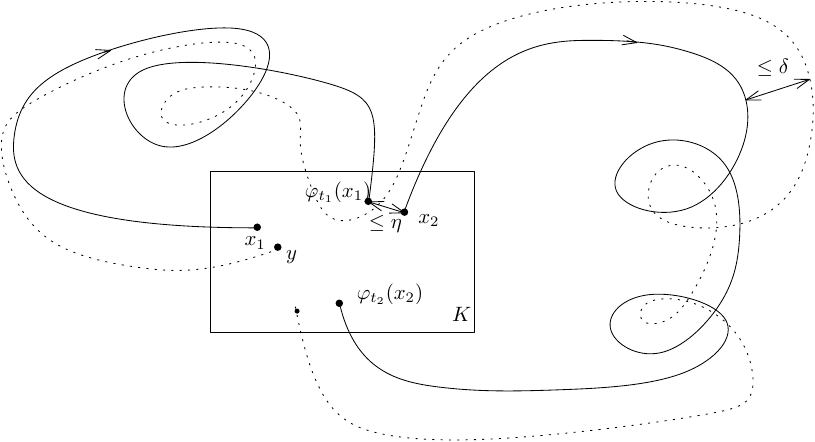}
    \caption{Finite exact shadowing for $N=2$ }
    \label{fig:finite_exact_shadowing}
\end{figure}
See Figure~\ref{fig:finite_exact_shadowing}.
Here, the word {\em finite} refers to the finite number of orbits defined on finite intervals of time and the word {\em exact} to the fact that the orbit of $y$ follows exactly the orbits of the $(x_i)$, without reparametrization of time,  by contrast with the usual definitions of shadowing, see for example \cite[Definition 1.5.29]{Fisher-Hasselblatt} and \cite[Definition 5.3.1]{Fisher-Hasselblatt}.
For the sake of clarity, let us compare this shadowing property with the assumptions of \cite{CS10}.
In~\cite{CS10}, the authors work with flows satisfying  transitivity, closing lemma and a local product structure. This property, defined below, is
stronger than the finite exact shadowing of Definition \ref{weak shadowing property}, as proven in Lemma~\ref{local-product-versus-finite-shadowing} below.

\begin{defi}[Local product structure]\label{local poduct structure} We say that $\phi$ satisfies the local product structure if for every $x_0\in X$ there exits $\epsilon_0>0$ such that, for every $\delta>0$ there exists $\eta>0$ satisfying the following property. For all $x,y\in B(x_0,\epsilon_0)$ such that $d(x,y)\leq\eta$, there exists $z\in B(x_0,\epsilon_0)$ and $|\tau|<\delta$ such that
$d(\phi_tx,\phi_tz)\le \delta$ for all $t\le 0$ and $d(\phi_{t+\tau}y, \phi_t z)\le \delta$ for all $t\ge 0$. We will use the standard notation $z= \langle x,y\rangle$.
\end{defi}

\begin{lemm}\label{local-product-versus-finite-shadowing}
Let $\phi\colon M\to M$ be a $C^1$-flow satisfying \eqref{eqn:minoration}. If $\phi$ satisfies the local product structure (see Definition \ref{local poduct structure}), then $\phi$ satisfies the finite exact shadowing property (see Definition \ref{weak shadowing property}).
\end{lemm}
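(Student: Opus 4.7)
The idea is to glue the successive orbit pieces $(\phi_t(x_i))_{0\le t\le t_i}$ one after the other using the local product structure, and to absorb the time reparametrization that inevitably appears into an extra spatial error, via the global upper bound $d(\phi_s(y),\phi_{s+\tau}(y))\le b|\tau|$ coming from \eqref{eqn:minoration}.

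\textbf{Step 1: uniform local product structure on $K$.} For each $p\in K$, let $\epsilon_0(p)$ be the constant provided by Definition~\ref{local poduct structure}. By compactness of $K$, one can cover $K$ by finitely many balls $B(p_k,\epsilon_0(p_k)/4)$; set $\epsilon_0:=\tfrac14\min_k \epsilon_0(p_k)>0$. Then, for any $p\in K$, the ball $B(p,2\epsilon_0)$ is contained in one of the $B(p_k,\epsilon_0(p_k))$. Consequently, for every $\delta'>0$ there is a uniform $\eta'(\delta')>0$ such that whenever $p\in K$ and $x,y\in B(p,\epsilon_0)$ satisfy $d(x,y)\le\eta'(\delta')$, local product structure produces $z$ and $\tau$ with $|\tau|<\delta'$ and the two shadowing estimates of Definition~\ref{local poduct structure} with parameter $\delta'$.

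\textbf{Step 2: one gluing step.} Suppose a point $w$ already $\delta_0$-shadows the first $k$ pieces, in the sense that for each $1\le i\le k$ and $0\le u\le t_i$,
\[
d\bigl(\phi_{u+T_{i-1}}(w),\phi_u(x_i)\bigr)\le\delta_0, \qquad T_i:=t_1+\dots+t_i.
\]
If $\delta_0+\eta\le \eta'(\delta')$ and $\delta_0+\eta\le \epsilon_0$, then $\phi_{T_k}(w)$ and $x_{k+1}$ both lie in $B(x_{k+1},\epsilon_0)$ and are at distance at most $\eta'(\delta')$. Apply the local product structure with $x=\phi_{T_k}(w)$, $y=x_{k+1}$; this yields $z$ and $|\tau|<\delta'$. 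Define $w':=\phi_{-T_k}(z)$. The backward estimate gives, for $s\le T_k$,
\[
d\bigl(\phi_s(w'),\phi_s(w)\bigr)=d\bigl(\phi_{s-T_k}(z),\phi_{s-T_k}(x)\bigr)\le \delta',
\]
so $w'$ still $(\delta_0+\delta')$-shadows the first $k$ pieces. The forward estimate gives, for $T_k\le s\le T_{k+1}$,
\[
d\bigl(\phi_s(w'),\phi_{s-T_k+\tau}(x_{k+1})\bigr)\le\delta'.
\]
Using the global upper bound $d(\phi_u(x_{k+1}),\phi_{u+\tau}(x_{k+1}))\le b|\tau|\le b\delta'$ from \eqref{eqn:minoration}, we obtain exact shadowing of the $(k{+}1)$-th piece with error at most $(1+b)\delta'$. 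The flow being complete, $w'$ is well defined.

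\textbf{Step 3: induction on $N$.} Set $w^{(1)}:=x_1$, which shadows the first piece with error $0$. Iterating Step~2, we obtain $w^{(N)}=:y$ that $\delta_N$-shadows all $N$ pieces, where the errors satisfy
\[
\delta_{k+1}\le \max\bigl(\delta_k+\delta',(1+b)\delta'\bigr).
\]
Given the target precision $\delta>0$, choose $\delta':=\delta/((1+b)N)$ and then
\[
\eta:=\tfrac12\min\bigl(\epsilon_0,\ \eta'(\delta')-\delta_{\max}\bigr),
\]
where $\delta_{\max}\le (1+b)\delta'+(N-1)\delta'\le \delta$ is an upper bound on all intermediate errors. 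By construction $\delta_N\le\delta$, and the inductive choice of $\eta$ ensures that the hypothesis $\delta_k+\eta\le\eta'(\delta')$ required in Step~2 is satisfied at every gluing. This produces the desired $y$.

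\textbf{Main obstacle.} The delicate point is that local product structure only furnishes shadowing \emph{with a time reparametrization} $\tau$, while Definition~\ref{weak shadowing property} demands exact (unparametrized) tracking. The hypothesis \eqref{eqn:minoration} on the uniform upper bound $d(x,\phi_\tau(x))\le b|\tau|$ is exactly what is needed to convert the $O(\delta')$ temporal error into an $O((1+b)\delta')$ spatial error at each gluing. A secondary difficulty is to ensure that the errors do not blow up through the $N-1$ gluings; this is handled by taking $\delta'$ of order $\delta/N$ and choosing $\eta$ small enough for all the intermediate local product structure applications to be legitimate.
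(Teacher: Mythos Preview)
Your approach is essentially the paper's: glue successive pieces via the local product structure and convert the time shift $\tau$ into an $O(b\delta')$ spatial error using \eqref{eqn:minoration}. The paper only spells out $N=2$ and declares the general case to follow by iteration; you attempt the full induction explicitly.

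There is, however, a genuine gap in Step~3. To apply the local product at stage $k\to k{+}1$ you need $d(\phi_{T_k}(w^{(k)}),x_{k+1})\le\eta'(\delta')$. Since the last-added piece $k$ already carries error $(1+b)\delta'$, this forces $(1+b)\delta'+\eta\le\eta'(\delta')$. But Definition~\ref{local poduct structure} gives no relation between $\eta'(\delta')$ and $\delta'$ (and in general $\eta'(\delta')$ may be much smaller than $\delta'$), so the quantity $\eta'(\delta')-\delta_{\max}$ appearing in your definition of $\eta$ can be negative and the choice collapses. The fix is to abandon the single uniform $\delta'$ and choose the precisions backward: take $\delta'_{N-1}$ so that the final errors are at most $\delta$, then recursively pick $\delta'_{k-1}$ small enough that $(1+b)\delta'_{k-1}<\tfrac12\eta'(\delta'_{k})$, and finally set $\eta=\tfrac12\min_k\eta'(\delta'_k)$. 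This is what the paper's terse ``successive uses of the $N=2$ version'' amounts to once made explicit.
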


\begin{proof}
It is enough to prove it for $N=2$, and the general case follows by successive uses of the $N=2$ version.
Let $K$ be a compact subset and fix $\delta>0$.
Let $\delta' = \frac{\delta}{1+b}$ where $b$ comes from the upper bound  constant in (\ref{eqn:minoration}).
As $\overline{B(K,1)}$ is compact, using the local product structure, we obtain $\eta>0$ such that for all $x,y\in \overline{B(K,1)}$ such that $d(x,y)\leq\eta$, there exists $z\in M$ and $|\tau|<\delta'$ such that
$d(\phi_t x,\phi_t z)\le \delta'$ for all $t\le 0$ and $d(\phi_{t+\tau}y, \phi_t z)\le \delta'$ for all $t\ge 0$.

Fix $x_1,x_2\in K$ and $t_1,t_2\geq 0$ such that $d(\phi_{t_1}(x_1),x_2)<\eta$, that is, as in the definition of finite exact shadowing. Apply then the local product structure for $\phi_{t_1}(x_1)$ and $x_2$: it gives us $z\in M$ and $|\tau|<\delta'$ such that $d(\phi_{t+t_1} (x_1),\phi_t (z))\le \delta'$ for all $t\le 0$ and $d(\phi_{t+\tau} (x_2), \phi_t (z))\le \delta$ for all $t\ge 0$.

Set $y=\phi_{-t_1}(z)$.
It satisfies $d(\phi_t(y),\phi_t(x_1))\le\delta'<\delta$ for $0\le t\le t_1$ and, by \eqref{eqn:minoration},
\[d(\phi_{t+t_1} (y),\phi_t(x_2))=d(\phi_{t} (z),\phi_t(x_2))\le d(\phi_t (z),\phi_{t+\tau}(x_2))+d(\phi_{t+\tau}(x_2),\phi_t (x_2))\le \delta'+b|\tau|<\delta\] for $t\ge 0$. The result follows.
\end{proof}

\subsection{\texorpdfstring{$H$}{TEXT}-flows}
\begin{defi}\label{def:H-flow}
A $H$-flow is a $C^1$-flow $(\phi_t)_{t\in\R}$ on a complete Riemannian manifold satisfying the following properties\,:
\begin{enumerate}
\item for every $\tau\in [-1,1]$, the time $\tau$ map $\phi_\tau$ is Lipschitz, as in (\ref{lipsch});
 \item the parametrization of the flow satisfies the bounds   (\ref{eqn:minoration});
 \item the flow is  topologically transitive, see Definition~\ref{def-transitive};
 \item  the flow  satisfies the finite exact shadowing property, see Definition~\ref{weak shadowing property};
 \item the flow satisfies the closing lemma, see Definition~\ref{closing lemma};
 \item the flow  is expansive, see Definition~\ref{def:expansive}.
 \end{enumerate}
\end{defi}

\begin{rema}
By Lemma~\ref{local-product-versus-finite-shadowing}, an expansive flow satisfying the assumptions (transitivity, local product and closing lemma) of \cite{CS10} and the controls \eqref{lipsch} and \eqref{eqn:minoration}  is a $H$-flow.
\end{rema}

\begin{theo}\label{geodesic-flow-H-flow} Let $N$ be a Riemannian manifold with pinched negative curvature, bounded from above and from below by uniform negative constants. Assume that the injectivity radius is bounded from below by a positive constant. Let $M=T^1N$ be its unit tangent bundle, endowed with the distance associated with the Sasaki metric.
If the geodesic flow
 $(g_t)_{t\in\R}$ is topologically transitive, then it is  a $H$-flow.
\end{theo}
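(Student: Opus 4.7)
I would verify each of the six axioms of Definition~\ref{def:H-flow} for the geodesic flow $(g_t)$ on $M=T^1N$. Three are essentially free. Topological transitivity is in the hypothesis. Expansivity is exactly Proposition~\ref{geod-flow-expansive}. The parametrization bound (\ref{eqn:minoration}) follows because the geodesic spray has constant Sasaki norm $1$ on $T^1N$, so the sufficient condition (\ref{norm_bounded}) holds with $a'=b'=1$. For the Lipschitz bound (\ref{lipsch}), I would invoke the Sasaki identification of $Dg_\tau \xi$ with the perpendicular Jacobi field $J$ along $t\mapsto g_t v$ whose pair $(J(0),J'(0))$ encodes $\xi$. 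The pinched negative curvature $-b^2\le K\le -a^2<0$ gives Rauch-type bounds $\|J(\tau)\|+\|J'(\tau)\|\le C\,e^{b|\tau|}(\|J(0)\|+\|J'(0)\|)$, independently of $v$; for $\tau\in[-1,1]$ this yields a uniform bound on $\|Dg_\tau\|_{\mathrm{Sasaki}}$, hence (\ref{lipsch}).

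The remaining two axioms, finite exact shadowing and the closing lemma, I would both derive from a local product structure. In pinched negative curvature with a uniform positive injectivity radius $\rho$, the geodesic flow admits strong stable and strong unstable foliations $W^{ss},W^{uu}$ of uniform local size and uniform exponential contraction rate: in the universal cover they coincide with the horospherical foliations, whose local geometry depends only on the curvature pinching, and the covering $T^1\widetilde N\to T^1N$ is an isometry in Sasaki balls of radius $<\rho/2$. Given $v_0\in T^1N$, I would pick $\epsilon_0$ smaller than this common local-product radius and, for $v,w\in B(v_0,\epsilon_0)$ close enough, define $\langle v,w\rangle$ as the unique intersection of $W^{uu}_{\mathrm{loc}}(v)$ with the local weak stable leaf through $w$; the reparametrization $\tau$ is the flow-distance from that intersection to $W^{ss}_{\mathrm{loc}}(w)$, and is controlled by $d(v,w)$. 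This realizes Definition~\ref{local poduct structure}, whereupon Lemma~\ref{local-product-versus-finite-shadowing} provides the finite exact shadowing property. For the closing lemma, I would apply the standard Anosov closing argument: from $d(y,g_t y)\le \delta$ one forms $z=\langle y,g_t y\rangle$ and uses exponential stable/unstable contraction to promote $z$ to an exact periodic point whose period lies in $[t-\epsilon,t+\epsilon]$ and whose orbit $\epsilon$-shadows that of $y$ on $[0,t]$.

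The main obstacle is not any individual step but the \emph{uniformity} of all the constants involved (local-product radius, shadowing scale, closing-lemma constants) over the non-compact manifold $T^1N$, where compactness is not available to absorb them. The crucial observation is that these constants depend only on the pinched curvature bounds $a,b$ and on the injectivity radius lower bound $\rho$: the horospherical geometry in $T^1\widetilde N$ is entirely controlled by the former, while the latter lets one lift small Sasaki balls isometrically to the universal cover. Once this uniformity is recorded, the classical compact-case arguments for local product structure, shadowing, and closing transfer verbatim to the setting of the theorem.
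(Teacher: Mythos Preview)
Your proposal is correct and follows essentially the same route as the paper: verify the six axioms one by one, with transitivity assumed, expansivity from Proposition~\ref{geod-flow-expansive}, the parametrization bound from $\|X\|_{\mathrm{Sasaki}}\equiv 1$, the Lipschitz bound via Jacobi-field growth controlled by the curvature pinching, and finite exact shadowing via local product structure plus Lemma~\ref{local-product-versus-finite-shadowing}. The only minor differences are that the paper obtains the Jacobi bound by a direct Gr\"onwall argument on $\|J\|^2+\|J'\|^2$ rather than invoking Rauch comparison, and for the closing lemma it simply cites Eberlein and \cite{CS10} rather than sketching the Anosov closing construction; your explicit emphasis on the uniformity of all constants (depending only on the curvature pinching and the injectivity-radius lower bound) is a useful clarification that the paper leaves implicit in its references.
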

\begin{proof} The bound (\ref{lipsch}) saying that the geodesic flow on $M$ at time $\tau$, for $\tau\in[-1,1]$, is Lispchitz continuous, with a uniform Lipschitz constant, will follow from a uniform bound on the differential of the geodesic flow on the unit tangent bundle $M$ for the Sasaki metric.

First note that, as the sectional curvature is bounded, the Riemann curvature tensor, which is completely determined by the sectional curvature, is also bounded (see for instance \cite[Chapter III, Theorem~3.8]{GHL} or \cite[Section~6.1]{buserkarcher}).
Now, the differential of the geodesic flow on the tangent bundle $M$ is
\[
D_{(p,v)} g_t (X,Y) = ( J(t), J'(t))\, ,
\]
where $J$ is the Jacobi field along the geodesic $\gamma$ defined by $(\gamma(0),\gamma'(0))=(p,v)$ with initial condition $J(0) = X$ and $J'(0) = Y$ (see for instance \cite[Lemma~1.13]{Ballmann}).
Therefore, in order to bound uniformly the differential of the geodesic flow, it is enough to bound uniformly $\Vert J(t)\Vert$ and $\Vert J'(t)\Vert$ for $t\in[-\tau,\tau]$. From a direct computation (see \cite[Proof of Proposition~1.19]{Ballmann}), we obtain
\begin{align*}\left(||J(t)||^2 + ||J'(t)||^2 \right)'
&= 2\left\langle J'(t), J(t)+J''(t)\right\rangle
 = 2\left\langle J'(t), J(t)- R(J(t),\gamma'(t))\gamma'(t)\right\rangle\\
&\leq 2C ||J'(t)|| ||J(t)|| \leq  C\left(||J(t)||^2 + ||J'(t)||^2 \right)
\end{align*}
for some $C>0$ where $R$ is the curvature tensor.
Using Grönwall's inequality, we obtained the desired bound.
 \color{black}

The bounds (\ref{eqn:minoration}) follow from an elementary computation\,: in the classical horizontal/vertical coordinates of $TTN$, the geodesic vector field generating the geodesic flow satisfies $X(v)=(v,0)$ so that its Sasaki norm satisfies $\|X(v)\|=\|v\|=1$.
Topological transitivity is an assumption. Finite exact shadowing follows from the fact that geodesic flows satisfy a local product structure, and from Lemma \ref{local-product-versus-finite-shadowing}.  Closing lemma is due to Eberlein \cite{Eberlein} in nonpositive curvature, and a short proof in the particular case of negative curvature is given in \cite{CS10}. Expansivity follows from
Proposition \ref{geod-flow-expansive}.

\end{proof}
\subsection{Properties of \texorpdfstring{$H$}{TEXT}-flows}\label{section_technical_prop_h_flows}

In the whole section, $\phi$ is a $H$-flow on a complete Riemannian manifold $M$, and $d$ is a Riemannian distance.

\begin{lemm}
Let $\phi$ be a $H$-flow. For every nonempty open set $U\subset M$, there exists a periodic orbit that intersects $U$.
\end{lemm}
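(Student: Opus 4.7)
The plan is to combine topological transitivity with the closing lemma, which is the standard way to produce a periodic orbit in a prescribed open set for uniformly hyperbolic systems. The key observation is that transitivity gives us an orbit that returns close to its starting point, and the closing lemma converts such an almost-closed orbit into a genuine periodic one lying nearby.

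First I would pick a point $x \in U$ and choose $\epsilon>0$ small enough that the ball $B(x,2\epsilon)$ is contained in $U$; this is possible because $U$ is open. Next I would apply the closing lemma (Definition~\ref{closing lemma}) at $x$ with this $\epsilon$: this yields constants $\delta>0$ and $T_\mathrm{min}>0$ with the property that any $y\in B(x,\epsilon)$ whose forward orbit returns $\delta$-close to $y$ at some time $t\geq T_\mathrm{min}$ is shadowed by a periodic orbit passing within $\epsilon$ of $y$.

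Then I would invoke topological transitivity (Definition~\ref{def-transitive}) applied to the open set $V:=B(x,\min(\epsilon,\delta/2))$: there exists $t\geq T_\mathrm{min}$ such that $\phi_t(V)\cap V\neq \emptyset$, and hence a point $y\in V$ with $\phi_t(y)\in V$. By the triangle inequality,
\[
d(\phi_t(y),y)\leq d(\phi_t(y),x)+d(x,y)\leq \delta/2+\delta/2=\delta,
\]
and $y\in B(x,\epsilon)$. The closing lemma then provides a periodic point $z\in B(y,\epsilon)$ whose orbit is periodic. Finally, since $y\in B(x,\epsilon)$ and $z\in B(y,\epsilon)$, another triangle inequality gives $z\in B(x,2\epsilon)\subset U$, so the periodic orbit through $z$ intersects $U$, as required.

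The proof is essentially routine once the axioms of a $H$-flow are available; there is no real obstacle. The only small point of care is to fix $\epsilon$ small with respect to $U$ \emph{before} invoking the closing lemma (so that its output $\epsilon$-ball still fits inside $U$), and to tighten the transitivity balls down to $\delta/2$ so that the return distance actually falls below the closing-lemma threshold $\delta$.
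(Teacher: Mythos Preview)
Your proof is correct and follows essentially the same route as the paper: pick $x\in U$ with $B(x,2\epsilon)\subset U$, extract $\delta,T_\mathrm{min}$ from the closing lemma, use transitivity on a small ball of radius $\min(\epsilon,\delta/2)$ to produce a $\delta$-recurrent point $y$, and close up. The only cosmetic difference is that the paper assumes $\delta\le\epsilon$ without loss of generality instead of writing $\min(\epsilon,\delta/2)$.
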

\begin{proof} Choose $x\in U$ and $\epsilon>0$  such that $B(x,2\epsilon)\subset U$.
The closing lemma (see Definition \ref{closing lemma}) associates to $x$ and $\epsilon$ some $\delta>0$ and $T_\mathrm{min}>0$.
Without loss of generality, $\delta\leq\epsilon$.
By transitivity (see Definition \ref{def-transitive}), there exists $t\ge T_\mathrm{min}$ such that $B(x,\delta/2)\cap\phi_t(B(x,\delta/2))\neq \emptyset$. Choose some $\phi_t(y)$ in this intersection, and apply the closing lemma \ref{closing lemma} to $y$.
As $d(\phi_t( y),y)<\delta\leq\epsilon$, there exists a periodic point $z\in B(y,\epsilon)\subset B(x,2\epsilon)\subset U$. This concludes the proof. \end{proof}

\noindent \textbf{Notation.} For a compact set $K\subset M$ whose interior is nonempty, denote by $\tau_K$ the minimal period of a periodic orbit (possibly a multiply covered orbit) with period $\geq 1$ intersecting $K$. Such an orbit always exists for a $H$-flow as the interior of $K$ is nonempty.
\label{p:tau_K}

\begin{lemm}[Separation of orbits]\label{lemma on same po} Let $\phi$ be a $H$-flow on $M$.
For every $\nu>0$, there exists $\tau_0>0$ such that  for every $\tau_1\ge 1$, there exists $\epsilon_1>0$ such that the following holds. For all  periodic points $x_0,x_1$ with respective periods $T_0,T_1\geq \tau_1$, satisfying $0\leq T_1-T_0\leq \tau_0$, if, for all $s\in[0,T_0-\tau_1]$ we have
\begin{equation}\label{eqn:separation}
d(\phi_s(x_0),\phi_s(x_1))<\epsilon_1
\end{equation}
then  $x_1=\phi_u(x_0)$ for some $u\in[-\nu,\nu]$.
\end{lemm}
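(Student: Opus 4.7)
The plan is to apply expansivity (Definition \ref{def:expansive}) after exhibiting a continuous reparametrization $\sigma\colon\mathbb{R}\to\mathbb{R}$ with $\sigma(0)=0$ such that $d(\phi_t(x_0),\phi_{\sigma(t)}(x_1))<\epsilon$ for all $t\in\mathbb{R}$, where $\epsilon$ is the expansivity constant associated to $\nu$. First I would apply expansivity to $\nu$ to obtain such an $\epsilon>0$, then choose $\tau_0>0$ with $b\tau_0<\epsilon/4$ (where $b$ is the constant of \eqref{eqn:minoration}); this $\tau_0$ depends only on $\nu$. Finally, given $\tau_1\geq 1$, I would pick $\epsilon_1>0$ with $lip(\phi)^{\lceil\tau_1\rceil+1}\epsilon_1<\epsilon/4$, so that $\epsilon_1$ depends on both $\nu$ and $\tau_1$, as required by the quantifier order in the statement.

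The natural choice of reparametrization is $\sigma(t)=tT_1/T_0$, which is continuous with $\sigma(0)=0$ and satisfies $\sigma(t+T_0)=\sigma(t)+T_1$. Consequently both $t\mapsto\phi_t(x_0)$ and $t\mapsto\phi_{\sigma(t)}(x_1)$ are $T_0$-periodic, so it suffices to bound their distance for $t\in[0,T_0]$. By the triangle inequality,
\[
d(\phi_t(x_0),\phi_{\sigma(t)}(x_1))\leq d(\phi_t(x_0),\phi_t(x_1))+d(\phi_t(x_1),\phi_{\sigma(t)}(x_1));
\]
the second term is $\leq b|\sigma(t)-t|\leq b(T_1-T_0)\leq b\tau_0<\epsilon/4$ by \eqref{eqn:minoration}. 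For the first term, when $t\in[0,T_0-\tau_1]$ the hypothesis directly gives $d(\phi_t(x_0),\phi_t(x_1))<\epsilon_1<\epsilon/4$. When $t\in[T_0-\tau_1,T_0]$ I would iterate the Lipschitz bound \eqref{lipsch} starting from the endpoint $T_0-\tau_1$ over an elapsed time at most $\tau_1$, obtaining $d(\phi_t(x_0),\phi_t(x_1))\leq lip(\phi)^{\lceil\tau_1\rceil+1}\epsilon_1<\epsilon/4$. In both cases the total is strictly less than $\epsilon$, and expansivity then supplies the desired $u\in[-\nu,\nu]$ with $x_1=\phi_u(x_0)$.

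The main obstacle is bridging the ``gap'' $[T_0-\tau_1,T_0]$, on which the hypothesis gives no direct estimate; the two orbits only close up via their own (distinct) periods. Propagating the control by the Lipschitz constant across this gap costs an exponential-in-$\tau_1$ factor, which is exactly why $\epsilon_1$ must be chosen after $\tau_1$. The linear rescaling $\sigma(t)=tT_1/T_0$ is the simplest choice making both sides of the expansivity inequality $T_0$-periodic in $t$, which is what allows us to reduce the whole-real-line estimate to a single period and then to match the endpoints $x_0=\phi_0(x_0)$ and $x_1=\phi_0(x_1)$ against the ``wrap-around'' points $\phi_{T_0}(x_0)=x_0$ and $\phi_{\sigma(T_0)}(x_1)=x_1$.
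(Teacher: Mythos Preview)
Your proposal is correct and follows essentially the same approach as the paper: apply expansivity with the linear reparametrization $\sigma(t)=tT_1/T_0$, bound the drift $d(\phi_t(x_1),\phi_{\sigma(t)}(x_1))$ via \eqref{eqn:minoration} and the choice of $\tau_0$, and propagate the hypothesis across the gap $[T_0-\tau_1,T_0]$ using the Lipschitz constant of the time-$\tau_1$ maps. The only cosmetic differences are that the paper writes the Lipschitz constant as a single $C(\tau_1)$ rather than $lip(\phi)^{\lceil\tau_1\rceil+1}$ and uses $\epsilon'/2$ in place of your $\epsilon/4$.
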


\begin{proof}
Fix $\nu>0$. Let $\epsilon'>0$ be given by Definition~\ref{def:expansive} of expansivity associated with $\nu$.  Let $\tau_0=\epsilon'/2b$, where $b>0$ is the constant of \eqref{eqn:minoration}.
Fix $\tau_1\geq 1$. Let $C(\tau_1)=C\geq 1$ be a uniform Lipschitz constant for every $\phi_t$ with $t\in[0,\tau_1]$ (such a constant exists by the Lipschitz condition in the definition of $H$-flows). Let $\epsilon_1=\epsilon'/2C$. Note that $\epsilon_1\leq \epsilon'/2$.

Consider $x_0$, $x_1$, $T_0$ and $T_1$ as in the statement of the lemma. Then, for every $t\in[T_0-\tau_1,T_0]$,
\[
d(\phi_t (x_0),\phi_t (x_1))\leq C d(\phi_{T_0-\tau_1} (x_0), \phi_{T_0-\tau_1} (x_1))< C\epsilon_1
=
\frac{\epsilon'}{2}\, ,
\]
where the last inequality holds because of \eqref{eqn:separation}.
Therefore, for every $t\in[0,T_0]$,
\[
d(\phi_t (x_0),\phi_t (x_1))<\frac{\epsilon'}{2}\,.
\]
Thus, for every $t\in [0,T_0]$, because of this last inequality and because of \eqref{eqn:minoration},
\[
d\left(\phi_{t\frac{T_1}{T_0}} (x_1),\phi_t (x_0)\right) \leq d\left(\phi_{t\frac{T_1}{T_0}} (x_1),\phi_t (x_1)\right)+d\left(\phi_t (x_1),\phi_t (x_0)\right)< b t \left|\frac{T_1}{T_0}-1 \right|+\frac{\epsilon'}{2}\leq b\tau_0+\frac{\epsilon'}{2}\leq \epsilon'\,.
\]
As $\phi_t (x_0)$ and $\phi_{t\frac{T_1}{T_0}} (x_1)$ are $T_0$ periodic, the inequality
\[
d\left(\phi_{t\frac{T_1}{T_0}} (x_1),\phi_t (x_0)\right) <\epsilon'
\]
holds for every $t\in\mathbb R$. From the expansivity property, see Definition~\ref{def:expansive}, we get a parameter $u$ such that $|u|\leq \nu$ and $x_1=\phi_u (x_0)$.
\end{proof}

\begin{lemm}[Uniform transitivity]\label{transitivite prop bis}\label{transitivite prop}
Let $\phi$ be a $H$-flow on $M$. Let $K'\subset K\subset M$ be two compact subset  with nonempty interior. Let $\delta>0$. There exists $\sigma>0$ 
such that for every $x,y\in K$ and for every $S\ge \sigma$
there are $z\in M$ and $T\in[S-\tau_K,S+\tau_K]$ such that
\[
\phi_{[0,T]}(z)\cap K'\neq\emptyset,\quad d(x,z)<\delta \quad\text{and}\quad d(y,\phi_T(z))<\delta\, .
\]
\end{lemm}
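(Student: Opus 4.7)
The plan is to realize $z$ as the shadow, under the finite exact shadowing property, of a four-piece concatenation: (i) from near $x$ to near a point $p_0\in\mathrm{int}(K')$; (ii) from near $p_0$ to a point $p\in\gamma_0\cap K$ on a periodic orbit $\gamma_0$ of period $\tau_K$ meeting $K$; (iii) $k$ windings around $\gamma_0$; (iv) from $p$ to near $y$. The passage close to $p_0$ is what forces $\phi_{[0,T]}(z)$ to meet $K'$. The role of the winding in (iii) is central: each extra turn changes the total length by $\tau_K$ while the target window $[S-\tau_K,S+\tau_K]$ has width $2\tau_K$, so a single integer choice of $k$ always lands $T$ inside the window, independently of the lengths of the other three arcs, hence uniformly in $x,y\in K$.

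First I would fix $p_0\in\mathrm{int}(K')$ and $\delta_0>0$ with $B(p_0,\delta_0)\subset K'$, a periodic orbit $\gamma_0$ of period $\tau_K$ meeting $K$ (which exists by definition of $\tau_K$), and $p\in\gamma_0\cap K$. Set $\delta':=\min(\delta,\delta_0)/2$, and cover $K$ by finitely many balls $\{B(x_i,\eta/2)\}_{i=1}^N$, with $\eta>0$ to be chosen smaller than $\delta'$ and at most the finite exact shadowing constant for $(K_s,\delta',N=4)$ (where $K_s$ is the compact union of all orbit arcs produced below together with $\gamma_0$). For each pair $(i,j)$, topological transitivity provides $\xi_1^i\in B(x_i,\eta/2)$ and $t_1^i>0$ with $\phi_{t_1^i}(\xi_1^i)\in B(p_0,\eta/2)$; $\xi_2\in B(p_0,\eta/2)$ and $s>0$ with $\phi_s(\xi_2)\in B(p,\eta/2)$; and $\xi_3^j\in B(p,\eta/2)$ and $t_2^j>0$ with $\phi_{t_2^j}(\xi_3^j)\in B(x_j,\eta/2)$. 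Set $\sigma:=\max_{i,j}(t_1^i+s+t_2^j)+\tau_K$.

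Given $x,y\in K$ and $S\ge\sigma$, I would pick $i,j$ with $x\in B(x_i,\eta/2)$, $y\in B(x_j,\eta/2)$, and an integer $k\ge 0$ with $T:=t_1^i+s+k\tau_K+t_2^j\in[S-\tau_K,S+\tau_K]$ (possible since the interval in $k$ has length $2$, and since $S\ge\sigma$ guarantees a nonnegative solution). The four orbit pieces
\[
(\phi_t(\xi_1^i))_{0\le t\le t_1^i},\quad (\phi_t(\xi_2))_{0\le t\le s},\quad (\phi_t(p))_{0\le t\le k\tau_K},\quad (\phi_t(\xi_3^j))_{0\le t\le t_2^j}
\]
lie in $K_s$ and have consecutive jumps $<\eta$ (each jump is between two points of a common open ball $B(p_0,\eta/2)$ or $B(p,\eta/2)$), so finite exact shadowing produces $z\in M$ whose orbit tracks each piece within $\delta'$. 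Triangle inequalities then yield $d(z,x)<\delta'+\eta<2\delta'\le\delta$, $d(\phi_T(z),y)<\delta$, and $d(\phi_{t_1^i}(z),p_0)<\delta'+\eta/2<\delta_0$, so $\phi_{t_1^i}(z)\in K'\cap\phi_{[0,T]}(z)$.

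The hard part is the mild circular dependence between $\eta$ and $K_s$: the shadowing constant depends on $K_s$, while $K_s$ is itself built from the orbit arcs produced by transitivity at precision $\eta/2$. This will be handled by a standard bootstrap — an initial auxiliary precision fixes a first candidate $K_s^{(0)}$ and shadowing constant $\eta^{(0)}$, and a refinement pass at precision $\eta^{(0)}/2$ adds only finitely many compact arcs to $K_s$, allowing a consistent final choice of $\eta$ and $K_s$. Everything else in the argument is bookkeeping: chaining transitivity over a finite cover and invoking shadowing once.
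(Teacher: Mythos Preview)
Your approach is essentially the same as the paper's: cover $K$ by finitely many small balls, use transitivity to build fixed connecting arcs through a point of $\inter{K'}$ and through a point on the periodic orbit of period $\tau_K$, wind around that periodic orbit the right number of times, and invoke the finite exact shadowing property with $N=4$.

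The only substantive difference is that you introduced the auxiliary compact $K_s$ containing all orbit arcs, and then worried about a circular dependence between $\eta$ and $K_s$. This complication is unnecessary: in Definition~\ref{weak shadowing property}, the compact $K$ only constrains the \emph{starting points} $x_i$ of the orbit segments (``starting in $K$''), not the full arcs. The paper simply applies the property with the fixed compact $K$ (one could also use $\overline{B(K,1)}$ to absorb the $\eta/2$-perturbations of the starting points), obtains $\eta$ once and for all, and there is no bootstrap to perform. With that simplification your proof is correct and matches the paper's.
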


\begin{figure}[ht]
    \centering
    \includegraphics{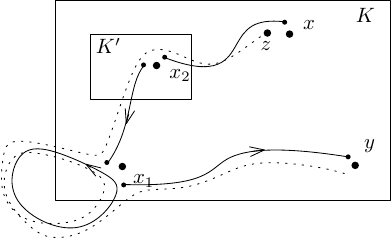}
    \caption{Proof of Uniform Transitivity}
    \label{fig:uniform_transitivity}
\end{figure}

\begin{proof}
See Figure~\ref{fig:uniform_transitivity}.
Recall that $\tau_K$ is the length of the shortest periodic orbit with period $\geq 1$ intersecting $K$.
Let $\eta$ be given by the finite exact shadowing property \ref{weak shadowing property} with parameters $K$, $\delta$ and $N=4$.
Cover $K$ with finitely many balls $B_i=B(x_i,\eta/2)$ of radius $\eta/2$. Without loss of generality, we may assume that the periodic orbit $\gamma$ used to define $\tau_K$ intersects $B_1$ and that $B(x_2,\delta+\eta/2)\subset K'$.

For all $i,j$, there exists $T_{i,j}> 0$ such that $\phi_{T_{i,j}}(B_i)\cap B_j\neq\emptyset$ (this is the transitivity property from Definition~\ref{def-transitive}).
Let $\sigma_0 = 3\max_{i,j} \{T_{i,j}\}+\tau_K$.

Let $x,y\in K$. In particular, there exists $i,j$ such that $x\in B_i$, $y\in B_j$. Let $S\geq \sigma_0$.
Fix $n\geq 2$ such that $T_{i,2}+T_{2,1}+n\tau_K+T_{1,j} \in [S-\tau_K,S+\tau_K]$.
Use the finite exact shadowing (see Definition \ref{weak shadowing property}) to concatenate an orbit from $B_i$ to $B_2$ (given by the definition of $T_{i,2}$), and orbit from $B_2$ to $B_1$ (given by the definition of $T_{2,1}$) $n\gamma$ and an orbit from $B_1$ to $B_j$  (given by the definition of $T_{1,j}$). The initial point of the concatenated orbit is the desired point $z$.
\end{proof}

The following lemma allows us to build a periodic orbit from a pseudo-orbit, see Figure~\ref{fig:multiple_cloing_lemma}.

\begin{lemm}[Multiple closing lemma]\label{petal unis} Let $\phi$ be a $H$-flow on $M$.
Let $K\subset M$ be a compact subset. For all $\delta>0$, $\nu>0$ and $N\in\mathbb{N}^*$, there exist $T_\mathrm{min}>0$ and $\eta>0$ such that for all $x_1,\dots, x_N\in K$, and $T_1,\dots, T_N>0$ with $\sum_{i=1}^NT_i\geq T_\mathrm{min}$ and $\phi_{T_i}(x_i)\in B(x_{i+1},\eta)$ for $i=1,\dots, N$ (where $x_{N+1}=x_1$), the following property holds. There exists a periodic orbit $\gamma$ with period
\[
\ell(\gamma)\in\left[\sum_{i=1}^NT_i-\nu,\sum_{i=1}^NT_i+\nu\right]
\]
such  that for every $0\le i\le N$, and every $s\in [0,T_i]$, we have
\[
d(\gamma(s+\sum_{j=1}^{i-1}T_j),\phi_s(x_i))<\delta \, .
\]
\end{lemm}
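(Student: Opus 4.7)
The plan is to combine the finite exact shadowing property (Definition~\ref{weak shadowing property}) with the closing lemma (Definition~\ref{closing lemma}), using the compactness of $K$ to uniformize the latter over the base point.

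First, I would fix $\epsilon_0 = \min(\nu, \delta/3)$ and cover $K$ by finitely many balls $B(p_j, \epsilon_0/2)$, $j=1,\dots,n$. Applying the closing lemma to each center $p_j$ with precision $\epsilon_0$ yields local parameters $\delta_j>0$ and $T_{\min,j}>0$. Setting $\delta_0 = \min_j \delta_j$ and $T_\mathrm{min} = \max_j T_{\min,j}$ produces a \emph{uniform} version of the closing lemma: any $y$ with $d(y, K) < \epsilon_0/2$ and $d(\phi_t(y), y) \leq \delta_0$ with $t \geq T_\mathrm{min}$ admits a nearby periodic point with period in $[t - \epsilon_0, t+\epsilon_0]$ shadowing $\phi_{[0,t]}(y)$ within $\epsilon_0$.

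Next, I would fix a parameter $\delta' \leq \min(\delta/3,\, \delta_0/4,\, \epsilon_0/2)$ and invoke the finite exact shadowing property for the compact set $K$, precision $\delta'$, and $N$ pieces, producing $\eta_1>0$. Setting $\eta = \min(\eta_1, \delta_0/2)$, consider data $(x_i, T_i)_{i=1}^N$ satisfying the hypotheses of the lemma. The first $N-1$ conditions $d(\phi_{T_i}(x_i), x_{i+1}) < \eta$ let us apply finite exact shadowing to obtain $y\in M$ with
\[
d\bigl(\phi_{s+\sum_{j<i}T_j}(y),\, \phi_s(x_i)\bigr) < \delta' \qquad \text{for every } 1\leq i\leq N,\ 0\leq s\leq T_i.
\]
In particular $d(y,x_1)<\delta'\leq \epsilon_0/2$, so $y$ lies in the enlarged neighborhood of $K$ where the uniform closing lemma applies. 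Using the remaining closure condition $d(\phi_{T_N}(x_N), x_1) < \eta$ together with the shadowing bound at both endpoints, the triangle inequality gives
\[
d(\phi_T(y), y) \leq d(\phi_T(y), \phi_{T_N}(x_N)) + d(\phi_{T_N}(x_N), x_1) + d(x_1, y) < 2\delta' + \eta \leq \delta_0,
\]
where $T=\sum_i T_i \geq T_\mathrm{min}$ by hypothesis. The uniform closing lemma then produces a periodic point $z \in B(y,\epsilon_0)$ with period $\tau \in [T-\epsilon_0, T+\epsilon_0] \subset [T-\nu, T+\nu]$ and $d(\phi_s(z), \phi_s(y))\leq \epsilon_0$ for $0\leq s\leq T$. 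Defining $\gamma(t)=\phi_t(z)$ and combining this control with the shadowing bound via the triangle inequality yields, for every $i$ and $s\in[0,T_i]$,
\[
d\bigl(\gamma(s+\textstyle\sum_{j<i}T_j), \phi_s(x_i)\bigr) \leq \epsilon_0 + \delta' \leq \tfrac{\delta}{3}+\tfrac{\delta}{3} < \delta,
\]
which is the desired conclusion.

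The main obstacle is the lack of built-in uniformity in the closing lemma: as stated, both $\delta$ and $T_\mathrm{min}$ depend pointwise on $x$, so the compactness-based finite cover step is where noncompactness is actually dealt with. The secondary difficulty is the somewhat delicate bookkeeping of the constants: one must simultaneously arrange that the shadowing of the concatenated pseudo-orbit is fine enough to activate the closing lemma (the $2\delta'+\eta\leq \delta_0$ budget), that the composite shadowing error $\epsilon_0+\delta'$ stays below the target $\delta$, and that the allowed period fluctuation $\epsilon_0$ is dominated by $\nu$. All of this is routine once the order of quantifier choices is fixed as above.
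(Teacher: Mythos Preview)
Your argument is correct and follows exactly the approach the paper indicates: the paper's own proof is the single sentence ``This follows immediately from the finite exact shadowing property~\ref{weak shadowing property} and the closing lemma~\ref{closing lemma},'' and what you have written is a careful unpacking of that sentence, including the compactness step needed to uniformize the closing-lemma constants over $K$ and the bookkeeping $2\delta'+\eta\le\delta_0$, $\epsilon_0+\delta'<\delta$, $\epsilon_0\le\nu$. Nothing is missing.
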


\begin{figure}[ht]
    \centering
    \includegraphics{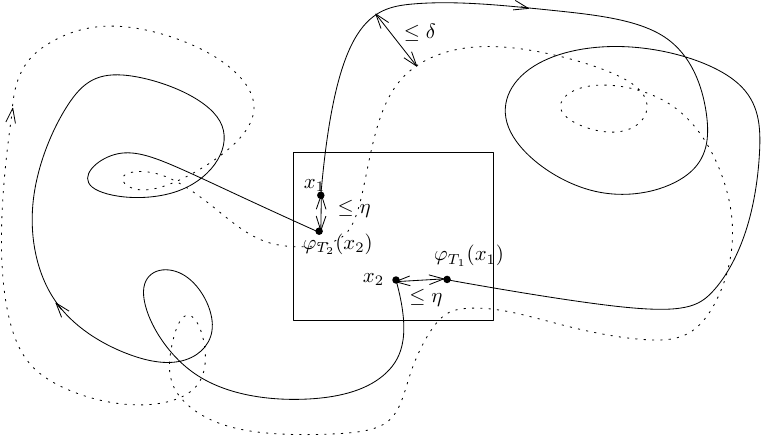}
    \caption{Multiple closing Lemma for $N=2$}
    \label{fig:multiple_cloing_lemma}
\end{figure}

\begin{proof} This follows immediately from the finite exact shadowing property \ref{weak shadowing property} and the closing lemma \ref{closing lemma}.
\end{proof}

The following lemma allows us to build a periodic orbit from a finite number of orbits with endpoints in a compact set, see Figure~\ref{fig:uniform_multiple_closing_lemma}.

\begin{lemm}[Uniform multiple closing lemma]\label{petal separe} Let $\phi$ be a $H$-flow on $M$.
Let $K'\subset K\subset M$ be two compact subsets with nonempty interior, let $\nu>0$, $\delta>0$ and $N\in\mathbb N^*$.
Then, there exist $\sigma>0$ and $T_\mathrm{min}>0$ such that for every $S\geq \sigma$, for all $x_1,\dots,x_N\in K$, all $T_1,\dots,T_N>0$ that satisfy  $\sum_{i=1}^N T_i\geq T_\mathrm{min}$, and such that for every $1\le i\le N$, $\phi_{T_i}(x_i)\in K$,  the following property holds. There exists a periodic orbit $\gamma$ with period
\[
\ell(\gamma)\in\left[\sum_{i=1}^NT_i+NS-\tau_K-\nu,\sum_{i=1}^NT_i+NS+\tau_K+\nu\right]
\]
that intersects $\inter{K'}$ and there exist $\tau_1,\dots,\tau_{N-1}$, $\tau_i\in[S-\tau_K,S+\tau_K]$ such that that for every $0\le i\le N$, and every $s\in [0,T_i]$, we have
\[
d(\gamma(s+\sum_{j=1}^{i-1}(T_j+\tau_j)),\phi_s(x_{i}))<\delta\, .
\]
\end{lemm}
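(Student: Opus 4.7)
The strategy is to interlace the $N$ given orbit pieces with connecting orbit pieces produced by the Uniform Transitivity Lemma (Lemma~\ref{transitivite prop}), each passing through a compact subset of $\inter{K'}$, and then apply the Multiple Closing Lemma (Lemma~\ref{petal unis}) to the resulting $2N$-piece pseudo-orbit.

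First I pick a compact set $K''\subset\inter{K'}$ with nonempty interior together with $\delta_0>0$ such that $B(K'',\delta_0)\subset\inter{K'}$. Set $\delta'=\tfrac12\min(\delta,\delta_0)$ and $\tilde K=\overline{B(K,1)}$. Feeding $\tilde K$, $\delta'$, $\nu$ and $2N$ pieces into Lemma~\ref{petal unis} yields constants $\eta_1>0$ and $T_\mathrm{min}^{(1)}>0$. Feeding $K''\subset\tilde K$ and tolerance $\min(\eta_1/2,\delta')$ into Lemma~\ref{transitivite prop} yields $\sigma_0>0$. I then set $\sigma=\sigma_0+(N-1)\tau_K$ and $T_\mathrm{min}=T_\mathrm{min}^{(1)}$.

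Now fix data satisfying the hypotheses and $S\geq\sigma$. For $i=1,\dots,N-1$, apply Lemma~\ref{transitivite prop} with parameter $S$ to connect $\phi_{T_i}(x_i)\in K$ to $x_{i+1}\in K$, obtaining $z_i\in M$ and $\tau_i\in[S-\tau_K,S+\tau_K]$ such that $\phi_{[0,\tau_i]}(z_i)\cap K''\neq\emptyset$ and both endpoint distances are at most $\eta_1/2$. To close the loop, set $S^*=NS-\sum_{i=1}^{N-1}\tau_i$. Since each $\tau_i$ deviates from $S$ by at most $\tau_K$, one has $S^*\in[S-(N-1)\tau_K,\,S+(N-1)\tau_K]$, so $S^*\geq\sigma_0$ by the choice of $\sigma$. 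A last application of Lemma~\ref{transitivite prop} with parameter $S^*$ gives $z_N$ and $\tau_N\in[S^*-\tau_K,\,S^*+\tau_K]$ connecting $\phi_{T_N}(x_N)$ to $x_1$ through $K''$; in particular $\sum_{i=1}^N\tau_i\in[NS-\tau_K,NS+\tau_K]$.

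Interlacing the $N$ given pieces with the $N$ connecting ones produces a $2N$-piece pseudo-orbit based in $\tilde K$ whose gluing errors are all below $\eta_1$ and whose total time exceeds $T_\mathrm{min}^{(1)}$. Lemma~\ref{petal unis} then produces a periodic orbit $\gamma$ of period in $[\sum T_i+\sum\tau_i-\nu,\,\sum T_i+\sum\tau_i+\nu]\subset[\sum T_i+NS-\tau_K-\nu,\,\sum T_i+NS+\tau_K+\nu]$, $\delta'$-shadowing the pseudo-orbit. Since each connecting piece passes through $K''$ and $\delta'\leq\delta_0$, the orbit $\gamma$ enters $B(K'',\delta_0)\subset\inter{K'}$. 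The shadowing bound at the $k$-th given piece occurs at the time offset $\sum_{j<k}(T_j+\tau_j)$, exactly as claimed. The only delicate point is arranging the total gap to lie in the tight window $[NS-\tau_K,NS+\tau_K]$ instead of the naive $[NS-N\tau_K,NS+N\tau_K]$; I do this by fixing the first $N-1$ gaps at parameter $S$ and choosing the closing one adaptively at the compensating parameter $S^*$, which is why $\sigma$ has to absorb an extra $(N-1)\tau_K$.
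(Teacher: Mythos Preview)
Your proof is correct and follows essentially the same route as the paper's: connect the $N$ given segments by $N$ transitivity arcs, choose the last connecting time adaptively as $S^*=NS-\sum_{i<N}\tau_i$ to land in the tight window $[NS-\tau_K,NS+\tau_K]$, and close up with the Multiple Closing Lemma on the resulting $2N$-piece pseudo-orbit. Your bookkeeping is in fact a bit more careful than the paper's (you enlarge to $\tilde K=\overline{B(K,1)}$ to accommodate the connecting basepoints $z_i$, and you pass tolerance $\min(\eta_1/2,\delta')$ rather than $\delta$ to uniform transitivity so the gluing error really is below $\eta_1$), but the architecture and the key adaptive-$S^*$ trick are identical.
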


\begin{figure}[ht]
    \centering
    \includegraphics{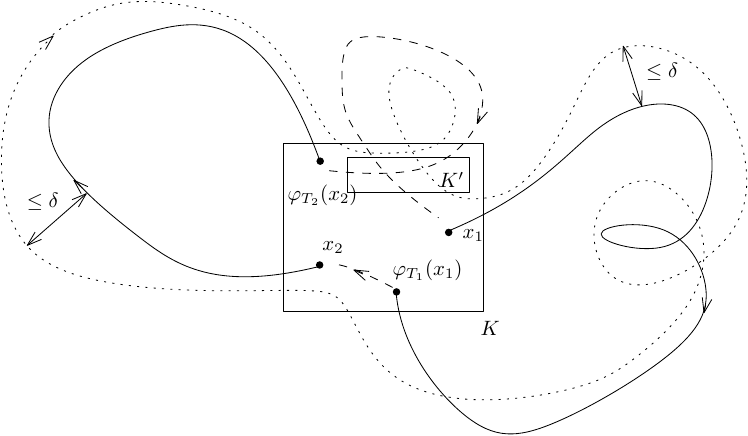}
    \caption{Uniform multiple closing Lemma for $N=2$ }
    \label{fig:uniform_multiple_closing_lemma}
\end{figure}

\begin{proof}
We will use the transitivity property~\ref{def-transitive} to construct pieces of orbits from $\phi_{T_i} (x_i)$ to $x_{i+1}$, then the multiple closing lemma~\ref{petal unis} to obtain a periodic orbit.
We will carefully choose the length of the last orbit to ensure $\ell(\gamma)\in\left[\sum_{i=1}^NT_i+NS-\tau_K-\nu,\sum_{i=1}^NT_i+NS+\tau_K+\nu\right]$
as opposed to
$\ell(\gamma)\in\left[\sum_{i=1}^NT_i+NS-N\tau_K-\nu,\sum_{i=1}^NT_i+NS+N\tau_K+\nu\right]$ if one ins not careful.

More precisely, assume $\delta$ is small enough so that there exist a ball $B(x',2\delta)\subset K'\subset K$.
Lemma~\ref{petal unis} with parameters $K$, $N$, $\delta$ and $\nu$ gives us $T_\mathrm{min}$ and $\eta$.
Let $\sigma_0$ be given by the uniform transitivity lemma (Lemma~\ref{transitivite prop bis}) with parameters $K$, $\delta$ and $B(x',\delta)$.
Let $\sigma = \sigma_0+(N-1)\tau_K$.

Let $x_1,\dots,x_N$ and $T_1,\dots T_N$ and $S$ be as in the statement of the Lemma.
By successive uses of the uniform transitivity~\ref{transitivite prop}, we can build pieces of orbits of respective lengths $\tau_i\in [S-\tau_K,S+\tau_K]$ from $B(\phi_{T_i}(x_i),\eta)$ to $B(x_{i+1},\eta)$ for $i=1,\dots N-1$.
The uniform transitivity~\ref{transitivite prop bis}, with constant $S_0=NS-\sum_{i=1}^N \tau_i \geq NS-(N-1)S-(N-1)\tau_K\geq\sigma_0$, gives us a piece of orbit from $B(\phi_{T_N} (x_N),\eta)$ to $B(x_1,\eta)$ intersecting $B(x',\delta)$ and of length $\tau_n\in[NS-\sum_{i=1}^N \tau_i-\tau_K,NS-\sum_{i=1}^N \tau_i+\tau_K]$.
Therefore, we have $\sum_{i=1}^N \tau_i\in [NS-\tau_K,NS+\tau_K]$.
Lemma~\ref{petal unis} gives us a periodic orbit which satisfies all the desired conditions.
\end{proof}

With the same arguments, we get the following variant of lemmas \ref{petal unis} and \ref{petal separe}. We let the proof to the reader.
\begin{lemm}[Variation around the multiple closing lemma]\label{petales reunis} Let $\phi$ be a $H$-flow on $M$.
Let $K'\subset K\subset M$ be two compact subsets with nonempty interior, let $\nu>0$, $\delta>0$ and $N\in\mathbb N^*$.
Then, there exist $\sigma>0$ and $T_\mathrm{min}>0$
and $\eta>0$ such that for every $S\geq \sigma$, for all $x_1,\dots,x_N\in K$, all $T_1,\dots,T_N>0$ that satisfy  $\sum_{i=1}^N T_i\geq T_\mathrm{min}$ , and such that for every $1\le i\le N$, $\phi_{T_i}(x_i)\in B(x_{i+1},\eta)$ (with $x_{N+1}=x_1$),  the following property holds. There exists a periodic orbit $\gamma$ with period
\[
\ell(\gamma)\in\left[\sum_{i=1}^NT_i+ S-\tau_K-\nu,\sum_{i=1}^NT_i+ S+\tau_K+\nu\right]
\]
that intersects $\inter{K'}$ and  such that for every $0\le i\le N-1$,  and every $s\in [0,T_i]$, we have
\[
d(\gamma(s+\sum_{j=1}^{i-1}T_j),\phi_s(x_i))<\delta \, .
\]
\end{lemm}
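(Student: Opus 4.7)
My strategy is to combine the multiple closing lemma (Lemma~\ref{petal unis}) with a single application of uniform transitivity (Lemma~\ref{transitivite prop bis}). Since the $N$ pieces $(\phi_t(x_i))_{0\le t\le T_i}$ already almost chain together (the jumps $d(\phi_{T_i}(x_i),x_{i+1})$ are bounded by $\eta$ by assumption), I only need to insert one extra segment to close the loop from $\phi_{T_N}(x_N)$ back to $x_1$ and simultaneously force the resulting periodic orbit to visit $\inter{K'}$. This is the main difference with Lemma~\ref{petal separe}, where a transitivity detour is required between every consecutive pair, producing the $\pm N\tau_K$ slack instead of the $\pm\tau_K$ slack that appears here.

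First, I would shrink $\delta$ so that there exists $x'\in M$ with $\overline{B(x',2\delta)}\subset\inter{K'}$; this will be used at the end to pass from a $K'$-visit by the detour to an $\inter{K'}$-visit by $\gamma$. Next, I would apply Lemma~\ref{petal unis} with parameters $K$, $N+1$, $\delta$ and $\nu$ to obtain constants $T_\mathrm{min}^{(0)}$ and $\eta^{(0)}$, and apply Lemma~\ref{transitivite prop bis} with parameters $K$, $B(x',\delta)$ and $\eta^{(0)}/2$ to obtain $\sigma$. Finally, set $\eta=\min(\eta^{(0)}/2,\delta)$ and $T_\mathrm{min}=T_\mathrm{min}^{(0)}$.

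Given data $x_1,\dots,x_N$, $T_1,\dots,T_N$ and $S\ge\sigma$ as in the statement, I would apply uniform transitivity to $x=y=x_1\in K$ and time $S$ to produce $z\in M$ and $T\in[S-\tau_K,S+\tau_K]$ with $d(x_1,z)<\eta^{(0)}/2$, $d(x_1,\phi_T(z))<\eta^{(0)}/2$, and with $\phi_{[0,T]}(z)$ meeting $B(x',\delta)$. The triangle inequality, together with $d(\phi_{T_N}(x_N),x_1)<\eta\le\eta^{(0)}/2$, then gives $d(\phi_{T_N}(x_N),z)<\eta^{(0)}$ and $d(\phi_T(z),x_1)<\eta^{(0)}$, so the concatenated sequence of pieces $(x_1,T_1),\dots,(x_N,T_N),(z,T)$ forms a valid $\eta^{(0)}$-pseudo-orbit in the sense of Lemma~\ref{petal unis}. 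Feeding it into that lemma produces a periodic orbit $\gamma$ with period in
\[
\left[\sum_{i=1}^NT_i+T-\nu,\,\sum_{i=1}^NT_i+T+\nu\right]\subset\left[\sum_{i=1}^NT_i+S-\tau_K-\nu,\,\sum_{i=1}^NT_i+S+\tau_K+\nu\right]
\]
that $\delta$-shadows each piece. The shadowing of the first $N$ pieces yields the required distance estimates $d(\gamma(s+\sum_{j<i}T_j),\phi_s(x_i))<\delta$, and the shadowing of the detour ensures $\gamma$ passes within $\delta$ of $B(x',\delta)$, hence meets $B(x',2\delta)\subset\inter{K'}$.

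The argument carries no real obstacle; it is essentially bookkeeping. The only point that requires care is that $\sigma$, $\eta$ and $T_\mathrm{min}$ must depend only on $K$, $K'$, $N$, $\delta$ and $\nu$, not on the particular pseudo-orbit data, and this is guaranteed by the uniform character of the constants produced by Lemmas~\ref{petal unis} and~\ref{transitivite prop bis}.
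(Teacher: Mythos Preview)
Your proof is correct and matches the approach the paper intends --- the paper itself omits the proof, writing only ``With the same arguments'' as for Lemmas~\ref{petal unis} and~\ref{petal separe}. One cosmetic detail: since the detour point $z$ need not lie in $K$ itself, Lemma~\ref{petal unis} should be invoked on a slight thickening such as $\overline{B(K,1)}$ (the same informality appears in the paper's own proof of Lemma~\ref{petal separe}).
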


\section{Entropies}\label{sec:entropies}

This section is devoted to the introduction of different notions of entropy, both associated to a dynamics $(\phi_t)_{t}$ and to a $\phi$-invariant probability measure. Some subadditivity properties for periodic orbits are shown in the framework of $H$-flows, which imply some consequences for the Gurevich entropy. The definitions of entropies at infinity are also presented.

\begin{rema}\label{time-one} \rm All notions of entropies, denoted here $h$, concern a single transformation, and satisfy the relation $h(\phi^n)=|n|\, h(\phi)$. Classically, the entropy of a flow $\phi=(\phi_t)_{t\in\R}$ is defined as the entropy of its time-one map $\phi_1$. In this section, we write $\phi$ instead of $\phi_1$.
\end{rema}

\subsection{Kolmogorov-Sinai entropy}


Let $\mu$ be a $\phi$-invariant measure and $\xi$ be a finite or countable $\mu$-measurable partition. The entropy of the partition $\xi$ with respect to $\mu$ is the quantity
\[
H(\xi,\mu)= \sum_{\substack{C\in\xi\\ \mu(C)>0}}-\mu(C)\log(\mu(C))\geq 0\,.
\]
Given two measurable partitions $\xi_1,\xi_2$, we denote by $\xi_1\vee\xi_2$ the refinement of the two partitions, that is the partition whose elements are the sets $A\cap B$ with positive measure, for $A\in \xi_1$ and $B\in \xi_2$.
Given a  finite or countable $\mu$-measurable partition $\xi$, for every $n\ge 1$,  set $\xi_n:=\bigvee_{i=0}^{n-1}\phi_{-i}(\xi)$.
\begin{defi}\label{KS entropy}
The Kolmogorov-Sinai entropy of $\phi$ with respect to $\mu$ is
\[
h_{\mathrm{KS}}(\mu)=\sup_\xi \lim_{n\to+\infty}\dfrac{1}{n}H(\xi_n,\mu)\,,
\]
where the supremum is taken over all finite or countable $\mu$-measurable partitions $\xi$ with $H(\xi,\mu)<\infty$.
\end{defi}
Denote by $\mathcal{M}_\phi$ (resp.  $\mathcal{M}_\phi^\mathrm{erg}$) the  set of $\phi$-invariant (resp. $\phi$-invariant ergodic) probability measures. The set $\mathcal{M}_\phi$ is a convex set whose extremal points are exactly the ergodic invariant measures $\mu\in \mathcal{M}_\phi^\mathrm{erg}$. Moreover,  the entropy map $\mu\in \mathcal{M}_\phi\mapsto h_\mathrm{KS}(\mu)\in \mathbb{R}^+$ is affine \cite[Theorem 8.1]{Walters}. This justifies the following theorem-definition.
\label{p:M_phi_M_erg_phi}
\begin{defi}\label{variational entropy}
The variational entropy of the flow $\phi$ is the supremum
\[
h_{\mathrm{var}}(\phi)= \sup_{\mu\in\mathcal{M}_\phi}h_{\mathrm{KS}}(\mu)=\sup_{\mu\in\mathcal{M}^\mathrm{erg}_\phi}h_{\mathrm{KS}}(\mu)\,.
\]
A measure realizing this supremum, when it exists, is called a {\em measure of maximal entropy} of $\phi$.
\end{defi}


\subsection{Katok and Brin-Katok entropies}\label{sec:Katok-BK-entropy}

For $x\in M$, $\epsilon,T>0$, let $B(x,\epsilon,T)$ be the associated \emph{dynamical ball}, i.e.
\[
B(x,\epsilon,T)
=
\{y\in M :\ \forall t\in [0,T],\; d(\phi_t(x),\phi_t(y))<\epsilon \}.
\]
\label{p:dynamical_ball}

\begin{defi}\label{defi spanning} Let $\mu \in\mathcal{M}_\phi$,  $\alpha\in(0,1)$ and $T,\epsilon >0$.
A finite set $E$ is said to be \emph{$(T,\epsilon,\alpha,\mu)$-spanning} if
\[
\mu\left(\cup_{x\in E} B(x,\epsilon,T)\right) \geq \alpha.
\]
\end{defi}

\begin{fact}Let $M$ be a manifold and $\mu$ a probability measure on $M$.
For all $T>0$, $\epsilon>0$, $\alpha\in(0,1)$  there exists a finite $(T,\epsilon,\alpha,\mu)$-spanning set.
\end{fact}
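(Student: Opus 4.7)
The plan is straightforward: combine second-countability of the manifold $M$ with countable additivity of $\mu$. There is no genuine obstacle here; the statement is essentially an exercise in separability.

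First, I would check that each dynamical ball $B(x,\epsilon,T)$ is open in $M$. Continuity of the flow $\phi$ makes the map $(t,y)\mapsto d(\phi_t(x),\phi_t(y))$ continuous on $[0,T]\times M$, and since $[0,T]$ is compact the partial maximum
\[
F_x(y) := \max_{t\in[0,T]} d(\phi_t(x),\phi_t(y))
\]
is continuous in $y$. The dynamical ball is then $B(x,\epsilon,T) = F_x^{-1}\bigl((-\infty,\epsilon)\bigr)$, hence open (and in particular Borel measurable).

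Next, since $x\in B(x,\epsilon,T)$ trivially, the collection $\{B(x,\epsilon,T)\}_{x\in M}$ is an open cover of $M$. Because $M$ is a manifold it is second countable, therefore Lindelöf, so this cover admits a countable subcover $\{B(x_n,\epsilon,T)\}_{n\in\mathbb{N}}$. Set $U_N := \bigcup_{n\le N} B(x_n,\epsilon,T)$; the sequence $(U_N)$ is increasing with union $\bigcup_n B(x_n,\epsilon,T) = M$, so by continuity of measure from below
\[
\mu(U_N) \xrightarrow[N\to+\infty]{} \mu(M) = 1.
\]
Since $\alpha<1$, we may choose $N$ large enough that $\mu(U_N)\ge \alpha$. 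Then $E := \{x_1,\dots,x_N\}$ is a finite $(T,\epsilon,\alpha,\mu)$-spanning set, which proves the claim.

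An alternative argument, essentially equivalent, would invoke inner regularity of Borel probability measures on a Polish space: pick a compact $K\subset M$ with $\mu(K)\ge \alpha$, then cover $K$ by the open family $\{B(x,\epsilon,T)\}_{x\in K}$ and extract a finite subcover using compactness. Either route works; the Lindelöf version avoids even mentioning regularity.
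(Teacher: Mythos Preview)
Your proof is correct. The primary argument you give (Lindel\"of plus continuity of measure from below) differs from the paper's, which is precisely your ``alternative'' route: the paper uses that $M$ is exhausted by compact sets to find a compact $K$ with $\mu(K)\ge\alpha$, covers $K$ by dynamical balls, and extracts a finite subcover by compactness. Your Lindel\"of argument is arguably cleaner in that it avoids any appeal to regularity or $\sigma$-compactness, relying only on second-countability; the paper's version is slightly shorter because it skips the verification that dynamical balls are open (which it takes for granted). Either way the content is the same elementary fact, and you have covered both.
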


\begin{proof}
As $M$ is exhaustible by compact subsets, there exists a compact subset $K\subset M$ such that $\mu(K)\geq \alpha$. Then $\cup_{x\in K} B(x,\epsilon,T)$ is an open cover for $K$. Any finite subcover provides us with a finite $(T,\epsilon,\alpha,\mu)$-spanning set.
\end{proof}

The following fact is immediate from the definition.
\begin{fact}\label{fact:M_Katok_entropy}
Let $M(T,\epsilon,\alpha,\mu)$ be the minimal cardinality of a $(T,\epsilon,\alpha,\mu)$-spanning set.
 It is a non-decreasing quantity in $\alpha>0$ and non-increasing in $\epsilon>0$.
\end{fact}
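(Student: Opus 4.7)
The plan is to derive both monotonicities directly from the definition of a $(T,\epsilon,\alpha,\mu)$-spanning set, using only set-theoretic inclusions and monotonicity of the measure $\mu$; no dynamical input is required beyond the definition of the dynamical ball $B(x,\epsilon,T)$.

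For monotonicity in $\alpha$, I would fix $T,\epsilon>0$ and $0<\alpha_1\le\alpha_2<1$, and observe that any $(T,\epsilon,\alpha_2,\mu)$-spanning set $E$ satisfies
\[
\mu\Bigl(\bigcup_{x\in E} B(x,\epsilon,T)\Bigr)\ge \alpha_2\ge \alpha_1,
\]
so $E$ is automatically $(T,\epsilon,\alpha_1,\mu)$-spanning. Taking the infimum of cardinalities over all such $E$ then yields $M(T,\epsilon,\alpha_1,\mu)\le M(T,\epsilon,\alpha_2,\mu)$, i.e.\ $M$ is non-decreasing in $\alpha$.

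For monotonicity in $\epsilon$, the key observation is the obvious nesting of dynamical balls: if $0<\epsilon_1\le \epsilon_2$, the definition
\[
B(x,\epsilon,T)=\{y\in M:\ d(\phi_t(x),\phi_t(y))<\epsilon\ \text{for all}\ t\in[0,T]\}
\]
immediately gives $B(x,\epsilon_1,T)\subseteq B(x,\epsilon_2,T)$. Consequently any $(T,\epsilon_1,\alpha,\mu)$-spanning set $E$ satisfies
\[
\mu\Bigl(\bigcup_{x\in E} B(x,\epsilon_2,T)\Bigr)\ge \mu\Bigl(\bigcup_{x\in E} B(x,\epsilon_1,T)\Bigr)\ge \alpha,
\]
so $E$ is also $(T,\epsilon_2,\alpha,\mu)$-spanning. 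Passing to the infimum of cardinalities gives $M(T,\epsilon_2,\alpha,\mu)\le M(T,\epsilon_1,\alpha,\mu)$.

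There is essentially no obstacle: the statement is a bookkeeping observation about how the two defining constraints (a lower bound on a $\mu$-measure and an upper bound on the deviation along orbits) become respectively harder or easier as the parameters vary. The only mild caveat is that one should keep $\alpha\in(0,1)$ so that the previous existence fact guarantees $M(T,\epsilon,\alpha,\mu)$ is a finite integer, but no further argument is needed.
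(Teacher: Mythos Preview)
Your proof is correct and is exactly the unpacking of the phrase ``immediate from the definition'' that the paper uses in lieu of a proof. There is nothing to add: the two monotonicities follow directly from the nesting of dynamical balls in $\epsilon$ and the trivial implication between the measure constraints in $\alpha$, just as you wrote.
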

\begin{defi}[Katok entropy \cite{Katok}]\label{Katok entropy}The \emph{Katok entropy} of
$\mu$ is defined as
\[
h_{\mathrm{Kat}}(\mu) = \inf_{\alpha>0}\, \sup_{\epsilon>0}\, \limsup_{T\to\infty}\, \frac{1}{T}\log\,M(T,\epsilon,\alpha,\mu)\,=\,
\lim_{\alpha\to 0}\, \lim_{\epsilon\to 0}\, \limsup_{T\to\infty}\, \frac{1}{T}\log\,M(T,\epsilon,\alpha,\mu)\,.
\]
\end{defi}

\begin{defi}[Brin-Katok local entropy \cite{BK}]\label{Brin Katok entropies} Let $K$ be a compact subset of $M$. The \emph{upper, resp. lower, local entropy} on $K$ is defined as
\[
\overline{h}_{\mathrm{loc}}(\mu,K)
=
\sup_{\epsilon>0} \,\esssup_{x\in K} \,\limsup_{\substack{T\to\infty \\ \phi_T(x)\in K}}\,-\frac{1}{T}\log\mu\left(B(x,T,\epsilon)\right)
=
\lim_{\epsilon\to 0} \,\esssup_{x\in K} \,\limsup_{\substack{T\to\infty \\ \phi_T(x)\in K}}\,-\frac{1}{T}\log\mu\left(B(x,T,\epsilon)\right)\, ,
\]
resp.
\[
\underline{h}_{\mathrm{loc}}(\mu,K)
=
\sup_{\epsilon>0} \,\essinf_{x\in K} \,\liminf_{\substack{T\to\infty \\ \phi_T(x)\in K}}\,-\frac{1}{T}\log\mu\left(B(x,T,\epsilon)\right)
=
\lim_{\epsilon\to 0} \,\essinf_{x\in K} \,\liminf_{\substack{T\to\infty \\ \phi_T(x)\in K}}\,-\frac{1}{T}\log\mu\left(B(x,T,\epsilon)\right)\,.
\]
The \emph{upper (resp. lower) Brin-Katok} entropy of $\mu$ is defined as
\[
\overline{h}_\mathrm{BK}(\mu) =
\sup_{K\subset M \text{compact}}\, \overline{h}_{\mathrm{loc}}(\mu,K)\,,
\]
resp.
\[
\underline{h}_\mathrm{BK}(\mu) =
\inf_{K\subset M \text{compact}}\, \underline{h}_{\mathrm{loc}}(\mu,K)\,.
\]
\label{p:BK_entropy}
\end{defi}

\begin{rema}
Poincaré recurrence theorem implies that $\mu$-almost every point in $K$ returns to $K$ infinitely often and therefore $$\limsup_{\substack{T\to\infty \\ \phi_T(x)\in K}}\,-\frac{1}{T}\log\mu\left(B(x,T,\epsilon)\right) $$ is well defined $\mu$-almost everywhere.
\end{rema}


\subsection{Gurevic entropy}

\subsubsection{Definition}\label{subsection def orbit periodique}

A periodic point of $\phi$ is a couple $(x,T)$, with $x\in M$ and $T>0$ such that $\phi_T(x)=x$.
A periodic orbit $\gamma$ is a couple $\left(\{\phi_{t}(x), t\in\R\},T\right)$, where $(x,T)$ is a periodic point. The period $T>0$ is denoted by $\ell(\gamma)$. Note that, with this definition, the orbits of $(x,T)$ and $(x,nT)$, for $n\ge 2$, are distinct.
A parametrized periodic orbit is a periodic map from $\R$ to $ M$, still denoted by $\gamma$, of the form
\[
s\in\R \mapsto \gamma(s)=\phi_s(\phi_{t_0}(x))\,,\quad\mbox{for a fixed real number } \, t_0\in\R.
\]
By abuse of notation, we speak of a point $z\in\gamma$ instead of a point in the image of the orbit associated with $\gamma$.
We denote by $\mathcal{P}$ the set of  periodic orbits of  the flow $\phi$.

Let $K\subset M$ be a compact set. Given any $x\in \gamma$, denote by $\ell(\gamma\cap K)=|\{t\in [0,\ell(\gamma)],\, \phi_t(x)\in K\}|$ the length of the intersection of the periodic orbit $\gamma$ with the set $K$. This quantity does not depend on the point $x$ used in its definition.

Given any $0<T_0\leq T_1$, we define\label{p:P_K_T}
\[
\mathcal{P}_K(T_0, T_1)=\{ \gamma\in  \mathcal{P},\,  \ell(\gamma)\in [T_0,T_1], \gamma\cap K\neq \emptyset\}\,.
\]
Define also
\[
\mathcal{P}_K(T_0)=\{ \gamma\in\mathcal{P},\,  \ell(\gamma)\leq T_0, \gamma\cap K\neq \emptyset\}\,.
\]

\begin{theo}\label{theo:Gurevic}
Let $\phi$ be a $H$-flow on $(M,d)$. Let $K\subset M$ be a compact subset with nonempty interior and $\sigma>0$.
The  quantity
\begin{equation}\label{def:h_Gur}
h_\mathrm{Gur}(\phi,K,\sigma)=\limsup_{L\to\infty}\frac{1}{L}\log \#   \mathcal{P}_K(L,L+\sigma)
\end{equation}
does depend neither on $\sigma>0$ nor on $K$. It is called the {\em Gurevic entropy of the flow $\phi$} and denoted by {\em  $h_\mathrm{Gur}(\phi)$}.

Moreover, for every compact subset $K\subset M$ with nonempty interior,  and every $\sigma\ge 5\tau_K$, with $\tau_K$ being the period of the shortest periodic orbit with period at least $1$ that intersects $K$, the Gurevic entropy is a true limit\,:
\begin{equation}\label{corollaire_entropie_gurevic_est_une_limite}
h_\mathrm{Gur}(\phi)=\lim_{L\to +\infty}\frac{1}{L}\log \#\mathcal{P}_K(L,L+\sigma)\,.
\end{equation}
\end{theo}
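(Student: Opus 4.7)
The statement has three assertions: independence of $h_\mathrm{Gur}(\phi,K,\sigma)$ from $\sigma$, independence from $K$, and the fact that the limsup is a limit for $\sigma\ge 5\tau_K$. My plan is to handle independence by two pigeonhole arguments, and to get the limit by a concatenation-based superadditivity argument based on Lemma~\ref{petal separe}, with injectivity coming from expansivity via Lemma~\ref{lemma on same po}.

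\textbf{Independence from $\sigma$.} For $\sigma\le \sigma'$ one inclusion is trivial. Conversely, I would partition $[L,L+\sigma']$ into $N=\lceil\sigma'/\sigma\rceil$ intervals of width at most $\sigma$. Pigeonhole gives some index $k$ with
\[
\#\mathcal{P}_K(L+k\sigma,L+k\sigma+\sigma)\ge \frac{1}{N}\#\mathcal{P}_K(L,L+\sigma')\,.
\]
Taking logarithms, dividing by $L$ and letting $L\to\infty$, the constant $\log N$ vanishes and shifts of the base point by bounded amounts do not affect the limsup, so $h_\mathrm{Gur}(\phi,K,\sigma)=h_\mathrm{Gur}(\phi,K,\sigma')$.

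\textbf{Independence from $K$.} Given two compact sets $K,K'$ with nonempty interior, and $\gamma\in\mathcal{P}_{K'}(L,L+\sigma)$, I would pick a base point $x\in\gamma\cap K'$ and apply Lemma~\ref{petal separe} with $N=1$ and a prescribed passage through $K$: this produces a periodic orbit $\hat\gamma$ of period in an interval of length $2\tau_K+2\nu+\sigma$ centered near $L+S$, intersecting $K$. The crucial point is injectivity: if $\gamma_1\ne\gamma_2$ produce the same $\hat\gamma$, then $\hat\gamma$ shadows both with precision $\delta$ over a long subinterval, hence $\gamma_1$ and $\gamma_2$ shadow each other, and Lemma~\ref{lemma on same po} (for $\delta,\sigma$ small enough) forces $\gamma_1=\gamma_2$ up to a bounded time shift that only contributes a constant factor. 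Combined with the pigeonhole from the previous step, this gives $h_\mathrm{Gur}(\phi,K',\sigma)\le h_\mathrm{Gur}(\phi,K,\sigma)$, and exchanging $K$ and $K'$ yields equality.

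\textbf{The limsup is a limit.} Set $a(L)=\log\#\mathcal{P}_K(L,L+\sigma)$ and $h=\limsup a(L)/L$. The plan is to concatenate many orbits of near-optimal growth and use the flexibility in the transition times in Lemma~\ref{petal separe} (which vary in $[S-\tau_K,S+\tau_K]$) together with $\sigma\ge 5\tau_K$ to cover every target period. Given $\epsilon>0$, pick $L_0$ large with $a(L_0)/L_0\ge h-\epsilon$. For $N$-tuples $(\gamma_1,\dots,\gamma_N)\in\mathcal{P}_K(L_0,L_0+\sigma)^N$, Lemma~\ref{petal separe} produces a periodic orbit whose period lies in
\[
I_N=\bigl[NL_0+NS-\tau_K-\nu,\;NL_0+NS+N\sigma+\tau_K+\nu\bigr]\,,
\]
and expansivity plus Lemma~\ref{lemma on same po} shows that at most polynomially (in $N$) many input tuples give the same output. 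Pigeonhole over $I_N$ cut into subintervals of length $\sigma$ yields some $L^*\in I_N$ with $a(L^*)\ge N\,a(L_0)-O(\log N)$. Now, by fixing $N-1$ of the $N$ pieces and varying only the transition parameters $\tau_i\in[S-\tau_K,S+\tau_K]$, the concatenated orbit's period sweeps (continuously, in the discrete sense up to $\tau_K$) an interval of length $\ge 2(N-1)\tau_K$; combined with the freedom to replace $\gamma_N$ by a multiple of a short periodic orbit in $K$, the assumption $\sigma\ge 5\tau_K$ ensures that every target period $L$ in a sub-interval of $I_N$ of full density is hit. This gives $a(L)\ge N\,a(L_0)-O(\log N)$ for every such $L$, and dividing by $L\sim N(L_0+S)$ shows $\liminf_{L\to\infty}a(L)/L\ge a(L_0)/(L_0+S)\ge h-2\epsilon$ once $L_0\gg S$.

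\textbf{Main obstacle.} The hard part is the last paragraph: showing not merely some superadditive lower bound at scattered values of $L$, but at every large $L$ in a prescribed window. Everything hinges on the tight period-control in Lemma~\ref{petal separe} (where the error is $\pm(\tau_K+\nu)$ uniformly in $N$, not $\pm N\tau_K$) and on using the transition slack $\tau_i\in[S-\tau_K,S+\tau_K]$ together with $\sigma\ge 5\tau_K$ to tile the real line without gaps. Checking that these two ingredients really do close up, and that injectivity survives the combinatorics of relabeling concatenations, is where the care is needed.
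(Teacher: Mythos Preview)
Your first two parts are correct and close to the paper's Facts~\ref{ind gur entropy} and~\ref{dependance-compact}: independence from $\sigma$ by pigeonhole over sub-windows, and independence from $K$ by a concatenate-and-separate argument via the closing lemma and Lemma~\ref{lemma on same po}.

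The limit argument has a genuine gap, and you correctly flag it as the hard part. First, a misreading: in Lemma~\ref{petal separe} the transition times $\tau_i\in[S-\tau_K,S+\tau_K]$ are \emph{outputs} of the lemma, not inputs you may tune; the only free parameter is $S$. Second, even reinterpreted as varying $S$, your two mechanisms for hitting every $L$ are in tension. With a single fixed $S$, the output period lies in $[\sum_i\ell(\gamma_i)+NS\pm(\tau_K+\nu)]$ and $\sum_i\ell(\gamma_i)$ sweeps an interval of length $N\sigma$; pigeonhole gives \emph{some} rich sub-window of length $\sigma$, but nothing forces the one containing your target $L$ to be rich. If instead you choose $S$ per tuple so that every output lands in $[L,L+\sigma]$, the injectivity count is no longer ``$O(\log N)$'': the phase of the $i$-th piece on the output orbit is determined only up to an additive error that accumulates across the junctions, and controlling this via Lemma~\ref{lemma on same po} forces either its parameter $\tau_1$ (hence $\epsilon_1$) to depend on $N$, or a multiplicative $C^N$ loss. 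One can still push this through (since $\log C\ll a(L_0)$ for large $L_0$), but not with the bookkeeping you sketched.

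The paper bypasses all of this by reducing to \emph{two} pieces. Proposition~\ref{prop subadd gen} shows, for $\sigma_0\ge4\tau_K$, the existence of constants $D,\hat\sigma$ with
\[
\#\mathcal{P}_K(L_1,L_1+\sigma_0)\cdot\#\mathcal{P}_K(L_2,L_2+\sigma_0)\le D(L_1+L_2)\,\#\mathcal{P}_K(L_1+L_2+\hat\sigma,L_1+L_2+\hat\sigma+\sigma_0),
\]
via exactly your concatenate-and-separate argument with $N=2$, choosing $S$ as a function of $\ell(\gamma_1)+\ell(\gamma_2)$ so that the output always lands in one fixed window (this is where $\sigma_0\ge4\tau_K$ enters, and with $N=2$ the injectivity bound is genuinely linear in $L_1+L_2$). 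Then $u_n=-\log\#\mathcal{P}_K(n-\hat\sigma,n-\hat\sigma+4\tau_K)$ satisfies $u_{n+m}\le u_n+u_m+\log(D(n+m))$, and the de~Bruijn--Erd\H{o}s generalized Fekete lemma (Theorem~\ref{erdos}, which permits an additive error $\psi(n+m)$ with $\int\psi(t)/t^2\,dt<\infty$) yields convergence of $u_n/n$. Identifying the limit with $h_\mathrm{Gur}(\phi)$ along a well-chosen subsequence, and then sandwiching to pass from $4\tau_K$ to any $\sigma\ge5\tau_K$, are short. The moral: encode the combinatorics as a two-term near-subadditivity and let the abstract Fekete-type machinery handle the passage from subsequential to full convergence, rather than trying to tile all periods directly with an $N$-fold concatenation.
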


The first part of the Theorem is relatively elementary and classical, and follows from Facts~\ref{ind gur entropy} and \ref{dependance-compact}. The second part is more difficult and follows from subadditivity properties proved in Section~\ref{sec:subadd}.

Before the proof of the theorem, let us give an immediate corollary, that will be useful in Section~\ref{section:estimation_boules_dynamiques}.

\begin{coro}\label{corollaire_limite_nulle_quotient_P_K}
Under the assumptions of Theorem~\ref{theo:Gurevic}, if  $h_{\mathrm{Gur}}(\phi)>0$, then
\[
\lim_{L\to\infty}\frac{\#\mathcal P_{K}\left(\frac{L+\sigma}{2}\right)}{\#\mathcal P_{K}(L,L+\sigma)}
=
0\,.
\]
\end{coro}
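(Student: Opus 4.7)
The plan is to bound the numerator from above and the denominator from below, both with rates controlled by $h_{\mathrm{Gur}}(\phi)$. Since $h_{\mathrm{Gur}}(\phi)>0$, the growth rate of the numerator (at period $\tfrac{L+\sigma}{2}$) is asymptotically roughly half that of the denominator, so the ratio decays exponentially in $L$ and the limit is zero. Concretely, for every $\epsilon>0$ I will establish
\[
\#\mathcal{P}_K\!\left(\tfrac{L+\sigma}{2}\right) \le C_1\, e^{\frac{L+\sigma}{2}(h_{\mathrm{Gur}}+\epsilon)} \qquad \text{and} \qquad \#\mathcal{P}_K(L,L+\sigma) \ge e^{L(h_{\mathrm{Gur}}-\epsilon)}
\]
for $L$ large enough (with $C_1=C_1(K,\sigma,\epsilon)$), and then choose $\epsilon<h_{\mathrm{Gur}}/3$.

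For the upper bound, I use only the $\limsup$ characterization \eqref{def:h_Gur} of $h_{\mathrm{Gur}}$, which holds for any $\sigma>0$. Given $\epsilon>0$ there is $L_1$ such that $\#\mathcal{P}_K(T,T+\sigma)\le e^{T(h_{\mathrm{Gur}}+\epsilon)}$ for every $T\ge L_1$. Writing $T':=\tfrac{L+\sigma}{2}$ and partitioning $[0,T']$ into intervals of length $\sigma$, I bound
\[
\#\mathcal{P}_K(T') \;\le\; \#\mathcal{P}_K(L_1) + \sum_{k= \lceil L_1/\sigma\rceil}^{\lceil T'/\sigma\rceil}\#\mathcal{P}_K(k\sigma,(k+1)\sigma),
\]
and this geometric-type sum is dominated by its last term, hence controlled by $\lesssim e^{T'(h_{\mathrm{Gur}}+\epsilon)}$ up to a multiplicative constant depending on $\sigma$ and $\epsilon$. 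Note that $\#\mathcal{P}_K(L_1)$ is finite because each bucket $\#\mathcal{P}_K(k\sigma,(k+1)\sigma)$ is (otherwise $h_{\mathrm{Gur}}$ would be $+\infty$).

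The lower bound is the delicate input, and this is where I use the second part of Theorem~\ref{theo:Gurevic}: for $\sigma\ge 5\tau_K$ the $\limsup$ in \eqref{def:h_Gur} is in fact a true limit, so for every $\epsilon>0$ there is $L_2$ with $\#\mathcal{P}_K(L,L+\sigma)\ge e^{L(h_{\mathrm{Gur}}-\epsilon)}$ for every $L\ge L_2$. (A $\limsup$ alone would give this only along a subsequence, which would produce the ratio tending to $0$ along that subsequence but not as a full limit.) Combining the two displays gives
\[
\frac{\#\mathcal{P}_K\!\left(\tfrac{L+\sigma}{2}\right)}{\#\mathcal{P}_K(L,L+\sigma)} \;\le\; C_1\,\exp\!\left[-L\cdot\tfrac{h_{\mathrm{Gur}}-3\epsilon}{2} + \tfrac{\sigma(h_{\mathrm{Gur}}+\epsilon)}{2}\right],
\]
which tends to $0$ as $L\to\infty$ for any fixed $\epsilon<h_{\mathrm{Gur}}/3$, proving the corollary.

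The main obstacle---converting the $\limsup$ defining $h_{\mathrm{Gur}}$ into a genuine lower bound on the denominator valid for all large $L$---has already been addressed upstream by Theorem~\ref{theo:Gurevic}; granted that input, the corollary reduces to the growth-rate comparison above.
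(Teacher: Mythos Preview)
Your argument is correct and matches the paper's (implicit) approach: the corollary is presented there as an immediate consequence of Theorem~\ref{theo:Gurevic}, and your derivation makes explicit exactly the intended reasoning---the numerator is controlled by Fact~\ref{ind gur entropy} (which you essentially reprove via the bucket decomposition), and the crucial lower bound on the denominator comes from the ``true limit'' part~\eqref{corollaire_entropie_gurevic_est_une_limite}, which as you rightly note requires $\sigma\ge 5\tau_K$.
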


\subsubsection{The Gurevic entropy does depend neither on \texorpdfstring{$K$}{TEXT} nor on \texorpdfstring{$\sigma$}{TEXT}}

We prove here the first part of Theorem \ref{theo:Gurevic}.

\begin{fact}\label{ind gur entropy}\label{lemma growth leq}
Under the assumptions of Theorem \ref{theo:Gurevic}, the Gurevic entropy  satisfies
\[
h_\mathrm{Gur}(\phi,K,\sigma)
=
\limsup_{L\to +\infty}\frac{1}{L}\log \#\mathcal{P}_K(L)\,.
\]
In particular, $h_\mathrm{Gur}(\phi,K,\sigma)=h_\mathrm{Gur}(\phi,K)$ does not depend on the constant $\sigma$.
\end{fact}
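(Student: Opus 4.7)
Set $h := h_\mathrm{Gur}(\phi, K, \sigma)$ and $h' := \limsup_{L\to\infty} \frac{1}{L}\log \#\mathcal{P}_K(L)$. The plan is to establish $h = h'$ by proving two matching inequalities; independence of $\sigma$ is then automatic since the quantity $h'$ makes no reference to $\sigma$.

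For $h \le h'$, I would use the trivial inclusion $\mathcal{P}_K(L, L+\sigma) \subset \mathcal{P}_K(L+\sigma)$, which gives
\[
\frac{1}{L}\log \#\mathcal{P}_K(L, L+\sigma) \;\le\; \frac{L+\sigma}{L}\cdot \frac{1}{L+\sigma}\log \#\mathcal{P}_K(L+\sigma).
\]
Passing to the $\limsup$ as $L \to \infty$ and noting $\frac{L+\sigma}{L} \to 1$ yields $h \le h'$.

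For the reverse inequality, I would cover the interval $[0, L]$ by consecutive length-$\sigma$ windows:
\[
\#\mathcal{P}_K(L) \;\le\; \sum_{k=0}^{\lfloor L/\sigma \rfloor} \#\mathcal{P}_K\bigl(k\sigma, (k+1)\sigma\bigr).
\]
Fix $\epsilon > 0$. By definition of $\limsup$, there exists $L_0$ such that $\#\mathcal{P}_K(L^\ast, L^\ast + \sigma) \le e^{(h+\epsilon)L^\ast}$ whenever $L^\ast \ge L_0$. Hence the ``long'' terms $k\sigma \ge L_0$ sum as a geometric series and are bounded by $C'_\epsilon\, e^{(h+\epsilon)L}$, while the finitely many ``short'' terms $k\sigma < L_0$ contribute a fixed constant independent of $L$. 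Altogether, $\#\mathcal{P}_K(L) \le C''_\epsilon\, e^{(h+\epsilon)L}$ for all $L$ large enough, so $h' \le h + \epsilon$, and letting $\epsilon \to 0$ gives $h' \le h$.

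The only non-immediate point, which I expect to be the step to double-check, is that $\#\mathcal{P}_K(L_0+\sigma)$ is finite in the non-compact setting. The upper bound in \eqref{eqn:minoration} confines any periodic orbit of period at most $L_0+\sigma$ meeting $K$ to the compact set $\overline{B(K, b(L_0+\sigma))}$, and the expansivity of the flow, applied as in Lemma~\ref{lemma on same po}, forces any two distinct such orbits to be separated by a uniform positive distance; a standard compactness argument then bounds their number. This is the one place where the $H$-flow axioms enter the argument.
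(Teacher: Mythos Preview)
Your argument is correct and follows essentially the same route as the paper: the easy inequality via $\mathcal{P}_K(L,L+\sigma)\subset\mathcal{P}_K(L+\sigma)$, and the reverse by covering $[0,L]$ with length-$\sigma$ windows and bounding the tail terms exponentially. Your explicit justification that $\#\mathcal{P}_K(L_0+\sigma)<\infty$ (via confinement to $\overline{B(K,b(L_0+\sigma))}$ plus expansivity) fills in a point the paper leaves implicit.
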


\begin{proof} Fix some $\sigma>0$.
As $\mathcal{P}_K(L,L+\sigma)\subset \mathcal{P}_K(L+\sigma)$, the inequality \[
h_\mathrm{Gur}(\phi,K,\sigma)\le\limsup_{L\to +\infty}\frac{1}{L}\log \#\mathcal{P}_K(L)
\]
is immediate.

The proof of $h_\mathrm{Gur}(\phi,K,\sigma)\ge\limsup_{L\to +\infty}\frac{1}{L}\log \#\mathcal{P}_K(L) $ is classical.
Let $\delta>0$ and denote as $h$ the entropy $h_{\mathrm{Gur}}(\phi, K, \sigma)$.
If $h=+\infty$, there is noting to prove.
We now assume $h<\infty$.
There exists $T_0>0$ such that for $T>T_0$,
$\#\mathcal P_{K}(T,T+\sigma)\leq e^{T (h+\delta)}$.
Let $N_T = \left\lfloor \frac{T-T_0}{\sigma} \right\rfloor-1$.
We have
\[
\mathcal{P}_K(T)\,\,\subset\,\,
\mathcal{P}_K(T_0)\,\cup\, \bigcup_{n=0}^{N_T} \mathcal{P}_K(T_0+n\sigma,T_0+(n+1)\sigma)\,.
\]
Therefore
\begin{align*}
\# P_K(T) &\leq P_K(T_0) + \sum_{n = 0}^{N_T}\#\mathcal P_{K}(T_0+n\sigma,T_0+(n+1)\sigma)\\
&\leq \mathcal{P}_K(T_0) + \sum_{n =0}^{N_T}  e^{(T_0+n\sigma)(h+\delta)} \\
&\leq\mathcal{P}_K(T_0) + \frac{T}{\sigma}  e^{T(h+\delta)}.
\end{align*}
It follows that
\[
\limsup_{T\to\infty} \frac{1}{T}\log\#\mathcal \mathcal{P}_{K}(T)\leq h_\mathrm{Gur}(\phi,K,C)+\delta\,.
\]
As the inequality holds for any $\delta >0$, the result follows.
 \end{proof}

\begin{fact}\label{dependance-compact} If $\phi$ is a $H$-flow, and $K\subset M$ is a compact set with nonempty interior, then $h_{\mathrm{Gur}}(\phi,K)=h_{\mathrm{Gur}}(\phi)$. In particular, the Gurevic entropy $h_\mathrm{Gur}(\phi)$  does not depend on $K$.
\end{fact}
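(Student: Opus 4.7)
Write $K := K_1 \cup K_2$, which is compact with nonempty interior. Since $\mathcal{P}_{K_i}(L) \subset \mathcal{P}_K(L)$, we immediately get $h_\mathrm{Gur}(\phi, K_i) \leq h_\mathrm{Gur}(\phi, K)$ for $i = 1, 2$. Both equalities $h_\mathrm{Gur}(\phi, K_i) = h_\mathrm{Gur}(\phi, K)$ will therefore follow from the general statement: for any inclusion $K_0 \subset K$ of compact sets with nonempty interiors, $h_\mathrm{Gur}(\phi, K) \leq h_\mathrm{Gur}(\phi, K_0)$.

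The strategy for this reverse inequality is to build a map $F \colon \mathcal{P}_K(L, L+\sigma_0) \to \mathcal{P}_{K_0}$ whose image sits in periodic orbits with slightly inflated periods, and whose multiplicity is at most polynomial in $L$. Apply Lemma~\ref{lemma on same po} with $\nu = 1$ to obtain $\tau_0 > 0$, then with $\tau_1 = 1$ to obtain $\epsilon_1 > 0$. Choose $\sigma_0 \in (0, \tau_0)$ and $\delta \leq \epsilon_1/3$. Apply Lemma~\ref{petal separe} with $K' = K_0$, $K$, $N = 1$, $\nu = 1$ and the chosen $\delta$ to obtain a threshold $\sigma_*$ and a minimal time $T_\mathrm{min}$; fix once and for all some $S \geq \sigma_*$. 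For each $\gamma \in \mathcal{P}_K(L, L+\sigma_0)$ with $L \geq T_\mathrm{min}$, choose any $x_\gamma \in \gamma \cap K$, set $T_1 := \ell(\gamma)$ (so that $\phi_{T_1}(x_\gamma) = x_\gamma \in K$), and apply Lemma~\ref{petal separe} to the input $(x_\gamma, T_1)$. This produces a periodic orbit $F(\gamma)$ intersecting $\inter{K_0}$ with
\[
\ell(F(\gamma)) \in \bigl[L + S - \tau_K - 1,\ L + \sigma_0 + S + \tau_K + 1\bigr],
\]
together with a basepoint $y_\gamma \in F(\gamma)$ satisfying $d(\phi_t(y_\gamma), \phi_t(x_\gamma)) < \delta$ for all $t \in [0, \ell(\gamma)]$.

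For the multiplicity control, suppose two distinct orbits $\gamma_1, \gamma_2 \in \mathcal{P}_K(L, L+\sigma_0)$ yield $F(\gamma_1) = F(\gamma_2) =: \gamma'$, and write $y_{\gamma_2} = \phi_{s_0}(y_{\gamma_1})$ with $s_0 \in [-\ell(\gamma')/2, \ell(\gamma')/2]$. Assume for contradiction $|s_0| \leq 1$; up to swapping indices we may take $s_0 \geq 0$. Setting $\tilde x := \phi_{s_0}(x_{\gamma_1})$ (a periodic point on $\gamma_1$ of period $\ell(\gamma_1)$) and using $\phi_{t+s_0}(y_{\gamma_1}) = \phi_t(y_{\gamma_2})$ together with the two shadowing bounds, the triangle inequality gives
\[
d(\phi_t(\tilde x), \phi_t(x_{\gamma_2})) \leq d(\phi_{t+s_0}(x_{\gamma_1}), \phi_{t+s_0}(y_{\gamma_1})) + d(\phi_t(y_{\gamma_2}), \phi_t(x_{\gamma_2})) < 2\delta < \epsilon_1
\]
for every $t \in [0, \min(\ell(\gamma_1), \ell(\gamma_2)) - 1]$ (this interval is contained in both shadowing windows because $|s_0| \leq 1$). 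Since $|\ell(\gamma_1) - \ell(\gamma_2)| \leq \sigma_0 < \tau_0$, the separation Lemma~\ref{lemma on same po} applies to the periodic points $\tilde x$ and $x_{\gamma_2}$ and forces $x_{\gamma_2}$ to lie on the orbit of $\tilde x$, which is the same as that of $x_{\gamma_1}$, namely $\gamma_1$. Hence $\gamma_1 = \gamma_2$, contradicting our assumption.

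Consequently, distinct preimages of $\gamma'$ under $F$ correspond to basepoints $y_\gamma$ at pairwise circular distance greater than $1$ on $\gamma'$, so their number is at most $\ell(\gamma') + 1 = O(L)$. This yields
\[
\#\mathcal{P}_K(L, L+\sigma_0) \leq (L + O(1)) \cdot \#\mathcal{P}_{K_0}\bigl(L + S - \tau_K - 1,\ L + \sigma_0 + S + \tau_K + 1\bigr),
\]
and taking $\tfrac{1}{L} \log$ followed by $\limsup_{L \to \infty}$ annihilates the polynomial factor, giving $h_\mathrm{Gur}(\phi, K, \sigma_0) \leq h_\mathrm{Gur}(\phi, K_0, \sigma_0 + 2\tau_K + 2)$. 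Fact~\ref{ind gur entropy} then removes the $\sigma$-dependence on both sides, establishing $h_\mathrm{Gur}(\phi, K) \leq h_\mathrm{Gur}(\phi, K_0)$. The delicate point is the calibration of the constants: $\sigma_0$ must lie strictly below the separation threshold $\tau_0$, and $\delta$ must be small enough that two shadowings combine to a distance still below the expansivity scale $\epsilon_1$, so that the multiplicity argument actually closes.
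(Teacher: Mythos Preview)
Your argument is correct and follows the same overall architecture as the paper's: reduce to showing $h_\mathrm{Gur}(\phi,K)\le h_\mathrm{Gur}(\phi,K_0)$ for $K_0\subset K$, build a map from $\mathcal P_K$ to $\mathcal P_{K_0}$ via a closing-type lemma, and control multiplicity with the separation Lemma~\ref{lemma on same po}.

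The main difference is in the construction of the map. The paper applies the multiple closing Lemma~\ref{petal unis} with $N=3$: it covers $K'$ by small balls, fixes transit arcs $c_i,c'_i$ (built by hand via transitivity) from each ball to a fixed ball $B_0\subset K$, and concatenates $\gamma$ with $c_i$ and $c'_i$. You instead apply the \emph{uniform} multiple closing Lemma~\ref{petal separe} with $N=1$ directly to the single input $(x_\gamma,\ell(\gamma))$; the transitivity step is absorbed into that lemma, so no manual cover or transit arcs are needed. This is cleaner and buys you a shorter setup at no cost. The paper's version, on the other hand, makes the mechanism slightly more transparent and yields the explicit preimage bound $\lceil (L+\sigma)b/\epsilon\rceil\lceil\sigma'/\tau_0\rceil$ rather than your $\ell(\gamma')+1$; both are $O(L)$ and vanish after $\tfrac1L\log$.

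Two minor remarks on your write-up. First, when you say ``basepoints $y_\gamma$ at pairwise circular distance greater than $1$ on $\gamma'$'', the relevant circle has circumference equal to the \emph{primitive} period $p$ of $\gamma'$, not $\ell(\gamma')$; your bound $\ell(\gamma')+1$ still holds since $p\le\ell(\gamma')$, but the phrasing is slightly off. Second, the step ``hence $\gamma_1=\gamma_2$'' after Lemma~\ref{lemma on same po} only gives that the images coincide; equality of the periodic orbits (which carry a period as part of the data) requires that the periods agree. This is harmless for the entropy estimate and the paper makes the same jump, but it is worth being aware of.
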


\begin{proof} If $K\subset K'$ are any two compact sets, we have   $h_\mathrm{Gur}(\phi,K)\le h_\mathrm{Gur}(\phi,K')$.

\begin{figure}[ht]
    \centering
    \includegraphics{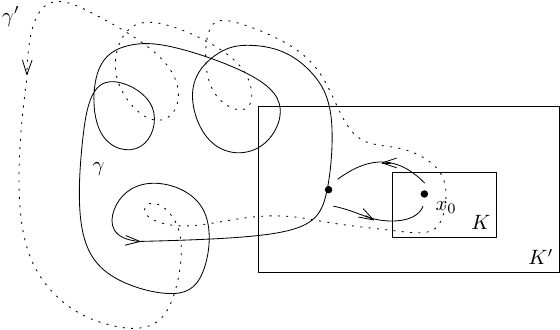}
    \caption{The Gurevic entropy does not depend on the compact}
    \label{fig:h_Gur_independant_K}
\end{figure}

The reverse inequality can be obtained, as explained below, from the transitivity, finite exact shadowing, and closing lemma properties as gathered in Lemma~\ref{petal unis}.

Let $\sigma'>0$.
Choose $\epsilon>0$ and $x_0\in K$ such that the open ball $B_0=B(x_0,\epsilon)$  is included in $K$.
By selecting a smaller $\epsilon$ if necessary, we may also assume that $2\epsilon$ is adapted to the separation of orbits (Lemma~\ref{lemma on same po}) for parameters $\nu=\tau_1=1$. This lemma also gives us $\tau_0$.

Let $0<\eta<\epsilon/2$ and $T_\mathrm{min}$ be given by Lemma \ref{petal unis} with parameters $K'$, $N=3$, $\delta = \epsilon/2$ and $\nu=1$.
Cover $K'$ with finitely many open balls $B_i=B(x_i,\eta)$. Without loss of generality, assume that $x_0\in B_0$. By transitivity (\ref{transitivite prop bis}), there exist pieces of orbits $c_i$ with length $\ell_i\geq 1$ from $B_i$ to $B_0$ and $c'_i$ with length $\ell'_i\geq 1$ from $B_0$ to $B_i$. Set $L_0=\max\{\ell_i,\ell'_i\}$. We may assume that $\ell_i
$ and $\ell'_i$ are bounded below by $T_\mathrm{min}$.

Given a periodic orbit $\gamma$ in $\mathcal{P}_{K'}(L,L+\sigma')$, we may assume, without loss of generality, that $\gamma(0)\in K$. By Lemma~\ref{petal unis}, one can concatenate the pieces of orbits $\gamma$ (starting at $\gamma(0)$), $c_i$ and $c'_i$ to get an $\epsilon/2$-close periodic orbit $\gamma'$ that goes through $B(x_0,\epsilon)\subset K$, and has length in
$[\ell(\gamma)+\ell_i+\ell'_i-1,\ell(\gamma)+\ell_i+\ell'_i+1]\subset[L,L+\sigma'+2L_0+1]$. 

This construction gives us a map from $\mathcal{P}_{K'}(L,L+\sigma')$ to $\mathcal{P}_K(L,L+\sigma)$, for $\sigma=\sigma'+2L_0+1$ depending on $K$ and $K'$. Let us control the cardinal of its preimages.
Let $\gamma_0$ and $\gamma_1$ be periodic orbits in $\mathcal{P}_{K'}(L,L+C')$ of period $\ell(\gamma_0)$ and $\ell(\gamma_1)$, lying in the preimage of $\gamma\in \mathcal{P}_K(L,L+\sigma)$.
One may choose the orgins of $\gamma_0$, $\gamma_1$ and $\gamma$ such that there exist $t_1$ satisfying the following
\begin{itemize}
    \item for all $0\leq t\leq \ell(\gamma_0)$, we have $d(\gamma_0(t), \gamma(t))\leq \epsilon/2$
    \item for all $0\leq t\leq \ell(\gamma_1)$, we have $d(\gamma_1(t), \gamma(t+t_1))\leq \epsilon/2$.
\end{itemize}
Therefore, for all $0\leq t \leq \min(\ell(\gamma_0),\ell(\gamma_1))$, by the previous inequalities and by \eqref{eqn:minoration},
\[d(\gamma_0(t),\gamma_1(t))\leq \epsilon +  b|t_1|.\]
Thus if $|t_1|\leq\epsilon/b$ and $|\ell(\gamma_1)-\ell(\gamma_0)|\leq\tau_0$, Lemma~\ref{lemma on same po} proves that $\gamma_0=\gamma_1$.
As $|t_1|\leq L+\sigma$ and $|\ell(\gamma_1)-\ell(\gamma_0)|\leq \sigma'$, we get the inequality
\[
\#\mathcal{P}_{K'}(L,L+\sigma')\le \left\lceil\frac{(L+\sigma)b}{\epsilon}\right\rceil\, \left\lceil\frac{\sigma'}{\tau_0}\right\rceil\, \#\mathcal{P}_K(L,L+\sigma)\,.
\]
The equality $h_{\mathrm{Gur}}(\phi,K)=h_{\mathrm{Gur}}(\phi,K')$ follows immediately.
This proves $h_{\mathrm{Gur}}(\phi,K)$ does not depend on $K$.
\end{proof}

  \subsubsection{First subadditivity  properties}\label{sec:subadd}

We now prove a subadditivity property, that is a comparison between counts of orbits of periods $L_1$, $L_2$ and $L_1+L_2$.
This will be a key ingredient to prove the last part of Theorem~\ref{theo:Gurevic} which says that the Gurevic entropy is a true limit.

Recall that $\tau_K$ is the period of the shortest periodic orbit with period $\ge 1$ that intersects the interior of $K$.

\begin{prop}\label{prop subadd gen}
Let $\phi$ be a $H$-flow on $M$. Let $K\subset M$ be a compact subset with nonempty interior.
Let $\sigma_0\geq 4\tau_K$.
There exist constants  $D$ and $\sigma$ such that for all $L_1,L_2\gg 1$,
\begin{equation}\label{subadditivity general}
\# \mathcal{P}_K(L_1,L_1+\sigma_0)\, \#\mathcal{P}_K(L_2,L_2+\sigma_0)\leq D\, (L_1+L_2)\, \#\mathcal{P}_K(L_1+L_2+\sigma, L_1+L_2+\sigma+\sigma_0)\, .
\end{equation}
\end{prop}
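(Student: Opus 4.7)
The plan is to construct an almost-injective concatenation map from pairs of short periodic orbits to a long one, using the Uniform Multiple Closing Lemma (Lemma~\ref{petal separe}) to produce the concatenation and the separation of orbits (Lemma~\ref{lemma on same po}) to control its fibers. Concretely, I aim to build
\[
\Phi\colon \calP_K(L_1,L_1+\sigma_0)\times \calP_K(L_2,L_2+\sigma_0)\longrightarrow \calP_K(L_1+L_2+\sigma,L_1+L_2+\sigma+\sigma_0)
\]
whose fibers have cardinality at most $D(L_1+L_2)$; summing over the image then yields the desired inequality.

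To construct $\Phi$, fix $\nu<\tau_K/2$, a small $\delta>0$ (to be determined), and a compact $K'\subset\inter{K}$ with nonempty interior. Lemma~\ref{petal separe} with $N=2$, $K'\subset K$, and parameters $\delta,\nu$ provides constants $\sigma^*$ and $T_{\min}$. For each $\gamma\in\calP_K$, I fix once and for all a canonical base point $x(\gamma)\in\gamma\cap K$. Given $(\gamma_1,\gamma_2)$ with $T_i=\ell(\gamma_i)$, set the common transition target
\[
S(\gamma_1,\gamma_2):=\tfrac{1}{2}\bigl(\sigma+\tau_K+\nu-(T_1+T_2-L_1-L_2)\bigr),
\]
and take $\sigma:=2\sigma^*+2\sigma_0$, which forces $S\geq\sigma^*$ for every admissible pair since $T_1+T_2-L_1-L_2\in[0,2\sigma_0]$. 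Applying Lemma~\ref{petal separe} to these data produces a periodic orbit $\Phi(\gamma_1,\gamma_2)$ intersecting $\inter{K'}\subset K$, which $\delta$-shadows $\gamma_1$ from $x(\gamma_1)$ for time $T_1$ and then $\gamma_2$ from $x(\gamma_2)$ for time $T_2$, with transitions of length in $[S-\tau_K,S+\tau_K]$, and whose period lies in
\[
[T_1+T_2+2S-\tau_K-\nu,\;T_1+T_2+2S+\tau_K+\nu]=[L_1+L_2+\sigma,\;L_1+L_2+\sigma+2\tau_K+2\nu]\subset[L_1+L_2+\sigma,\;L_1+L_2+\sigma+\sigma_0],
\]
where the final inclusion uses $\sigma_0\geq 4\tau_K>2\tau_K+2\nu$.

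To bound the fibers, suppose $\Phi(\gamma_1,\gamma_2)=\gamma$. By construction there exist $t_0\in[0,\ell(\gamma))$ and $T_1\in[L_1,L_1+\sigma_0]$ such that $s\mapsto\gamma(t_0+s)$ $\delta$-shadows $\gamma_1$ parametrized from $x(\gamma_1)$ on $[0,T_1]$. For $\delta$ small enough in terms of $\epsilon_1$ from Lemma~\ref{lemma on same po} (applied with parameters $\nu$ and $\tau_1=1$), the unparametrized orbit $\gamma_1$ is determined by $(t_0,T_1)$ modulo a time-shift of magnitude at most $\nu$. Discretizing $t_0$ on a grid of mesh proportional to $\epsilon_1$ (using~\eqref{lipsch} and the upper bound $b$ from~\eqref{eqn:minoration}) and $T_1$ on a grid of mesh $\tau_0$ produces at most $C_1(L_1+L_2)$ candidates for $\gamma_1$. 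Once $(t_0,T_1)$ is fixed, the starting time and length of the $\gamma_2$-shadowing piece on $\gamma$ are confined to intervals of sizes $2\tau_K$ and $\sigma_0$ respectively, so only $O(1)$ candidates for $\gamma_2$ remain. Thus $\#\Phi^{-1}(\gamma)\leq D(L_1+L_2)$.

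The hard part is the fiber bound: given a long orbit $\gamma$, one must restrict polynomially the ways of decomposing $\gamma$ into two shadowed pieces joined by transitions, which rests delicately on the quantitative separation of Lemma~\ref{lemma on same po}. A secondary technical point is the uniformity of the shift $\sigma$ in $L_1,L_2$; this is handled by letting the transition target $S(\gamma_1,\gamma_2)$ depend on the pair so as to absorb the variability $T_1+T_2-L_1-L_2\in[0,2\sigma_0]$, and the hypothesis $\sigma_0\geq 4\tau_K$ is precisely what allows the $2\tau_K$-width output window of Lemma~\ref{petal separe} to fit inside the target window of width $\sigma_0$.
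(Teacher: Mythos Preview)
Your proposal is correct and follows essentially the same route as the paper: build the concatenation map via Lemma~\ref{petal separe} with $N=2$, let the transition time $S$ depend on the pair to absorb the variability of $T_1+T_2$ into a fixed output window, and bound the fibers by discretizing the origin shift and the periods and invoking Lemma~\ref{lemma on same po}. Two minor points: the window for the start of the $\gamma_2$-shadowing piece is a bit larger than $2\tau_K$ (since $S$ itself depends on $T_2$, adding an extra $\sigma_0/2$), and correspondingly the parameter $\tau_1$ in Lemma~\ref{lemma on same po} needs to be chosen of order $\sigma_0+\tau_K$ rather than $1$ to handle the $\gamma_2$ comparison; both are harmless adjustments that leave the $O(1)$ count for $\gamma_2$ intact.
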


\begin{figure}[ht]
    \centering
    \includegraphics{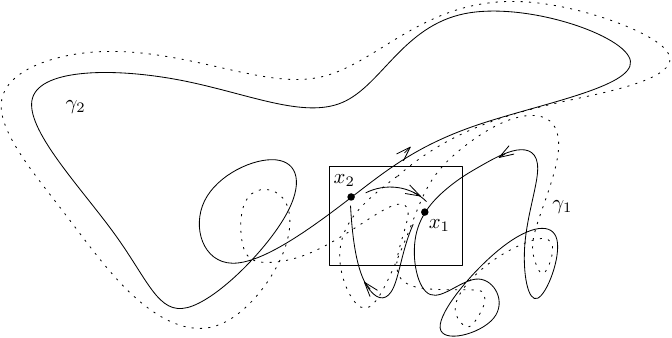}
    \caption{First subadditivity property}
    \label{fig:sous_additivite_simple}
\end{figure}

\begin{proof}
See Figure~\ref{fig:sous_additivite_simple}.
Fix the compact set $K$. We will build a map
\[
f\colon \mathcal{P}_K(L_1,L_1+\sigma_0)\times \mathcal{P}_K(L_2,L_2+\sigma_0)\to \mathcal{P}_K(L_1+L_2+\sigma,L_1+L_2+\sigma+\sigma_0)\, ,
\]
whose preimages, for $L_1,L_2$ large enough and a suitable choice of the constant $\sigma$,  have a cardinality bounded by $D\times (L_1+L_2)$, for a suitable constant $D>0$. Inequality \ref{subadditivity general} will follow immediately.

{\bf{Step 1.}} Construction of $f$.

Lemma~\ref{lemma on same po} associates with $\nu=1$ some $\tau_0>0$, and for $\tau_1=2\tau_0+2\tau_K$ some $\epsilon>0$.
Lemma~\ref{petal separe} applied to the compact set $K$ associates to $\nu = 1$,  $\delta=\frac{\epsilon}{4}$ and $N=2$ some numbers
$T_{\mathrm{min}}>0$ and $\sigma'>0$.

Consider $L_1,L_2\geq T_{\min}$, and a pair of periodic orbits $\gamma_1\in\mathcal{P}_K(L_1,L_1+\sigma_0)$ and $\gamma_2\in\mathcal{P}_K(L_2,L_2+\sigma_0)$, with respective lengths $\ell(\gamma_1)$ and $\ell(\gamma_2)$. The periodic orbit $f(\gamma_1,\gamma_2)$ is defined as follows. As $\gamma_1$ and $\gamma_2$ intersect $K$, we can reparametrize them so that their origins $\gamma_1(0), \gamma_2(0)$ belong to $K$.
Apply Lemma \ref{petal separe} with $x_1=\gamma_1(0),x_2=\gamma_2(0), T_1=\ell(\gamma_1),T_2=\ell(\gamma_2)$ and
\[
S=S(\gamma_1,\gamma_2)=\dfrac{L_1+L_2+2\sigma_0-\ell(\gamma_1)-\ell(\gamma_2)}{2}+\sigma'+\tau_K\geq \sigma'\,.
\]
We get a periodic orbit $\gamma=f(\gamma_1,\gamma_2)$ that intersects $K$, with length
\[
\ell(\gamma)\in [\ell(\gamma_1)+\ell(\gamma_2)+2S-\tau_K-1,\ell(\gamma_1)+\ell(\gamma_2)+2S+\tau_K+1]\subset [L_1+L_2+\sigma,L_1+L_2+\sigma+\sigma_0]\, ,
\]
where $\sigma=2\sigma_0+2\sigma'+\tau_K-1$.
Moreover, there exists $\tau\in[S-\tau_K,S+\tau_K]$ such that
\begin{itemize}
    \item for every $s\in [0,\ell(\gamma_1)]$, we have $d(\gamma_1(s),\gamma(s))<\dfrac{\epsilon}{4}$;
    \item for every $s\in [0,\ell(\gamma_2)]$, we have $d(\gamma_2(s),\gamma(\ell_1+\tau+s))<\dfrac{\epsilon}{4}$.
\end{itemize}
The periodic orbit  $f(\gamma_1,\gamma_2)=\gamma$ belongs therefore to $\mathcal{P}_K(L_1+L_2+\sigma,L_1+L_2+\sigma+\sigma_0)$. Moreover, our construction provides an origin of the orbit $\gamma$, i.e. a marked point on its image.

{\bf{Step 2.}} Bound on the cardinality of each preimage.

Assume that $\gamma_1,\tilde\gamma_1\in\mathcal{P}_K(L_1,L_1+\sigma_0)$ and $\gamma_2,\tilde\gamma_2\in\mathcal{P}_K(L_2,L_2+\sigma_0)$ are such that $f(\gamma_1,\gamma_2)=f(\tilde\gamma_1,\tilde\gamma_2)=\gamma$.
The constructions of $f(\gamma_1,\gamma_2)$ and $f(\tilde \gamma_1,\tilde \gamma_2)$ lead to the same orbit $\gamma$ by assumption, but with maybe different origins.
Without loss of generality, we can shift the parametrization of $\gamma$ so that the origin given by the construction of $\gamma=f(\gamma_1,\gamma_2)$ starting from $\gamma_1$ and $\gamma_2$  is $\gamma(0)$.
Let $\gamma(s_0)$ be the origin of $\gamma$ given by the construction of $\gamma=f(\tilde \gamma_1,\tilde \gamma_2)$ starting from $\tilde\gamma_1$ and $\tilde\gamma_2$.

{\bf{Step 2.a}} Bound on the cardinality of each preimage, when the lengths are prescribed.

We prove the following  statement.
If the orbits satisfy $f(\gamma_1,\gamma_2)=f(\tilde \gamma_1,\tilde\gamma_2)$ and
\begin{equation}\label{assumptions subadd}
\vert s_0\vert <\dfrac{\epsilon}{4b}\, ,\qquad \vert\ell(\gamma_1)-\ell(\tilde\gamma_1)\vert<\tau_0\, ,\qquad \vert\ell(\gamma_2)-\ell(\gamma_2)\vert<\tau_0\, ,
\end{equation}
where $b$ is the constant of property  \eqref{eqn:minoration} in  definition \ref{def:H-flow},  then the orbits coincide:  $(\gamma_1,\gamma_2)=(\tilde\gamma_1,\tilde\gamma_2)$.

By the definition of the function $f$, the following holds:
\begin{itemize}
    \item for every $s\in[0,\ell_1]$, we have $d(\gamma(s),\gamma_1(s))<\dfrac{\epsilon}{4}$;
    \item for every $s\in[0,\tilde\ell_1]$, we have $d(\gamma(s_0+s),\tilde\gamma_1(s))<\dfrac{\epsilon}{4}$.
\end{itemize}
Thus, for every $s\in[0,\min(\ell(\gamma_1),\ell(\tilde\gamma_1)]$ we obtain
\[
d(\gamma_1(s),\tilde\gamma_1(s))\leq d(\gamma_1(s),\gamma(s))+d(\gamma(s),\gamma(s_0+s))+d(\gamma(s_0+s),\tilde\gamma_1(s))<\dfrac{\epsilon}{4}+\vert s_0\vert b+\dfrac{\epsilon}{4}<\epsilon\, .
\]
By Lemma~\ref{lemma on same po}, we deduce that $\gamma_1=\tilde\gamma_1$.

Again by the construction of $f$, there exist $\tau\in[S(\gamma_1,\gamma_2)-\tau_K,S(\gamma_1,\gamma_2)+\tau_K]$ and $\tilde\tau\in[S(\tilde\gamma_1,\tilde\gamma_2)-\tau_K,S(\tilde\gamma_1,\tilde\gamma_2)+\tau_K]$ such that
\begin{itemize}
    \item for every $s\in[0,\ell(\gamma_2)]$, we have $d(\gamma(\ell(\gamma_1)+\tau+s),\gamma_2(s))<\dfrac{\epsilon}{4}$;
    \item for every $s\in[0,\ell(\tilde\gamma_2)]$, we have $d(\gamma(s_0+\ell(\tilde\gamma_1)+\tilde\tau+s),\tilde\gamma_2(s))<\dfrac{\epsilon}{4}$
\end{itemize}
Let $\bar s=\ell(\tilde\gamma_1)-\ell(\gamma_1)+\tilde\tau-\tau$.
Up to swapping $\gamma_1$ and $\tilde\gamma_1$, we may assume $\bar s \geq 0$.
Moreover, we have $\vert \bar s\vert \leq 2\sigma_0+2\tau_K$.
Then, for every $s\in[0,\min(\ell(\gamma_2)-\bar s,\ell(\tilde\gamma_2))]$, we have
\begin{align*}
  d(\gamma_2(\bar s+s),\tilde\gamma_2(s))\leq &  d(\gamma_2(\bar s+s),\gamma(\ell(\gamma_1)+\tau+\bar s+s)) +d(\gamma(\ell(\gamma_1)+\tau + \bar s+s),\gamma(s_0+\ell(\tilde\gamma_1) + \tilde\tau+s))\\
  &+d(\gamma(s_0+\ell(\tilde\gamma_1)+\tilde\tau+s),\tilde\gamma_2(s)) \\
  <&\dfrac{\epsilon}{4}+\vert s_0\vert b+\dfrac{\epsilon}{4}<\epsilon\, .
\end{align*}
By Lemma~\ref{lemma on same po}, since $\vert \bar s\vert \leq 2\sigma_0+2\tau_K= \tau_1$, we conclude that also $\gamma_2=\tilde\gamma_2$.

{\bf{Step 2.b.}} Conclusion.

So far, we have shown that, as soon as $(\gamma_1,\gamma_2)$ and $(\tilde\gamma_1,\tilde\gamma_2)$ satisfy (\ref{assumptions subadd}), if they have the same image under $f$, then they are the same periodic orbits. Since
\[
\vert s_0\vert
\leq
L_1+L_2+\sigma+\sigma_0\, ,\qquad \vert \ell(\gamma_1)-\ell(\tilde\gamma_1)\vert \leq \sigma_0\, ,\qquad \vert \ell(\gamma_2)-\ell(\gamma_2)\vert \leq \sigma_0\, ,
\]
we deduce that the cardinality of any preimage through $f$ of a periodic orbit in $\mathcal{P}_K(L_1+L_2+\sigma,L_1+L_2+\sigma+\sigma_0))$ is bounded by
\[
\left\lceil\dfrac{\sigma_0}{\tau_0}\right\rceil^2\, \left\lceil\dfrac{4b}{\epsilon}(L_1+L_2+\sigma+\sigma_0)\right\rceil\, .
\]
Choose the constant $D$ so that
\[
\left\lceil\dfrac{\sigma_0}{\tau_0}\right\rceil^2\, \left\lceil\dfrac{4b}{\epsilon}(L_1+L_2+\sigma+\sigma_0)\right\rceil
\leq D(L_1+L_2)\, .
\]
The desired bound \ref{subadditivity general} follows immediately.
\end{proof}

We will use the subadditivity property shown in Proposition \ref{prop subadd gen} to deduce that the exponential growth rate of the cardinality of the set of periodic orbits intersecting $K$ has a limit. We need the following result, see \cite[Theorem 23]{Erdos}.
\begin{theo}[de Bruijn--Erdös]\label{erdos}
 Let $t>0\mapsto \psi(t)$ be a positive and increasing map. Assume that $\int_1^\infty \frac{\psi(t)}{t^2}\, dt<\infty$. Let $(u_n)_{n\in\mathbb{N}}$ be a sequence such that
 \[
 u_{n+m}\leq u_n+u_m+\psi(n+m)\qquad \text{for }\dfrac n 2 \leq m\leq 2n\, .
 \]
 Then $\lim_{n\to\infty}\frac{u_n}{n}=L$ for some $L\in\mathbb{R}\cup\{-\infty\}$.
\end{theo}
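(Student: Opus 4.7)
This is a classical almost-subadditivity statement in the spirit of Fekete's lemma. The strategy is to first extract a doubling inequality, iterate it along powers of $2$ while absorbing the error into the summable tail of $\psi(t)/t^{2}$, and then extend the resulting dyadic control to arbitrary $N$ via a binary splitting of the index.

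First, setting $m=n$ in the hypothesis gives the doubling inequality $u_{2n}\le 2u_n+\psi(2n)$. A straightforward induction yields, for every $n_0\ge 1$ and $k\ge 0$,
\[
\frac{u_{2^k n_0}}{2^k n_0}\;\le\;\frac{u_{n_0}}{n_0}+\sum_{j=1}^{k}\frac{\psi(2^j n_0)}{2^j n_0}.
\]
Since $\psi$ is increasing, the elementary estimate $\psi(M)/(2M)\le\int_{M}^{2M}\psi(t)/t^{2}\,dt$ shows that the full tail $R(n_0):=\sum_{j\ge 1}\psi(2^j n_0)/(2^j n_0)$ is bounded by $2\int_{n_0}^{+\infty}\psi(t)/t^{2}\,dt$, which tends to $0$ as $n_0\to+\infty$ by the integrability hypothesis. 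In particular $L:=\liminf_{n\to\infty}u_n/n\le u_{n_0}/n_0+R(n_0)<+\infty$, so $L\in\R\cup\{-\infty\}$, and along any dyadic sequence starting at a well-chosen $n_0$ the ratio $u_{2^k n_0}/(2^k n_0)$ eventually stays within $L+\varepsilon$.

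For an arbitrary $N$, I would pass from the dyadic subsequence to all large indices by binary splitting. For $N\ge 3$ the two halves $\lfloor N/2\rfloor,\lceil N/2\rceil$ satisfy the admissible ratio bound, so the hypothesis gives $u_N\le u_{\lfloor N/2\rfloor}+u_{\lceil N/2\rceil}+\psi(N)$. Iterating $\kappa$ times produces
\[
u_N\;\le\;\sum_{i=1}^{2^{\kappa}}u_{n_i}+\sum_{j=0}^{\kappa-1}2^{j}\,\psi\bigl(\lceil N/2^{j}\rceil\bigr),
\qquad n_i\in\bigl\{\lfloor N/2^{\kappa}\rfloor,\lceil N/2^{\kappa}\rceil\bigr\},
\]
and dividing by $N=\sum_i n_i$ the total error is controlled by $2\int_{N/2^{\kappa}}^{2N}\psi(t)/t^{2}\,dt$, which is $o(1)$ as soon as $N/2^{\kappa}\to+\infty$. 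Choosing $\kappa=\kappa(N)$ so that $N/2^{\kappa}$ lies in a fixed scale-$n_0$ window, one obtains $u_N/N\le \max_i u_{n_i}/n_i+o(1)$, and it remains to control the right-hand side by $L+O(\varepsilon)$ to conclude $\limsup u_N/N\le L$.

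\textbf{Main obstacle.} The delicate point is the last step: the bottom-level pieces $n_i$ of the binary decomposition are dictated by $N$, not equal to a prescribed good $n_0$ where $u_{n_0}/n_0$ is close to $L$. To bound $u_{n_i}/n_i$ one must replace the pointwise bound $u_{n_0}/n_0<L+\varepsilon$ by a uniform bound over an entire scale window. I would do this by re-entering the subadditivity: given $n_0$ with $u_{n_0}/n_0<L+\varepsilon$, express any $m\in[n_0,2n_0]$ either as $n_0+(m-n_0)$ (when $m-n_0\ge n_0/2$) or through an intermediate dyadic step, bounding $u_m/m$ by $u_{n_0}/n_0+u_{m-n_0}/(m-n_0)+O(\psi(m)/m)$ and recursing. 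The key recurring miracle is that all the error terms produced at each scale telescope into the convergent series $\sum_{j}\psi(2^j n_0)/(2^j n_0)$, so the cumulative error stays $O(\varepsilon)$. Letting $\varepsilon\to 0$ then gives $\lim u_n/n=L$.
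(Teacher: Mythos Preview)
The paper does not give its own proof of this theorem: it is quoted verbatim as a classical result of de~Bruijn and Erd\H{o}s, with a reference to \cite[Theorem~23]{Erdos}, and then used as a black box to deduce that the Gurevic entropy is a true limit. So there is no ``paper's proof'' to compare your attempt against.

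As for your sketch itself, the overall architecture (doubling along $2^{k}n_{0}$, summable error via $\int\psi(t)/t^{2}\,dt$, then binary splitting of a general $N$) is the right one, but the resolution you propose for your ``main obstacle'' does not work as written. After binary splitting, the leaves lie in $\{P,P+1\}$ with $P=\lfloor N/2^{\kappa}\rfloor$, and for a generic $N$ there is no choice of $\kappa$ making $P$ equal to your good index $n_{0}$; so you really must bound $a_{m}:=u_{m}/m$ uniformly for all $m$ in some window $[n_{0},2n_{0}]$. Your suggested fix, writing $m=n_{0}+(m-n_{0})$, is inadmissible precisely on the range $m\in[n_{0},\tfrac{3}{2}n_{0})$ since then $m-n_{0}<n_{0}/2$; and none of the admissible splittings of such an $m$ produce pieces you already control (multiples of $n_{0}$, or previously treated values). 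The claim that ``all the error terms telescope into the convergent series'' presupposes exactly the uniform window bound you are trying to prove, so the argument is circular at this point. A complete proof (as in the original reference) requires an additional idea to propagate control from a single good $n_{0}$ to a full dyadic block, and this step is genuinely missing from your outline.
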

We can now conclude the proof of Theorem \ref{theo:Gurevic}, by showing that \eqref{corollaire_entropie_gurevic_est_une_limite} holds.

\begin{proof}[Proof of \eqref{corollaire_entropie_gurevic_est_une_limite} in Theorem~\ref{theo:Gurevic}]
Fix a compact set $K$ with nonempty interior.
Let $\hat\sigma$ be the constant $\sigma$ given by Proposition~\ref{prop subadd gen} for $\sigma_0=4\tau_K$ (with respect to the notation of Proposition~\ref{prop subadd gen}).

{\bf{Step 1.}} The sequence $\frac{1}{n}\log\#\mathcal{P}_K(n-\hat\sigma,n-\hat\sigma+4\tau_K)$ converges to $h_\mathrm{Gur}(\varphi)$.

{\bf{Step 1.a}} The sequence $\frac{1}{n}\log\#\mathcal{P}_K(n-\hat\sigma,n-\hat\sigma+4\tau_K)$ has a limit.

Define a sequence $(u_n)_{n\in\mathbb{N}}$ as
\[
u_n=-\log\,\#\mathcal{P}_K(n-\hat\sigma,n-\hat\sigma+4\tau_K)
\]
for $n\in\mathbb{N}, n\gg 1$. By Proposition~\ref{prop subadd gen}, for $n,m$ large enough, the sequence $(u_n)_{n\in\mathbb{N}}$ satisfies
\[
u_{n+m}\leq \log(D(n+m-2\hat\sigma))+u_n+u_m\leq \log(D(n+m)+1)+u_n+u_m\, .
\]
Observe that the function $\psi:t>0\mapsto \log(Dt+1)$ is positive and increasing for $t>0$ and satisfies $\int_1^\infty \frac{\log(Dt+1)}{t^2}\, dt<\infty$. Therefore, by Theorem \ref{erdos},  the sequence $(\frac{u_n}{n})_{n\in\mathbb{N}}$ converges  and   $\lim_{n\to\infty}\frac{-u_n}{n}\leq h_{\mathrm{Gur}}(\phi)$.

{\bf{Step 1.b.}} The limit is $h_\mathrm{Gur}(\varphi)$.

We now prove $\lim_{n\to\infty}\frac{-u_n}{n} =  h_{\mathrm{Gur}}(\phi)$. It is enough to prove $\lim_{n\to\infty}\frac{-u_n}{n} \geq  h_{\mathrm{Gur}}(\phi)$.
Let $(L_n)_{n\in\mathbb N}$ be such that
\[
\lim_{n\to+\infty}\dfrac{\log \#\mathcal{P}_K(L_n,L_n+3\tau_K)}{L_n}=h_{\mathrm{Gur}}(\phi)\, .
\]
Let $(N_n)_{n\in\mathbb N}$ be the sequence of integers such that
\[N_n-\hat\sigma\leq L_n <N_n-\hat\sigma +1.\]
Then (as $\tau_K\geq 1$)
\[\mathcal P_K(L_n,L_n+3\tau_K)\subset \mathcal P_K(N_n-\hat\sigma,N_n-\hat\sigma+4\tau_K)\, . \]
Therefore
\[h_\mathrm{Gur}(\phi)=\lim_{n\to\infty} \dfrac{\log \#\mathcal{P}_K(L_n , L_n +3\tau_K)}{ L_n} \leq \lim_{n\to\infty} \dfrac{\log \#\mathcal{P}_K(N_n-\hat{\sigma},N_n-\hat{\sigma}+4\tau_K)}{N_n-\hat\sigma}=\lim_{n\to+\infty}\dfrac{-u_{N_n}}{N_n}\, .\]
Thus $\lim_{n\to\infty}\frac{-u_n}{n} =  h_{\mathrm{Gur}}(\phi)$.

{\bf{Step 2.}} The sequence $\frac{1}{L_n}\log\#\mathcal{P}_K(L_n,L_n+ 5\tau_K)$ converges to $h_{\mathrm{Gur}}(\phi)$.

Let $(L_n)_n$ be a sequence such that $L_n\to\infty$.
We now prove that
$\left(\dfrac{\log \#\mathcal{P}_K(L_n , L_n +5\tau_K)}{ L_n}\right)_{n\in\mathbb N}$
can only have $h_\mathrm{Gur}(\phi)$ as subsequential limit and therefore converges to $h_\mathrm{Gur}(\phi)$. Recall that $\hat\sigma>0$ is the constant given by Proposition~\ref{prop subadd gen} for $\sigma_0=4\tau_K$. Let $(N_n)_{n\in\mathbb N}$ be the sequence of integers such that
\[
L_n\leq N_n-\hat\sigma< L_n +1.
\]
From the previous step, we know that
$
\left(\dfrac{\log \#\mathcal{P}_K(N_n-\hat\sigma ,N_n-\hat\sigma +4\tau_K)}{N_n}\right)_{n\in\mathbb N}
$
converges to $h_{\mathrm{Gur}}(\phi)$.
Moreover, as $\tau_K\geq 1$,
\[
\mathcal{P}_K(N_n-\hat\sigma ,N_n-\hat\sigma +4\tau_K)\subset \mathcal{P}_K(L_n , L_n +5\tau_K)\]
and we obtain
\[
h_{\mathrm{Gur}}(\phi) =\lim_{n\to\infty}\dfrac{\log \# \mathcal{P}_K(N_n-\hat\sigma ,N_n-\hat\sigma +4\tau_K)}{N_n}
\leq
\limsup_{n\to\infty}\dfrac{\log \# \mathcal{P}_K(L_n ,L_n +5\tau_K)}{ L_n}\leq
h_{\mathrm{Gur}}(\phi)\,.
\]
Thus
the sequence
$
\left(\dfrac{\log \#\mathcal{P}_K(L_n , L_n +5\tau_K)}{ L_n}\right)_{n\in\mathbb N}
$
converges to $h_{\mathrm{Gur}}(\phi)$
and
\[
\lim_{L\to+\infty}\dfrac{\log \#\mathcal{P}_K(L,L+5\tau_K)}{L}
=
h_{\mathrm{Gur}}(\phi)\, .
\]

{\bf{Step 3.}} General case.

Let $\sigma\geq5\tau_K$. As
\[\dfrac{\log \#\mathcal{P}_K(L , L +5\tau_K)}{ L}\leq \dfrac{\log \#\mathcal{P}_K(L , L +\sigma)}{ L}\]
and
\[\limsup_{L\to\infty}\dfrac{\log \#\mathcal{P}_K(L , L +\sigma)}{ L}= h_{\mathrm{Gur}}(\phi),\]
we obtain
\[
\lim_{L\to+\infty}\dfrac{\log \#\mathcal{P}_K(L,L+\sigma)}{L}=h_{\mathrm{Gur}}(\phi)\, .
\]
\end{proof}


\subsection{Entropies at infinity}\label{sec:entropy-at-infinity}

Definition \ref{def:SPR} of {\em Strong positive recurrence}  involves a notion of {\em entropy at infinity}. We introduce here different notions of entropy at infinity and compare them in section \ref{known}.  The rough idea is to measure the exponential growth rate of the dynamics outside a large compact set $K$ and then let $K$ grow to exhaust $M$.

More precisely, for defining Gurevic entropy at infinity, we consider periodic orbits that intersect $K$ but spend only a small proportion of time in $K$.
For the variational entropy at infinity, we shall  consider the supremum of measured entropies of  probability measures that give a small measure to a large compact set $K$ .

\subsubsection{Gurevic entropy at infinity}

\begin{defi}\label{definition_gurevic_infini} Let $K\subset M$ be a compact subset with nonempty interior. Let $\alpha>0$, $L>0$ and $\sigma>0$. Define
\[
\mathcal{P}_{K}^\alpha(L,L+\sigma)=
\{ \gamma\in\mathcal{P}_K(L, L+\sigma) :\ \ell(\gamma\cap K)<\alpha\ell(\gamma) \}\,,
\]
and
\[
h_\mathrm{Gur}^{K,\alpha}(\phi)=
\limsup_{L\to \infty}\frac{1}{L}\log \#\mathcal{P}_K^\alpha(L,L+\sigma)\,.
\]
The {\em Gurevic entropy at infinity} of the flow $\phi$ is defined by
\[
h_\mathrm{Gur}^\infty(\phi):=
\, \inf_{K}\,\lim_{\alpha\to 0} h_\mathrm{Gur}^{K,\alpha}(\phi)\,,
\]
where the infimum is taken over all compact subsets $K\subset M$ with nonempty interior.
\end{defi}

\begin{fact}
Under the hypotheses of the definition, $\limsup_{L\to \infty}\frac{1}{L}\log \#\mathcal{P}_K^\alpha(L,L+\sigma)$ does not depend on $\sigma$ and therefore $h_\mathrm{Gur}^{K,\alpha}(\phi)$ is well-defined.
\end{fact}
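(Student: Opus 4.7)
The plan is to show that for any two positive constants $\sigma_1, \sigma_2$, the quantities
\[
h_1 := \limsup_{L\to \infty}\frac{1}{L}\log \#\mathcal{P}_K^\alpha(L,L+\sigma_1)
\quad\text{and}\quad
h_2 := \limsup_{L\to \infty}\frac{1}{L}\log \#\mathcal{P}_K^\alpha(L,L+\sigma_2)
\]
are equal. Without loss of generality, assume $\sigma_1 \le \sigma_2$. The inequality $h_1\le h_2$ is immediate from the inclusion $\mathcal{P}_K^\alpha(L,L+\sigma_1)\subset \mathcal{P}_K^\alpha(L,L+\sigma_2)$, which holds because widening the admissible length interval can only add more periodic orbits while the proportion condition $\ell(\gamma\cap K)<\alpha\ell(\gamma)$ is unchanged.

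For the reverse inequality $h_2\le h_1$, the idea is a decomposition argument modeled on the proof of Fact~\ref{ind gur entropy}. Set $N = \lceil \sigma_2/\sigma_1 \rceil$ and partition $[L, L+\sigma_2]$ into the $N$ intervals $[L+i\sigma_1,\, L+(i+1)\sigma_1]$ for $i=0,\dots,N-1$ (whose union covers $[L,L+\sigma_2]$). Any $\gamma \in \mathcal{P}_K^\alpha(L,L+\sigma_2)$ has length in one of these subintervals; since the proportion condition depends only on $\gamma$ itself, it remains satisfied. Therefore
\[
\#\mathcal{P}_K^\alpha(L,L+\sigma_2) \;\le\; \sum_{i=0}^{N-1} \#\mathcal{P}_K^\alpha\bigl(L+i\sigma_1,\,L+(i+1)\sigma_1\bigr).
\]

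To finish, treat the case $h_1<\infty$ (if $h_1=\infty$ we are done by the already-established inequality). Fix $\epsilon>0$; by definition of limsup there exists $L_0$ such that for every $L'\ge L_0$,
\[
\#\mathcal{P}_K^\alpha(L',L'+\sigma_1) \le e^{L'(h_1+\epsilon)}.
\]
Then for $L\ge L_0$, using $L+(i+1)\sigma_1 \le L+\sigma_1+\sigma_2$,
\[
\#\mathcal{P}_K^\alpha(L,L+\sigma_2) \le N\, e^{(L+\sigma_1+\sigma_2)(h_1+\epsilon)},
\]
so dividing by $L$ and taking the limsup as $L\to\infty$ yields $h_2\le h_1+\epsilon$. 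Letting $\epsilon\to 0$ gives $h_2\le h_1$, proving equality. The main (and only) subtle point is simply ensuring that the proportion condition in the definition of $\mathcal{P}_K^\alpha$ is preserved under this length-interval decomposition, which it clearly is since the condition is intrinsic to each orbit $\gamma$ and does not involve $\sigma$.
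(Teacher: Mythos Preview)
Your proof is correct and takes essentially the same approach as the paper: both establish monotonicity in $\sigma$ and then use a decomposition of the longer length interval into shorter ones of width $\sigma_1$. The only cosmetic difference is that the paper proves the special case $\sigma_2=2\sigma_1$ (and lets monotonicity plus iteration handle the rest), whereas you go directly to arbitrary $\sigma_1\le\sigma_2$ with $N=\lceil\sigma_2/\sigma_1\rceil$ pieces; the underlying idea is identical.
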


\begin{proof}
By definition, if $\sigma\leq\sigma'$ then
\[\limsup_{L\to \infty}\frac{1}{L}\log \#\mathcal{P}_K^\alpha(L,L+\sigma)\leq \limsup_{L\to \infty}\frac{1}{L}\log \#\mathcal{P}_K^\alpha(L,L+\sigma').\]
We now prove
\[\limsup_{L\to \infty}\frac{1}{L}\log \#\mathcal{P}_K^\alpha(L,L+2\sigma)\leq \limsup_{L\to \infty}\frac{1}{L}\log \#\mathcal{P}_K^\alpha(L,L+\sigma).\]
This is enough to conclude the proof of the fact.
We have
\[ \mathcal P_{K}(L,L+2\sigma)\subset \mathcal P_{K}(L,L+\sigma)\cup \mathcal P_{K}(L+\sigma,L+2\sigma)\]
therefore
\[
\# \mathcal P_{K}(L,L+2\sigma) \leq 2 \max \left(\# \mathcal P_{K}(L,L+\sigma), \#\mathcal P_{K}(L+\sigma,L+2\sigma)\right).
\]
As
\[
\limsup_{L\to\infty} \frac{1}{L} \log \# \mathcal P_{K}(L,L+\sigma) = \limsup_{L\to\infty} \frac{1}{L} \log \left(\max \left(\# \mathcal P_{K}(L,L+\sigma), \#\mathcal P_{K}(L+\sigma,L+2\sigma)\right)\right)
\]
we have
\[\limsup_{L\to \infty}\frac{1}{L}\log \#\mathcal{P}_K^\alpha(L,L+2\sigma)\leq \limsup_{L\to \infty}\frac{1}{L}\log \#\mathcal{P}_K^\alpha(L,L+\sigma)\]
as required.
\end{proof}

\begin{fact}\label{fait:h_Gur_K_epsilon}
Let $K$ be a compact subset with nonempty interior.
The map $\alpha>0 \mapsto h_\mathrm{Gur}^{K,\alpha}(\phi)$ is non-decreasing.
Moreover, let $K'$ be a compact subset such that $K\subset\inter{K'}$.
We then have \[h_\mathrm{Gur}^{K',\alpha}(\phi)\le h_\mathrm{Gur}^{K,2\alpha}(\phi)\,.
\]
\end{fact}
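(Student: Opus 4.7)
The first claim is immediate: whenever $\alpha_1 \le \alpha_2$, the defining condition gives the inclusion $\mathcal{P}_K^{\alpha_1}(L,L+\sigma) \subset \mathcal{P}_K^{\alpha_2}(L,L+\sigma)$, and taking $\limsup$ in $L$ of $\frac{1}{L}\log\#$ preserves this inequality. So I focus on the comparison $h_{\mathrm{Gur}}^{K',\alpha}(\phi) \le h_{\mathrm{Gur}}^{K,2\alpha}(\phi)$.

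The plan is to split $\mathcal{P}_{K'}^\alpha(L,L+\sigma)$ into two families according to whether the orbit meets $K$ or not. For $\gamma \in \mathcal{P}_{K'}^\alpha(L,L+\sigma)$ with $\gamma\cap K\ne\emptyset$, the inclusion $K\subset K'$ gives $\ell(\gamma\cap K) \le \ell(\gamma\cap K') < \alpha\ell(\gamma) < 2\alpha\ell(\gamma)$, so $\gamma$ lies directly in $\mathcal{P}_K^{2\alpha}(L,L+\sigma)$. The delicate case is $\gamma\cap K = \emptyset$: here I plan to reroute $\gamma$ through $K$ using the uniform multiple closing lemma (Lemma~\ref{petal separe}) with $N=1$, taking the lemma's outer compact to be $K'$ and its inner compact to be $K$ (licit since $K$ has nonempty interior), and with $x_1\in \gamma\cap K'$, $T_1=\ell(\gamma)$. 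Choosing the shadowing parameter $\delta>0$ small enough that $B(K,\delta)\subset \inter{K'}$ (possible because $K\subset\inter{K'}$ and $K$ is compact), the lemma produces a periodic orbit $\tilde\gamma$ that $\delta$-shadows $\gamma$ on a time interval of length $\ell(\gamma)$, intersects $\inter{K}$, and has length $\ell(\tilde\gamma)\in[L+c_1,L+\sigma+c_2]$ for constants $c_1,c_2$ depending only on $K,K',\delta,\nu$.

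To estimate $\ell(\tilde\gamma\cap K)$, I decompose the intersection into its shadowing portion and its complementary loop. On the shadowing portion, any $t$ with $\tilde\gamma(t)\in K$ forces $\gamma(t)\in B(K,\delta)\subset K'$, so this contribution is $\le \ell(\gamma\cap K') < \alpha\ell(\gamma)\le\alpha\ell(\tilde\gamma)$; the complementary loop has length bounded by a constant $C$. Hence for $L$ large enough that $C/\ell(\tilde\gamma)\le\alpha$, one gets $\ell(\tilde\gamma\cap K)<2\alpha\ell(\tilde\gamma)$, so $\tilde\gamma\in \mathcal{P}_K^{2\alpha}(L',L'+\sigma')$ with $L'$ and $\sigma'$ differing from $L$ and $\sigma$ by fixed constants. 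Finally, I control the multiplicity of the map $\gamma\mapsto \tilde\gamma$: two pre-images $\gamma_1,\gamma_2$ are both $\delta$-shadowed by $\tilde\gamma$ over their entire lengths, hence are $2\delta$-close along a piece of length $\min(\ell(\gamma_1),\ell(\gamma_2))\ge L$, and Lemma~\ref{lemma on same po} (applied with parameters $\nu$ and $\tau_1$ adapted to the bounds on $\ell(\gamma_i)$, after shrinking $\delta$ if needed so $2\delta<\epsilon_1$) then forces $\gamma_1=\gamma_2$ up to a time shift lying in an interval of length $O(L)$. The resulting multiplicity is polynomial in $L$, so
\[
\#\mathcal{P}_{K'}^\alpha(L,L+\sigma) \le \#\mathcal{P}_K^{2\alpha}(L,L+\sigma) + \mathrm{poly}(L)\cdot \#\mathcal{P}_K^{2\alpha}(L',L'+\sigma').
\]
Taking $\log$, dividing by $L$ and letting $L\to\infty$ kills the polynomial factor and yields the desired inequality (using the $\sigma$-independence of $h_{\mathrm{Gur}}^{K,2\alpha}$ from the previous fact). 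The main obstacle is the careful coordination of the shadowing parameter $\delta$, the orbit-separation parameter $\epsilon_1$ of Lemma~\ref{lemma on same po}, and the constants produced by Lemma~\ref{petal separe}, to simultaneously ensure that $B(K,\delta)\subset K'$, that $2\delta$ is a separation parameter, and that the bounded extra loop can absorb at most an $\alpha$ fraction of $\ell(\tilde\gamma)$ for large $L$.
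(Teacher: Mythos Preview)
Your proposal is correct and follows essentially the same route as the paper. The only organizational difference is the case split on whether $\gamma$ already meets $K$: the paper treats all orbits uniformly by rerunning the construction from Fact~\ref{dependance-compact} (concatenate a short detour into $K$, close up, bound time in $K$ via the condition $\epsilon\le d(\partial K,\partial K')$, control preimages with Lemma~\ref{lemma on same po}), which is precisely your treatment of the case $\gamma\cap K=\emptyset$; your case $\gamma\cap K\ne\emptyset$ is an unnecessary but harmless shortcut. One expository point: the sentence ``hence are $2\delta$-close along a piece of length $\ge L$'' is not literally true, since the two shadowing windows on $\tilde\gamma$ may start at different origins $s_1,s_2$; the correct statement---which you then effectively make---is that $|s_1-s_2|$ ranges over an interval of length $O(L)$, and quantizing this offset (together with the length difference) before invoking Lemma~\ref{lemma on same po} is what produces the polynomial multiplicity bound.
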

\begin{proof} The first assertion is a direct consequence of the definition.

For the second assertion, we follow the arguments of the proof of Fact~\ref{dependance-compact}.
Let $\sigma>0$.
We import the notation from the proof  of Fact~\ref{dependance-compact}.
Let us assume additionally that $\epsilon\leq d(\partial K, \partial K')$.
We associate to any periodic orbit of $\mathcal{P}_{K'}^\alpha(L, L+\sigma)$ a periodic orbit of $\mathcal{P}_K(L, L+\sigma')$, for some $\sigma'\geq \sigma$, which spends a time at most $\alpha L+\sigma'$ in $K$.
For $L$ large enough, $\alpha L+\sigma'\le 2\alpha L$ and we obtain a map from $\mathcal P_{K'}^{2\alpha}(L,L+\sigma')$ to $\mathcal P_{K}^{\alpha}(L,L+\sigma)$.
The bound
\[
\left\lceil\dfrac{\sigma_0}{\tau_0}\right\rceil^2\, \left\lceil\dfrac{4b}{\epsilon}(L_1+L_2+\sigma+\sigma_0)\right\rceil\, .
\]
on the number of preimages  given in proof  of Fact~\ref{dependance-compact} remain valid.
Therefore, there exists some $D>0$ such that, for $L\gg 1$
\[
\#\mathcal{P}_{K'}^\alpha(L,L+\sigma)\le L\#\mathcal{P}_K^{2\alpha}(L,L+\sigma')\,.
\]
Since $h^{K,\alpha}_{\mathrm{Gur}}(\phi)$ does not depend on the constant $\sigma$, the result follows.
\end{proof}



\subsubsection{Variational entropy at infinity}

As in \cite{GST}, we introduce the variational entropy at infinity.
\begin{defi}\label{variational infty entropy}
The variational entropy at infinity of the flow $\phi$ is
\begin{eqnarray*}
h_{\mathrm{var}}^\infty(\phi)&:=&
\lim_{\epsilon\to 0}\inf_{K}\sup \{ h_\mathrm{KS}(\mu) :\ \mu \in \mathcal M_\phi, \mu(K)\leq \epsilon \}\\
&=&\lim_{\epsilon\to 0}\inf_{K}\sup \{ h_\mathrm{KS}(\mu) :\ \mu \in \mathcal M_\phi^{\rm erg}, \mu(K)\leq \epsilon \}\,,\\
&=&\lim_{\epsilon\to 0}\inf_{K}\sup \{ h_\mathrm{Kat}(\mu) :\ \mu \in \mathcal M_\phi^{\rm erg}, \mu(K)\leq \epsilon \}\,,
\end{eqnarray*}
where the infimum is taken over all compact subsets $K\subset M$.
\end{defi}
The equality between the two first quantities on the right follows from the fact that the entropy map $\mu\in \mathcal M_\phi\mapsto h_\mathrm{KS}(\mu)$ is convex and ergodic measures are the extremal points of $\mathcal M_\phi$.
The last equality follows from Theorem \ref{entropies-ergodic-measures}.

\begin{rema}\label{invert order in h infty var}
Observe that, in Definition \ref{variational infty entropy}, the quantity $\sup \{ h_\mathrm{Kat}(\mu) :\ \mu \in \mathcal M^{\rm erg}_\phi, \mu(K)\leq \epsilon \}$ (as well as the others appearing in the equalities) is non-decreasing in $\epsilon$ and non-increasing in $K$. Therefore, it is possible to invert the order of $\lim_{\epsilon\to 0}$ and $\inf_K$, i.e.,
\[
\lim_{\epsilon\to 0}\inf_{K}\sup \{ h_\mathrm{Kat}(\mu) :\ \mu \in \mathcal M_\phi^{\rm erg}, \mu(K)\leq \epsilon \}=\inf_{K}\lim_{\epsilon\to 0}\sup \{ h_\mathrm{Kat}(\mu) :\ \mu \in \mathcal M_\phi^{\rm erg}, \mu(K)\leq \epsilon \}\, .
\]
\end{rema}

\section{Comparison of entropies}\label{known}


\subsection{Comparison of measure-theoretic entropies}

Our main theorem (Theorem~\ref{theo:main}) establishes the existence of a measure that maximizes all notions of measured entropy. This measure will be obtained as a limit of  averages of periodic measures. As a consequence, on the one hand, it is a priori not known to be ergodic, and on the other hand,  its Katok and Brin-Katok entropies are the only ones that are computable.  Therefore, we will need general statements to be able to compare all kinds of entropies.

\begin{prop}[Riquelme \cite{Riquelme}] \label{Katok-noncompact}
 Let $\phi$ be a Lipschitz flow on a manifold $M$ and  $\mu\in \mathcal{M}_\phi$ an invariant  probability measure.
If $K\subset M$ is a compact subset, then, for $\epsilon>0$ small enough,
    \[
    \int_K\limsup_{\substack{n\to+\infty \\ \phi^n(x)\in K}}-\frac{1}{n}\log\,\mu(B(x,\epsilon,n))\, d\mu\leq h_{\mathrm{KS}}(\mu)\,.
    \]
\end{prop}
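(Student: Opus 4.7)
The plan follows Katok's classical strategy~\cite{Katok}, adapted to the noncompact setting as in~\cite{Riquelme}. The core idea is to construct a countable Borel partition $\xi$ of $M$ whose atoms have diameter at most $\epsilon/\mathrm{lip}(\phi)$ and satisfy $H(\xi,\mu)<\infty$; the Lipschitz bound \eqref{lipsch} then forces atoms of $\xi_n$ to sit inside dynamical $(n,\epsilon)$-balls, and Shannon--McMillan--Breiman concludes.

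Set $\epsilon'=\epsilon/\mathrm{lip}(\phi)$. Since $(M,d)$ is separable and $\mu$ is a Radon probability measure, one can choose an exhaustion of $M$ by compact sets $K_n$, cover each annulus $K_n\setminus K_{n-1}$ by a finite number $N_n$ of $\epsilon'/2$-balls, and refine into a Borel partition $\xi=\{A_{n,j}\}$ with $\operatorname{diam}(A_{n,j})\le\epsilon'$. Writing $p_n=\mu(K_n\setminus K_{n-1})$, the elementary bound
$$H(\xi,\mu)\;\le\;-\sum_n p_n\log p_n+\sum_n p_n\log N_n$$
becomes finite as soon as the $K_n$ are chosen so that $p_n\log N_n$ is summable, which can be arranged recursively by exploiting the inner regularity of $\mu$. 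This is the standard Mañé-type partition.

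Once $\xi$ is in hand, the argument is transparent. For any $x\in M$ and any $y\in\xi_n(x)$, the points $\phi^i(x)$ and $\phi^i(y)$ lie in a common atom of $\xi$ for every $i=0,\dots,n-1$, so $d(\phi^i(x),\phi^i(y))\le\epsilon'$. For $t=i+\tau$ with $\tau\in[0,1]$, the Lipschitz bound \eqref{lipsch} upgrades this to
$$d(\phi_t(x),\phi_t(y))=d(\phi_\tau(\phi^i x),\phi_\tau(\phi^i y))\le\mathrm{lip}(\phi)\,\epsilon'=\epsilon,$$
so $\xi_n(x)\subset B(x,\epsilon,n)$ and hence $-\log\mu(B(x,\epsilon,n))\le-\log\mu(\xi_n(x))$. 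The Shannon--McMillan--Breiman theorem applied to the finite-entropy partition $\xi$ then provides, for $\mu$-a.e. $x$,
$$\lim_{n\to\infty}-\tfrac{1}{n}\log\mu(\xi_n(x))\;=\;h(\phi,\xi,x),$$
with $h(\phi,\xi,\cdot)$ a nonnegative $\phi$-invariant function whose integral equals $h_\mu(\phi,\xi)\le h_{\mathrm{KS}}(\mu)$.

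Combining the two previous displays, and using that restricting a limsup to a subsequence can only decrease it, yields, for $\mu$-a.e. $x\in K$,
$$\limsup_{\substack{n\to\infty\\\phi^n(x)\in K}}-\tfrac{1}{n}\log\mu(B(x,\epsilon,n))\;\le\;h(\phi,\xi,x).$$
Integrating over $K$ and using $h(\phi,\xi,\cdot)\ge 0$ on $M\setminus K$ gives the claim. The principal technical obstacle is the construction of $\xi$: one must reconcile the conflicting demands that atoms be uniformly small in diameter (which makes $N_n$ large) and that the entropy remain finite (which forces the exterior mass $p_n$ to decay rapidly), through a careful recursive choice of the exhaustion $(K_n)$ based on the inner regularity of $\mu$ and local finiteness of small-radius coverings of compact subsets.
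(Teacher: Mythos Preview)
Your overall strategy---build a finite-entropy partition $\xi$ with $\xi_n(x)\subset B(x,\epsilon,n)$, then invoke the non-ergodic Shannon--McMillan--Breiman theorem---is exactly the route the paper takes, following Riquelme and Ledrappier. The difference lies in how the partition is built, and there your argument has a genuine gap.

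You claim that one can always produce a countable partition of $M$ into sets of diameter at most $\epsilon'$ with $H(\xi,\mu)<\infty$, asserting that the recursive choice of $K_n$ can make $\sum p_n\log N_n$ converge. This is not true for an arbitrary Radon probability measure on a noncompact manifold. Take $M=[e,\infty)$ and $\mu$ with density $c/(x\log^2 x)$. For \emph{any} partition $\xi$ into sets of diameter at most $\epsilon'$, one checks (by grouping atoms into intervals of length $3\epsilon'$ and using $-\mu(A)\log\mu(A)\ge -\mu(A)\log q_k$ when $A\subset J_k$ and $q_k=\mu(J_k)$) that $H(\xi,\mu)\gtrsim\sum_k 1/(k\log k)=\infty$. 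The conflicting demands you identify at the end of your sketch are not merely technical; for heavy-tailed measures they are genuinely irreconcilable, and no choice of exhaustion $(K_n)$ saves the construction.

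This is precisely why the paper's proof (and Riquelme's) does \emph{not} ask for uniformly small atoms. Instead, Ledrappier's Proposition~6.3 supplies a partition $\mathcal P$ with finite entropy for which the inclusion $\mathcal P^n(x)\subset B(x,\epsilon,n)$ is guaranteed only at those times $n$ with $\phi^n(x)\in K$. The restriction ``$\phi^n(x)\in K$'' in the limsup of the statement is therefore not a cosmetic weakening but the exact hypothesis that makes the conditional partition sufficient; the rest of the proof (SMB, Fatou) proceeds as in your sketch. To repair your argument, replace your Mañé-type partition by Ledrappier's and restrict the comparison $\mu(B(x,\epsilon,n))\ge\mu(\mathcal P^n(x))$ to the return times to $K$.
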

This proposition is proven in \cite{Riquelme} (see also \cite[Appendix A]{GST}) when $\mu$ is ergodic, and the non ergodic case, very similar, is only briefly mentioned. As it is crucial for us, we give a proof of this statement.

\begin{proof} In \cite[Theorem 2.10]{Riquelme}, Riquelme uses a proposition due to Ledrappier, see  \cite[Proposition 6.3]{Ledrappier}, to build a partition $\mathcal P$   such that (without ergodicity), for
$\mu$-almost every $x$, for every $n$ such that $\phi^n(x)\in K$, we have $\mathcal{P}^{n}(x)\subset B(x,n,\epsilon)$, where for a partition $\mathcal{P}$ we denote by $\mathcal{P}(x)$ the element of the partition containing $x$, and where $\mathcal{P}^n$ is the measurable partition consisting of all possible intersections of elements of $\varphi^{-i}\mathcal{P}$, for $i=0,\dots, n-1$. It follows that
\[
\int_K\limsup_{\substack{n\to+\infty \\ \phi^n(x)\in K}}-\frac{1}{n}\log\left(\mu(B(x,\epsilon,n)\right)\, d\mu \le
\int_M \limsup_{n\to\infty} -\frac{1}{n}\log \mu\left(\mathcal{P}^n(x)\right)\,d\mu\, .
\]
The non-ergodic version of Shannon-McMillan-Breiman theorem  ensures that $-\frac{1}{n}\log \mu\left(\mathcal{P}^n(x)\right)$ converges almost surely, so that the right hand side is in fact a true (almost sure) limit.
This theorem is stated without proof in
 \cite[Theorem 1.2, Chapter IV]{Mane}. It is stated and proven in \cite[Theorem 2.5]{Krengel} in a more general framework, and the proof of \cite[Theorem 2.3, p. 261]{Petersen} in the ergodic case adapts almost \textit{verbatim} to the non-ergodic case.

By Fatou's Lemma, we get
\[
\int_M \lim_{n\to\infty} -\frac{1}{n}\log \mu\left(\mathcal{P}^n(x)\right)\,d\mu \le \liminf_{n\to \infty}\int_M -\frac{1}{n}\log \mu(\mathcal{P}^n(x))\,d\mu\,.
\]
By definition of the entropy of a partition, we have
\[
\int_{M}-\frac{1}{n}\log \mu\left(\mathcal{P}^n(x)\right)\,d\mu=\frac{1}{n}H(\mathcal{P}^n,\mu)\,,
\]
and this quantity converges to $h(\mu,\mathcal{P})\le h_{\mathrm{KS}}(\mu)$.
\end{proof}

\begin{theo}[ Brin-Katok \cite{BK}, Katok \cite{Katok}, Riquelme \cite{Riquelme}]\label{entropies-ergodic-measures} Let $\phi$ be a Lipschitz flow on a complete Riemannian manifold $M$.
Let $\mu\in\mathcal{M}_\phi^{\mathrm{erg}}$. Then
\[
h_\mathrm{Kat}(\mu)= \underline{h}_\mathrm{BK}(\mu)=\overline{h}_\mathrm{BK}(\mu)=h_{\mathrm{KS}}(\mu)\, .
\]
\end{theo}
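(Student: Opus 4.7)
My plan is to establish the cyclic chain of inequalities
\[
h_{\mathrm{Kat}}(\mu) \;\le\; \underline{h}_{\mathrm{BK}}(\mu) \;\le\; \overline{h}_{\mathrm{BK}}(\mu) \;\le\; h_{\mathrm{KS}}(\mu) \;\le\; h_{\mathrm{Kat}}(\mu),
\]
which collapses the four quantities into a single common value. The middle inequality is free from $\liminf\le\limsup$ together with $\inf_K\underline{h}_{\mathrm{loc}}\le\sup_K\overline{h}_{\mathrm{loc}}$. The last inequality $h_{\mathrm{KS}}(\mu)\le h_{\mathrm{Kat}}(\mu)$ is Katok's classical lower bound: comparing $(n,\epsilon)$-balls with atoms of a finite-entropy partition $\mathcal{P}$ of small diameter and invoking the Shannon--McMillan--Breiman theorem, one shows that covering a set of $\mu$-measure $\alpha$ requires at least $e^{n(h(\mu,\mathcal{P})-o(1))}$ balls. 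This argument is essentially local and transfers to the non-compact setting once a partition with $H(\mathcal{P},\mu)<\infty$ and arbitrarily small diameter has been built by exhausting $M$ with compacts.

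The substantive content lies in the remaining two inequalities, both of which rest on a common preliminary ergodicity observation. Using the inclusion $\phi_1 B(x,\epsilon,n+1)\subset B(\phi_1 x,\epsilon,n)$ and the $\phi_1$-invariance of $\mu$, the functions $x\mapsto \limsup_n -\tfrac{1}{n}\log\mu(B(x,\epsilon,n))$ and $x\mapsto \liminf_n -\tfrac{1}{n}\log\mu(B(x,\epsilon,n))$ are $\phi_1$-invariant modulo $\mu$-null sets, hence by ergodicity $\mu$-a.s.\ equal to constants $H^-(\epsilon)\le H^+(\epsilon)$, both non-increasing in $\epsilon$. For any compact $K$ with $\mu(K)>0$, Poincaré recurrence makes the return-time subsequence $\{n:\phi_n x\in K\}$ $\mu$-a.e.\ infinite on $K$, so the restricted limsup (resp.\ liminf) appearing in Definition~\ref{Brin Katok entropies} is almost surely bounded above by $H^+(\epsilon)$ (resp.\ below by $H^-(\epsilon)$).

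To obtain $\overline{h}_{\mathrm{BK}}(\mu)\le h_{\mathrm{KS}}(\mu)$, I would inject the almost-sure constant $H^+(\epsilon)$ into the integral bound of Proposition~\ref{Katok-noncompact}, yielding $H^+(\epsilon)\,\mu(K)\le h_{\mathrm{KS}}(\mu)$; exhausting $M$ by compacts with $\mu(K)\to 1$ gives $H^+(\epsilon)\le h_{\mathrm{KS}}(\mu)$, and letting $\epsilon\to 0$ closes it. To obtain $h_{\mathrm{Kat}}(\mu)\le \underline{h}_{\mathrm{BK}}(\mu)$, I would run a greedy covering argument: the set $A_n=\{x:\mu(B(x,\epsilon,n))\ge e^{-n(H^-(\epsilon)+\delta)}\}$ has $\mu$-measure tending to $1$ by the ergodic constancy of the liminf, hence can be covered by at most $e^{n(H^-(\epsilon)+\delta)}$ dynamical balls of the form $B(x,\epsilon,n)$; passing to $\limsup_n$ and then letting $\delta\to 0$ and $\epsilon\to 0$ one obtains $h_{\mathrm{Kat}}(\mu)\le \lim_{\epsilon\to 0}H^-(\epsilon)\le \underline{h}_{\mathrm{BK}}(\mu)$.

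The main technical obstacle is the harmonization of classical compact-case arguments, which are naturally formulated along the full integer sequence, with the non-compact Definition~\ref{Brin Katok entropies}, which only records information along the return-time subsequence to a compact $K$. The combination of ergodicity (giving almost-sure constancy of the limsup and liminf of $-\tfrac1n\log\mu(B(x,\epsilon,n))$) with Poincaré recurrence (giving almost-sure infiniteness of the return-time subsequence) is precisely the bridge that makes full-sequence and return-time statements interchangeable for the purposes of these bounds; Proposition~\ref{Katok-noncompact} then plays the role of the non-compact version of the classical Katok--Brin--Katok integral inequality that does most of the heavy lifting.
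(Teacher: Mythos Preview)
The paper's proof is purely by citation (Katok for $h_{\mathrm{KS}}\le h_{\mathrm{Kat}}$, Brin--Katok for $h_{\mathrm{KS}}\le \underline{h}_{\mathrm{BK}}$, Riquelme for the reverse bounds), so your sketch is in fact more detailed than what the paper offers. However, your proposed cycle does not close: the link $h_{\mathrm{Kat}}(\mu)\le \underline{h}_{\mathrm{BK}}(\mu)$ has a genuine gap. From the almost-sure constancy of the \emph{liminf}, $\liminf_n\bigl(-\tfrac{1}{n}\log\mu(B(x,\epsilon,n))\bigr)=H^-(\epsilon)$, you only obtain $x\in A_n$ \emph{infinitely often}, not eventually, so $\mu(A_n)\to 1$ is unjustified --- and without it the greedy covering does not control the $\limsup_T$ in Definition~\ref{Katok entropy}. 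The covering argument naturally uses the upper constant $H^+(\epsilon)$ (then $a_n(x)<H^+(\epsilon)+\delta$ does hold eventually, so $\mu(A_n)\to 1$), and it yields $h_{\mathrm{Kat}}(\mu)\le \lim_{\epsilon\to 0}H^+(\epsilon)$; but this is not obviously bounded by $\underline{h}_{\mathrm{BK}}(\mu)\ge \lim_{\epsilon\to 0}H^-(\epsilon)$ unless one already knows $H^-=H^+$, which is part of what is to be proved. The paper's arrangement sidesteps this by invoking the Brin--Katok inequality $h_{\mathrm{KS}}\le \underline{h}_{\mathrm{BK}}$ directly (a partition/SMB argument, not a covering one) and then Riquelme's bounds $\overline{h}_{\mathrm{BK}}\le h_{\mathrm{KS}}$ and $h_{\mathrm{Kat}}\le h_{\mathrm{KS}}$.

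There is also a smaller issue in your step $\overline{h}_{\mathrm{BK}}\le h_{\mathrm{KS}}$: Proposition~\ref{Katok-noncompact} integrates the \emph{restricted} limsup (which is $\le H^+(\epsilon)$), so the displayed inequality ``$H^+(\epsilon)\,\mu(K)\le h_{\mathrm{KS}}$'' does not follow from it. This one is easily repaired, since $\overline{h}_{\mathrm{BK}}$ is itself defined through the restricted limsup, and the pointwise SMB bound underlying the proof of Proposition~\ref{Katok-noncompact} gives $\overline{h}_{\mathrm{loc}}(\mu,K)\le h_{\mathrm{KS}}(\mu)$ directly, without passing through $H^+(\epsilon)$.
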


\begin{proof} The inequality $h_{\mathrm{KS}}(\mu)\le h_{\mathrm{Kat}}(\mu)$ is stated in \cite{Katok} in the compact case, but the proof does not use compactness. The inequality $h_{\mathrm{KS}}(\mu)\le \underline{h}_{\mathrm{BK}}(\mu)$ is stated in \cite{BK} in the compact case but the proof does not use compactness either.

 Riquelme \cite[Theorems 2.8, 2.9, 2.10, 2.13]{Riquelme} establishes the other (in)equalities.
 \end{proof}

The following   intermediate result, of independent interest, is proven in section \ref{inegalite-Gurevic}.
\begin{theo} \label{comparison-Kat-BK} Let $\varphi$ be a $H$-flow on a manifold $M$. For every invariant probability measure
$\mu\in \mathcal{M}_\varphi$, one has
\[
h_{\mathrm{Kat}}(\mu)\le h_{\mathrm{Gur}}(\varphi)\,.
\]
\end{theo}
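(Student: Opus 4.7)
The plan is to bound $M(T,\epsilon,\alpha,\mu)$ from above by a polynomial-in-$T$ factor times the count of periodic orbits of length close to $T$ in a fixed compact set, and then invoke Theorem~\ref{theo:Gurevic}. I fix $\alpha\in(0,1/2)$ and $\epsilon>0$; I choose compact sets $K'\subset K\subset M$ with nonempty interiors and $\mu(K)>1-\alpha/4$, so that by $\phi$-invariance the set $A_T:=K\cap\phi_{-T}(K)$ satisfies $\mu(A_T)\geq 1-\alpha/2>\alpha$. Let $S_T\subset A_T$ be a maximal $(T,\epsilon)$-separated subset, which is finite since $A_T$ is compact. By maximality, $A_T\subset\bigcup_{y\in S_T}B(y,\epsilon,T)$, so $S_T$ is itself a $(T,\epsilon,\alpha,\mu)$-spanning set (Definition~\ref{defi spanning}) and therefore $M(T,\epsilon,\alpha,\mu)\leq|S_T|$.

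Next I associate a periodic orbit to each element of $S_T$ and control the resulting fibers. Applying Lemma~\ref{petal separe} to $K'\subset K$ with $N=1$, $\nu=1$ and $\delta=\epsilon/8$ gives constants $\sigma_0,T_{\min}>0$; for $T\geq T_{\min}$ and each $y\in S_T$, the lemma applied with $x_1=y$, $T_1=T$ (valid since $y,\phi_T(y)\in K$) produces a periodic orbit
\[
\gamma_y\in\mathcal{P}_{K'}\bigl(T+\sigma_0-\tau_K-1,\;T+\sigma_0+\tau_K+1\bigr)
\]
admitting a parametrization with $d(\phi_s(y),\gamma_y(s))<\delta$ for all $s\in[0,T]$. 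The key quantitative step is bounding the fibers of $y\mapsto\gamma_y$: if $y,y'\in S_T$ map to the same oriented closed orbit with tracking origins differing by $\tau$ (so $\gamma_{y'}(\cdot)=\gamma_y(\tau+\cdot)$ up to reparametrization of the underlying orbit), the triangle inequality together with the upper bound $d(\gamma_y(t),\gamma_y(t+\tau))\leq b|\tau|$ furnished by \eqref{eqn:minoration} yields
\[
d(\phi_t(y),\phi_t(y'))\leq 2\delta+b|\tau|\qquad\text{for every }t\in[0,T].
\]
Since $S_T$ is $(T,\epsilon)$-separated, some $t^*\in[0,T]$ achieves $d(\phi_{t^*}(y),\phi_{t^*}(y'))\geq\epsilon$, which forces $|\tau|\geq(\epsilon-2\delta)/b=3\epsilon/(4b)$. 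As $\tau$ ranges in $[0,\ell(\gamma_y))\subset[0,T+O(1)]$, each periodic orbit has at most $O(T/\epsilon)$ preimages in $S_T$.

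Combining the two steps,
\[
M(T,\epsilon,\alpha,\mu)\leq|S_T|\leq\frac{4b(T+O(1))}{3\epsilon}\cdot\bigl|\mathcal{P}_{K'}(T+\sigma_0-\tau_K-1,\;T+\sigma_0+\tau_K+1)\bigr|.
\]
Taking $\limsup_{T\to\infty}T^{-1}\log$ kills the polynomial prefactor, and Theorem~\ref{theo:Gurevic} identifies the remaining $\limsup$ with $h_\mathrm{Gur}(\phi)$. As this bound holds for every $\alpha\in(0,1/2)$ and every $\epsilon>0$, taking $\sup_\epsilon$ and then $\inf_\alpha$ in Definition~\ref{Katok entropy} yields $h_\mathrm{Kat}(\mu)\leq h_\mathrm{Gur}(\phi)$. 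The main obstacle is the fiber count of the previous paragraph: producing the orbits $\gamma_y$ is a direct application of the packaged closing lemma, but the polynomial-in-$T$ bound on fibers depends crucially on the upper flow-speed bound in \eqref{eqn:minoration}, which is what converts an algebraic time-shift $\tau$ on the periodic orbit into a genuine metric displacement between the orbits of $y$ and $y'$.
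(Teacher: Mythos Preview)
Your proof is correct and follows essentially the same strategy as the paper: place a separated spanning set inside $K\cap\phi_{-T}(K)$, close each point into a periodic orbit via Lemma~\ref{petal separe}, and bound the fibers of this assignment using the separation together with the flow-speed bound~\eqref{eqn:minoration}. Your version is slightly more direct: where the paper routes through Lemmas~\ref{lemme:comparaison_M_M'}--\ref{lemma replace} to manufacture a separating spanning set inside $K\cap\phi_{-T}(K)$ from an arbitrary spanning set, you simply take a maximal $(T,\epsilon)$-separated subset of $A_T$ and observe it is already spanning, which shortcuts those preliminaries.
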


Our construction of a measure of maximal entropy in this paper will produce a measure that is \textit{a priori} not necessarily ergodic, so that it is worth noting the following corollary. Recall that we use the notation $\mathcal{M}_\phi$ for the set of $\phi$-invariant probability measures, and $\mathcal{M}^{\rm erg}_\phi$ for the set of $\phi$-invariant, ergodic, probability measures.
\begin{coro}\label{entropie-var}
Let $\phi$ be a $H$-flow on a manifold $M$. We have
\begin{eqnarray*}
h_{\rm var}(\phi)
&=&
\sup_{\mu\in\mathcal{M}_\phi} h_\mathrm{KS}(\mu)
=
\sup_{\mu\in \mathcal{M}^{\rm erg}_\phi} h_\mathrm{KS}(\mu)
=
\sup_{\mu\in \mathcal{M}^{\rm erg}_\phi} h_\mathrm{Kat}(\mu)=
\sup_{\mu\in \mathcal{M}^{\rm erg}_\phi} \underline{h}_\mathrm{BK}(\mu)\\
&\le&
\sup_{\mu\in\mathcal{M}_\phi} h_\mathrm{Kat}(\mu)\\
&\le& h_\mathrm{Gur}(\phi)
\,.\end{eqnarray*}
\end{coro}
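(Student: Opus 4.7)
The plan is to assemble the chain of (in)equalities directly from material already established in the paper, since every link is either a definition, a trivial set inclusion, or an application of a previously stated theorem.

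First, the equality $h_{\mathrm{var}}(\phi)=\sup_{\mu\in\mathcal{M}_\phi}h_\mathrm{KS}(\mu)=\sup_{\mu\in \mathcal{M}^{\rm erg}_\phi} h_\mathrm{KS}(\mu)$ is precisely Definition~\ref{variational entropy}, which encapsulates the fact that the Kolmogorov-Sinai entropy map $\mu\mapsto h_\mathrm{KS}(\mu)$ is affine on the convex set $\mathcal{M}_\phi$ whose extremal points are exactly the ergodic measures $\mathcal{M}_\phi^\mathrm{erg}$ (see the discussion following Definition~\ref{KS entropy}). This step requires nothing beyond the ergodic decomposition and is recorded there without further argument.

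Next, for the equalities $\sup_{\mu\in\mathcal{M}^{\rm erg}_\phi} h_\mathrm{KS}(\mu)=\sup_{\mu\in\mathcal{M}^{\rm erg}_\phi} h_\mathrm{Kat}(\mu)=\sup_{\mu\in\mathcal{M}^{\rm erg}_\phi} \underline{h}_\mathrm{BK}(\mu)$, the point is that Theorem~\ref{entropies-ergodic-measures} guarantees, for any Lipschitz flow on a complete Riemannian manifold (and in particular for any $H$-flow), that for every ergodic invariant probability measure one has the equalities $h_\mathrm{Kat}(\mu)=\underline{h}_\mathrm{BK}(\mu)=\overline{h}_\mathrm{BK}(\mu)=h_\mathrm{KS}(\mu)$. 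Taking suprema over $\mathcal{M}^{\rm erg}_\phi$ then yields the stated identities termwise.

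Finally, the two inequalities at the end follow trivially and from the main comparison. The inequality $\sup_{\mu\in\mathcal{M}^{\rm erg}_\phi}h_\mathrm{Kat}(\mu)\le \sup_{\mu\in\mathcal{M}_\phi}h_\mathrm{Kat}(\mu)$ is immediate from the inclusion $\mathcal{M}^{\rm erg}_\phi\subset\mathcal{M}_\phi$. The last inequality $\sup_{\mu\in\mathcal{M}_\phi}h_\mathrm{Kat}(\mu)\le h_\mathrm{Gur}(\phi)$ is exactly the content of Theorem~\ref{comparison-Kat-BK}, which states that every invariant probability measure $\mu\in\mathcal{M}_\phi$ (not necessarily ergodic) satisfies $h_\mathrm{Kat}(\mu)\le h_\mathrm{Gur}(\phi)$. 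There is no real obstacle to overcome here: the corollary is a bookkeeping assembly, whose only nontrivial input is Theorem~\ref{comparison-Kat-BK}, itself to be proved in Section~\ref{inegalite-Gurevic}.
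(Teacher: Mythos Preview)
Your proposal is correct and mirrors exactly the assembly the paper intends: the corollary is stated without a separate proof because each link is either Definition~\ref{variational entropy}, the ergodic-measure identities of Theorem~\ref{entropies-ergodic-measures}, the trivial inclusion $\mathcal{M}^{\rm erg}_\phi\subset\mathcal{M}_\phi$, or Theorem~\ref{comparison-Kat-BK}. There is nothing to add.
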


\subsection{Katok entropy is smaller than Gurevic entropy - Proof of Theorem~\ref{comparison-Kat-BK}}\label{inegalite-Gurevic}

In this subsection, we are going to prove Theorem~\ref{comparison-Kat-BK}. The rough idea goes as follows. Given a $\phi$-invariant probability measure $\mu$ and  compact set $K$, we can assume that each point of a spanning set for $\mu$ (whose cardinality is used to calculate its Katok entropy) lies in $K$ and comes back to $K$ after a time $T$. Thanks to the transitivity property and the finite exact shadowing property, we can close up the piece of orbit of each point of the spanning set to obtain a periodic orbit intersecting $K$. By controlling the default of injectivity of such a procedure, we will conclude that the Gurevich entropy is larger than the Katok entropy of $\mu$. We start now with the details of the proof.

Firstly, we introduce the notion of separating spanning sets and prove that they can be equivalently used to define the Katok entropy of a measure.

\begin{defi}
Let $T>0$ and $\epsilon>0$. A set $E\subset X$ is $(\epsilon,T)$-separating if for all $x,y\in E, x\neq y$ there exists $t\in[0,T]$ such that
\[
d(\phi_t(x),\phi_t(y))\geq \epsilon\, .
\]
\end{defi}
Let $\mu\in\mathcal{M}_\phi$.
\begin{defi}
Let $T>0$, $\epsilon>0$ and $\alpha\in(0,1)$. A set $E$ is a \textit{separating} $(T,\epsilon,\alpha,\mu)$-spanning set if it is a $(T,\epsilon,\alpha,\mu)$-spanning set (see Definition \ref{defi spanning}) and it is $(\frac\epsilon 2, T)$-separating.
\end{defi}

Recall that $M(T,\epsilon,\alpha,\mu)$ denotes the minimal cardinality of a $(T,\epsilon,\alpha,\mu)$-spanning set. Similarly, denote by $M'(T,\epsilon,\alpha,\mu)$ the minimal cardinality of a separating $(T,\epsilon,\alpha,\mu)$-spanning set. We then have $M(T,\epsilon,\alpha,\mu)\leq M'(T,\epsilon,\alpha,\mu)$, since any separating $(T,\epsilon,\alpha,\mu)$-spanning set is also a $(T,\epsilon,\alpha,\mu)$-spanning set.
\label{p:M'}

\begin{lemm}\label{lemme:comparaison_M_M'}
Let $T>0$, $\epsilon>0$ and $\alpha\in(0,1)$. Then $M'(T,2\epsilon,\alpha,\mu)\leq M(T,\epsilon,\alpha,\mu)$.
\end{lemm}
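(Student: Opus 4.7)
The plan is to start with an optimal spanning set and thin it out to a separating one, losing a factor of $2$ in the scale but no points in the count. Let $E$ be a $(T,\epsilon,\alpha,\mu)$-spanning set with $|E| = M(T,\epsilon,\alpha,\mu)$. I will extract from $E$ a maximal $(\epsilon,T)$-separated subset $E'\subseteq E$, for instance by a greedy procedure: enumerate $E$ and keep each point only if it is $(\epsilon,T)$-separated from every point already retained.

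By maximality of $E'$, for every $x\in E$ there is some $y\in E'$ such that $d(\phi_t(x),\phi_t(y))<\epsilon$ for all $t\in[0,T]$. The triangle inequality then gives $B(x,\epsilon,T)\subseteq B(y,2\epsilon,T)$, whence
\[
\bigcup_{x\in E} B(x,\epsilon,T)\;\subseteq\;\bigcup_{y\in E'} B(y,2\epsilon,T)\, .
\]
Applying $\mu$ and using that $E$ is $(T,\epsilon,\alpha,\mu)$-spanning, we get $\mu\bigl(\bigcup_{y\in E'} B(y,2\epsilon,T)\bigr)\geq \alpha$, so $E'$ is $(T,2\epsilon,\alpha,\mu)$-spanning. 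Since $E'$ is $(\epsilon,T)$-separated and $\epsilon = (2\epsilon)/2$, it is also separating at the scale required by the definition of $M'(T,2\epsilon,\alpha,\mu)$. Therefore $E'$ is a separating $(T,2\epsilon,\alpha,\mu)$-spanning set, which yields
\[
M'(T,2\epsilon,\alpha,\mu)\;\leq\;|E'|\;\leq\;|E|\;=\;M(T,\epsilon,\alpha,\mu)\, .
\]

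There is no real obstacle here: the statement is a standard spanning/separating comparison, and the only point to watch is the factor-$2$ mismatch between the spanning radius and the separation radius, which is precisely what allows the triangle-inequality inclusion of dynamical balls to go through.
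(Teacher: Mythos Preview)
Your proof is correct and follows essentially the same approach as the paper: greedily thin a minimal $(T,\epsilon,\alpha,\mu)$-spanning set to an $(\epsilon,T)$-separated subset, then use the triangle inequality to recover the spanning property at scale $2\epsilon$. The only cosmetic difference is that the paper phrases the erasure criterion as ``$B(x_i,\epsilon,T)\subset B(x_j,2\epsilon,T)$'' rather than ``$x_i\in B(x_j,\epsilon,T)$''; your formulation via a maximal separated subset is the more standard and slightly cleaner one.
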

\begin{proof}
Let $E$ be a $(T,\epsilon,\alpha,\mu)$-spanning set of minimal cardinality $M=M(T,\epsilon,\alpha,\mu)$. Enumerate the elements of $E$ as $\{x_1,\dots,x_M\}$. We select a subset $E'$ of $E$ as follows.
\begin{enumerate}
    \item The point $x_1\in E'$.
    \item Consider the dynamical ball centered at $x_1$ of radius $2\epsilon$. For $i>1$, we erase the point $x_i$, i.e., $x_i\notin E'$, if and only if the dynamical ball $B(x_i,\epsilon,T)\subset B(x_1,2\epsilon,T)$.
    \item We consider the next point $x_j$ among the remaining ones. We have not erased it at the previous step; we then keep it and say that it belongs to $E'$.
    \item  We iterate now the erasing procedure, starting with $x_j$. Consider the dynamical ball $B(x_j,2\epsilon,T)$. For $i>j$, we erase the point $x_i$, i.e., $x_i\notin E'$, if and only if $B(x_i,\epsilon,T)\subset B(x_j,2\epsilon,T)$.
\end{enumerate}
We have then $\# E'\leq \# E=M(T,\epsilon,\alpha,\mu)$. We are now going to show that $E'$ is a separating $(T,2\epsilon,\alpha,\mu)$-spanning set. This will imply then
$
M'(T,2\epsilon,\alpha,\mu)\leq \# E'
$, concluding our proof. For every $x\in E$
\begin{enumerate}
    \item either $x\in E'$. In this case observe that $B(x,\epsilon,T)\subset B(x,2\epsilon,T)$;
    \item or $x\notin E'$. In this case, it means that there exists another $\bar x\in E'$ such that $B(x,\epsilon,T)\subset B(\bar x,2\epsilon,T)$.
\end{enumerate}
Thus
\[
\bigcup_{x\in E}B(x,\epsilon,T)\subset \bigcup_{x\in E'}B(x,2\epsilon,T)\,,
\]
and so
\[
\mu\left(   \bigcup_{x\in E'}B(x,2\epsilon,T)\right)\geq \mu\left(\bigcup_{x\in E}B(x,\epsilon,T)\right)\geq \alpha\,,
\]
where the last inequality comes from the fact that $E$ is a $(T,\epsilon,\alpha,\mu)$-spanning set. So, the set $E'$ is a $(T,2\epsilon,\alpha,\mu)$-spanning set.

Moreover, let $x_i,x_j\in E'$ with $i>j$. There exists $t\in[0,T]$ such that $d(\phi_t(x_i),\phi_t(x_j))\geq \epsilon$. Indeed, if not, it would imply that
\[
B(x_i,\epsilon,T)\subset B(x_j,2\epsilon,T)\,,
\]
which is in contradiction with the construction of $E'$. That is, $E'$ is also $(\epsilon,T)$-separating, which concludes the proof.
\end{proof}

We then deduce the following corollary.
\begin{coro}\label{coro katok entropy}
Let $\mu\in \mathcal{M}_\phi$. 
Then
\[
h_\mathrm{Kat}(\mu)= \inf_{\alpha>0}\sup_{\epsilon>0}\limsup_{T\to+\infty}\dfrac{1}{T}\log\left(M'(T,\epsilon,\alpha,\mu)\right)\,.
\]
\end{coro}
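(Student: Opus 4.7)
The plan is to sandwich the quantity in Corollary~\ref{coro katok entropy} between two expressions equal to $h_{\mathrm{Kat}}(\mu)$, using the two easy comparisons between $M$ and $M'$: the trivial inequality $M(T,\epsilon,\alpha,\mu)\le M'(T,\epsilon,\alpha,\mu)$ (every separating spanning set is a spanning set) and the non-trivial inequality $M'(T,2\epsilon,\alpha,\mu)\le M(T,\epsilon,\alpha,\mu)$ provided by Lemma~\ref{lemme:comparaison_M_M'}.

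For the first direction, from $M(T,\epsilon,\alpha,\mu)\le M'(T,\epsilon,\alpha,\mu)$ I get
\[
\limsup_{T\to\infty}\tfrac{1}{T}\log M(T,\epsilon,\alpha,\mu)\le \limsup_{T\to\infty}\tfrac{1}{T}\log M'(T,\epsilon,\alpha,\mu),
\]
and then $\sup_\epsilon$ followed by $\inf_\alpha$ yields $h_{\mathrm{Kat}}(\mu)\le \inf_\alpha\sup_\epsilon \limsup_T \tfrac{1}{T}\log M'(T,\epsilon,\alpha,\mu)$. For the reverse, Lemma~\ref{lemme:comparaison_M_M'} gives
\[
\limsup_{T\to\infty}\tfrac{1}{T}\log M'(T,2\epsilon,\alpha,\mu)\le \limsup_{T\to\infty}\tfrac{1}{T}\log M(T,\epsilon,\alpha,\mu).
\]
Taking $\sup$ over $\epsilon>0$ on both sides, and using the change of variables $\epsilon'=2\epsilon$ on the left (so that the supremum over $\epsilon>0$ of $\limsup_T\tfrac{1}{T}\log M'(T,2\epsilon,\alpha,\mu)$ coincides with the supremum of $\limsup_T\tfrac{1}{T}\log M'(T,\epsilon',\alpha,\mu)$ over $\epsilon'>0$), we obtain
\[
\sup_{\epsilon>0}\limsup_{T\to\infty}\tfrac{1}{T}\log M'(T,\epsilon,\alpha,\mu)\le \sup_{\epsilon>0}\limsup_{T\to\infty}\tfrac{1}{T}\log M(T,\epsilon,\alpha,\mu).
\]
Taking $\inf_\alpha$ on both sides yields the converse inequality, and combining the two directions gives the equality.

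The main content is really contained in Lemma~\ref{lemme:comparaison_M_M'}; once that is granted, the corollary is merely a bookkeeping argument about the order of $\inf_\alpha$, $\sup_\epsilon$ and $\limsup_T$. No step should present a genuine obstacle, but the only point that requires attention is the legitimacy of the change of variables $\epsilon\mapsto 2\epsilon$ inside the $\sup_\epsilon$, which is immediate since $\epsilon\mapsto 2\epsilon$ is a bijection of $(0,\infty)$ onto itself.
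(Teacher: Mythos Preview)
Your proof is correct and follows exactly the approach the paper intends: the corollary is stated immediately after Lemma~\ref{lemme:comparaison_M_M'} with no explicit proof, as it is meant to follow directly from that lemma together with the trivial inequality $M\le M'$. Your sandwich argument and the change of variables $\epsilon\mapsto 2\epsilon$ in the $\sup_\epsilon$ are precisely the right bookkeeping.
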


Notice that $\epsilon\mapsto M(T,\epsilon,\alpha,\mu)$ and $\alpha\mapsto M(T,\epsilon,\alpha,\mu)$ are non increasing.
Yet, while $\alpha\mapsto M'(T,\epsilon,\alpha,\mu)$ is also non-decreasing, we have a prioi no control on $\epsilon\mapsto M'(T,\epsilon,\alpha,\mu)$.

We now prove a lemma analogous to Lemma~\ref{lemme:comparaison_M_M'} with the extra condition that the points in the separating-spanning set should belong to a fixed compact set $K$.
This is the first step to prove Theorem~\ref{comparison-Kat-BK}.

\begin{lemm}\label{lemma replace}
Let $E$ be a $(T,\epsilon,\alpha,\mu)$-spanning set and let $K\subset M$ be a subset such that $\mu(K)=\beta$. Then there exists a separating $(T,2\epsilon, \alpha+\beta-\mu(M),\mu)$-spanning set $E'\subset K$ such that $\# E'\leq \# E$.
\end{lemm}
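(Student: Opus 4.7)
The plan is to merge the separating-pruning step from the proof of Lemma~\ref{lemme:comparaison_M_M'} with the replacement of each point of $E$ by a nearby point of $K$ into a single greedy pass, so that the spanning radius only doubles once. Setting $\Omega = \bigcup_{x\in E} B(x,\epsilon,T)$, the spanning hypothesis gives $\mu(\Omega)\geq \alpha$, and inclusion-exclusion yields
\[
\mu(\Omega \cap K) \geq \mu(\Omega) + \mu(K) - \mu(M) \geq \alpha + \beta - \mu(M),
\]
which is the mass the new set $E'$ will need to cover with its $2\epsilon$-dynamical balls.

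I would enumerate $E = \{x_1,\dots,x_M\}$ and construct $E'$ inductively: starting from $E'_0 = \emptyset$, at step $i$ consider
\[
C_i = (B(x_i,\epsilon,T)\cap K) \setminus \bigcup_{y \in E'_{i-1}} B(y,2\epsilon,T).
\]
If $C_i \neq \emptyset$, pick some $y_i \in C_i$ and set $E'_i = E'_{i-1} \cup \{y_i\}$; otherwise set $E'_i = E'_{i-1}$. Define $E' = E'_M$. By construction $E' \subset K$ and $\#E' \leq \#E$.

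The $(\epsilon,T)$-separating property is immediate and in fact stronger: for $y_j, y_i \in E'$ with $j<i$ the defining condition of $C_i$ gives $y_i \notin B(y_j,2\epsilon,T)$, so some $t\in[0,T]$ satisfies $d(\phi_t(y_i),\phi_t(y_j))\geq 2\epsilon$. For the spanning property, I would show that for each index $i$, $B(x_i,\epsilon,T)\cap K \subset \bigcup_{y \in E'} B(y,2\epsilon,T)$: if $y_i$ was added then the triangle inequality gives $B(x_i,\epsilon,T)\subset B(y_i,2\epsilon,T)$, and if not then by definition $C_i=\emptyset$, so $B(x_i,\epsilon,T)\cap K \subset \bigcup_{y \in E'_{i-1}} B(y,2\epsilon,T)$. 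Taking the union over $i$ and combining with the mass bound above gives $\mu(\bigcup_{y\in E'} B(y,2\epsilon,T))\geq \alpha+\beta-\mu(M)$, as required.

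The main subtlety is radius book-keeping: a naive two-step procedure, first replacing each $x_i$ by a point of $B(x_i,\epsilon,T)\cap K$ and then applying the pruning of Lemma~\ref{lemme:comparaison_M_M'} to make the result separating, would double the radius twice and yield only a $(T,4\epsilon,\dots,\mu)$-separating spanning set. Performing both operations in a single pass, by choosing the new point inside $B(x_i,\epsilon,T)\cap K$ while forcing it to lie outside the $2\epsilon$-dynamical neighbourhoods of the previously chosen points, keeps the spanning radius at $2\epsilon$ and trades no strength on the separating side.
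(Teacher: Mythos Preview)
Your proof is correct and follows essentially the same greedy construction as the paper. The only difference is cosmetic: you exclude the $2\epsilon$-dynamical neighbourhoods of the already-chosen points, whereas the paper excludes only the $\epsilon$-neighbourhoods; this gives you a $(2\epsilon,T)$-separating set rather than the paper's $(\epsilon,T)$-separating one, but since the definition of a separating $(T,2\epsilon,\cdot,\mu)$-spanning set only demands $(\epsilon,T)$-separation, both variants work and the spanning argument is identical.
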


\begin{proof}
We construct the set $E'$ inductively as follows. Let us enumerate $E$ as $\{x_1,\dots,x_N\}$.
\begin{enumerate}
    \item If $B(x_1,\epsilon,T)\cap K\neq \emptyset$, then we choose $y_1\in B(x_1,\epsilon,T)\cap K$ and set $E'_1 = \{y_1\}$. Otherwise $E'_1=\emptyset$.
    \item For $i\geq 2$, if \[B(x_i,\epsilon,T)\cap K\cap (M\setminus \bigcup_{y_j\in E'_{i-1}}B(y_j,\epsilon,T))\neq \emptyset\, ,\]
    then we choose $y_i\in B(x_i,\epsilon,T)\cap K\cap (M\setminus \bigcup_{y_j\in E'_{i-1}}B(y_j,\epsilon,T))$ and add $y_i$ to $E'_{i-1}$ to obtain $E'_i$. Otherwise let $E'_{i}=E'_{i-1}$.
\end{enumerate}
Let $E'=E'_N$.
Observe that $\#E'\leq \#E$ and that $E'\subset K$.
Moreover, we have
\begin{equation}\label{inter dans K aussi dans boules 2 eps}
 \bigcup_{x\in E}B(x,\epsilon,T)\cap K\subset \bigcup_{y\in E'}B(y,2\epsilon,T)\, .
\end{equation}
Indeed, if $y_i\in E'$, then in particular $y_i\in B(x_i,\epsilon,T)\cap K$ and so
\[
B(x_i,\epsilon,T)\cap K\subset B(x_i,\epsilon,T)\subset B(y_i,2\epsilon,T)\,.
\]
If $x_i$ is such that there is no corresponding $y_i\in E'$, then $B(x_i,\epsilon,T)\cap K\cap (M\setminus \bigcup_{y_j\in E'_{i-1}}B(y_j,\epsilon,T))=\emptyset$. If $B(x_i,\epsilon,T)\cap K=\emptyset$, then the empty set is clearly contained in $\bigcup_{y\in E'}B(y,2\epsilon,T)$.
If not, the only possibility is that 
$B(x_i,\epsilon,T)\cap K\subset \bigcup_{y_j\in E'_{i-1}}B(y_j,\epsilon,T)$; then
\[
B(x_i,\epsilon,T)\cap K\subset \bigcup_{y\in E'}B(y,\epsilon,T)\subset \bigcup_{y\in E'}B(y,2\epsilon,T)\, .
\]
We now argue that $E'$ is a $(T,2\epsilon, \alpha+\beta-\mu(M),\mu)$-separating set.
Indeed, from \eqref{inter dans K aussi dans boules 2 eps},
\begin{align*}
    \mu\left( \bigcup_{y\in E'}B(y,2\epsilon,T) \right)&\geq \mu\left( \bigcup_{x\in E}B(x,\epsilon,T) \cap K\right)\\
    &\geq \mu\left( \bigcup_{x\in E}B(x,\epsilon,T) \right)+\mu(K)-\mu(M)\\
    &\geq \alpha+\beta-\mu(M)\, ,
\end{align*}
where the last inequality comes from $E$ being a $(T,\epsilon,\alpha, \mu)$-spanning set.
Moreover the set $E'\subset K$ is also separating. Indeed, let $y,y'\in E'$, $y\neq y'$. In particular, by the construction of $E'$, it means that $y'\notin B(y,\epsilon,T)$ (or viceversa): there exists $\tau\in[0,T]$ such that $d(\phi_{\tau}(y),\phi_{\tau}(y'))\geq \epsilon$.
\end{proof}

\begin{proof}[Proof of Theorem~\ref{comparison-Kat-BK}]
Let $\mu\in\mathcal{M}_\phi$. We are now going to prove that the Katok entropy of $\mu$ is smaller or equal to the Gurevic entropy.
Let $\epsilon>0$ and $\alpha_1\in(0,1)$.
Pick $\alpha'\in(0,1)$ such that $ \alpha_1+\alpha'\in(0,1)$.
Let $\alpha_0=\alpha_1+\alpha'$.

Let $K\subset M$ be a compact subset with nonempty interior such that $\mu(K)\geq 1-\alpha'/2$ (such a $K$ exists as $M$ is exhaustible by compact sets).
Let us apply Lemma~\ref{petal separe} at $K$, $\delta=\frac{\epsilon}{3}$, $\nu=\tau_K$ and $N=1$. The lemma gives us constants $\sigma>0, T_\mathrm{min}>0$. Let $T\geq T_\mathrm{min}$ and $K'=K\cap \phi_{-T}(K)$.
Observe that $\mu(K')\geq 1-\alpha'$ as $\mu$ is $\phi$-invariant.
Let $E$ be a
$(T,\epsilon,\alpha_0,\mu)$ spanning set. By Lemma \ref{lemma replace}, there exists a separating $(T,2\epsilon,\alpha_1,\mu)$ spanning set $E'$ such that $\#E'\leq \#E$ and $E'\subset K'$.

We can associate to every $x\in E'$ a periodic point $y$, thanks to Lemma \ref{petal separe}. Indeed, since $x\in E'\subset K'$, it holds that both $x$ and $\phi_T(x)$ belongs to $K$, and by construction $T\geq T_\mathrm{min}$. Thus, there exists $y\in M$ and $L\in[T+\sigma-2\tau_K,T+\sigma+2\tau_K]$ so that $\phi_L(y)=y$ and $d(\phi_t(y),\phi_t(x))<\frac{\epsilon}{3}$ for every $t\in[0,T]$.

We can give an upper bound on the number of points in $E'$ that could be associated to the same periodic point. Fix $x,x'\in E'$ with $x\neq x'$. Let  $\gamma$ and $\gamma'$ be the associated periodic orbits. Assume $\gamma=\gamma'$.
Then, there exists $u$ such that, for all $s\in[0,T]$,
\begin{equation*}
    d(\phi_s(x),\gamma(s))<\frac{\epsilon}{3}\qquad\text{and}\qquad d(\phi_s(x'),\gamma(s+u))<\frac{\epsilon}{3}\,.
\end{equation*}
Moreover, since $E'$ is $(\epsilon,T)$ separating, then there exists $\tau\in[0,T]$ such that
\begin{equation*}
    d(\phi_\tau(x),\phi_\tau(x'))\geq \epsilon\,.
\end{equation*}
Therefore,
\begin{equation*}
    d(\phi_\tau(y),\phi_\tau(y'))=d(\gamma(\tau),\gamma(\tau+u))\geq \frac{\epsilon}{3}\,.
\end{equation*}
From the right inequality of \eqref{eqn:minoration}, we deduce that $\vert u\vert \geq \epsilon/3b$. Consequently, there are at most $\left\lceil(T+\sigma+2\tau_K)\frac{3b}{\epsilon}\right\rceil$ points of $E'$ that could correspond to points on the same periodic orbit. This implies that
\[
\# E'\leq \left\lceil\frac{3b}{\epsilon}(T+\sigma+2\tau_K)\right\rceil\,\#\mathcal{P}_K(T+\sigma-2\tau_K,T+\sigma+2\tau_K)\,.
\]
By the definition of Katok entropy for $\mu$, by Proposition \ref{ind gur entropy} and by Corollary \ref{coro katok entropy}, we deduce that
\[ \limsup_{T\to+\infty}\dfrac{1}{T}\log\left(M'(T,2\epsilon,\alpha_1,\mu)\right)\leq h_\mathrm{Gur}(\phi)\]
and therefore
\[
h_\mathrm{Kat}(\mu)\leq h_\mathrm{Gur}(\phi)\,.
\]
\end{proof}

\subsection{Comparison of entropies at infinity}

With similar ideas, we can also compare Gurevic and variational entropies at infinity.

\begin{theo}[Comparison of entropies at infinity]\label{theo:comparison-infinity} Let $\phi$ be a $H$-flow on $(M,d)$. The entropies at infinity satisfy
 \[
  h_{\mathrm{var}}^\infty(\phi) \le h_{\mathrm{Gur}}^\infty(\phi)\,.
 \]
\end{theo}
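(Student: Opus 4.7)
The plan is to show, for every compact $K_0 \subset M$ with nonempty interior and every $\alpha_0 > 0$, the inequality $h_\mathrm{var}^\infty(\phi) \le h_\mathrm{Gur}^{K_0,\alpha_0}(\phi)$; taking infima over $\alpha_0$ and $K_0$ then yields the theorem. By the variational formulation of $h_\mathrm{var}^\infty$ using Katok entropies (Definition~\ref{variational infty entropy}), it is enough to exhibit a compact $K_* \subset M$ and some $\epsilon_0 > 0$ such that every ergodic $\phi$-invariant probability measure $\mu$ with $\mu(K_*) \le \epsilon_0$ satisfies $h_\mathrm{Kat}(\mu) \le h_\mathrm{Gur}^{K_0,\alpha_0}(\phi)$.

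Fix a shadowing precision $\delta > 0$, choose $K_* \supset \overline{B(K_0, \delta)}$ compact, and set $\epsilon_0 := \alpha_0/4$. For an ergodic $\mu$ with $\mu(K_*) \le \epsilon_0$, Birkhoff's ergodic theorem combined with Egorov's theorem produces, for any small $\alpha' > 0$, a compact $K_1 \supset K_*$ with $\mu(K_1) \ge 1 - \alpha'$, a measurable set $A \subset K_1$ with $\mu(A) \ge 1 - \alpha'$, and a time $T_1$ such that
\[
\tfrac{1}{T}\int_0^T \mathbf{1}_{K_*}(\phi_s x)\, ds \;<\; \tfrac{\alpha_0}{2}
\quad\text{for every } x \in A \text{ and } T \ge T_1.
\]
By $\phi$-invariance, $\mu(A \cap K_1 \cap \phi_{-T}(K_1)) \ge 1 - 3\alpha'$ for all $T$.

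I would then mimic the proof of Theorem~\ref{comparison-Kat-BK}, with an extra bookkeeping on time spent in $K_0$. Fix $\alpha_1 \in (0,1)$ and $\epsilon > 0$. Starting from a minimal $(T, \epsilon, \alpha_1 + 3\alpha', \mu)$-spanning set, Lemma~\ref{lemma replace} applied with the set $A \cap K_1 \cap \phi_{-T}(K_1)$ extracts a separating $(T, 2\epsilon, \alpha_1, \mu)$-spanning set $E' \subset A \cap K_1 \cap \phi_{-T}(K_1)$ of cardinality at most $M(T, \epsilon, \alpha_1 + 3\alpha', \mu)$. For each $x \in E'$, the piece $\phi_{[0,T]}(x)$ is closed up via Lemma~\ref{petal separe} with $N = 1$, outer compact $K_1$, inner compact $K' \subset \inter{K_0}$, precision $\delta$, and a fixed $S \ge \sigma$ depending only on $K_1$ and $K'$; this yields a periodic orbit $\gamma$ of length $L = T + S + O(1)$ that meets $\inter{K'} \subset K_0$ and $\delta$-shadows $\phi_{[0,T]}(x)$. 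On the shadowing interval $[0, T]$ the implication $\gamma(s) \in K_0 \Rightarrow \phi_s(x) \in \overline{B(K_0, \delta)} \subset K_*$, together with the Birkhoff control on $A$, gives $\int_0^T \mathbf{1}_{K_0}(\gamma(s))\, ds \le \int_0^T \mathbf{1}_{K_*}(\phi_s x)\, ds < \tfrac{\alpha_0}{2} T$; on the closing arc of uniformly bounded length $C_1$, the contribution is at most $C_1$. Hence for $T$ large, $\ell(\gamma \cap K_0) < \alpha_0 L$, so $\gamma \in \mathcal{P}_{K_0}^{\alpha_0}(L, L + \sigma^*)$ for a fixed window $\sigma^*$.

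Finally, the injectivity bound from Theorem~\ref{comparison-Kat-BK} (based on $E'$ being $\epsilon$-separating and on the Lipschitz estimate~\eqref{eqn:minoration}) controls the preimages of the map $x \mapsto \gamma$ by $O(T)$. Consequently $M(T, \epsilon, \alpha_1 + 3\alpha', \mu) \le O(T) \cdot \#\mathcal{P}_{K_0}^{\alpha_0}(L, L + \sigma^*)$, and taking $\limsup_T \tfrac{1}{T}\log$ combined with Corollary~\ref{coro katok entropy} gives $h_\mathrm{Kat}(\mu) \le h_\mathrm{Gur}^{K_0,\alpha_0}(\phi)$. The hard part will be handling ergodic measures with $\mu(K_0) = 0$: their typical orbits need not visit $K_0$ at all, so shadowing alone cannot produce elements of $\mathcal{P}_{K_0}^{\alpha_0}$; forcing $\gamma$ through $\inter{K'} \subset \inter{K_0}$ via Lemma~\ref{petal separe} resolves this at the cost of a closing arc whose bounded length contributes negligibly to $\ell(\gamma \cap K_0)/L$ asymptotically.
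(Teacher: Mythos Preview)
Your proposal is correct and follows essentially the same approach as the paper's proof: approximate $h_\mathrm{var}^\infty$ by the Katok entropy of an ergodic measure giving small mass to a large compact, use Birkhoff's theorem to control time averages on a set of nearly full measure, apply Lemma~\ref{lemma replace} to relocate the spanning set into this good set, then close up each piece of orbit via Lemma~\ref{petal separe} forcing the resulting periodic orbit through a small ball inside $K_0$, and finally bound preimages as in Theorem~\ref{comparison-Kat-BK}. The only cosmetic difference is the order of quantifiers: you fix $K_0,\alpha_0$ first and construct $K_*,\epsilon_0$, whereas the paper simultaneously chooses $K_0$, $\eta$, and $\mu$ to approximate both entropies at infinity up to a prescribed error $\alpha$; the core argument is identical.
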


\begin{proof}
If $ h_{\mathrm{Gur}}^\infty(\phi)=+\infty$ there is nothing to prove.
We now assume $h_{\mathrm{Gur}}^\infty(\phi)<\infty$.

Assume $h_{\mathrm{var}}^\infty(\phi)<\infty$.
Fix  some small $\alpha>0$. Thanks to Remark \ref{invert order in h infty var}, we choose a large compact set $K_0$, a small $0<\eta<\frac 1 2$, and an ergodic probability measure
$\mu\in\mathcal{M}_\varphi^\mathrm{erg}$ with $\mu(K_0)\le \eta$ so that
\[
\left| h_\mathrm{Gur}^\infty(\varphi)-h_\mathrm{Gur}^{K_0,4\eta}(\phi)\right|\le \alpha\quad\mbox{and}\quad
\left|h_\mathrm{var}^\infty(\varphi)-h_\mathrm{Kat}(\mu)\right|\le \alpha\,.
\]

We follow very closely the proof of Theorem~\ref{comparison-Kat-BK}, with a few modifications.

Let $\epsilon>0$ and $\alpha_1\in(0,1)$.
Pick $\alpha'$ such that $\alpha_0= \alpha_1+\alpha'\in(0,1)$. Fix $w\in \mathrm{int}(K_0)$.
Choose $\delta\leq \frac{\epsilon}{3}$ small enough so that $\mu(B(K_0,\delta))\leq 2\eta$ and $B(w,\delta)\subset K_0$.
Fix $K_2$ a compact set such that $K_2\supset B(K_0,\delta)$ and $\mu(K_2)\ge 1-\frac{\alpha'}{4}$.
Let $\Omega = B(K_0,\delta)$.

By Birkhoff ergodic theorem, for $\mu$-almost every $x\in M$,
\[
\lim_{T\to\infty}\frac{1}{T}\int_0^T 1_{\Omega}(\phi_t(x))\,dt  = \mu(\Omega)\, ,
\]
where $1_\Omega$ is the indicator function of $\Omega$.
As $\mu(\Omega)\le 2\eta$ and $\mu(K_2)\ge 1-\frac{\alpha'}{4}$, there exists a subset $A\subset K_2$ with $\mu(A)\ge 1-\frac{\alpha'}{2}$ and $S_\mathrm{min}>0$, such that for all $x\in A$ and all $T\geq S_{\mathrm{min}}$,
\begin{equation}\label{Birk}
\frac{1}{T}\int_0^T 1_{\Omega}(\phi_t(x))\,dt \le 3\eta\,.
\end{equation}
Note that, elements of the closure of $A$, denoted as $\overline{A}$, also satisfies condition~(\ref{Birk}). Indeed, let $(x_n)_n$ be a sequence of elements of $A$ converging to $x$. Then, for all $T\geq S_\mathrm{min}$, as $\Omega$ is an open set and by Fatou's lemma
\[
\frac{1}{T}\int_0^T 1_{\Omega}(\phi_t(x))\,dt \leq \frac{1}{T}\int_0^T \liminf_{n\to\infty} 1_{\Omega}(\phi_t(x_n))\,dt \leq \liminf_{n\to\infty}\frac{1}{T}\int_0^T 1_{\Omega}(\phi_t(x_n))\,dt\leq 3\eta.
\]

We will use Lemma~\ref{petal separe} with parameters $\overline{B(w,\delta)}\subset K_2$, $\delta$ as above, $\nu=\tau_{K_2}$ and $N=1$.
This lemma gives us constants $T_\mathrm{min}$ and $\sigma$.
Let $T\geq \max(T_\mathrm{min}, S_\mathrm{min})$.
Set $K_1=\overline{A}\cap \phi_{-T}(\overline{A})$, and observe that $\mu(K_1)\geq  1-\alpha'$.

Let $E$ be a separating $(T,\epsilon,\alpha_0,\mu)$-spanning set. Without loss of generality, we can assume that for every $x\in E$, $\mu(B(x,\epsilon,T))>0$.

By Lemma~\ref{lemma replace}, there exists a separating $(T,2\epsilon,\alpha_1,\mu)$ spanning set $E'$ such that $\#E'\leq \#E$ and $E'\subset K_1$.
Without loss of generality, we can assume that for every $x\in E'$, $\mu(B(x,2\epsilon,T))>0$.

Observe that, since $E'\subset \overline{A}$, every point $x\in E'$ satisfies inequality \eqref{Birk}. Fix now a point $x\in E'$. Note that $x\in K_2$ and $\phi_T(x)\in K_2$.

By Lemma~\ref{petal separe}, we obtain a periodic orbit $\gamma$ such that
\begin{itemize}
    \item for all $t\in [0,T]$, we have $d(\phi_t(x),\gamma(t))\leq \delta$;
    \item $\ell(\gamma)\in [T+\sigma-2\tau_{K_2},T+\sigma+2\tau_{K_2}] $;
    \item $\gamma$ intersects $B(w,\delta)$.
\end{itemize}
Therefore, since by \eqref{Birk} the piece of orbit $\phi_{[0,T]}(x)$ spends at most a total amount of time of $3\eta T$ in $B(K,\delta)=\Omega$, we have
\[
\ell(\gamma\cap K_0) = \ell(\gamma_{[0,T]}\cap K_0) +\ell(\gamma_{[T,\ell(\gamma)]}\cap K_0)\leq 3\eta T + \sigma + 2\tau_{K_2}.
\]
Thus $\ell(\gamma\cap K_0) \leq 4\eta \ell(\gamma)$ for $T\gg 1$ and therefore $\gamma\in \mathcal{P}_{K_0}^{4\eta}(T+\sigma-2\tau_{K_2},T+\sigma+2\tau_{K_2})$.

The end of the proof is the same as the one of Theorem~\ref{comparison-Kat-BK}.
Since $E'$ is $(\epsilon,T)$-separating, we deduce that
\[
\# E'\leq \left\lceil\frac{3b}{\epsilon}(T+\sigma+2\tau_{K_2})\right\rceil\,\#\mathcal{P}_{K_0}^{4\eta}(T+\sigma-2\tau_{K_2},T+\sigma+2\tau_{K_2})\,.
\]
By the definition of Katok entropy for $\mu$, by Proposition \ref{ind gur entropy} and by Corollary \ref{coro katok entropy}, we deduce that
\[
\limsup_{T\to+\infty}\dfrac{1}{T}\log  M'(T,2\epsilon,\alpha_1,\mu) \leq h_{\mathrm{Gur}}^{K_0,4\eta}(\phi)
\]
and therefore
\[
h_\mathrm{Kat}(\mu)\leq h_\mathrm{Gur}^{K_0,4\eta}(\phi)\,,
\]
and at the end
\[
h_{\mathrm{var}}^\infty(\phi)\le h_{\mathrm{Gur}}^\infty(\phi)+2\alpha\,.
\]
As $\alpha$ was arbitrary, the result follows.

If $h_{\mathrm{var}}^\infty(\phi)=+\infty$, we choose $\mu$ ergodic such that $h_\mathrm{Kat}(\mu) \geq N$ and proceed as in the previous case. We obtain
$N\leq h_\mathrm{Gur}^{K_0,4\eta}(\phi)$ for arbitrarily big $N$
and therefore $h_{\mathrm{Gur}}^\infty(\phi) = \infty$.
\end{proof}

\subsection{Strong positive recurrence}

In \cite{ST19,GST}, for geodesic flows in negative curvature, the geodesic flow is said {\em  strongly positively recurrent} if its entropy at infinity is strictly smaller than its topological entropy. In this context, all notions of entropy (resp. entropy at infinity) coincide, as proven in \cite{GST}. Here, it  is not the case. That motivates the following terminology.

\begin{defi}\label{def:SPR}
The flow $(\phi_t)_{t\in\mathbb R}$ is \emph{$h_{\mathrm Gur}$-strongly positively recurrent} if $h_{\mathrm Gur}^\infty(\phi) < h_{\mathrm Gur}(\phi)$.
It is  \emph{$h_{\mathrm var}$-strongly positively recurrent} if $h_{\mathrm var}^\infty(\phi) < h_{\mathrm var}(\phi)$.
\end{defi}

\begin{rema}
Theorems~\ref{theo:comparison-infinity}  and \ref{theo:main}  show that  if $\phi$ is a $h_{\mathrm Gur}$-strongly positively recurrent $H$-flow, then it is also  \emph{$h_{\mathrm var}$-strongly positively recurrent}.
\end{rema}

\subsection{Gurevic entropies}

The end of this section is devoted to the proof of the following proposition.

\begin{prop}\label{prop_entropies_Gur_finies_simultanement}
Let $\phi$ be a $H$-flow. Then $h_\mathrm{Gur}^\infty (\phi)<\infty$ if and only if $h_\mathrm{Gur}(\phi)<\infty$.
\end{prop}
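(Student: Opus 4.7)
The implication $h_\mathrm{Gur}(\phi)<\infty \Rightarrow h_\mathrm{Gur}^\infty(\phi)<\infty$ is immediate: for every compact $K\subset M$ with nonempty interior and every $\alpha>0$, the inclusion $\mathcal{P}_K^\alpha(L,L+\sigma)\subset\mathcal{P}_K(L,L+\sigma)$ gives $h_\mathrm{Gur}^{K,\alpha}(\phi)\leq h_\mathrm{Gur}(\phi)$, whence $h_\mathrm{Gur}^\infty(\phi)\leq h_\mathrm{Gur}(\phi)$ after passing to the limit $\alpha\to 0$ and the infimum over $K$.

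For the converse, my plan is to establish the stronger unconditional statement that $h_\mathrm{Gur}(\phi)\leq n\log lip(\phi)<\infty$, where $n=\dim M$, which makes the converse automatic. Fix a compact $K\subset M$ with nonempty interior and a small $\nu>0$; Lemma~\ref{lemma on same po} produces $\tau_0>0$ and, for $\tau_1\geq 1$ fixed, a separation constant $\epsilon_1>0$. Using the independence of $h_\mathrm{Gur}$ on $\sigma$ from Theorem~\ref{theo:Gurevic}, I take $0<\sigma\leq\min(\tau_0,\tau_K/2)$; the condition $\sigma<\tau_K$ rules out any ambiguity from periodic orbits with the same image but different multiplicities. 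To each $\gamma\in\mathcal{P}_K(L,L+\sigma)$ I associate an arbitrary representative $x_\gamma\in\gamma\cap K$. The contrapositive of Lemma~\ref{lemma on same po} then asserts that, for any two distinct orbits $\gamma\neq\gamma'$, there exists $s\in[0,L-\tau_1]$ with $d(\phi_s(x_\gamma),\phi_s(x_{\gamma'}))\geq\epsilon_1$: in other words, $\{x_\gamma\}_\gamma$ is an $(\epsilon_1,L-\tau_1)$-dynamically separated subset of $K$.

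Iterating the Lipschitz bound \eqref{lipsch} yields $d(\phi_t(x),\phi_t(y))\leq lip(\phi)^{\lceil t\rceil}d(x,y)$ for all $x,y\in M$ and $t\geq 0$, so that two points in $K$ at Riemannian distance at most $\epsilon_1\,lip(\phi)^{-L}$ remain within $\epsilon_1$ for all $t\in[0,L-\tau_1]$ and cannot be dynamically separated. The classical covering estimate $N(K,r)\leq C_K r^{-n}$ for a compact subset of a smooth $n$-dimensional Riemannian manifold then bounds the cardinality of the separated family by $C\cdot lip(\phi)^{nL}$, hence $\#\mathcal{P}_K(L,L+\sigma)\leq C\,lip(\phi)^{nL}$ and $h_\mathrm{Gur}(\phi)\leq n\log lip(\phi)<\infty$.

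\textbf{Main obstacle.} The statement of the Proposition is rendered essentially automatic by this unconditional exponential upper bound, whose only delicate points are verifying via Lemma~\ref{lemma on same po} that distinct periodic orbits yield separated representatives and ruling out period-multiplicity double counting by the choice $\sigma<\tau_K$. A proof avoiding the dimension $n$ and the Lipschitz constant would presumably split $\mathcal{P}_K(L,L+\sigma)=\mathcal{P}_K^\alpha(L,L+\sigma)\sqcup\mathcal{Q}_K^\alpha(L,L+\sigma)$, control $\#\mathcal{P}_K^\alpha$ via $h_\mathrm{Gur}^{K,\alpha}(\phi)$, and estimate $\#\mathcal{Q}_K^\alpha$ (orbits spending at least a fraction $\alpha$ of their time in $K$) by a topological-entropy-type argument localized to the compact set $K$; but any natural realization of this would still invoke the Lipschitz bound and the dimension of $M$.
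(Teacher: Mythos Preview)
Your argument is correct and takes a genuinely different, more direct route than the paper. You prove the unconditional bound $h_\mathrm{Gur}(\phi)\le n\log lip(\phi)<\infty$ by combining Lemma~\ref{lemma on same po}, the iterated Lipschitz estimate, and a covering bound on the compact $K$; both directions of the proposition then become trivial. One small point to tighten: to exclude distinct multiple covers of the same primitive orbit lying in $[L,L+\sigma]$, you should take $\sigma$ below the infimum of primitive periods of orbits meeting $K$ (positive by a flow-box argument on $K$ using the lower bound in~\eqref{eqn:minoration}), rather than $\sigma\le\tau_K/2$, since $\tau_K$ only governs periods $\ge 1$.

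The paper instead genuinely uses the hypothesis $h_\mathrm{Gur}^\infty(\phi)<\infty$. It fixes $K_0,\alpha_0$ with $h_\mathrm{Gur}^{K_0,\alpha_0}(\phi)<\infty$ and applies a tailored subadditivity inequality (Proposition~\ref{prop subadd gen variante}) relating $\#\mathcal P_{K_0}(L_1,\cdot)\cdot\#\mathcal P_{K_1}^{\alpha_0/3}(L_2,\cdot)$ to $\#\mathcal P_{K_0}^{\alpha_0}(L_1+L_2+\sigma,\cdot)$. A dichotomy follows: either $\mathcal P_{K_1}^{\alpha_0/3}(L,\cdot)\neq\emptyset$ for arbitrarily large $L$, and subadditivity together with Theorem~\ref{theo:Gurevic} (Gurevic entropy is a true limit) yields $h_\mathrm{Gur}(\phi)\le (h_\mathrm{Gur}^\infty(\phi)+2)(1+3/\alpha_0)$; or all periodic orbits through $K_0$ are confined to a compact set, and one invokes Bowen--Walters via Lemma~\ref{lemme:entropie_gur_compact}. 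Your approach is shorter and gives a clean explicit bound in terms of $n$ and $lip(\phi)$; the paper's approach exercises the subadditivity machinery of Section~\ref{sec:subadd} and produces a (loose) quantitative link between $h_\mathrm{Gur}$ and $h_\mathrm{Gur}^\infty$, at the cost of a case split and an appeal to the compact expansive-flow entropy bound.
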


We will start with two preliminary results.

The following lemma is a rephrasing of the finiteness of entropy on compact sets.
It will be useful to control the entropy using the entropy at infinity in the proof of the  proposition and also in the proof of Theorem~\ref{difficult-technical}.

\begin{lemm}\label{lemme:entropie_gur_compact}
Let $K_0\subset K_1$ be two compact subsets of $M$.
Let $\phi$ be an expansive flow on $M$.
Let $C>0$.
Then
\[
\limsup_{T\to\infty}\frac{1}{T}\log \#\{\gamma\in\mathcal P_{K_0}(T,T+C), \gamma\subset K_1 \} <\infty\, .
\]
\end{lemm}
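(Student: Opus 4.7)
The plan is to encode each periodic orbit $\gamma\subset K_1$ by the sequence of pieces of a finite cover of $K_1$ that it visits at integer times, and to use expansivity to bound how many orbits with the same code can coexist. First, apply expansivity (Definition~\ref{def:expansive}) with $\nu:=1/2$ to obtain $\epsilon_0>0$. By uniform continuity of the flow on the compact set $K_1\times[-1,1]$, choose $\delta_1>0$ such that $x,y\in K_1$, $d(x,y)\le\delta_1$, and $|s|\le 1$ imply $d(\phi_s(x),\phi_s(y))<\epsilon_0/4$, and choose $\epsilon_1\in(0,1)$ such that $x\in K_1$ and $|r|\le\epsilon_1$ imply $d(\phi_r(x),x)<\epsilon_0/4$. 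Let $\{B_1,\ldots,B_N\}$ be a finite cover of $K_1$ by open balls of radius $\delta_1/4$.

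Partition $[T,T+C]$ into $m:=\lceil C/\epsilon_1\rceil$ sub-intervals of length at most $\epsilon_1$. To each $\gamma\subset K_1$ with $\ell(\gamma)\in[T,T+C]$ and $\gamma\cap K_0\neq\emptyset$, associate an origin $p_\gamma\in\gamma\cap K_0$ and the code $\sigma(\gamma):=(i_0,\ldots,i_{M_T})$ where $M_T:=\lceil T+C\rceil$ and $\phi_k(p_\gamma)\in B_{i_k}$ (well-defined since $\gamma\subset K_1$). The number of possible codes is at most $N^{M_T+1}$. The heart of the argument is to show that two distinct orbits $\gamma_1,\gamma_2$ with the same code and whose periods $T_1,T_2$ lie in the same sub-interval cannot coexist. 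From the code agreement, $d(\phi_k(p_1),\phi_k(p_2))\le\delta_1$ for every integer $k\in[0,M_T]$, whence by the choice of $\delta_1$, $d(\phi_t(p_1),\phi_t(p_2))<\epsilon_0/4$ for every $t\in[0,M_T]$. Define $s\colon\R\to\R$ by $s(t):=t\,T_2/T_1$ on $[0,T_1]$ and extend by the rule $s(t+T_1)=s(t)+T_2$. Since $|T_2-T_1|\le\epsilon_1$, one has $|s(t)-t|\le\epsilon_1$ on $[0,T_1]$, hence by the choice of $\epsilon_1$,
\[
d(\phi_t(p_1),\phi_{s(t)}(p_2))\le d(\phi_t(p_1),\phi_t(p_2))+d(\phi_t(p_2),\phi_{s(t)}(p_2))<\epsilon_0/2.
\]
The periodicity of both orbits (combined with $s(t+T_1)=s(t)+T_2$) extends this inequality to all $t\in\R$, and expansivity then forces $p_2=\phi_\tau(p_1)$ with $|\tau|\le\nu$, i.e. $\gamma_1=\gamma_2$.

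Summing over the $m$ sub-intervals yields $\#\{\gamma\in\mathcal P_{K_0}(T,T+C),\,\gamma\subset K_1\}\le m\,N^{M_T+1}$, whose exponential growth rate in $T$ is bounded by $\log N<\infty$, which is the claim. The main obstacle I expect is the period-mismatch phenomenon: expansivity only yields a clean conclusion for reparametrizations $s$ with $|s(t)-t|$ uniformly small, and this is precisely what forces the sub-division of $[T,T+C]$ into pieces of length at most $\epsilon_1$. This sub-division only costs a multiplicative constant $m$ in the count, and therefore does not affect the finiteness of the exponential growth rate.
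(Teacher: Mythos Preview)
Your argument is correct (modulo one small point below) and takes a genuinely different route from the paper. The paper's proof is a one-liner: it restricts the flow to the closure $X$ of the union of periodic orbits contained in $K_1$, observes that $X$ is compact and that the restricted flow is still expansive, and then invokes Bowen--Walters' theorem that expansive flows on compact metric spaces have finite topological entropy. Your approach is a direct, self-contained coding argument via itineraries through a finite cover, which is essentially a hands-on reproof of the relevant part of Bowen--Walters in this specific situation. The paper's route is shorter; yours is more elementary and does not rely on an external black box.

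The one point to tighten: from $p_2=\phi_\tau(p_1)$ you conclude ``$\gamma_1=\gamma_2$'', but elements of $\mathcal P_{K_0}(T,T+C)$ are couples (image, period), so you must also check $T_1=T_2$. Both $T_1$ and $T_2$ are periods of the common image, hence $|T_1-T_2|$ is an integer multiple of the primitive period $\ell$; since $|T_1-T_2|\le\epsilon_1$, this forces $T_1=T_2$ provided $\epsilon_1<\ell$. Expansivity on a connected manifold rules out fixed points (take $s\equiv 0$ in Definition~\ref{def:expansive}), so a standard limiting argument gives a uniform lower bound $\ell_{\min}>0$ on primitive periods of orbits contained in the compact $K_1$. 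Shrinking $\epsilon_1$ so that $\epsilon_1<\ell_{\min}$ closes the gap without affecting the rest of your estimates.
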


\begin{proof}
Consider the closure of the set of all periodic orbits of $\phi$ that are contained in the compact set $K_1$. Denote such a closed set by $X$. Then, endowing it with the distance inherited from that of the whole $M$, the set $X$ is a metric space. The flow $\phi$ restricted to $X$ is still expansive. Since we can find a bigger compact set that contains it, the set $X$ is also compact. Applying then \cite[Theorem 5]{BowWal}, the result follows immediately.
\end{proof}

The following proposition is an adaptation of Proposition~\ref{prop subadd gen}.

\begin{prop}\label{prop subadd gen variante}
Let $\phi\colon M\to M$ be a $H$-flow. Let $K_0$, $K_1$ be two compact subsets of $M$ with nonempty interior and such that $K_0\subset \inter{K_1}$.
Let $0<\alpha<1$ and $\sigma_0\geq 4\tau_{K_1}$.
There exist constants $D>0$ and $\sigma>0$ such that for all $L_1,L_2\gg 1$ with $\frac{\alpha}{3}L_2\ge L_1$, one has
\begin{equation*}
\# \mathcal{P}_{K_0}(L_1,L_1+\sigma_0)\, \#\mathcal{P}^{\alpha/3}_{K_1}(L_2,L_2+\sigma_0)\leq D\, (L_1+L_2)\, \#\mathcal{P}^\alpha_{K_0}(L_1+L_2+\sigma, L_1+L_2+\sigma+\sigma_0)\, .
\end{equation*}
\end{prop}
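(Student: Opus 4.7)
The approach mirrors the proof of Proposition~\ref{prop subadd gen}, but must additionally ensure that the constructed long periodic orbit lies in $\mathcal{P}^\alpha_{K_0}$, i.e.\ spends less than an $\alpha$-fraction of its time inside $K_0$. The hypothesis $L_1 \le (\alpha/3) L_2$ is what will make this possible. I will build a map
\[
f\colon \mathcal{P}_{K_0}(L_1,L_1+\sigma_0)\times \mathcal{P}^{\alpha/3}_{K_1}(L_2,L_2+\sigma_0)\to \mathcal{P}^\alpha_{K_0}(L_1+L_2+\sigma, L_1+L_2+\sigma+\sigma_0)
\]
whose fibers have cardinality $O(L_1+L_2)$, and the proposition follows.

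For the construction of $f$, I apply Lemma~\ref{petal separe} with the pair $K_0\subset \inter{K_1}$, with $N=2$, $\nu=1$, and shadowing precision $\delta>0$ chosen small enough that (i) $\delta < d(K_0, M\setminus K_1)$, and (ii) $\delta \le \epsilon/4$ where $\epsilon$ comes from Lemma~\ref{lemma on same po} applied with $\nu=1$ and $\tau_1 = 2\tau_0 + 2\tau_{K_1}$. This supplies $T_\mathrm{min}$ and $\sigma'$. Given $(\gamma_1,\gamma_2)$, I reparametrize so that $\gamma_1(0)\in K_0$ and $\gamma_2(0)\in K_1$, and apply the lemma with $T_1=\ell(\gamma_1)$, $T_2=\ell(\gamma_2)$, and $S = \tfrac{1}{2}\bigl(L_1+L_2+2\sigma_0-\ell(\gamma_1)-\ell(\gamma_2)\bigr)+\sigma'+\tau_{K_1}\ge \sigma'$. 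Exactly as in Proposition~\ref{prop subadd gen}, setting $\sigma = 2\sigma_0 + 2\sigma' + \tau_{K_1} - 1$, this yields a periodic orbit $\gamma=f(\gamma_1,\gamma_2)$ of period in $[L_1+L_2+\sigma, L_1+L_2+\sigma+\sigma_0]$ that intersects $\inter{K_0}$ and $\delta$-shadows $\gamma_1$ on a piece of length $\ell(\gamma_1)$ and $\gamma_2$ on a piece of length $\ell(\gamma_2)$, separated by two connecting pieces of lengths $\tau_1,\tau_2\in[S-\tau_{K_1},S+\tau_{K_1}]$.

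The crux of the argument is to verify that $\gamma\in \mathcal{P}^\alpha_{K_0}$. I split $\ell(\gamma\cap K_0)$ according to the four subintervals. The $\gamma_1$-shadowing contributes at most $\ell(\gamma_1)\le L_1+\sigma_0$. On the $\gamma_2$-shadowing, if $\gamma(t)\in K_0$ then the corresponding point of $\gamma_2$ lies in $B(K_0,\delta)\subset K_1$ by choice of $\delta$; hence this piece contributes at most $\ell(\gamma_2\cap K_1) < \tfrac{\alpha}{3}\ell(\gamma_2) \le \tfrac{\alpha}{3}(L_2+\sigma_0)$. Because $\ell(\gamma_i)\in[L_i,L_i+\sigma_0]$, one has $S\le \sigma'+\tau_{K_1}+\sigma_0$, so the two connecting pieces contribute at most a constant $C_0 = 2(S+\tau_{K_1})$. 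Using $L_1\le \tfrac{\alpha}{3}L_2$, I obtain
\[
\ell(\gamma\cap K_0) \;\le\; L_1 + \sigma_0 + \tfrac{\alpha}{3}(L_2+\sigma_0) + C_0 \;\le\; \tfrac{2\alpha}{3}L_2 + C_1
\]
for a constant $C_1$ depending only on $\alpha,\sigma_0,K_0,K_1$. Since $\ell(\gamma)\ge L_2$ and $\ell(\gamma)\ge L_1+L_2+\sigma$, the remainder $\alpha\ell(\gamma) - \tfrac{2\alpha}{3}L_2 \ge \tfrac{\alpha}{3}\ell(\gamma)$ grows linearly, so for $L_1+L_2$ large enough we get $\ell(\gamma\cap K_0)<\alpha\ell(\gamma)$ as required.

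The preimage bound is then identical to Step~2 of the proof of Proposition~\ref{prop subadd gen}: if $(\gamma_1,\gamma_2)$ and $(\tilde\gamma_1,\tilde\gamma_2)$ have the same image with origin shift $|s_0|<\epsilon/(4b)$ and length defects $<\tau_0$, triangle inequality together with Lemma~\ref{lemma on same po} forces $(\gamma_1,\gamma_2)=(\tilde\gamma_1,\tilde\gamma_2)$; covering the possible $(s_0,\ell(\tilde\gamma_1),\ell(\tilde\gamma_2))$ by $O(L_1+L_2)$ boxes yields the desired constant $D$. The main obstacle is the bookkeeping in the previous paragraph: one must arrange $\delta<d(K_0,M\setminus K_1)$ \emph{before} applying Lemma~\ref{petal separe} (which is where the strict inclusion $K_0\subset\inter{K_1}$ is used), and one must observe that the hypothesis $L_1\le \tfrac{\alpha}{3}L_2$ is essential — if $L_1\sim L_2$, the $\gamma_1$-contribution alone could exhaust the $\alpha$-budget and the statement would fail.
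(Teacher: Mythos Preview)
Your proposal is correct and follows essentially the same approach as the paper: adapt the construction and preimage bound from Proposition~\ref{prop subadd gen} verbatim, and insert the extra verification that $\ell(\gamma\cap K_0)<\alpha\ell(\gamma)$ via the chain $\ell(\gamma\cap K_0)\le \ell(\gamma_1)+\ell(\gamma_2\cap K_1)+\text{const}\le \tfrac{2\alpha}{3}L_2+\text{const}\le \alpha\ell(\gamma)$ for $L_2\gg1$. The paper's only cosmetic difference is that it phrases the smallness condition as $\epsilon/4<d(\partial K_0,\partial K_1)$ rather than $\delta<d(K_0,M\setminus K_1)$, but the role is identical.
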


\begin{figure}
    \centering
    \includegraphics{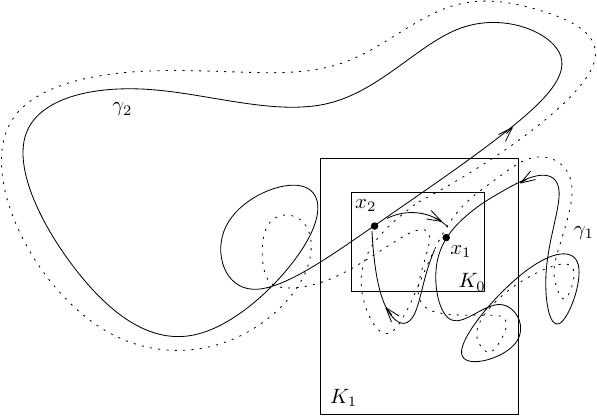}
    \caption{Modified subadditivity property}
    \label{fig:sous_additivite_simple_variante}
\end{figure}

\begin{proof}[Proof of Proposition~\ref{prop subadd gen variante}]
The proof is a direct adaptation of the proof of Proposition~\ref{prop subadd gen}.
See Figure~\ref{fig:sous_additivite_simple_variante}.
We import the notation from this proof. Choose $\epsilon>0$ as in the proof of Proposition~\ref{prop subadd gen}.
Without loss of generality, we can assume that $\epsilon/4<d(\partial K_0,\partial K_1)$.
The only new property to check is that if
$\gamma_1\in \mathcal{P}_{K_0}(L_1,L_1+\sigma_0)$,
$\gamma_2\in \mathcal{P}^{\alpha/3}_{K_1}(L_2,L_2+\sigma_0)$ and $\frac{\alpha}{3}L_2\ge L_1$
then $\gamma = f(\gamma_1,\gamma_2)$ satisfies $\ell(\gamma\cap K_0)< \alpha \ell(\gamma)$. The very same construction in the proof of Proposition~\ref{prop subadd gen} gives us the constants $D>0$ and $\sigma>0$.
Recall that $\gamma$ consists in four pieces
\begin{itemize}
    \item a piece $\epsilon/4$-close to $\gamma_1$ on an interval of length $\ell(\gamma_1)$;
    \item a piece $\epsilon/4$-close to $\gamma_2$ on an interval of length $\ell(\gamma_2)$;
    \item two transition pieces of total length $\leq \sigma+\sigma_0$.
\end{itemize}
Therefore
\begin{align*}
\ell(\gamma\cap K_0)&\leq \ell(\gamma_1)+ \ell(\gamma_2\cap K_1) + \sigma+\sigma_0 \\
&< \ell(\gamma_1)+ \frac{\alpha}{3}\ell(\gamma_2) +  \sigma+\sigma_0\\
&\leq L_1+\sigma_0 + \frac{\alpha}{3}(L_2+\sigma_0) + \sigma + \sigma_0\\
&\leq \frac{2\alpha}{3} L_2 + \sigma + 3\sigma_0\\
&\leq \alpha L_2\\
&\leq \alpha\ell(\gamma)
\end{align*}
for $L_2\gg1$.
Thus $\gamma\in \mathcal{P}^\alpha_{K_0}(L_1+L_2+\sigma, L_1+L_2+\sigma+\sigma_0)$.
The rest of the proof is unchanged.
\end{proof}

\begin{proof}[Proof of Proposition~\ref{prop_entropies_Gur_finies_simultanement}]
If $h_\mathrm{Gur}(\phi)<\infty$ then $h_\mathrm{Gur}^\infty(\phi)<\infty$.

We now assume $h_\mathrm{Gur}^\infty(\phi)<\infty$.
There exists $K_0$ compact and $\alpha_0>0$ such that for all $\alpha\leq \alpha_0$
\[h_\mathrm{Gur}^{K_0,\alpha}(\phi)\le h_\mathrm{Gur}^\infty(\phi)+1.
\]
Therefore, for $L\gg 1$,
\[
\# \mathcal{P}^{\alpha_0}_{K_0}(L,L+\sigma_0)\leq e^{\left(h_\mathrm{Gur}^\infty(\phi)+2\right)L}.
\]

Let $K_1$ be a compact subset of $M$ such that $K_0\subset \inter{K_1}$.
We now use Proposition~\ref{prop subadd gen variante} with parameters $K_0$, $K_1$, $\alpha_0$ and $\sigma_0=5\tau_{K_0}$.
For all $L_1,L_2\gg 1$ with $\frac{\alpha_0}{3}L_2\ge L_1$
\begin{equation*}
\# \mathcal{P}_{K_0}(L_1,L_1+\sigma_0)\, \#\mathcal{P}^{\alpha_0/3}_{K_1}(L_2,L_2+\sigma_0)\leq D\, (L_1+L_2)\, \#\mathcal{P}^{\alpha_0}_{K_0}(L_1+L_2+\sigma, L_1+L_2+\sigma+\sigma_0)\, .
\end{equation*}

Let us first assume there exist an increasing sequence $(L_2^n)_{n\in\mathbb{N}}$ such that $\lim_{n\to\infty} L_2^n = \infty$ and, for all $n\ge 0$,
\[\#\mathcal{P}^{\alpha/3}_{K_1}(L^n_2,L^n_2+\sigma_0)\ge 1.\]
Let $L_1^n = \frac{\alpha_0}{3} L_2^n$.
Then, for all $n\gg 1$
\begin{align*}
    \# \mathcal{P}_{K_0}(L^n_1,L^n_1+\sigma_0)&\leq  D \left(1+\frac{3}{\alpha_0}\right)L^n_1 \#\mathcal{P}^{\alpha_0}_{K_0}\left(\left(1+\frac{3}{\alpha_0}\right)L^n_1+\sigma, \left(1+\frac{3}{\alpha_0}\right)L^n_1+\sigma+\sigma_0\right)\\
    &\leq   D \left(1+\frac{3}{\alpha_0}\right)L^n_1 e^{\left(h_\mathrm{Gur}^\infty(\phi)+2\right)\left(\left(1+\frac{3}{\alpha_0}\right)L^n_1+\sigma \right)}.
\end{align*}
As the Gurevic entropy is a true limit, not only a superior limit when $\varphi$ is a $H$-flow (see Theorem~\ref{theo:Gurevic}), we then have
\[h_\mathrm{Gur}(\phi)\leq \left(h_\mathrm{Gur}^\infty(\phi)+2\right)\left(1+\frac{3}{\alpha_0}\right) < \infty.\]

We now assume
\[
\#\mathcal{P}^{\alpha_0/3}_{K_1}(L_2,L_2+\sigma_0)=0
\]
for all $L_2\gg 1$.
In particular, there do not exist arbitrarily long chords of $\partial K_1$ outside $K_1$ (otherwise, by transitivity and the uniform multiple closing lemma~\ref{petal separe}, one may construct a periodic orbit in some $\mathcal{P}^{\alpha_0/3}_{K_1}(L_2,L_2+\sigma_0)$).
Therefore all the periodic orbits intersecting $K_0$ are contained in some compact $K_2$.
From Lemma~\ref{lemme:entropie_gur_compact}, we obtain that the Gurevic entropy is finite.
\end{proof}

\section{Gurevic entropy versus chord entropy}\label{chords}

It will be useful in the sequel to count chords, i.e., pieces of orbits from the neighbourhood of a point to the neighbourhood of another point, and try to compare their number with the number  of periodic orbits.
In Section~\ref{sec:chords-definition} we explain how to count chords and state some elementary properties of chords counts.

 \subsection{Chords}\label{sec:chords-definition}

The aim of this section is to introduce these chords and prove some counting properties.

Let $K\subset M$
be a compact set. Let $x,y\in K$ and $\eta>0$. Let $0<T^-<T^+$.
The set of chords from $B(x,\eta)$ to  $ B(y,\eta)$  with lengths in $[T^-,T^+]$ is
 \[
 \mathcal{C}(x,y,\eta,T^-,T^+)=
\{ z\in B(x,\eta) :\ \phi_{[T^-,T^+]}(z)\cap B(y,\eta)\neq \emptyset\}\, .
\]
\label{p:C(c,y,eta,T-,T+)}

\begin{defi}\label{defi E-set}
Let $\delta>0$. A set $E$ is a $E(x,y,\eta, T^-,T^+,\delta)$-set if:
\begin{enumerate}
    \item\label{point i def E-set} $E\subset  \mathcal{C}(x,y,\eta,T^-,T^+)$;
    \item\label{point ii def E-set} $E$ is a $(\delta,T^-)$-separating set;
    \item\label{point iii def E-set} the set of chords $ \mathcal{C}(x,y,\eta,T^-,T^+)$ is contained in the union of dynamical balls $\bigcup_{z\in E}B(z,\delta,T^-)$.
\end{enumerate}
Denote by $\mathcal{N}_{\mathcal{C}}(x,y,\eta,T^-,T^+,\delta)$ the maximal cardinality of a $E(x,y,\eta,T^-,T^+,\delta)$-set.
\end{defi}

\begin{fact}\label{E set point un et deux alors trois}
Let $E$ be a set satisfying points \ref{point i def E-set} and \ref{point ii def E-set} of Definition \ref{defi E-set}.
Then, there exists a set $E'$ that is a $E(x, y, \eta, T^-, T^+, \delta)$-set and contains $E$.
\end{fact}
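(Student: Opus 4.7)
My plan is to extend $E$ to a \emph{maximal} $(\delta,T^-)$-separating subset of the chord set $\mathcal{C}(x,y,\eta,T^-,T^+)$ by a Zorn's lemma argument, and then deduce the covering condition (iii) of Definition~\ref{defi E-set} purely from this maximality.

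First I would set up the poset $\mathcal{F}$ consisting of all $(\delta,T^-)$-separating subsets of $\mathcal{C}(x,y,\eta,T^-,T^+)$ that contain $E$, ordered by inclusion. The key observation is that being $(\delta,T^-)$-separating is a condition on pairs of points: any two elements of a union of a chain already belong to a common member of that chain and are therefore $(\delta,T^-)$-separated. Hence every chain in $\mathcal{F}$ has an upper bound (its union), and Zorn's lemma yields a maximal element $E' \in \mathcal{F}$. By construction, $E'$ satisfies points \ref{point i def E-set} and \ref{point ii def E-set} of Definition~\ref{defi E-set}.

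The heart of the argument is deducing point \ref{point iii def E-set} from maximality. Given any $w \in \mathcal{C}(x,y,\eta,T^-,T^+)$, suppose for contradiction that $w \notin B(z,\delta,T^-)$ for every $z \in E'$. Then for each such $z$ there exists $t \in [0,T^-]$ with $d(\phi_t(w),\phi_t(z)) \geq \delta$, so $E' \cup \{w\}$ remains $(\delta,T^-)$-separating and strictly contains $E'$, contradicting maximality. Therefore $\mathcal{C}(x,y,\eta,T^-,T^+) \subset \bigcup_{z \in E'} B(z,\delta,T^-)$, which is exactly condition \ref{point iii def E-set}.

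Finally, although the fact only asserts the existence of $E'$, I would also note that $E'$ is automatically finite (so that the quantity $\mathcal{N}_\calC(x,y,\eta,T^-,T^+,\delta)$ of Definition~\ref{defi E-set} is a genuine cardinality): iterating the Lipschitz bound \eqref{lipsch} provides a constant $C(T^-)>0$ such that $\phi_t$ is $C(T^-)$-Lipschitz on $[0,T^-]$, so any two distinct points of $E'$ lie at Riemannian distance at least $\delta/C(T^-)$; since $E' \subset B(x,\eta)$ has compact closure by completeness of $(M,g)$ (Hopf--Rinow), $E'$ is finite. There is no genuine obstacle in this argument: it is the standard maximal-separated-set construction, and the only point needing care is the (immediate) verification that chains in $\mathcal{F}$ admit upper bounds.
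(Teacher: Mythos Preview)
Your proof is correct and follows essentially the same idea as the paper's: extend $E$ to a maximal $(\delta,T^-)$-separating subset of $\mathcal{C}(x,y,\eta,T^-,T^+)$, from which the covering condition~\ref{point iii def E-set} follows immediately. The only difference is that the paper performs this extension by an explicit iteration---repeatedly adjoining a point of $\mathcal{C}(x,y,\eta,T^-,T^+)\setminus\bigcup_{z\in E}B(z,\delta,T^-)$---and uses compactness of $\overline{\mathcal{C}(x,y,\eta,T^-,T^+)}$ to guarantee termination after finitely many steps, whereas you invoke Zorn's lemma first and argue finiteness separately via the Lipschitz bound; both routes are standard and equally valid.
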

\begin{proof}
If the union of dynamical balls
$\cup_{z\in E}B(z,\delta,T^-)$ does not contain $\mathcal{C}(x,y,\eta,T^-,T^+)$, then we pick a point
\[
z'\in \mathcal{C}(x,y,\eta,T^-,T^+)\setminus \bigcup_{z\in E}\,B(z,\delta,T^-)\, .
\]
By construction, the set $\{z'\}\cup E$ is $(\delta,T^-)$-separating and contained in $\mathcal{C}(x,y,\eta,T^-,T^+)$, i.e., it satisfies points \ref{point i def E-set} and \ref{point ii def E-set} of Definition \ref{defi E-set}. If it is not  a  $E(x,y,\eta,T^-,T^+,\delta)$-set, we iterate the procedure. By compactness of the closure of $\mathcal{C}(x,y,\eta,T^-,T^+)$, this procedure will stop after a finite number of iterations.  At the end, we obtain a set $E'$ that contains $E$ and which is a $E(x,y,\eta,T^-,T^+,\delta)$-set.
\end{proof}

\begin{fact}\label{fact about monotonicity N}
Let $x,y\in K$, $0<\delta$ and $0<T^-<T^+$. The map
\[
\eta\in (0,+\infty)\mapsto \mathcal{C}(x,y,\eta,T^-,T^+)\]
is non-decreasing for the inclusion.
Moreover, the map
\[
\eta\in (0,+\infty)\mapsto \mathcal{N}_{\mathcal{C}}(x,y,\eta,T^-,T^+,\delta)\in\mathbb{N}
\]
is non-decreasing.
\end{fact}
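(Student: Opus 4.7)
The plan is to handle the two claims in order; the first is essentially a tautology, and the second reduces to an application of Fact~\ref{E set point un et deux alors trois}.

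For the first claim, given $\eta_1 \leq \eta_2$, I would simply observe the inclusions $B(x,\eta_1) \subset B(x,\eta_2)$ and $B(y,\eta_1) \subset B(y,\eta_2)$. If $z \in \mathcal{C}(x,y,\eta_1,T^-,T^+)$, then $z \in B(x,\eta_1) \subset B(x,\eta_2)$, and there exists $t \in [T^-,T^+]$ with $\phi_t(z) \in B(y,\eta_1) \subset B(y,\eta_2)$. Both conditions of membership in $\mathcal{C}$ upgrade at once, giving $z \in \mathcal{C}(x,y,\eta_2,T^-,T^+)$.

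For the second claim, I would fix $\eta_1 \leq \eta_2$ and pick a maximal $E(x,y,\eta_1,T^-,T^+,\delta)$-set $E_1$, which by definition has cardinality $\mathcal{N}_{\mathcal{C}}(x,y,\eta_1,T^-,T^+,\delta)$. The key point is that $E_1$ automatically satisfies the first two conditions of Definition~\ref{defi E-set} for the parameter $\eta_2$: condition (i) follows because, by the first claim, $E_1 \subset \mathcal{C}(x,y,\eta_1,T^-,T^+) \subset \mathcal{C}(x,y,\eta_2,T^-,T^+)$, while condition (ii) ($(\delta,T^-)$-separation) is intrinsic to $E_1$ and $T^-$ and does not involve $\eta$ at all. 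Only the covering condition (iii) may fail. Applying Fact~\ref{E set point un et deux alors trois}, which is designed precisely to extend any set satisfying (i) and (ii) into a genuine $E$-set, I obtain a $E(x,y,\eta_2,T^-,T^+,\delta)$-set $E_2 \supset E_1$. Maximality of $\mathcal{N}_{\mathcal{C}}$ at parameter $\eta_2$ then yields
\[
\mathcal{N}_{\mathcal{C}}(x,y,\eta_2,T^-,T^+,\delta) \geq \# E_2 \geq \# E_1 = \mathcal{N}_{\mathcal{C}}(x,y,\eta_1,T^-,T^+,\delta).
\]

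No real obstacle arises in this proof; it is a direct combination of the monotonicity of open balls with the extension procedure already recorded in Fact~\ref{E set point un et deux alors trois}. The only point worth emphasizing explicitly is the $\eta$-independence of the separation condition, as this is what allows $E_1$ to be recycled verbatim when passing from $\eta_1$ to $\eta_2$.
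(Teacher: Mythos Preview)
Your proof is correct and follows exactly the same approach as the paper: the first assertion is immediate from the definition, and the second is obtained by taking a maximal $E(x,y,\eta_1,T^-,T^+,\delta)$-set and applying Fact~\ref{E set point un et deux alors trois} to extend it to an $E(x,y,\eta_2,T^-,T^+,\delta)$-set. Your write-up simply makes explicit why conditions~(i) and~(ii) carry over, which the paper leaves to the reader.
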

\begin{proof} The first assertion is a direct consequence of the definition of chords. For the second assertion, fix $\eta_1<\eta_2$, use the first assertion and apply Fact~\ref{E set point un et deux alors trois} to a $E(x,y,\eta_1,T^-,T^+,\delta)$-set of maximal cardinality $E$, to build a  $E(x,y,\eta_2,T^-,T^+,\delta)$-set containing $E$.   We deduce that
\[\# E = \mathcal{N}_{\mathcal{C}}(x,y,\eta_1,T^-,T^+,\delta)\leq \mathcal{N}_{\mathcal{C}}(x,y,\eta_2,T^-,T^+,\delta)\, .
\]
\end{proof}

\begin{fact}\label{fact_about_monotonicity_eta}
Let $x,y\in K$, $\eta>0$ and $0<T^-<T^+$. Then, the map
\[
\delta\in (0,+\infty)\mapsto \mathcal{N}_{\mathcal{C}}(x,y,\eta,T^-,T^+,\delta)\in\mathbb{N}
\]
is non-increasing.
\end{fact}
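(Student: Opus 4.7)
The plan is to mirror the argument used for the preceding Fact, but exploiting the monotonicity of the separation condition instead of the inclusion of chord sets. Fix $0 < \delta_1 < \delta_2$; the goal is to show
\[
\mathcal{N}_{\mathcal{C}}(x,y,\eta,T^-,T^+,\delta_1) \;\geq\; \mathcal{N}_{\mathcal{C}}(x,y,\eta,T^-,T^+,\delta_2).
\]

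The key observation is that $(\delta,T^-)$-separation is monotone in the ``wrong'' direction with respect to $\delta$: if $z,z'$ satisfy $\sup_{t\in[0,T^-]} d(\phi_t(z),\phi_t(z')) \geq \delta_2$, then trivially the same supremum is $\geq \delta_1$. Therefore any $(\delta_2,T^-)$-separating set is automatically $(\delta_1,T^-)$-separating.

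With this in hand, take a $E(x,y,\eta,T^-,T^+,\delta_2)$-set $E$ of maximal cardinality $\mathcal{N}_{\mathcal{C}}(x,y,\eta,T^-,T^+,\delta_2)$. By construction $E \subset \mathcal{C}(x,y,\eta,T^-,T^+)$ (condition \ref{point i def E-set} of Definition~\ref{defi E-set}), and by the monotonicity observation $E$ is $(\delta_1,T^-)$-separating (condition \ref{point ii def E-set} for parameter $\delta_1$). Thus $E$ satisfies points \ref{point i def E-set} and \ref{point ii def E-set} of Definition~\ref{defi E-set} with $\delta$ replaced by $\delta_1$; it may however fail condition \ref{point iii def E-set} since the dynamical balls $B(z,\delta_1,T^-)$ are smaller than $B(z,\delta_2,T^-)$.

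To remedy this, apply Fact~\ref{E set point un et deux alors trois} (with parameter $\delta_1$) to enlarge $E$ into a $E(x,y,\eta,T^-,T^+,\delta_1)$-set $E' \supset E$. Then
\[
\mathcal{N}_{\mathcal{C}}(x,y,\eta,T^-,T^+,\delta_1) \;\geq\; \#E' \;\geq\; \#E \;=\; \mathcal{N}_{\mathcal{C}}(x,y,\eta,T^-,T^+,\delta_2),
\]
which is the desired monotonicity. I do not expect any real obstacle here: the proof is a two-line consequence of the preceding Fact together with the elementary remark that finer separation is a weaker condition; the structure is essentially dual to the proof of Fact~\ref{fact about monotonicity N}.
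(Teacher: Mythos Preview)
Your proof is correct and follows essentially the same approach as the paper: both observe that $(\delta_2,T^-)$-separation implies $(\delta_1,T^-)$-separation, take a maximal $E(x,y,\eta,T^-,T^+,\delta_2)$-set, and apply Fact~\ref{E set point un et deux alors trois} with parameter $\delta_1$ to conclude.
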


\begin{proof}
Let $\delta_1\leq\delta_2$
First note that if a set is $(\delta_2,T^-)$-separating then it is also $(\delta_1,T^-)$-separating.
Let $E_2$ be an $E(x,y,\eta,T^-,T^+,\delta_2)$-set of maximal cardinality.
Then $E_2$ satisfies the first two points in the definition of $E(x,y,\eta,T^-,T^+,\delta_1)$-set.
Using Fact~\ref{E set point un et deux alors trois}, we obtain
\[
\#E_2 = \mathcal{N}_\mathcal{C}(x,y,\eta,T^-,T^+,\delta_2) \leq \mathcal{N}_\mathcal{C}(x,y,\eta,T^-,T^+,\delta_1)
\]
as required.
\end{proof}

Up to changing the parameter involved in the definition of the counting of chords, we show in Proposition~\ref{prop_chagement_points_cordes} that such a number is uniform with respect to the points $x,y\in K$.

\begin{prop}[Chord counting does not depend on the endpoints of the chords]\label{prop_chagement_points_cordes}
Let $K\subset M$ be a compact subset.
For any $\eta_1$ and $\delta$ such that $0<\eta_1< \frac{\delta}{2}$, there exist $\sigma>0$ and $0<\tilde \eta_0<\eta_1/2$ such that for all $x_0$, $y_0$, $x_1$, $y_1$ in $K$, $0< T_0^-<T_0^+$, $0<\eta_0\le \tilde \eta_0$ and, $S\ge \sigma$, we have
\[
\mathcal{N}_{\mathcal{C}}(x_0,y_0, \eta_0, T_0^-,T_0^+,\delta)\leq \mathcal{N}_{\mathcal{C}}(x_1,y_1,\eta_1,T_0^++S, T_0^++S+2\tau_K,\delta/2)\, .
\]
In particular, if $T_0^++\sigma \leq T_1^-<T_1^+$ and $T_1^+-T_1^-\geq 2\tau_K$, we have
\[
\mathcal{N}_{\mathcal{C}}(x_0,y_0, \eta_0, T_0^-,T_0^+,\delta)\leq\mathcal{N}_{\mathcal{C}}(x_1,y_1,\eta_1,T_1^-,T_1^+,\delta/2).
\]
\end{prop}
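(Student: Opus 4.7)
My plan is to construct an injective map $z\mapsto\tilde z$ from a maximal $E$-set $E_0$ realizing $\mathcal{N}_\mathcal{C}(x_0,y_0,\eta_0,T_0^-,T_0^+,\delta)$ into a $(\delta/2,T_0^++S)$-separated subset of $\mathcal{C}(x_1,y_1,\eta_1,T_0^++S,T_0^++S+2\tau_K)$; Fact~\ref{E set point un et deux alors trois} then extends the image to a bona fide $E$-set without decreasing cardinality, yielding the announced inequality. The map is produced by shadowing a three-piece pseudo-orbit attached to each $z\in E_0$: a transitivity leg from near $x_1$ to near $x_0$, the chord $z$ itself on its full duration $T(z)\in[T_0^-,T_0^+]$, and a transitivity leg from near $y_0$ to near $y_1$. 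The \emph{in particular} assertion then follows by choosing $S=T_1^--T_0^+\geq\sigma$ and extending the resulting $E$-set once more via Fact~\ref{E set point un et deux alors trois} to accommodate the larger window $[T_1^-,T_1^+]\supset[T_0^++S,T_0^++S+2\tau_K]$, which is allowed because $T_1^+-T_1^-\geq2\tau_K$.

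To fix the auxiliary constants I would set $K'=\overline{B(K,1)}$, apply the finite exact shadowing property (Definition~\ref{weak shadowing property}) on $(K',\eta_1/2,N=3)$ to obtain $\eta_\mathrm{sh}>0$, pick $\delta_0\leq\min(\eta_\mathrm{sh}/2,\eta_1/4)$ and $\tilde\eta_0=\eta_\mathrm{sh}/2$, and apply the uniform transitivity Lemma~\ref{transitivite prop bis} on $(K,K',\delta_0)$ to obtain a threshold $\sigma_1$; finally set $\sigma=2\sigma_1+\tau_K$. The first transitivity leg $p_1$ is fixed once and for all by invoking Lemma~\ref{transitivite prop bis} with $S_1=\sigma_1$, producing a length $\tau_1\in[\sigma_1-\tau_K,\sigma_1+\tau_K]$ with $p_1\approx x_1$ and $\phi_{\tau_1}(p_1)\approx x_0$. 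The second leg is indexed by $z$: for each $z\in E_0$ with time $T(z)$, I would apply Lemma~\ref{transitivite prop bis} between $y_0$ and $y_1$ with the \emph{variable} parameter
\[
\tilde S_2(z)=T_0^++S+\tau_K-\tau_1-T(z),
\]
which is $\geq\sigma_1$ by the choice of $\sigma$, yielding $\tau_2(z)\in[\tilde S_2(z)-\tau_K,\tilde S_2(z)+\tau_K]$. A direct computation shows that the total length $\tau_1+T(z)+\tau_2(z)$ lands in $[T_0^++S,T_0^++S+2\tau_K]$ regardless of $T(z)$. All junction gaps are bounded by $\delta_0+\eta_0\leq\eta_\mathrm{sh}$, so finite exact shadowing produces a single orbit $\tilde z$ tracking each piece within $\eta_1/2$; in particular $\tilde z\in B(x_1,\eta_1)$ and $\phi_{\tau_1+T(z)+\tau_2(z)}(\tilde z)\in B(y_1,\eta_1)$.

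For the separation: if $z\neq z'$ are $(\delta,T_0^-)$-separated at a time $t\in[0,T_0^-]$ then the middle-piece tracking and the triangle inequality give
\[
d(\phi_{\tau_1+t}(\tilde z),\phi_{\tau_1+t}(\tilde z'))\geq d(\phi_t(z),\phi_t(z'))-\eta_1\geq\delta-\eta_1>\delta/2
\]
because $\eta_1<\delta/2$, while $\tau_1+t\leq\sigma_1+\tau_K+T_0^-\leq T_0^++S$ by the choice of $\sigma$. The main subtlety is precisely this length bookkeeping: the target window has width only $2\tau_K$, whereas two \emph{independent} uniform-transitivity legs and a chord of fluctuating length would naively produce a total-length window of width $4\tau_K+(T_0^+-T_0^-)$. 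Letting $\tilde S_2$ depend linearly on $T(z)$ exactly cancels both the input-length fluctuation and half of the transitivity slack, compressing the output to the required window of width $2\tau_K$; this is the only non-routine step of the argument.
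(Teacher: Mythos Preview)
Your proof is correct and follows essentially the same approach as the paper: both build a three-piece pseudo-orbit (transitivity leg, chord, transitivity leg) whose second leg length is chosen to absorb both the first-leg slack and the chord-length fluctuation so that the total lands in a window of width exactly $2\tau_K$, and both verify $(\delta/2)$-separation via the middle-piece tracking. The only cosmetic difference is that the paper lets the first leg have length $\approx S/2$ while you fix it at $\approx\sigma_1$ and push the remaining $S$-dependence entirely into the second leg; this changes nothing substantive.
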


\begin{proof}
Figure~\ref{fig:comparaison_extremites_cordes} summarizes the proof.
The first naive idea to prove the proposition is the following. By transitivity property~\ref{transitivite prop}, we find  arcs of length $S/2$ respectively from $B(x_1,\eta_0)$ to $B(x_0,\eta_0)$ and from $B(y_0,\eta_0)$ to $B(y_1,\eta_0)$. The finite exact shadowing property~\ref{weak shadowing property} allows to  concatenate every chord of length in $[T_0^-,T_0^+]$ from  $B(x_0,\eta_0)$ to  $B(y_0,\eta_0)$ with these arcs before and after it, to obtain a chord from $B(x_1,\eta_1)$ to $B(y_1,\eta_1)$. The resulting chord has length in $[T_0^- +S-2\tau_K, T_0^++S+2\tau_K]$ and the uncertainty on its length is higher than desired.

The proof is close to this naive idea, but we choose first an arc  from $B(x_1,\eta_0)$ to $B(x_0,\eta_0)$, with length $S_1\simeq S/2\pm\tau_K$. Second, we consider an arbitrary chord of length $\ell\in[T_0^-,T_0^+]$. Third, by uniform transitivity, we choose an arc from $B(y_0,\eta_0)$ to $B(y_1,\eta_0)$ with length $S_2\pm \tau_K$, where $S_2$ is chosen so that
\[
S_1+\ell+S_2=T_0^++S+\tau_K
\]
so that, after concatenation, the resulting chord has length in $[T_0^++S, T_0^++S+2\tau_K]$.

\begin{figure}
    \centering
    \includegraphics{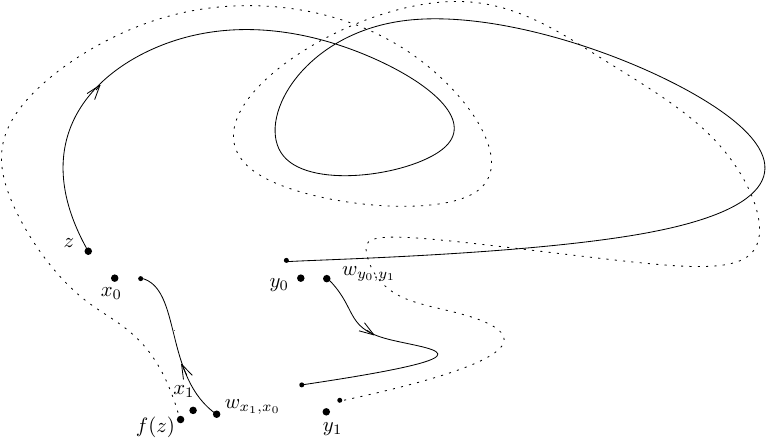}
    \caption{Chord counting does not depend on the endpoints of the chords}
    \label{fig:comparaison_extremites_cordes}
\end{figure}

{\bf Step 1.} Choice of parameters.
Let $\tilde\eta_0=\eta/2$, with $\eta$ the constant given by the finite exact shadowing property \ref{weak shadowing property} applied with $K$, $N=3$ and $\delta=\eta_1/2$.
Let $\sigma_0$ be the constant given by the uniform transitivity property applied with parameters $K$ and $\tilde\eta_0$.
Let $\sigma = 2\max(\sigma_0,\tau_K)$ and $S\geq\sigma$.
Let $0<\eta_0\le \tilde \eta_0$.

{\bf Step 2.} First use of transitivity.  Transitivity property~\ref{transitivite prop} gives us a point $w_{x_1,x_0}\in B(x_1,\tilde\eta_0)$ and $\ell(w_{x_1,x_0})\in [S/2-\tau_K,S/2+\tau_K]$ such that $\phi_{\ell(w_{x_1,x_0})}(w_{x_1,x_0})\in B(x_0,\tilde\eta_0)$.

{\bf Step 3.} Choice of a chord.
Let $E$ be a $E(x_0,y_0,\eta_0,T_0^-,T_0^+,\delta)$-set of maximal cardinality. For every $z\in E$, denote by $\ell(z)\in [T_0^-,T_0^+]$ a time such that $\phi_{\ell(z)}(z)\in B(y_0,\eta_0)$.

{\bf Step 4.} Second use of transitivity.  Lemma~\ref{transitivite prop} applied to $K$, $\tilde\eta_0$, $y_0$, $y_1$ and $S_2=S/2+(T_0^+-\ell(z))+(S/2+\tau_K-\ell(w_{x_1,x_0}))\geq S/2\geq \sigma_0$ gives a point $w_{y_0,y_1}$ and a length $\ell(w_{y_0,y_1})\in [S_2-\tau_K,S_2+\tau_K]$ such that
$w_{y_0,y_1}\in B(y_0,\tilde\eta_0)$ and $\phi_{\ell(w_{y_0,y_1})}(w_{y_0,y_1})\in B(y_1,\tilde\eta_0)$.
By construction, we have
\[
T_0^++S\leq \ell(z)+\ell(w_{x_1,x_0})+\ell(w_{y_0,y_1})\leq T_0^++S+2\tau_K.
\]

{\bf Step 5.} Concatenation.
By the choice of parameters in Step 1,  we have $d(\phi_{\ell(w_{x_1,x_0})}(w_{x_1,x_0}),z)<2\tilde\eta_0$ and $d(\phi_{\ell(z)}(z),w_{y_0,y_1})<2\tilde\eta_0$,
By the finite exact shadowing property~\ref{weak shadowing property} there exists a point $f(z)$ such that
\begin{itemize}
    \item for every $s\in[0,\ell(w_{x_1,x_0})]$,   $d(\phi_s(f(z)),\phi_s(w_{x_1,x_0}))<\eta_1/2$;
    \item for every $s\in[0,\ell(z)]$,    $d(\phi_{\ell(w_{x_1,x_0})+s}(f(z)),\phi_s(z))<\eta_1/2$;
    \item for every $s\in[0,\ell(w_{y_0,y_1})]$,  $d(\phi_{\ell(w_{x_1,x_0})+\ell(z)+s}(f(z)),\phi_s(w_{y_0,y_1}))<\eta_1/2$.
\end{itemize}
By construction,
\[
d(f(z),x_1)\leq d(f(z),w_{x_1,x_0})+d(w_{x_1,x_0},x_1)<\eta_1/2+\tilde\eta_0\leq \eta_1
\]
and
\begin{align*}
d(\phi_{\ell(w_{x_1,x_0})+\ell(z)+\ell(w_{y_0,y_1})}(f(z)),y_1)\leq&  d(\phi_{\ell(w_{x_1,x_0})+\ell(z)+\ell(w_{y_0,y_1})}(f(z)),\phi_{\ell(w_{y_0,y_1})}(w_{y_0,y_1}))\\&+d(\phi_{\ell(w_{y_0,y_1})}(w_{y_0,y_1}),y_1)\\
<&
\eta_1/2+\tilde\eta_0\leq \eta_1.
\end{align*}
Thus, the point $f(z)$ belongs to $\mathcal C(x_1,y_1,\eta_1,T_0^++S,T_0^++S+2\tau_K)$.
Therefore, we have just defined a map $f : E \to  \mathcal C(x_1,y_1,\eta_1,T_0^++S,T_0^++S+2\tau_K)$

{\bf Step 6.} Separation. Recall that  $E$ is a $E(x_0,y_0,\eta_0,T_0^-,T_0^+,\delta)$-set of maximal cardinality.
For every $z\in E$, we built in the preceding steps  a point $f(z)\in \mathcal C(x_1, y_1, \eta_1, T_0^+ + S, T_0^+ +S+2\tau_K)$. We prove now that $E'=f(E)$ is $(\frac{\delta}{2}, T_0^+ + S)$-separating and that $f$ is injective.

Let  $z_1,z_2\in E$ be such that $z_1\neq z_2$. As $E$ is $(\delta,T_0^-)$-separating, there exists $u\in[0,T_0^-]$ such that $d(\phi_u(z_1),\phi_u(z_2))\geq \delta$. Therefore, we have
\begin{align*}
d(\phi_{u+\ell(z_{x_1,x_0})}(f(z_1)),\phi_{u+\ell(z_{x_1,x_0})}(f(z_2)))\geq \,& \, d(\phi_u(z_1),\phi_u(z_2))-d(\phi_{u+\ell(z_{x_1,x_0})}(f(z_1)),\phi_u(z_1))\\
&\,\,-d(\phi_{u+\ell(z_{x_1,x_0})}(f(z_2)),\phi_u(z_2))
\\  >&\,\,\delta- \eta_1\geq \dfrac{\delta}{2}
\end{align*}
that is, since $T_0^-+\frac S 2 +\tau_K\leq T_0^++S$, $E'$ is a $(\frac{\delta}{2}, T_0^++S)$-separating set and $f$ is injective.
Thus $\#E\leq \#E'$.

{\bf Conclusion.} By Lemma \ref{E set point un et deux alors trois}, we obtain
\[
\mathcal{N}_{\mathcal{C}}(x_0,y_0,\eta_0,T_0^-,T_0^+,\delta)\leq \mathcal{N}_{\mathcal{C}}\left(x_1,y_1,\eta_1,T_0^++S,T_0^++S+2\tau_{\min}(K),\frac{\delta}{2}\right)\, ,
\]
as required.
\end{proof}


\subsection{Comparing chords and periodic orbits}\label{sec:chords-periodic}

We can now compare the number of chords with the number of periodic orbits of approximately the same length.
Note that the admissible lengths of the chords/periodic orbits are intervals of the same length $\tau$ but shifted by $\sigma$.
This is not critical to compare chord entropy and Gurevic entropy but will be crucial for upcoming statements.

\begin{prop}\label{chords et orbite periodique}
Let $K \subset M$  be a compact subset of $M$  with nonempty interior.
Let $\tau>2\tau_K$.
Fix $\delta>0$.
There exist constants $D=D(\delta)>0$, $T_{\rm min}>0$ and $\sigma>0$ such that for all $x,y\in K$, $T\geq T_\mathrm{min}$ and   $0<\eta<1$, we have
\[
\mathcal{N}_{\mathcal{C}}(x,y,\eta,T,T+\tau,\delta)\,\,\leq\,\, D\times(T+\sigma+\tau)\times \,\#\mathcal{P}_{K}(T+\sigma, T+\sigma+\tau).
\]
\end{prop}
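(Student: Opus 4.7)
The plan is to build an almost-injective map $f\colon E\to\mathcal P_K(T+\sigma,T+\sigma+\tau)$ from a maximal $E(x,y,\eta,T,T+\tau,\delta)$-set by closing each chord into a periodic orbit with a short transitivity arc, and then to show that each periodic orbit is produced by at most $O(T)$ distinct chords, the multiplicity being controlled by the $(\delta,T)$-separation of $E$ together with the shadowing estimate.

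Concretely, I would first fix a compact $\tilde K$ with nonempty interior containing $\overline{B(K,1)}$, so that $z\in B(x,\eta)$ and $\phi_{\ell(z)}(z)\in B(y,\eta)$ both lie in $\tilde K$ (we may use $\eta<1$). Apply Lemma~\ref{lemma on same po} with $\nu=1$ and a fixed $\tau_1\geq 1$ to obtain constants $\tau_0$ and $\epsilon_1>0$. Set $\nu_0=(\tau-2\tau_K)/4>0$ (positive because $\tau>2\tau_K$) and $\delta_0=\min(\delta,\epsilon_1)/4$. Apply Lemma~\ref{petal separe} to the pair $K\subset\tilde K$ with $N=1$ and parameters $\nu_0,\delta_0$ to get constants $\sigma_1$ and $T_{\min}$, and set $\sigma=\sigma_1+\tau+\tau_{\tilde K}+\nu_0$. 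For each $z\in E$ with $\ell(z)\in[T,T+\tau]$, apply the same lemma with $x_1=z$, $T_1=\ell(z)$ and $S_z:=T+\sigma-\ell(z)+\tau_{\tilde K}+\nu_0\geq \sigma_1$. The lemma delivers a periodic orbit $\gamma_z$ that intersects $\mathrm{int}(K)$, satisfies $d(\gamma_z(s),\phi_s(z))<\delta_0$ for all $s\in[0,\ell(z)]$, and whose period belongs to
\[
[\ell(z)+S_z-\tau_{\tilde K}-\nu_0,\,\ell(z)+S_z+\tau_{\tilde K}+\nu_0]\subset[T+\sigma,\,T+\sigma+2\tau_{\tilde K}+2\nu_0]\subset[T+\sigma,\,T+\sigma+\tau],
\]
the last inclusion using $\tau_{\tilde K}\leq \tau_K$ and the choice of $\nu_0$. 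Define $f(z)=\gamma_z\in\mathcal P_K(T+\sigma,T+\sigma+\tau)$.

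The heart of the argument is the fiber bound. If $z_1\neq z_2$ lie in $E$ and $f(z_1)=f(z_2)$, then $\gamma_{z_1}$ and $\gamma_{z_2}$ parametrize the same periodic orbit $\Gamma$ of the same period, so they differ by a time shift $u\in\mathbb R/\ell(\Gamma)\mathbb Z$ with $\gamma_{z_2}(\,\cdot\,)=\gamma_{z_1}(\,\cdot\,+u)$. For every $s\in[0,T]$, since $\ell(z_i)\geq T$, the triangle inequality and the upper bound in \eqref{eqn:minoration} give
\[
d(\phi_s(z_1),\phi_s(z_2))\leq d(\phi_s(z_1),\gamma_{z_1}(s))+d(\gamma_{z_1}(s),\gamma_{z_1}(s+u))+d(\gamma_{z_2}(s),\phi_s(z_2))\leq 2\delta_0+b|u|.
\]
As $E$ is $(\delta,T)$-separated and $2\delta_0\leq\delta/2$, this forces $|u|\geq\delta/(2b)$. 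Consequently the shifts attached to the preimages of a fixed $\Gamma$ are pairwise $\delta/(2b)$-separated on a circle of length $\ell(\Gamma)\leq T+\sigma+\tau$, whence $\#f^{-1}(\Gamma)\leq \lceil 2b(T+\sigma+\tau)/\delta\rceil$. Summing over $\Gamma\in f(E)$ yields
\[
\mathcal N_{\mathcal C}(x,y,\eta,T,T+\tau,\delta)=\#E\leq\left\lceil\tfrac{2b(T+\sigma+\tau)}{\delta}\right\rceil\cdot\#\mathcal P_K(T+\sigma,T+\sigma+\tau),
\]
which is the claimed estimate with $D=2b/\delta+1$.

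The main obstacle is the interval bookkeeping. Lemma~\ref{petal separe} produces periods with slack $\pm(\tau_{\tilde K}+\nu_0)$, so a constant choice of transit length $S$ would output orbits in a window of width $\sim\tau+2\tau_K$, which is bigger than the target $\tau$. Absorbing this slack forces two things: first, $S_z$ must depend linearly on the chord length $\ell(z)$ in order to collapse the $\tau$-wide range of $\ell(z)$ into the $\tau$-wide target range of periods; second, the additive $\pm 2(\tau_{\tilde K}+\nu_0)$ error must still fit inside $\tau$, which is exactly where the hypothesis $\tau>2\tau_K$ of the proposition enters.
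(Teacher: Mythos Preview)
Your proof is correct and follows essentially the same approach as the paper: close each chord into a periodic orbit via Lemma~\ref{petal separe} with $N=1$, and bound the fibers of the resulting map using the $(\delta,T)$-separation of $E$ together with the flow bound~\eqref{eqn:minoration}. Your bookkeeping with the variable transit length $S_z$ is in fact more explicit than the paper's (which applies the lemma with a tersely specified $S$); note however that your invocation of Lemma~\ref{lemma on same po} is superfluous, since the constants $\tau_0,\epsilon_1$ are never used in the fiber argument and you could simply take $\delta_0=\delta/4$.
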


\begin{proof} The idea of the  proof is quite simple. We start with a chord from $x$ to $y$ whose   length is in $[T,T+\tau]$ ,  we use transitivity to build an almost-closed pseudo orbit following the chord from $x$ to $y$ and coming back to $x$. The closing lemma~\ref{petal separe} allows us to close it into a closed orbit intersecting $K$. Then, we control the default of injectivity of the construction.

{\bf Step 1.} Choice of parameters. \\
Let $T_{\mathrm{min}}>0$ and $\sigma'>0$ be the constants given by Lemma \ref{petal separe} applied to $K\subset \overline{B(K,1)}$, $\delta/3$, $\nu=\min(\tau/2-\tau_K,1)$ and $N=1$.
Without loss of generality, one may assume $T_\mathrm{min}\ge 1$.
Let $\sigma = \sigma'-\tau_K-\nu$. One may assume $\sigma>0$.
Let $T\geq T_\mathrm{min}$.
Let $E$ be a $E(x,y,\eta,T, T+\tau, \delta)$-set of maximal cardinality.

{\bf Step 2.} Construction of a map from chords to periodic orbits.\\
We now define a map
\[
f :  E\to \mathcal{P}_K(T+\sigma, T+\sigma+\tau).
\]
Let $w\in E$.
Lemma ~\ref{petal separe} gives us a periodic orbit $\gamma=f(w)$ with period in \[
[T+\sigma'-\tau_K-\nu,T+\sigma'+\tau_K+\nu]\subset[T+\sigma,T+\sigma+\tau]
\]
that intersects $K$ and $\frac{\delta}{3}$-shadows the orbit of $w$ from $B(x,\eta)$ to $B(y,\eta)$.
More precisely, for $w\in E$, let $\ell(w)\in [T,T+\tau]$ a time such that $\phi_{\ell(w)}(w)\in B(y,\eta)$. Then, by construction, there exists an origin $s_0$ for the periodic orbit $\gamma$ such that for all $s\in [0,\ell(w)]$ we have
\begin{equation}\label{eq 1 first chords}
d(\gamma(s+s_0),\phi_s(w))<\dfrac{\delta}{3}.
\end{equation}

{\bf Step 3.} Control of the cardinality  of the preimages of $f$.\\
Let $\gamma\in\mathcal{P}_K(T+\sigma,T+\sigma+\tau)$ and let $w_1,w_2\in E$ be such that $f(w_1)=f(w_2)=\gamma$.
The construction $\gamma = f(w_1)$ gives us an origin $s_1$ and the construction $\gamma=f(w_2)$ an origin $s_2$.

For every $s\in[0,T]$ by \eqref{eq 1 first chords} and by property \eqref{eqn:minoration} we obtain
\begin{align*}
d(\phi_s(w_1),\phi_s(w_2))&\leq d(\phi_s(w_1),\gamma(s+s_1))+d(\gamma(s+s_1),\gamma(s+s_2))+d(\phi_s(w_2),\gamma(s+s_2))\\
&\leq \dfrac{\delta}{3}+\dfrac{\delta}{3}+b\vert s_2-s_1\vert \, .
\end{align*}
If $b\vert s_2-s_1\vert <\frac{\delta}{3}$, then $w_2\in B(w_1,\delta,T)$ and, as $E$ is $(\delta,T)$-separating, we obtain $w_1=w_2$. Therefore, if $w_1\neq w_2$ then $\vert s_2-s_1\vert \ge \delta/3b$ and
\begin{equation*}
\# f^{-1}(\gamma)\leq \left\lceil \dfrac{3b}{\delta}(T+\sigma+\tau)\right\rceil.
\end{equation*}
As $E$ is a $E(x,y,\eta,T,T+\tau,\delta)$-set of maximal cardinality, we conclude that
\[
\mathcal{N}_{\mathcal{C}}(x,y,\eta,T,T+\tau,\delta)\leq\left\lceil \frac{3b}{\delta}\times(T+\sigma+\tau) \right\rceil\times \,\#\mathcal{P}_K(T+\sigma,T+\sigma+\tau).
\]
This is the desired result, with $D=\left\lceil\frac{3b}{\delta}\right\rceil +1$.
\end{proof}

\begin{prop}\label{orbite periodique et chords} 
Let $K\subset M$ be a compact subset. Fix some $\tau>2\tau_K$. There exists $\epsilon_0>0$ and $D$ such that for all $0<\delta< \epsilon_0/4$ and $0<\eta<\epsilon_0/2$, there exists $\sigma>0$ such that for all $x,y\in K$ and $T>0$, we have
\[
\#\mathcal{P}_K(T, T+\tau)\leq D \, \mathcal{N}_{\mathcal{C}}(x,y,\eta,T+\sigma, T+\sigma+\tau, \delta)\, .
\]
\end{prop}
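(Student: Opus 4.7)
The strategy is dual to that of Proposition~\ref{chords et orbite periodique}: we build a chord from $x$ to $y$ by traversing each given periodic orbit once, and then bound the injectivity defect of this assignment. For each $\gamma \in \mathcal{P}_K(T, T+\tau)$, fix a marked point $z_\gamma \in \gamma \cap K$ depending only on the underlying trajectory of $\gamma$ (so that multiple covers of the same prime orbit share the same $z_\gamma$). By uniform transitivity (Lemma~\ref{transitivite prop}), produce two orbit pieces: one of length $\sigma_1(\gamma) \in [\sigma/2 - \tau_K, \sigma/2 + \tau_K]$ from a small neighborhood of $x$ to a small neighborhood of $z_\gamma$, and one of length $\sigma_2(\gamma)$ from a small neighborhood of $z_\gamma$ to a small neighborhood of $y$, with $\sigma_2(\gamma)$ chosen so that $\sigma_1(\gamma) + \ell(\gamma) + \sigma_2(\gamma) \in [T+\sigma, T+\sigma+\tau]$. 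The admissible window of length $\tau$ for $\sigma_2(\gamma)$ strictly exceeds the $\pm\tau_K$ slack of uniform transitivity (using $\tau > 2\tau_K$). Concatenating the three pseudo-orbit pieces and applying the finite exact shadowing property with $N=3$ and precision $\delta/2$, we obtain a point $w_\gamma \in B(x, \eta)$ whose orbit $(\delta/2)$-shadows this pseudo-orbit; in particular $w_\gamma \in \mathcal{C}(x, y, \eta, T+\sigma, T+\sigma+\tau)$.

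For the parameters, we apply Lemma~\ref{lemma on same po} with some $\nu \in (0, \tau_K/2)$ to obtain $\tau_0 > 0$, then apply it again with $\tau_1 := \tau + 2\tau_K + 1$ to obtain $\epsilon_1 > 0$; set $\epsilon_0 := \epsilon_1$. Given $0 < \delta < \epsilon_0/4$ and $0 < \eta < \epsilon_0/2$, let $\eta_{\mathrm{sh}}$ be the constant given by finite exact shadowing with parameters $K$, $\delta/2$, $N=3$, and let $\sigma_{\mathrm{unif}}$ be the constant from uniform transitivity with parameters $K$ and $\min(\eta_{\mathrm{sh}}, \delta/2, \eta/2)$. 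Choose $\sigma := 2\sigma_{\mathrm{unif}} + 4\tau_K + 2$, so that both transit targets exceed $\sigma_{\mathrm{unif}}$, and take $T_{\mathrm{min}}$ large enough that $T \geq T_{\mathrm{min}}$ implies $T - 2\tau_K > \tau_1$.

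For the injectivity, suppose $\gamma_1 \neq \gamma_2$ in $\mathcal{P}_K(T, T+\tau)$ produce points $w_1, w_2$ with $d(\phi_s(w_1), \phi_s(w_2)) < \delta$ for all $s \in [0, T+\sigma]$. Combining with the $(\delta/2)$-shadowing of each $w_i$ on the periodic portion yields
\[
d\bigl(\gamma_1(t), \gamma_2(t - \Delta)\bigr) < 2\delta \leq \epsilon_0
\]
for $t$ in an interval of length at least $T - 2\tau_K > \tau_1$, where $\Delta := \sigma_1^{(2)} - \sigma_1^{(1)}$ satisfies $|\Delta| \leq 2\tau_K$. If $|\ell(\gamma_1) - \ell(\gamma_2)| \leq \tau_0$, then Lemma~\ref{lemma on same po} applied to $\gamma_1(0)$ and the suitably shifted starting point on $\gamma_2$ forces $\gamma_1$ and $\gamma_2$ to share the same underlying trajectory. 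If $|\ell(\gamma_1) - \ell(\gamma_2)| > \tau_0$, we argue separately that the prime orbits underlying $\gamma_1$ and $\gamma_2$ are $2\delta$-close as subsets of $M$; by expansivity applied to pairs of points, one on each prime orbit, these prime orbits must coincide. In either case $\gamma_1, \gamma_2$ are multiple covers of a common prime orbit of period at least $\tau_K \geq 1$, so the number of such covers with period in $[T, T+\tau]$ is at most $\lfloor \tau/\tau_K \rfloor + 1$. This bounds the cardinality of the preimage of any chord in a maximal $\delta$-separated $E(x, y, \eta, T+\sigma, T+\sigma+\tau, \delta)$-set by a constant $D$ depending only on $K$, $\tau$, $\delta$, $\eta$, yielding the inequality.

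The main obstacle is the case $|\ell(\gamma_1) - \ell(\gamma_2)| > \tau_0$, since Lemma~\ref{lemma on same po} does not apply directly and we must use expansivity on the prime orbits and then handle the combinatorics of multiple covers sharing a common prime trajectory. A secondary difficulty is keeping track of the $\pm\tau_K$ slack in both transit lengths $\sigma_1, \sigma_2$, which forces us to compare $\gamma_1$ with a time-shifted version of $\gamma_2$ rather than with $\gamma_2$ itself.
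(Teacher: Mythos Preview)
Your construction of the chord from a periodic orbit is essentially the same as the paper's, and the overall architecture is correct. The divergence is in the injectivity bound, and there you have introduced real problems rather than solved them.

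The paper's argument is much simpler than yours. It partitions the length interval $[T,T+\tau]$ into $\lceil\tau/\tau_0\rceil$ subintervals of width $\tau_0$ and shows that \emph{within a single subinterval} the map $\gamma\mapsto p\in E$ is injective, using Lemma~\ref{lemma on same po} once. That immediately gives $D=\lceil\tau/\tau_0\rceil$, with $\tau_0$ fixed by $K$ and $\tau$ alone. There is no second case: you never need to compare two periodic orbits whose lengths differ by more than $\tau_0$, because such orbits lie in different bins and are already counted separately.

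Your attempt to treat the case $|\ell(\gamma_1)-\ell(\gamma_2)|>\tau_0$ has two genuine gaps. First, the assertion that the prime orbits are ``$2\delta$-close as subsets of $M$'' and therefore coincide ``by expansivity applied to pairs of points'' is not a valid use of Definition~\ref{def:expansive}: expansivity requires a continuous reparametrisation $s(t)$ with $d(\phi_t(x),\phi_{s(t)}(y))\le\epsilon$ for \emph{all} $t\in\mathbb R$, and Hausdorff proximity of the images does not furnish one without substantial extra work. Second, and more directly fatal, your final count assumes the prime period is at least $\tau_K$. This is false: $\tau_K$ is by definition the smallest period \emph{among those $\ge 1$} of orbits meeting $K$, so prime periods below $1$ (and hence below $\tau_K$) are not excluded. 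With an arbitrarily small prime period, your bound $\lfloor\tau/\tau_K\rfloor+1$ on the number of multiple covers in $[T,T+\tau]$ collapses.

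Drop the case split entirely and follow the binning argument: once you know that $f(\gamma_1)=f(\gamma_2)$ together with $|\ell(\gamma_1)-\ell(\gamma_2)|\le\tau_0$ forces $\gamma_1=\gamma_2$ via Lemma~\ref{lemma on same po}, the bound $D=\lceil\tau/\tau_0\rceil$ is immediate and depends only on $K$ and $\tau$, as required by the statement.
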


\begin{proof}
The strategy of the proof is, once again, to use transitivity property~\ref{transitivite prop bis} from $x$ to a periodic orbit $\gamma$ and from $\gamma$ to $y$, and then the finite exact shadowing property~\ref{weak shadowing property} to get a chord from $x$ to $y$ that starts close from $x$, goes to $\gamma$, follows it, and then finishes close to $y$.
One difficulty, as usual, is to control the (lack of) injectivity of the construction. See Figure~\ref{fig:comparaison_cordes_OP_2}.

\begin{figure}
    \centering
    \includegraphics{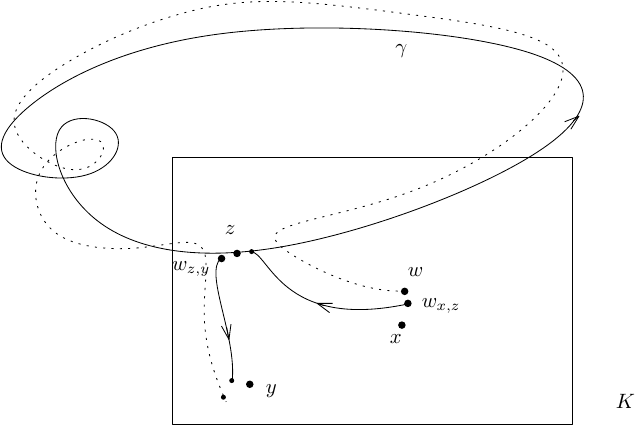}
    \caption{How to create a chord from a periodic orbit}
    \label{fig:comparaison_cordes_OP_2}
\end{figure}

{\bf Step 1.} Choice of appropriate parameters.
At the end of the construction, we will use the separation property \ref{lemma on same po} with $\nu=1$ and $\tau_1=\tau+2\tau_K$. This lemma gives us some constants $\tau_0$ and $\epsilon_0$.
Fix $0<\delta<\epsilon_0/4$ and $0<\eta<\epsilon_0/2$.
We will apply the finite exact shadowing property~\ref{weak shadowing property} on the $1$-neighbourhood $K'=\overline{B(K,1)}$ of $K$, with $N=3$ pieces of orbits that we want to glue to get a shadowing orbit at distance at most $\eta/2$ of the initial pieces.  Property~\ref{weak shadowing property} gives a constant $0<\zeta<\eta/2$ associated with $K'$, $N=3$ and $\eta/2$.
Now, transitivity property~\ref{transitivite prop bis} on $K'$ with precision $\zeta$ gives us a constant $\sigma_0>0$.
Observe that by definition, $\tau_{K'}\le \tau_K$.
Set $\sigma =2\sigma_0+\tau$.

{\bf Step 2.} A map from periodic orbits to chords.
Let $E$ be a $E(x,y,\eta ,T+\sigma, T+\sigma+\tau, \delta)$-set of maximal cardinality.
We define a map
\[
f :  \mathcal{P}_K(T,T+\tau)\to E\,,
\]
with controlled lack of injectivity as follows.

{\bf Step 2a.} Transitivity.
Start with a periodic orbit $\gamma\in \mathcal{P}_K(T,T+\tau)$.
Choose a starting point $z\in \gamma\cap K$ and a parametrization of $\gamma$ such that $z=\gamma(0)$.
By transitivity property~\ref{transitivite prop bis}, we find $w_{x,z}\in B(x, \zeta)$ and $\ell(w_{x,z})\in  [\sigma_0-\tau_{K'},\sigma_0+\tau_{K'}]\subset [\sigma_0-\tau_K,\sigma_0+\tau_K]$ such that $\phi_{\ell(w_{x,z})}(w_{x,z})\in B(z,\zeta)$.
In particular,
\[
\ell(w_{x,z})+\ell(\gamma)\in [\sigma_0+T-\tau_K,\sigma_0+T+\tau+\tau_K].
\]

{\bf Step 2b.} Transitivity with well chosen length.
For every $S\ge\sigma_0$, transitivity allows to find a chord from $B(z,\zeta)$ to $B(y,\zeta)$ with length in $[S-\tau_K,S+\tau_K]$.
Choose \[S=\sigma_0+(\sigma_0+\tau_K-\ell(w_{x,z}))+(T+\tau-\ell(\gamma))\geq \sigma_0\]
and let $w_{z,y}$ denote the initial point of the associated chord and $\ell(w_{z,y})$ its length.
By construction,
\[
\ell(z_{x,z})+\ell(\gamma)+
\ell(z_{z,y})\in [\sigma_0+(\sigma_0+\tau_K)+(T+\tau)-\tau_K,\sigma_0+(\sigma_0+\tau_K )+(T+\tau)+\tau_K]\,.
\]
Therefore (recall that $\sigma = 2\sigma_0+\tau$ and $\tau>2\tau_K$),
\[\ell(z_{x,z})+\ell(\gamma)+
\ell(z_{z,y})\in[T+\sigma,T+\sigma+2\tau_K]\subset [T+\sigma,T+\sigma+\tau].
\]

{\bf Step 2c.} Finite exact shadowing.
Recall that $w_{x,z}\in B(x,\zeta)$, $\phi_{\ell(w_{z,y})}(w_{z,y})\in B(y,\zeta)$ and $\zeta\leq\eta/2$.
As $\phi_{\ell(w_{x,z})}(w_{x,z})\in B(\gamma(0),\zeta)=B(z,\zeta)$ and  $w_{z,y}\in B(z,\zeta)=B(\gamma(\ell(\gamma)),\zeta)$, the finite exact shadowing property~\ref{weak shadowing property} gives a point $w\in  B(w_{x,z},\eta/2)\subset B(x,\eta)$ such that
\begin{itemize}
    \item for  $0\le s\le\ell(w_{x,z})$,  $d(\varphi_s(w), \varphi_s(w_{x,z}))<\frac{\eta}{2}$;
    \item for $0\le s\le\ell(\gamma)$,
    $d(\varphi_{\ell(w_{x,z})+s}(w),\gamma(s))<\frac{\eta}{2}$;
    \item for  $0\le s\le \ell(w_{z,y}) $,
    $d(\varphi_{\ell(w_{x,z})+\ell(\gamma)+s}(w),\varphi_s(w_{z,y}))<\frac{\eta}{2}$.
\end{itemize}
In particular, $\varphi_{\ell(w_{x,z})+\ell(\gamma)+\ell(w_{z,y})}(w)\in B(y,\eta)$ and $w \in \mathcal{C}(x,y,\eta,T+\sigma, T+\sigma+\tau)$.

{\bf Step 2d.} Construction of $f$. Since $E$ is a $E(x,y,\eta,T+\sigma,T+\sigma+\tau,\delta)$-set, there exists a point $p\in E$ such that $w\in B(p,\delta, T+\sigma)$. Set $f(\gamma)=p$. If $w$ belongs to more than one dynamical ball, just enumerate all the points in $E$ and choose the first one.

{\bf Step 3.} Bound the cardinality of the preimages of $f$.
Consider $\gamma_1,\gamma_2\in\mathcal{P}_K(T,T+\tau)$ such that $f(\gamma_1)=f(\gamma_2)=p\in E$.
Divide the interval $[T, T+ \tau]$ into intervals of length $\tau_0$, where $\tau_0$ is given by Lemma~\ref{lemma on same po} as explained at the beginning of the proof. We now prove that if $\gamma_1$ and $\gamma_2$ satisfy $f(\gamma_1)=f(\gamma_2)$ and $|\ell(\gamma_1)-\ell(\gamma_2)|\le \tau_0$, then $\gamma_1=\gamma_2$. This will imply the desired result with $D=\ceil{\tau/\tau_0}$.

Assume from now that $0\leq \left|\ell(\gamma_2)-\ell(\gamma_1)\right|\leq \tau_0$, We want to show that for $s\in [0,T-2\tau_K]$, we have $d(\gamma_1(s),\gamma_2(s))\le \epsilon_0$, and then use the separation property~\ref{lemma on same po}.

The construction of $f(\gamma_1)$ (resp. $f(\gamma_2)$) involves chords with initial points $w_{x,z_1}$ and $w_{z_1,y}$ (resp. $w_{x,z_2}$ and $w_{z_2,y}$), and produces a point $w_1$ (resp. $w_2$) in $ \mathcal{C}(x,y,\eta,T+\sigma,T+\sigma+\tau)$ such that $w_1\in B(p,\delta, T+\sigma)$ (resp. $w_2\in B(p,\delta, T+\sigma)$).
This proves  $w_1\in B(w_2,2\delta, T+\sigma)$.
Without loss of generality, we may assume that $\ell(w_{x,z_1})\leq \ell(w_{x,z_2})$.

By construction, for every $\ell(w_{x,z_1})\le s\le \ell(\gamma_1)+\ell(w_{x,z_1})$, $\phi_s(w_1)$ is $\eta/2$-close to $\gamma_1$.
Similarly, for every $\ell(w_{x,z_2})\le s\le \ell(\gamma_2)+\ell(w_{x,z_2})$, $\phi_s(w_2)$ is $\eta/2$-close to $\gamma_2$.
More precisely, for all $s\in [0,\ell(\gamma_1)]$,
\[d(\gamma_1(s), \phi_{s+\ell(w_{x,z_1})}(w_1))\leq \frac{\eta}{2}.\]
Therefore, for all $s\in[-(\ell(w_{x,z_2})-\ell(w_{x,z_1})), \ell(\gamma_1)-(\ell(w_{x,z_2})-\ell(w_{x,z_1}))]$
\[d(\gamma_1(s+(\ell(w_{x,z_2})-\ell(w_{x,z_1})), \phi_{s+\ell(w_{x,z_2})}(w_1))\leq \frac{\eta}{2}.\]
Symmetrically, for all  $s\in [0,\ell(\gamma_2)]$,
\[d(\gamma_2(s), \phi_{s+\ell(w_{x,z_2})}(w_2))\leq \frac{\eta}{2}.\]
Recall that $0\le \ell(w_{x,z_2})-\ell(w_{x,z_1}) \le 2\tau_K$, $T\le\ell(\gamma_1)\le T+\tau$ and $T\le \ell(\gamma_2)\le T+\tau$.
Therefore, for all $s\in[0, T-2\tau_K]$, we have
\begin{align*}
d(\gamma_1(s+\ell(w_{x,z_2})-\ell(w_{x,z_1})),\gamma_2(s))\leq& d(\gamma_1(s+\ell(w_{x,z_2})-\ell(w_{x,z_1})),\varphi_{s+\ell(w_{x,z_2})}(w_1))\\
&+d(\varphi_{s+\ell(w_{x,z_2})}(w_1),\varphi_{s+\ell(w_{x,z_2})}(w_2))\\
&+d(\varphi_{s+\ell(w_{x,z_2})}(w_2),\gamma_2(s))\\
<&\frac{\eta}{2}+2\delta +\frac{\eta}{2}<\epsilon_0.
\end{align*}
By Lemma \ref{lemma on same po}, we deduce that $\gamma_1=\gamma_2$. Thus, the cardinality of the preimage  of any point by $f$ is bounded by $D=\ceil{\tau/\tau_0}$ and
\[
\#\mathcal{P}_K(T,T+\tau)\leq D\, \# E=D\, \mathcal{N}_{\mathcal{C}}(x,y,\eta,T+\sigma,T+\sigma+\tau,\delta).
\]
as required.
\end{proof}

We end this section with a technical adaptation of Proposition~\ref{chords et orbite periodique} which will be useful to compare entropies at infinity in the proof of Theorem~\ref{difficult-technical}.
The idea is to compare chords and orbits contained in specified compact subsets of $M$.
This may be skipped on first reading.

Let $K$ be a compact subset of $M$.
Let $x,y\in K$ and let $K'$ be a compact set containing $K$.
Analogously to Definition~\ref{defi E-set}, we consider $E_{K'}(x,y,\eta,T^-, T^+,\delta)$-sets which are $E(x,y,\eta,T^-, T^+,\delta)$-sets made up of chords from $B(x,\eta)$ to $B(y,\eta)$ \emph{contained in $K'$}.
We will denote by $\mathcal N_{\mathcal C, K'}(x,y,\eta,T^-,T^+,\delta)$ the maximal cardinality of such sets.

\begin{prop}\label{cordes et orbite periodique version compacte}
Let $K \subset M$  be a compact subset of $M$  with nonempty interior.
Let $\tau>2\tau_K$.
Fix $0<\delta<3$.
There exist constants $R_\mathrm{min}>0$, $D=D(\delta)>0$, $T_{\rm min}>0$ and $\sigma>0$ such that for all $x,y\in K$, $R\ge R_\mathrm{min}$, $T\geq T_\mathrm{min}$ and all $0<\eta<1$, we have
\[
\mathcal{N}_{\mathcal{C},K_R}(x,y,\eta,T,T+\tau,\delta)\,\,\leq\,\, D\times(T+\sigma+\tau)\times \,\#\{\gamma\in \mathcal{P}_{K}(T+\sigma, T+\sigma+\tau), \gamma\subset K_{R+1}\}
\]
where $K_R=\overline{B(K,R)}$.
\end{prop}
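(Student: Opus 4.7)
The plan is to adapt the proof of Proposition~\ref{chords et orbite periodique} essentially verbatim, restricting the source set to chords contained in $K_R$ and the codomain to periodic orbits contained in $K_{R+1}$. More precisely, starting from a $E_{K_R}(x,y,\eta,T,T+\tau,\delta)$-set $E$ of maximal cardinality, I will build a map
\[
f \colon E \to \{\gamma \in \mathcal{P}_K(T+\sigma, T+\sigma+\tau) : \gamma \subset K_{R+1}\}
\]
exactly as in the original proof, by applying Lemma~\ref{petal separe} with $N=1$, $\nu=\min(\tau/2-\tau_K,1)$ and precision $\delta/3$. This yields the same constants $T_\mathrm{min}$, $\sigma'=\sigma+\tau_K+\nu$ and $D$ as in Proposition~\ref{chords et orbite periodique}. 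The preimage-counting step is then strictly identical, since it relies only on the separation Lemma~\ref{lemma on same po} applied to pairs of shadowing periodic orbits, which is insensitive to the ambient compact set.

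The only genuinely new verification is that the periodic orbit $\gamma=f(w)$ produced by the closing lemma lies in $K_{R+1}$ whenever the chord $w$ lies in $K_R$. I would decompose $\gamma$ into two parts. Its ``shadowing part'', of length $\ell(w)\in [T,T+\tau]$, stays within distance $\delta/3<1$ of the chord; since the chord is contained in $K_R=\overline{B(K,R)}$, this part lies in $\overline{B(K_R,1)}=K_{R+1}$. Its ``closing arc'', of length at most $\sigma'+\tau_K+\nu$, has endpoints that are $\eta$-close to $x,y\in K$; by the uniform upper speed bound from~\eqref{eqn:minoration}, every point on this arc is within distance $b(\sigma'+\tau_K+\nu)+\eta$ of $K$. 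Setting
\[
R_\mathrm{min} = b(\sigma'+\tau_K+\nu)+1
\]
and assuming $\eta<1$ guarantees that, for every $R\ge R_\mathrm{min}$, the closing arc is also contained in $K_{R+1}$, and hence $\gamma\subset K_{R+1}$.

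The main obstacle I expect is exactly this control on the closing arc: it is produced via a transitivity argument, so there is no geometric constraint on the region it visits a priori. The resolution is to observe that its length is bounded by a constant depending only on $K$, $\delta$ and $\tau$, and that the uniform upper speed bound of the $H$-flow then confines it to a fixed neighborhood of $K$. This is precisely why the statement must require $R\ge R_\mathrm{min}$: for small $R$, the closing arc could escape from $K_{R+1}$. Once this geometric point is established, every other step of the proof of Proposition~\ref{chords et orbite periodique}---the definition of $f$ via finite exact shadowing and the closing lemma, together with the preimage-cardinality bound---transfers without change, and the announced inequality follows.
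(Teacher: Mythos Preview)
Your proposal is correct and follows the same overall strategy as the paper: reuse the construction and injectivity count from Proposition~\ref{chords et orbite periodique} verbatim, with the single additional check that the resulting periodic orbit stays in $K_{R+1}$.

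There is one small difference worth noting in how $R_{\mathrm{min}}$ is justified. The paper argues that the transitivity chords used inside Lemma~\ref{petal separe} all arise from a \emph{finite} family of connecting orbits between balls covering $K$ (plus iterates of a fixed periodic orbit), and therefore live in a fixed compact set; $R_{\mathrm{min}}$ is then chosen so that $K_{R_{\mathrm{min}}}$ contains this set. Your argument instead uses only the global upper speed bound $d(z,\phi_t z)\le b|t|$ from~\eqref{eqn:minoration}: since the closing arc has length bounded by a constant depending only on $K,\delta,\tau$ and starts close to $K$, it is automatically confined to a bounded neighbourhood of $K$. Your route is slightly more direct, as it does not require unpacking the proof of uniform transitivity. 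One cosmetic point: the endpoints of the closing arc on $\gamma$ are actually $(\delta/3+\eta)$-close to $x,y\in K$ rather than $\eta$-close (the extra $\delta/3$ coming from the shadowing), but since $\delta/3<1$ this is absorbed in the passage from $K_R$ to $K_{R+1}$ and does not affect your choice of $R_{\mathrm{min}}$.
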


\begin{proof}
The proof goes exactly as the proof of Proposition~\ref{chords et orbite periodique}.
One just have to choose $R_\mathrm{min}$ big enough so that $K_{R_{\min}}$ contains all chords connecting, by uniform transitivity, any couple of balls, among a finite family covering $K$, whose radius depends only on $\delta$ and $K$. 
As $\delta/3<1$, the periodic orbit $\gamma$ is then contained in $K_{R+1}$ if the original chord is contained in $K_R$.
\end{proof}

\subsection{Chord entropy}

In this section, we define a notion of entropy that counts chords with increasing length. We prove that for $H$-flows, it coincides with Gurevic entropy. This chord entropy will be easier to use than the standard Gurevic entropy.

Fix $x,y\in M$ and $\sigma>0$, $\eta>0$, $\delta>0$.
Let
\begin{equation}\label{def:hC_x_y_eta_delta}
h_{\mathcal C}(x,y,\eta,\delta)=
\limsup_{T\to \infty}\frac{1}{T}\log \mathcal N_{\mathcal C}(x,y,\eta, T,T+\sigma,\delta).
\end{equation}
Recall that $\mathcal N_{\mathcal C}(x,y,\eta, T,T+\sigma,\delta)$ counts chords and is defined in Definition~\ref{defi E-set}.
This does not depend on $\sigma$, as proved in the following lemma.

\begin{lemm}\label{lem:constant}
The quantity $h_{\mathcal C}(x,y,\eta,\delta)$ is non-decreasing in $\eta>0$, non-increasing in $\delta>0$ and does not depend on $C$.
\end{lemm}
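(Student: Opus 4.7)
The plan is to treat the three claims separately. The two monotonicity statements are direct consequences of facts already established: the non-decrease in $\eta$ of the limsup defining $h_{\mathcal{C}}(x,y,\eta,\delta)$ follows by taking $\limsup_T \frac{1}{T}\log$ in Fact~\ref{fact about monotonicity N}, and the non-increase in $\delta$ follows in the same way from Fact~\ref{fact_about_monotonicity_eta}. Both facts hold pointwise in $T$, so the inequality passes to the $\limsup$.

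The substantive claim is independence of the auxiliary parameter $\sigma$. Fix $0 < \sigma_1 \leq \sigma_2$, and temporarily write $h_{\mathcal{C}}^{\sigma_i}(x,y,\eta,\delta)$ for the two candidate quantities. One direction, $h_{\mathcal{C}}^{\sigma_1} \leq h_{\mathcal{C}}^{\sigma_2}$, follows directly from the inclusion
\[
\mathcal{C}(x,y,\eta,T,T+\sigma_1) \subset \mathcal{C}(x,y,\eta,T,T+\sigma_2),
\]
combined with Fact~\ref{E set point un et deux alors trois}: any $E(x,y,\eta,T,T+\sigma_1,\delta)$-set of maximal cardinality is $(\delta,T)$-separating inside the larger chord set and can be extended to an $E(x,y,\eta,T,T+\sigma_2,\delta)$-set, yielding $\mathcal{N}_{\mathcal{C}}(x,y,\eta,T,T+\sigma_1,\delta) \leq \mathcal{N}_{\mathcal{C}}(x,y,\eta,T,T+\sigma_2,\delta)$ for every $T$.

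For the reverse direction, set $N = \lceil \sigma_2/\sigma_1 \rceil$ and partition the admissible length interval $[T,T+\sigma_2]$ into subintervals $I_k = [T+k\sigma_1,T+(k+1)\sigma_1]$ for $k = 0,\ldots,N-1$ (intersected with $[T,T+\sigma_2]$). Given an $E(x,y,\eta,T,T+\sigma_2,\delta)$-set $E$ of maximal cardinality, assign each $z \in E$ to some $E_k$ according to which $I_k$ contains a chosen time $\ell(z)$ with $\phi_{\ell(z)}(z) \in B(y,\eta)$. Each $E_k$ is $(\delta,T)$-separating, hence \emph{a fortiori} $(\delta,T+k\sigma_1)$-separating since $(\delta,T)$-separation requires a witness in the smaller interval $[0,T]$. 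Thus $E_k$ satisfies points (i) and (ii) of Definition~\ref{defi E-set} for the parameters $(T+k\sigma_1,T+(k+1)\sigma_1)$, so by Fact~\ref{E set point un et deux alors trois},
\[
\#E_k \leq \mathcal{N}_{\mathcal{C}}(x,y,\eta,T+k\sigma_1,T+(k+1)\sigma_1,\delta).
\]
Summing over $k$ gives $\mathcal{N}_{\mathcal{C}}(x,y,\eta,T,T+\sigma_2,\delta) \leq N \max_k \mathcal{N}_{\mathcal{C}}(x,y,\eta,T+k\sigma_1,T+(k+1)\sigma_1,\delta)$. Taking $\frac{1}{T}\log$ and $\limsup_{T\to\infty}$ kills the factor $N$ (which is bounded, independent of $T$) and absorbs the bounded shift $k\sigma_1$, so the right-hand side has $\limsup$ equal to $h_{\mathcal{C}}^{\sigma_1}(x,y,\eta,\delta)$, giving $h_{\mathcal{C}}^{\sigma_2} \leq h_{\mathcal{C}}^{\sigma_1}$. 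The only point requiring care is the direction of implications in the definition of $(\delta,T)$-separation, where a smaller $T$ is the \emph{stronger} condition; once this is noted, the argument is routine, modeled on Fact~\ref{ind gur entropy}.
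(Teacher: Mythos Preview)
Your proof is correct and follows essentially the same approach as the paper: the monotonicity claims come from the same two facts, and the independence of $\sigma$ is proved by the same partition of $[T,T+\sigma_2]$ into subintervals of length $\sigma_1$, with the key observation that a $(\delta,T)$-separating set is automatically $(\delta,T+k\sigma_1)$-separating, so Fact~\ref{E set point un et deux alors trois} applies. The only cosmetic difference is that you assign each $z\in E$ to a single $E_k$ via a chosen $\ell(z)$, whereas the paper takes $E_j = E\cap\mathcal C(x,y,\eta,T+j\sigma_1,T+(j+1)\sigma_1)$ (allowing overlaps); both yield the same bound.
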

\begin{proof} The first assertion comes from Fact~\ref{fact about monotonicity N}
and the second assertion from Fact~\ref{fact_about_monotonicity_eta}.

We now prove the last assertion. Choose two constants $0<\sigma_1<\sigma_2<\infty$.
Let $n$ be the smallest integer such that $\sigma_2\le n\sigma_1$.
For all choices of $x$, $y$, $\delta$, $\eta$ and $T>0$, we have
\[
\mathcal N_{\mathcal C} (x,y,\eta,T ,T+\sigma_1,\delta)
\le
\mathcal N_{\mathcal C} (x,y,\eta,T ,T+\sigma_2,\delta)\le \sum_{j=0}^{n-1}\mathcal N_{\mathcal C} (x,y,\eta,T+j\sigma_1 ,T+(j+1)\sigma_1,\delta).
\]
Indeed,
\[
\mathcal C(x,y,\eta,T,T+\sigma_2)
\subset
\bigcup_{j=0}^{n-1}\mathcal C(x,y,\eta,T + j\sigma_1, T+(j+1)\sigma_1)
\]
and if $E$ is a $E(x,y,\eta,T,T+\sigma_2,\delta)$-set of maximal cardinality, then $E_j = E\cap \mathcal C(x,y,\eta,T + j\sigma_1, T+(j+1)\sigma_1)$ is $(\delta, T)$-separating and therefore $(\delta,T+j\sigma_1)$-separating. By Fact \ref{E set point un et deux alors trois}, we deduce that
$$
\# E_j\leq \mathcal{N}_{\mathcal C} (x,y,\eta,T+j\sigma_1 ,T+(j+1)\sigma_1,\delta)\, ,
$$
and thus, since $E\subset \bigcup_{j=0}^{n-1}E_j$,
\[
\mathcal N_{\mathcal C} (x,y,\eta,T ,T+\sigma_2,\delta)
=
\# E\leq \sum_{j=0}^{n-1}\# E_j\leq \sum_{j=0}^{n-1}\mathcal{N}_{\mathcal C} (x,y,\eta,T+j\sigma_1 ,T+(j+1)\sigma_1,\delta)\, .
\]
Therefore,
\[
\mathcal N_{\mathcal C} (x,y,\eta,T ,T+\sigma_1,\delta)\le \mathcal N_{\mathcal C} (x,y,\eta,T ,T+\sigma_2,\delta)\le n\max_{j=0\dots,n-1}\mathcal N_{\mathcal C} (x,y,\eta,T+j\sigma_1 ,T+(j+1)\sigma_1,\delta)\,.
\]
Yet
\begin{align*}
\limsup_{T\to\infty}\frac{\log \mathcal N_{\mathcal C} (x,y,\eta,T ,T+\sigma_1,\delta)}{T} &= \limsup_{T\to\infty} \max_{j=0\dots,n-1}\frac{\log \mathcal N_{\mathcal C} (x,y,\eta,T+j\sigma_1 ,T+(j+1)\sigma_1,\delta) }{T+j\sigma_1} \\
&= \limsup_{T\to\infty}\frac{1}{T}\log \max_{j=0\dots,n-1} \mathcal N_{\mathcal C} (x,y,\eta,T+j\sigma_1 ,T+(j+1)\sigma_1,\delta)
\\
&= \limsup_{T\to\infty}\frac{1}{T}\log n \max_{j=0\dots,n-1} \mathcal N_{\mathcal C} (x,y,\eta,T+j\sigma_1 ,T+(j+1)\sigma_1,\delta).
\end{align*}
Therefore
\[\limsup_{T\to\infty}\frac{1}{T}\log \mathcal N_{\mathcal C} (x,y,\eta,T ,T+\sigma_1,\delta) = \limsup_{T\to\infty}\frac{1}{T}\log \mathcal N_{\mathcal C} (x,y,\eta,T ,T+\sigma_2,\delta).\]
\end{proof}

\begin{lemm}\label{lemme:changement_points_entropie_cordes}
Let $K\subset M$ be a compact subset.
Let $\delta>0$ and $\eta_1>0$ such that $\eta_1<\delta/2$.
Then there exists $\tilde \eta_0$ such that for all $x_0$, $y_0$, $x_1$, $y_1$ in $K$ and for all $0<\eta_0\le\tilde\eta_0$,
\[
h_{\mathcal C}(x_0,y_0,\eta_0,\delta)\le h_{\mathcal C}(x_1,y_1,\eta_1,\frac{\delta}{2}).
\]
In particular
\[
\lim_{\delta\to 0}\lim_{\eta\to 0} h_{\mathcal C}(x_0,y_0,\eta,\delta)= \lim_{\delta\to 0}\lim_{\eta\to 0} h_{\mathcal C}(x_1,y_1,\eta,\delta).
\]
\end{lemm}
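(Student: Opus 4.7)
The plan is to obtain the inequality as an almost immediate consequence of Proposition~\ref{prop_chagement_points_cordes}, combined with the invariance of $h_{\mathcal C}(x,y,\eta,\delta)$ with respect to the window length $\sigma$ provided by Lemma~\ref{lem:constant}. The only subtlety is bookkeeping with the limits in the second statement.

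First I would fix $\delta>0$ and $\eta_1\in(0,\delta/2)$ and apply Proposition~\ref{prop_chagement_points_cordes} to the four points $x_0,y_0,x_1,y_1\in K$. This yields constants $\sigma>0$ and $\tilde\eta_0\in(0,\eta_1/2)$ such that, for every $0<\eta_0\le\tilde\eta_0$ and every $T>0$, applying the proposition with $T_0^-=T$, $T_0^+=T+2\tau_K$ and $S=\sigma$ gives
\[
\mathcal N_{\mathcal C}(x_0,y_0,\eta_0,T,T+2\tau_K,\delta)\le \mathcal N_{\mathcal C}(x_1,y_1,\eta_1,T+2\tau_K+\sigma,T+4\tau_K+\sigma,\delta/2).
\]
Taking $\frac{1}{T}\log(\cdot)$ on both sides and letting $T\to\infty$, the left-hand side tends (along the $\limsup$) to $h_{\mathcal C}(x_0,y_0,\eta_0,\delta)$ by definition~\eqref{def:hC_x_y_eta_delta}. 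For the right-hand side, the substitution $T'=T+2\tau_K+\sigma$ gives an interval of length $2\tau_K$ starting at $T'$, and since $T'/T\to 1$ as $T\to\infty$, the $\limsup$ equals $h_{\mathcal C}(x_1,y_1,\eta_1,\delta/2)$. By Lemma~\ref{lem:constant}, neither entropy depends on the particular choice of window, so we conclude
\[
h_{\mathcal C}(x_0,y_0,\eta_0,\delta)\le h_{\mathcal C}(x_1,y_1,\eta_1,\delta/2),
\]
which is the first statement.

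For the second statement, I would first take $\lim_{\eta_0\to 0}$ on the left (this limit exists by the monotonicity in $\eta_0$ recorded in Lemma~\ref{lem:constant}), giving
\[
\lim_{\eta_0\to 0}h_{\mathcal C}(x_0,y_0,\eta_0,\delta)\le h_{\mathcal C}(x_1,y_1,\eta_1,\delta/2).
\]
Since this holds for every $\eta_1\in(0,\delta/2)$ and the left-hand side is independent of $\eta_1$, I can then take $\lim_{\eta_1\to 0}$ on the right, and finally $\lim_{\delta\to 0}$ (reindexing by $\delta'=\delta/2$ on the right), obtaining
\[
\lim_{\delta\to 0}\lim_{\eta\to 0}h_{\mathcal C}(x_0,y_0,\eta,\delta)\le \lim_{\delta'\to 0}\lim_{\eta\to 0}h_{\mathcal C}(x_1,y_1,\eta,\delta').
\]
Exchanging the roles of $(x_0,y_0)$ and $(x_1,y_1)$ in the first part of the argument gives the reverse inequality, hence equality.

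There is no real obstacle here: the construction in Proposition~\ref{prop_chagement_points_cordes}, which does the heavy lifting of gluing approximate orbits from the neighbourhood of $x_1$ to that of $x_0$ and from $y_0$ to $y_1$ via uniform transitivity and finite exact shadowing, has already dealt with all the delicate hyperbolic-dynamics steps; the present lemma is simply the translation of that uniform inequality into the language of the chord entropy, and the only care required is ensuring that the additive shifts of length $\sigma+2\tau_K$ in the time parameter are absorbed by the $1/T$ normalisation, which is exactly what Lemma~\ref{lem:constant} (together with the standard fact that $T/(T+\text{const})\to 1$) provides.
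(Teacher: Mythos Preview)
Your proof is correct and follows essentially the same approach as the paper's: apply Proposition~\ref{prop_chagement_points_cordes} to obtain the uniform chord-count inequality, take $\frac{1}{T}\log$ and $\limsup$ (absorbing the additive shift in $T$), and then handle the double limit and the symmetry argument exactly as you do. The only cosmetic difference is that the paper invokes the ``in particular'' form of Proposition~\ref{prop_chagement_points_cordes} directly with window $[T+\sigma, T+\sigma+2\tau_K]$, while you use the first form with $S=\sigma$ and then an explicit time-shift; these are equivalent.
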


\begin{proof}
By Proposition~\ref{prop_chagement_points_cordes}, there exists $\sigma$ and $\tilde\eta_0$ such that, for all $0<\eta_0\le\tilde\eta_0$ and all $T>0$
\[
\mathcal{N}_{\mathcal{C}}(x_0,y_0, \eta_0, T,T+2\tau_K,\delta)\leq\mathcal{N}_{\mathcal{C}}(x_1,y_1,\eta_1,T+\sigma,T+\sigma+2\tau_K,\delta/2).
\]
Therefore
\[
h_{\mathcal C}(x_0,y_0,\eta_0,\delta)\le h_{\mathcal C}(x_1,y_1,\eta_1,\frac{\delta}{2}).
\]
Considering the limit when $\eta_0\to 0$ and then  when $\eta_1\to 0$, we obtain
\[
\lim_{\eta\to 0} h_{\mathcal C}(x_0,y_0,\eta,\delta)\le \lim_{\eta\to 0} h_{\mathcal C}(x_1,y_1,\eta,\frac{\delta}{2}).
\]
We now consider the limit when  $\delta\to 0$ to obtain
\[
\lim_{\delta\to 0}\lim_{\eta\to 0} h_{\mathcal C}(x_0,y_0,\eta,\delta)\le \lim_{\delta\to 0}\lim_{\eta\to 0} h_{\mathcal C}(x_1,y_1,\eta,\delta).
\]
Therefore, by inverting the roles,
\[
\lim_{\delta\to 0}\lim_{\eta\to 0} h_{\mathcal C}(x_0,y_0,\eta,\delta)= \lim_{\delta\to 0}\lim_{\eta\to 0} h_{\mathcal C}(x_1,y_1,\eta,\delta).
\]
\end{proof}

\begin{defi}\label{def:chord-entropy}
We define the {\em chord entropy} as
\begin{equation*}
h_{\mathcal C} (\phi)= \lim_{\delta\to 0}\lim_{\eta\to 0}\limsup_{T\to \infty}\frac{1}{T}\log \mathcal N_{\mathcal C}(x,y,\eta, T,T+\sigma,\delta)\, .
\end{equation*}
\end{defi}

Observe that the chord entropy does not depend on the choice of $x$, $y$ and $\sigma$.

\begin{theo}\label{thm h cord=h gur}
Let $\varphi$ be a $H$-flow on $M$.
The Gurevic entropy coincides with the chord entropy:
\[
h_\mathrm{Gur}(\phi)=h_{\mathcal C}(\phi).
\]
Moreover, for every fixed compact set $K$ with nonempty interior, there exists $\alpha_0>0$ such that for all $x,y\in K$,  $\tau\ge 5\tau_K$, $0<\delta<\alpha_0/2$ and $0<\eta<\alpha_0/2$, the quantity
\[\frac{1}{T}\log \mathcal N_C(x,y,\eta, T,T+\tau,\delta)\]
converges towards $h_\mathrm{Gur}(\phi)$ when $T\to +\infty$.
Thus
\[
h_{\mathcal C}(\phi)=h_\mathrm{Gur}(\phi)=\lim_{T\to \infty}\frac{1}{T}\log \mathcal{N}_{\mathcal{C}}(x,y,\eta,T,T+\tau,\delta).
\]
\end{theo}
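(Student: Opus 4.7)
The plan is to sandwich $\mathcal N_\mathcal{C}(x,y,\eta,T,T+\tau,\delta)$ between two translates of $\#\mathcal{P}_K(T,T+\tau)$ using the two comparison propositions already established, and then invoke Theorem~\ref{theo:Gurevic} which says that for $\tau\ge 5\tau_K$ the Gurevic entropy is a true limit (not merely a $\limsup$). This true-limit property is exactly what will allow us to promote the inequalities into a genuine limit for the chord count.

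\textbf{Step 1: Upper bound.} First I would apply Proposition~\ref{chords et orbite periodique} with parameters $K$, $\tau \ge 5\tau_K > 2\tau_K$, and $\delta>0$. This produces constants $D_1,\sigma_1,T_\mathrm{min}^{(1)}$ such that for every $x,y \in K$, $0<\eta<1$ and $T \ge T_\mathrm{min}^{(1)}$,
\[
\mathcal N_{\mathcal C}(x,y,\eta,T,T+\tau,\delta) \le D_1 (T+\sigma_1+\tau)\, \#\mathcal{P}_K(T+\sigma_1,T+\sigma_1+\tau).
\]
Taking $\frac{1}{T}\log$ and using Theorem~\ref{theo:Gurevic} (which gives $\lim_{T\to\infty}\frac{1}{T}\log\#\mathcal{P}_K(T+\sigma_1,T+\sigma_1+\tau) = h_\mathrm{Gur}(\phi)$ since $\tau \ge 5\tau_K$), I obtain
\[
\limsup_{T\to\infty} \frac{1}{T}\log \mathcal N_{\mathcal C}(x,y,\eta,T,T+\tau,\delta) \;\le\; h_\mathrm{Gur}(\phi).
\]

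\textbf{Step 2: Lower bound.} Next I would apply Proposition~\ref{orbite periodique et chords} with parameters $K$ and $\tau\ge 5\tau_K$: this yields $\epsilon_0>0$ and $D_2$. I set $\alpha_0 := \epsilon_0/2$. For any $0<\delta < \alpha_0/2$ and $0<\eta <\alpha_0/2$, the proposition provides a constant $\sigma_2>0$ such that for every $x,y\in K$ and every $T>0$,
\[
\#\mathcal{P}_K(T,T+\tau) \;\le\; D_2\, \mathcal N_{\mathcal C}(x,y,\eta,T+\sigma_2,T+\sigma_2+\tau,\delta).
\]
Rewriting with $T' = T+\sigma_2$ and taking $\frac{1}{T'}\log$, and again using that the Gurevic entropy is a true limit (Theorem~\ref{theo:Gurevic}), I get
\[
h_\mathrm{Gur}(\phi) \;=\; \lim_{T'\to\infty} \frac{1}{T'}\log \#\mathcal{P}_K(T'-\sigma_2,T'-\sigma_2+\tau) \;\le\; \liminf_{T'\to\infty} \frac{1}{T'}\log \mathcal N_{\mathcal C}(x,y,\eta,T',T'+\tau,\delta).
\]

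\textbf{Step 3: Conclusion.} Combining the two steps, for $\tau\ge 5\tau_K$, $0<\delta<\alpha_0/2$, $0<\eta<\alpha_0/2$ and arbitrary $x,y\in K$,
\[
\lim_{T\to\infty} \frac{1}{T}\log \mathcal N_{\mathcal C}(x,y,\eta,T,T+\tau,\delta) \;=\; h_\mathrm{Gur}(\phi),
\]
which is the quantitative statement of the theorem. Passing to the limit $\eta\to 0$ then $\delta\to 0$ in Definition~\ref{def:chord-entropy} (the limits exist by the monotonicity statements of Lemma~\ref{lem:constant}), we deduce $h_{\mathcal C}(\phi) = h_\mathrm{Gur}(\phi)$, as required.

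The substance is really contained in the two comparison propositions of Section~\ref{sec:chords-periodic}; once those are available, no further difficulty remains. The mildly delicate point is simply matching the conventions on $\tau$ and the restrictions on $\delta,\eta$ so that both propositions apply simultaneously, and observing that the shift by $\sigma_1$ (resp.\ $\sigma_2$) in the periodic-orbit windows is absorbed by the $\frac{1}{T}$ normalization since $h_\mathrm{Gur}(\phi)$ is a genuine limit at scale $\tau\ge 5\tau_K$.
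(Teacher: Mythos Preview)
Your proof is correct and follows essentially the same approach as the paper: sandwich $\mathcal N_\mathcal{C}$ between two shifted counts $\#\mathcal P_K$ via Propositions~\ref{chords et orbite periodique} and~\ref{orbite periodique et chords}, then invoke the true-limit statement of Theorem~\ref{theo:Gurevic}. One cosmetic point: Proposition~\ref{chords et orbite periodique} requires $\eta<1$, so you should set $\alpha_0=\min(\epsilon_0/2,1)$ rather than $\epsilon_0/2$ (as the paper does), to guarantee the constraint is met.
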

\begin{proof}
It follows immediately from Propositions~\ref{chords et orbite periodique} and \ref{orbite periodique et chords}, as we will see now.

Fix $\tau=5\tau_K$.
Fix $\epsilon_0$ as in Proposition~\ref{orbite periodique et chords}.
Let $\alpha_0 = \min(\epsilon_0/2,1)$.
Choose $K\subset M$ compact and $x,y\in K$.
Fix $0<\delta<\alpha_0/2$ and $0<\eta<\alpha_0/2$.
From Propositions~\ref{chords et orbite periodique} and \ref{orbite periodique et chords}, there exists $\sigma$, $\sigma'$, $D$ and $D'$ such that for all $T\gg 1$
\[
\frac{1}{D}\# \mathcal P_K(T,T+\tau)\le  \mathcal N_\mathcal{C} (x,y,\eta,T+\sigma, T+\sigma+\tau,\delta)\le D' (T+\sigma+\sigma'+\tau) \# \mathcal P_K(T+\sigma+\sigma',T+\sigma+\sigma'+\tau).
\]
As $h_\mathrm{Gur}$ is a true limit (Theorem~\ref{theo:Gurevic}), we obtain that the following limits exist and
\[ \lim_{T\to \infty}\frac{1}{T}\log \mathcal{P}_K(T,T+\tau)= \lim_{T\to \infty}\frac{1}{T}\log \mathcal{N}_{\mathcal{C}}(x,y,\eta,T,T+\tau,\delta)\]
Thus
\[
h_\mathrm{Gur}(\phi)=h_{\mathcal C}(\phi).
\]
\end{proof}

\subsection{Entropy at infinity through chords}\label{chords infinity}

This section is devoted to the notion of chord entropy at infinity, that we will later compare with $h_{\mathrm{Gur}}^\infty(\phi)$.
Fix a compact subset $K\subset M$, and two points $x,y\in\partial K$.
For $\eta>0$, we define the $\eta$-interior neighbourhood of $K$ as
\[
K_{-\eta}=K \setminus\cup_{x\in\partial K} B(x,\eta)\,.
\]
\label{p:eta_interior_compact_set}
We define a \emph{chord outside $K_{-\eta}$ from $B(x,\eta)$ to $B(y,\eta)$} as a path from a point of $B(x,\eta)$ to a point of $B(y,\eta)$ that does not intersect $K_{-\eta}$.
We now consider the set of chords outside $K_{-\eta}$ with controlled length. Define  $\mathcal{C}^{K^c}(x,y,\eta,T^-,T^+)$  as
\[
\{ z\in B(x,\eta), \exists \tau\in[T^-,T^+]\text{ such that }\varphi_{\tau}(z)\in B(y,\eta) \text{ and }\varphi_{[0,\tau]}(z)\cap K_{-\eta}=\emptyset\}.
\]
\label{p:cordes_exterieur_K}
Observe that some sets $\mathcal{C}^{K^c}(x,y,\eta,T^-,T^+)$ could be empty for all $T^-$ and $T^+$.
Following Definition~\ref{defi E-set}, we introduce the following notations.
\begin{defi}\label{defi E-set outside compact}
Let $\delta>0$.
A set $E$ is a $E^{K^c}(x,y,\eta, T^-,T^+,\delta)$-set if:
\begin{enumerate}
    \item\label{point i def E-set outside} $E\subset  \mathcal{C}^{K^c}(x,y,\eta,T^-,T^+)$;
    \item\label{point ii def E-set outside} $E$ is a $(\delta,T^-)$-separating set;
    \item\label{point iii def E-set outside} the set $ \mathcal{C}^{K^c}(x,y,\eta,T^-,T^+)$ is contained in the union of dynamical balls $\bigcup_{z\in E}B(z,\delta,T^-)$.
\end{enumerate}
We define the {\em number} $\mathcal{N}^{K^c}_{\mathcal{C}}(x,y,\eta,T^-,T^+,\delta)$ of chords from $x$ to $y$ outside $K_{-\eta}$ with length in $[T^-,T^+]$ as the maximal cardinality of a $E^{K^c}(x,y,\eta,T^-,T^+,\delta)$-set.
\end{defi}

The proof of the following fact is similar to the proof of Fact~\ref{E set point un et deux alors trois}.
\begin{fact}\label{E set point un et deux alors trois outside compact}
Let $E$ be a set satisfying points~\ref{point i def E-set outside} and \ref{point ii def E-set outside} of Definition~\ref{defi E-set outside compact}.
Then there exists a set $E'$ containing $E$ which is a $E^{K^c}(x,y,\eta,T^-, T^+,\delta)$-set.
\end{fact}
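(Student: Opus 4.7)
The plan is to follow the construction in the proof of Fact~\ref{E set point un et deux alors trois} essentially verbatim, replacing the chord set $\mathcal{C}(x,y,\eta,T^-,T^+)$ by the chord set $\mathcal{C}^{K^c}(x,y,\eta,T^-,T^+)$ avoiding $K_{-\eta}$. None of the steps of that argument is sensitive to whether or not the chords in question avoid $K_{-\eta}$: the extra constraint simply shrinks the pool of candidate chords and plays no role in either the separation check or the termination argument.

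Concretely, starting from $E$ satisfying points~\ref{point i def E-set outside} and~\ref{point ii def E-set outside}, if condition~\ref{point iii def E-set outside} already holds I take $E' = E$; otherwise I pick a chord
\[
z' \in \mathcal{C}^{K^c}(x,y,\eta,T^-,T^+) \setminus \bigcup_{z\in E}B(z,\delta,T^-).
\]
By the choice of $z'$, for each $z \in E$ there exists $t \in [0,T^-]$ with $d(\phi_t(z'),\phi_t(z)) \geq \delta$, so $E \cup \{z'\}$ is still $(\delta, T^-)$-separating and still consists of chords in $\mathcal{C}^{K^c}(x,y,\eta,T^-,T^+)$; i.e.\ it again satisfies points~\ref{point i def E-set outside} and~\ref{point ii def E-set outside}. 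I then iterate this procedure.

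The only point requiring attention is the termination of the iteration, which I expect to be the main (mild) obstacle. Here I would invoke the compactness of $\overline{B(x,\eta)}$, which holds by Hopf-Rinow since $M$ is complete, combined with the uniform Lipschitz bound~\eqref{lipsch}: the latter, iterated over $[0,T^-]$, implies that any $(\delta,T^-)$-dynamical ball contains a genuine Riemannian ball whose radius depends only on $\delta$ and $T^-$. Therefore the cardinality of any $(\delta,T^-)$-separated subset of $\overline{B(x,\eta)}$ is bounded uniformly, so the iterative construction halts after finitely many steps, yielding the desired set $E' \supset E$.
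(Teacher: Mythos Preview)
Your proof is correct and follows essentially the same approach as the paper, which simply says the proof is similar to that of Fact~\ref{E set point un et deux alors trois}. Your termination argument via the Lipschitz bound and Hopf--Rinow is a slightly more explicit version of the paper's one-line appeal to compactness of the closure of the chord set.
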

The following fact is analogous to Fact~\ref{fact about monotonicity N} and we omit its proof.
\begin{fact}\label{fact about monotonicity N outside compact}
Let $K$ be a compact set with nonempty interior and $x,y\in\partial K$. Let $\delta>0$ and $0<T^-<T^+$. The map
\[
\eta\in(0,+\infty)\mapsto \mathcal{C}^{K^c}(x,y,\eta,T^-,T^+,\delta)
\]
is non-decreasing for the inclusion. The map
\[
\eta\in(0,+\infty)\mapsto \mathcal{N}^{K^c}_{\mathcal{C}}(x,y,\eta,T^-,T^+,\delta)
\]
is non-decreasing.
\end{fact}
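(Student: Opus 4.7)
The plan is to follow the same strategy as Fact~\ref{fact about monotonicity N}, using the two-ingredient definition of $E^{K^c}$-sets: first establish set-theoretic monotonicity of the chord sets themselves, then invoke Fact~\ref{E set point un et deux alors trois outside compact} to extend a maximal separating set under a bigger parameter $\eta$.

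For the first assertion, I would fix $\eta_1\le\eta_2$ and take $z\in\mathcal{C}^{K^c}(x,y,\eta_1,T^-,T^+)$. Then $z\in B(x,\eta_1)\subset B(x,\eta_2)$ and there is $\tau\in[T^-,T^+]$ with $\varphi_\tau(z)\in B(y,\eta_1)\subset B(y,\eta_2)$. The only thing to check is that the avoidance condition is preserved. By the definition $K_{-\eta}=K\setminus\bigcup_{w\in\partial K}B(w,\eta)$, the map $\eta\mapsto K_{-\eta}$ is non-increasing for inclusion, so $K_{-\eta_2}\subset K_{-\eta_1}$. Hence $\varphi_{[0,\tau]}(z)\cap K_{-\eta_2}\subset \varphi_{[0,\tau]}(z)\cap K_{-\eta_1}=\emptyset$, which gives $z\in\mathcal{C}^{K^c}(x,y,\eta_2,T^-,T^+)$.

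For the second assertion, I would again fix $\eta_1\le\eta_2$ and pick a $E^{K^c}(x,y,\eta_1,T^-,T^+,\delta)$-set $E$ of maximal cardinality $\mathcal{N}^{K^c}_{\mathcal{C}}(x,y,\eta_1,T^-,T^+,\delta)$. By the first part, $E\subset\mathcal{C}^{K^c}(x,y,\eta_2,T^-,T^+)$, so property~\ref{point i def E-set outside} of Definition~\ref{defi E-set outside compact} holds with parameter $\eta_2$. Property~\ref{point ii def E-set outside} ($(\delta,T^-)$-separation) is a property of $E$ alone and is unchanged. By Fact~\ref{E set point un et deux alors trois outside compact}, $E$ can be completed into a $E^{K^c}(x,y,\eta_2,T^-,T^+,\delta)$-set $E'\supset E$. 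Then $\#E\le\#E'\le\mathcal{N}^{K^c}_{\mathcal{C}}(x,y,\eta_2,T^-,T^+,\delta)$, which is the desired monotonicity.

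There is essentially no obstacle: the statement is purely combinatorial once one notices that $K_{-\eta}$ is monotone decreasing in $\eta$, which makes both defining constraints of $\mathcal{C}^{K^c}$ (the ball size and the forbidden region) relax simultaneously as $\eta$ grows. The argument is strictly parallel to the inner-chord case, with Fact~\ref{E set point un et deux alors trois outside compact} playing the role that Fact~\ref{E set point un et deux alors trois} plays in the proof of Fact~\ref{fact about monotonicity N}.
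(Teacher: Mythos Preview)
Your proof is correct and follows exactly the approach the paper intends: the paper omits the proof of this fact, noting only that it is analogous to Fact~\ref{fact about monotonicity N}, and your argument is precisely that analogue, with the one additional observation (that $\eta\mapsto K_{-\eta}$ is non-increasing for inclusion) needed to carry the first assertion through in the $K^c$ setting.
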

The following fact is similar to Fact~\ref{fact_about_monotonicity_eta}.
\begin{fact}\label{fact about monotonicity delta outside compact}
Let $K$ be a compact set with nonempty interior and $x,y\in\partial K$. Let $\eta>0$ and $0<T^-<T^+$. The map
\[
\delta\mapsto \mathcal{N}^{K^c}_{\mathcal{C}}(x,y,\eta,T^-,T^+,\delta)
\]
is non-increasing.
\end{fact}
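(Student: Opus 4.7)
The plan is to mirror, essentially \emph{verbatim}, the proof of Fact~\ref{fact_about_monotonicity_eta}, which handled the analogous monotonicity statement for the unconstrained chord counting $\mathcal{N}_{\mathcal{C}}$. The only new ingredient needed here is the outside-compact version of the completion lemma, namely Fact~\ref{E set point un et deux alors trois outside compact}, which is already available.

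Concretely, I would fix $0<\delta_1\le\delta_2$ and observe, as the first step, the trivial but crucial implication: any $(\delta_2,T^-)$-separated set is automatically $(\delta_1,T^-)$-separated, since the condition ``there exists $t\in[0,T^-]$ with $d(\phi_t(z),\phi_t(z'))\ge\delta$'' only becomes easier to satisfy as $\delta$ decreases. Then I would pick a $E^{K^c}(x,y,\eta,T^-,T^+,\delta_2)$-set $E_2$ of maximal cardinality $\mathcal{N}^{K^c}_{\mathcal{C}}(x,y,\eta,T^-,T^+,\delta_2)$. By construction, $E_2$ satisfies items \ref{point i def E-set outside} (it consists of chords outside $K_{-\eta}$) and \ref{point ii def E-set outside} (it is $(\delta_1,T^-)$-separated, by the observation above) of Definition~\ref{defi E-set outside compact} for the parameter $\delta_1$, although it might fail to satisfy the covering property \ref{point iii def E-set outside}.

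The key step is then to apply Fact~\ref{E set point un et deux alors trois outside compact} to $E_2$ to extend it into a full $E^{K^c}(x,y,\eta,T^-,T^+,\delta_1)$-set $E_1\supset E_2$. This yields
\[
\mathcal{N}^{K^c}_{\mathcal{C}}(x,y,\eta,T^-,T^+,\delta_2)=\#E_2\le \#E_1\le \mathcal{N}^{K^c}_{\mathcal{C}}(x,y,\eta,T^-,T^+,\delta_1),
\]
which is exactly the non-increasing statement. There is no real obstacle in this argument; the only subtlety to check is that the extension procedure of Fact~\ref{E set point un et deux alors trois outside compact} preserves the ``outside $K_{-\eta}$'' constraint, which it does tautologically because the new points are chosen from $\mathcal{C}^{K^c}(x,y,\eta,T^-,T^+)$ by definition of that fact.
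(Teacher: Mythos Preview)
Your proposal is correct and follows exactly the approach the paper intends: the paper states that this fact is ``similar to Fact~\ref{fact_about_monotonicity_eta}'' and omits the proof, and your argument is precisely the adaptation of that proof to the $K^c$-setting, using Fact~\ref{E set point un et deux alors trois outside compact} in place of Fact~\ref{E set point un et deux alors trois}.
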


Moreover, a proof similar to the proof of Lemma~\ref{lem:constant} gives the following result.

\begin{lemm}
The exponential growth rate \[\limsup_{T\to +\infty}\frac{1}{T}\log \mathcal{N}^{K^c}_{\mathcal{C}}(x,y,\eta,T,T+C,\delta)\]  does not depend on $C>0$.
\end{lemm}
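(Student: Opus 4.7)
The proof will be a direct adaptation of the argument given for Lemma~\ref{lem:constant} (chords through the full space), replacing $\mathcal{N}_{\mathcal{C}}$ and $\mathcal{C}$ everywhere by their outside-$K_{-\eta}$ analogues $\mathcal{N}^{K^c}_{\mathcal{C}}$ and $\mathcal{C}^{K^c}$, and using Fact~\ref{E set point un et deux alors trois outside compact} in place of Fact~\ref{E set point un et deux alors trois}. All I need to check is that the two one-sided inequalities go through verbatim; in particular, no new use of the flow dynamics is required, since the geometric property ``the trajectory avoids $K_{-\eta}$'' is preserved under all the operations used in Lemma~\ref{lem:constant} (taking subsets, assigning each chord to the unique subinterval containing its arrival time, extending a separating family to a maximal separating family inside the same set of chords).

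Fix $0<C_1<C_2<\infty$ and let $n$ be the smallest integer with $C_2\le nC_1$. First, for the easy direction, any $E^{K^c}(x,y,\eta,T,T+C_1,\delta)$-set is contained in $\mathcal{C}^{K^c}(x,y,\eta,T,T+C_2)$ and is still $(\delta,T)$-separating, so by Fact~\ref{E set point un et deux alors trois outside compact} it can be extended into an $E^{K^c}(x,y,\eta,T,T+C_2,\delta)$-set; hence $\mathcal{N}^{K^c}_{\mathcal{C}}(x,y,\eta,T,T+C_1,\delta)\le\mathcal{N}^{K^c}_{\mathcal{C}}(x,y,\eta,T,T+C_2,\delta)$. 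For the reverse direction, I will use the cover
\[
\mathcal{C}^{K^c}(x,y,\eta,T,T+C_2)\subset\bigcup_{j=0}^{n-1}\mathcal{C}^{K^c}(x,y,\eta,T+jC_1,T+(j+1)C_1),
\]
which holds because each chord has a well-defined arrival time in $[T,T+C_2]$ lying in at least one of the subintervals. Given a maximal $E^{K^c}(x,y,\eta,T,T+C_2,\delta)$-set $E$ of cardinality $\mathcal{N}^{K^c}_{\mathcal{C}}(x,y,\eta,T,T+C_2,\delta)$, set $E_j=E\cap\mathcal{C}^{K^c}(x,y,\eta,T+jC_1,T+(j+1)C_1)$. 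Each $E_j$ is $(\delta,T)$-separating, a fortiori $(\delta,T+jC_1)$-separating, and is contained in $\mathcal{C}^{K^c}(x,y,\eta,T+jC_1,T+(j+1)C_1)$, so by Fact~\ref{E set point un et deux alors trois outside compact} one has $\#E_j\le\mathcal{N}^{K^c}_{\mathcal{C}}(x,y,\eta,T+jC_1,T+(j+1)C_1,\delta)$. Summing and bounding the sum by $n$ times the maximum yields
\[
\mathcal{N}^{K^c}_{\mathcal{C}}(x,y,\eta,T,T+C_2,\delta)\le n\max_{0\le j\le n-1}\mathcal{N}^{K^c}_{\mathcal{C}}(x,y,\eta,T+jC_1,T+(j+1)C_1,\delta).
\]

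To conclude, I will take $\limsup_{T\to\infty}\frac{1}{T}\log$ on both sides. The factor $\log n$ disappears, and since $n$ is fixed one can pass the $\limsup$ through the maximum: for any sequence $T_m\to\infty$ realizing the $\limsup$ on the left, pigeonhole extracts a subsequence on which the maximizing index $j=j^\ast$ is constant, and writing $S=T+j^\ast C_1$, one checks that $\frac{1}{T}\log g(S)$ and $\frac{1}{S}\log g(S)$ have the same $\limsup$ as $T\to\infty$, where $g(S):=\mathcal{N}^{K^c}_{\mathcal{C}}(x,y,\eta,S,S+C_1,\delta)$. This gives
\[
\limsup_{T\to\infty}\frac{1}{T}\log\mathcal{N}^{K^c}_{\mathcal{C}}(x,y,\eta,T,T+C_2,\delta)\le\limsup_{T\to\infty}\frac{1}{T}\log\mathcal{N}^{K^c}_{\mathcal{C}}(x,y,\eta,T,T+C_1,\delta),
\]
and combined with the trivial reverse inequality, independence of $C$ follows. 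There is no real obstacle here; the only point to watch is the routine but slightly fussy justification that exchanging $\limsup_T\frac{1}{T}\log$ with $\max$ over a finite index set is legitimate (which is where the pigeonhole step is used) and that the outside-$K$ constraint is preserved at every step of the partition argument.
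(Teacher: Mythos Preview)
Your proposal is correct and is exactly what the paper intends: it states that ``a proof similar to the proof of Lemma~\ref{lem:constant} gives the following result'' without spelling out the details, and your adaptation (replacing $\mathcal{C}$ by $\mathcal{C}^{K^c}$ and invoking Fact~\ref{E set point un et deux alors trois outside compact} in place of Fact~\ref{E set point un et deux alors trois}) is precisely that similar proof.
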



We can now start defining the chord entropy outside a compact set. Set
\[
h_{\mathcal C}^{K^c}(x,y,\eta,\delta)=\limsup_{T\to \infty}\frac{1}{T}\log \mathcal{N}^{K^c}_{\mathcal{C}}(x,y,\eta,T ,T+C,\delta)
\]
and
\[
h_{\mathcal C}^{K^c}(\eta,\delta)=\sup_{x,y\in\partial K}\limsup_{T\to \infty}\frac{1}{T}\log \mathcal{N}^{K^c}_{\mathcal{C}}(x,y,\eta,T ,T +C,\delta) = \sup_{x,y\in\partial K}h_{\mathcal C}^{K^c}(x,y,\eta,\delta) \,.
\]
\label{p:chord_entropy_outside_K}
The \emph{chord entropy outside $K$} is defined as
\begin{equation}\label{def:chord-entropy-outside-K}
   h_{\mathcal C}^{K^c}(\phi )=\lim_{\delta\to 0}\lim_{\eta\to 0} h_{\mathcal C}^{K^c}(\eta,\delta )\,.
\end{equation}
This definition makes sense because the function $\eta\to h^{K^c}_{\mathcal{C}}(\eta,\delta)$ is non-decreasing, and the function $\delta\to h^{K^c}_{\mathcal{C}}(\eta,\delta)$ is non-increasing.

In the following proposition we prove that the above quantity is essentially non-increasing when $K$ grows.

\begin{prop}\label{prop croissance chord entropy} If $K_1\subset \inter{K}_2$ and $K_1$ has nonempty interior, then
\[ h_{\mathcal C}^{K_2^c}(\phi )\le h_{\mathcal C}^{K_1^c}(\phi )  \,.
\]
 \end{prop}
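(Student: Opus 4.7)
The plan is to construct, for each chord from $B(x_2,\eta)$ to $B(y_2,\eta)$ staying outside $(K_2)_{-\eta}$ (with $x_2, y_2 \in \partial K_2$), a longer chord from $B(x_1,\eta')$ to $B(y_1,\eta')$ staying outside $(K_1)_{-\eta'}$ for some $x_1, y_1 \in \partial K_1$, by extending the original chord through fixed orbit segments joining $\partial K_1$ to $\partial K_2$. Since $K_1 \subset \inter{K_2}$, the distance $d(K_1, \partial K_2)$ is strictly positive; imposing $\eta < d(K_1, \partial K_2)$ forces $K_1 \subset (K_2)_{-\eta}$, so any chord outside $(K_2)_{-\eta}$ automatically lies outside $K_1$, hence outside $(K_1)_{-r}$ for every $r > 0$. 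The whole game is thus to transport the endpoints from $\partial K_2$ to $\partial K_1$.

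The key construction is the following. By topological transitivity (Definition~\ref{def-transitive}) applied to $U = \inter{K_1}$ (nonempty by hypothesis) and $V = B(x_2, \eta/2)$, there exist $p \in \inter{K_1}$ and $t_1 > 0$ with $\phi_{t_1}(p) \in B(x_2, \eta/2)$. Since $p \in K_1$ while $\phi_{t_1}(p) \notin K_1$, the supremum $s_1^\ast := \sup\{s \in [0, t_1] : \phi_s(p) \in K_1\}$ lies in $[0, t_1)$ and $\phi_{s_1^\ast}(p) \in \partial K_1$. Setting $x_1 := \phi_{s_1^\ast}(p)$ and $\alpha(s) := \phi_{s_1^\ast + s}(p)$ for $s \in [0, s_1]$ (where $s_1 := t_1 - s_1^\ast$) produces an orbit segment from $x_1 \in \partial K_1$ to $B(x_2, \eta/2)$ with $\alpha((0, s_1]) \cap K_1 = \emptyset$. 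A symmetric first-entry argument yields $\beta \colon [0, s_2] \to M$ from $B(y_2, \eta/2)$ to some $y_1 \in \partial K_1$, disjoint from $K_1$ except at $\beta(s_2) = y_1$. These segments depend on $x_2, y_2, \eta$ but not on the particular chord to be transported.

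For a maximal $E^{K_2^c}(x_2, y_2, \eta, T, T+C, \delta)$-set $E$, and for each $z \in E$ with $\tau(z) \in [T, T+C]$ such that $\phi_{\tau(z)}(z) \in B(y_2, \eta)$, I concatenate $\alpha$, $\phi_{[0, \tau(z)]}(z)$, and $\beta$ into a pseudo-orbit with transitions of size at most $3\eta/2$. Taking $\eta$ below the finite exact shadowing threshold (Definition~\ref{weak shadowing property}) for $N = 3$ and precision $\delta/4$, and requiring $\delta/4 < \eta'$, the finite exact shadowing yields a true orbit starting at some $w \in B(x_1, \eta')$, ending in $B(y_1, \eta')$, of length $s_1 + \tau(z) + s_2$ and $\delta/4$-close to the pseudo-orbit. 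Since every piece of the pseudo-orbit lies outside $(K_1)_{-(\eta' - \delta/4)}$ (the pieces $\alpha, \beta$ avoid $\inter{K_1}$, and the chord lies outside $K_1$), a triangle-inequality argument on the distance to $\partial K_1$ shows the shadowing orbit lies outside $(K_1)_{-\eta'}$. Moreover, if $z, z' \in E$ are $\delta$-separated on $[0, T]$, the associated shadowing orbits are $(\delta/2)$-separated on $[s_1, s_1 + T] \subset [0, s_1 + T + s_2]$. Using Fact~\ref{E set point un et deux alors trois outside compact} to extend the image into a maximal $E^{K_1^c}$-set yields, with $s := s_1 + s_2$,
\[
\mathcal{N}_{\mathcal C}^{K_2^c}(x_2, y_2, \eta, T, T+C, \delta) \leq \mathcal{N}_{\mathcal C}^{K_1^c}(x_1, y_1, \eta', T+s, T+s+C, \delta/2).
\]
Dividing by $T$, taking $\limsup_{T \to \infty}$, and then $\sup$ over $x_2, y_2 \in \partial K_2$ gives $h_{\mathcal C}^{K_2^c}(\eta, \delta) \leq h_{\mathcal C}^{K_1^c}(\eta', \delta/2)$. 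Successively passing to the monotone limits $\eta \to 0$, $\eta' \to 0$, and $\delta \to 0$ produces the desired inequality.

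The main obstacle is the construction of $\alpha$ and $\beta$: topological transitivity produces orbits between any two open sets but gives no control on intermediate behaviour, and a naïve pseudo-orbit could dip into $(K_1)_{-\eta'}$, invalidating the shadowing conclusion. The fix is the simple topological observation that an orbit passing from $K_1$ into $M \setminus K_1$ must cross $\partial K_1$, so truncating at the last such crossing (resp.\ first crossing for $\beta$) yields a segment avoiding $\inter{K_1}$ entirely; the secondary technical point is to balance $\eta, \eta', \delta$ and the shadowing precision so that the shadowing orbit provably stays outside the correct shrunken $K_1$, rather than merely outside $(K_1)_{-\eta' + O(\delta)}$.
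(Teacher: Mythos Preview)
Your strategy is the paper's: for each $x_2,y_2\in\partial K_2$, use transitivity from/into $\inter{K_1}$ together with a last-exit/first-entry truncation at $\partial K_1$ to produce orbit segments $\alpha$ (from some $x_1\in\partial K_1$ to $B(x_2,\eta/2)$) and $\beta$ (from $B(y_2,\eta/2)$ to some $y_1\in\partial K_1$) avoiding $\inter{K_1}$; concatenate these with each chord of a maximal $E^{K_2^c}$-set via finite exact shadowing; check that separation is preserved; and pass to the limits. The last-exit truncation is exactly the key observation the paper uses.

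There is, however, a parameter mismatch that blocks your final limit. You shadow at precision $\delta/4$ and then require $\eta'>\delta/4$ so that the endpoints land in $B(x_1,\eta')$, $B(y_1,\eta')$ and the shadowing orbit stays outside $(K_1)_{-\eta'}$. With that constraint you cannot send $\eta'\to 0$ while $\delta$ is held fixed, so the announced order of limits ($\eta\to0$, then $\eta'\to0$, then $\delta\to0$) does not go through: after the first step you only have $\lim_{\eta\to0}h_{\mathcal C}^{K_2^c}(\eta,\delta)\le h_{\mathcal C}^{K_1^c}(\eta',\delta/2)$ for $\eta'>\delta/4$, and the best this yields is $h_{\mathcal C}^{K_2^c}(\phi)\le \inf_{\eta'}\sup_{\delta'}h_{\mathcal C}^{K_1^c}(\eta',\delta')$, whereas by definition $h_{\mathcal C}^{K_1^c}(\phi)=\sup_{\delta'}\inf_{\eta'}h_{\mathcal C}^{K_1^c}(\eta',\delta')$, the a~priori smaller quantity. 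The fix (and this is what the paper does) is to shadow at a precision $\alpha$ with $0<\alpha<\delta/4$ and use $\alpha$ itself as the $K_1$-side neighbourhood radius. The separation estimate still gives $\delta/2$ (since $\delta-2\alpha>\delta/2$), but now the constraint runs the other way ($\alpha<\delta/4$), and you may legitimately let $\eta\to0$, then $\alpha\to0$ with $\delta$ fixed, and finally $\delta\to0$.
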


This proposition is proved below. It motivates the following definition.

\begin{defi}\label{definition_chord_entropy_infinity}
The \emph{chord entropy at infinity} is
\begin{equation}\label{def:chord-entropy-at-infinity}
h_{\mathcal C}^{\infty}(\phi)=\inf_K h^{K^c}_{\mathcal{C}}(\phi)=\inf_K \lim_{\delta\to 0}\lim_{\eta\to 0}\sup_{x,y\in\partial K}\limsup_{T\to \infty} \frac{1}{T}\log \mathcal N_{\mathcal C}^{K^c}(x,y,\eta,T ,T+C,\delta)\, ,
\end{equation}
the infimum being taken over all compact sets $K$ with nonempty interior.
\end{defi}

The following corollary is an immediate consequence of Proposition~\ref{prop croissance chord entropy} and of Definition~\ref{definition_chord_entropy_infinity}.

\begin{coro}[Chord entropy at infinity is invariant under compact perturbations]\label{coro:chord-entropy-at-infinity-perturbation} Let $\phi\colon M_1\to M_1$ and $\psi\colon M_2\to M_2$ be two $H$-flows such that there exist two compact sets $K_1\subset M_1$ and $K_2\subset M_2$ with $M_1\setminus K_1=M_2\setminus K_2$, $\phi(M_1\setminus K_1)=\psi(M_2\setminus K_2)$ and ${\phi}_{|M_1\setminus K_1}={\psi}_{|M_2\setminus K_2}$.
Then
\[
 h_{\mathcal C}^\infty(\phi)=
 h_{\mathcal C}^\infty(\psi).
\]
\end{coro}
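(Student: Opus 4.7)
My plan is to exploit the fact that $h_{\mathcal{C}}^\infty$ is an infimum over compact sets, together with Proposition~\ref{prop croissance chord entropy}, to restrict the infimum on the $M_1$ side to compact sets $K\subset M_1$ whose interior contains $K_1$, and similarly on the $M_2$ side to compact sets containing $K_2$ in their interior. To each admissible $K\subset M_1$ I will associate $\hat K := (K\setminus K_1)\cup K_2\subset M_2$, which is compact and contains $K_2$ in its interior; the map $K\mapsto \hat K$ is a bijection between the two families of admissible compacts. By Proposition~\ref{prop croissance chord entropy}, the infima computing $h_{\mathcal{C}}^\infty(\phi)$ and $h_{\mathcal{C}}^\infty(\psi)$ are unchanged when restricted to these subfamilies.

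Write $U:=M_1\setminus K_1 = M_2\setminus K_2$ for the common part. The key observation is that, for $0<\eta< d(K_1,\partial K)$, the $\eta$-shrinking $K_{-\eta}$ contains $K_1$, so every chord avoiding $K_{-\eta}$ with endpoints in $\partial K\subset U$ stays entirely in $U$. Since $\phi$ and $\psi$ agree on $U$ (together with the Riemannian distance, as part of the identification), and since $\partial K$ and $K_{-\eta}$ (as subsets of $M_1$) coincide with $\partial \hat K$ and $\hat K_{-\eta}$ (as subsets of $M_2$) when intersected with $U$, I will conclude that
\[
\mathcal{C}^{K^c}(x,y,\eta,T^-,T^+) = \mathcal{C}^{\hat K^c}(x,y,\eta,T^-,T^+)
\]
for all $x,y\in\partial K=\partial\hat K$ and all parameters in the admissible range, with the same $(\delta,T^-)$-separating sets and the same dynamical balls. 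Hence $\mathcal{N}_{\mathcal{C}}^{K^c}(x,y,\eta,T^-,T^+,\delta)$ under $\phi$ equals $\mathcal{N}_{\mathcal{C}}^{\hat K^c}(x,y,\eta,T^-,T^+,\delta)$ under $\psi$.

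Passing to the $\limsup$ in $T$, then to the limits in $\eta$ and $\delta$, and finally to the supremum over $x,y\in\partial K$, I obtain $h_{\mathcal{C}}^{K^c}(\phi) = h_{\mathcal{C}}^{\hat K^c}(\psi)$ for every admissible $K$. Taking the infimum along the bijection $K\leftrightarrow \hat K$ yields $h_{\mathcal{C}}^\infty(\phi)=h_{\mathcal{C}}^\infty(\psi)$.

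The only real obstacle I anticipate is the (routine but delicate) point-set verification that $\partial_{M_1}K \cap U = \partial_{M_2}\hat K \cap U$ and that $K_{-\eta}\cap U = \hat K_{-\eta}\cap U$ for $\eta$ small enough. This rests on the fact that $K_1$ lies in the interior of $K$, so that $\partial K\subset U$, together with the implicit hypothesis that the identification $M_1\setminus K_1 = M_2\setminus K_2$ respects the Riemannian distance, so that the balls $B(x,\eta)$ used to shrink $K$ and $\hat K$ actually agree.
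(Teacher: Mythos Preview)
Your proposal is correct and follows precisely the route the paper has in mind: the paper simply states that the corollary is ``an immediate consequence of Proposition~\ref{prop croissance chord entropy} and of Definition~\ref{definition_chord_entropy_infinity}'', and your argument is exactly the unpacking of that claim. You use Proposition~\ref{prop croissance chord entropy} to reduce the infimum to compacts $K$ with $K_1\subset\inter{K}$ (resp.\ $K_2\subset\inter{\hat K}$), and then observe that for such $K$ all relevant chords, balls, and orbit segments live in the common region $U$, where $\phi$ and $\psi$ coincide, so the chord counts agree.
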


\begin{proof}[Proof of Proposition~\ref{prop croissance chord entropy}]
The main idea of the proof is the following.
For $x,y\in\partial K_2$, find some points $x',y'\in\partial K_1$ such that the number of chords from $x$ to $y$ outside $K_2$ is bounded by the number of chords from $x'$ to $y'$ outside $K_1$.
As the chord entropy outside a compact set is defined by counting chords, the theorem is proved.
More precisely, the points $x'$ and $y'$ have the following property: there exist a chord from $x'$ to $B(x,\eta/2)$ contained in $M\setminus \inter{K_1}$ and a chord from $B(y,\eta/2)$ to $y'$ contained in $M\setminus \inter{K_1}$.
We can now concatenate these two chords and a chord from $x$ to $y$ outside $K_2$ to obtain a chord from $x'$ to $y'$ outside $K_1$. As this process is injective, we obtain the desired inequality between the number of chords and therefore a proof of the theorem.
See Figure~\ref{fig:comparaison_entropie_hors_compact}.

\begin{figure}
    \centering
    \includegraphics{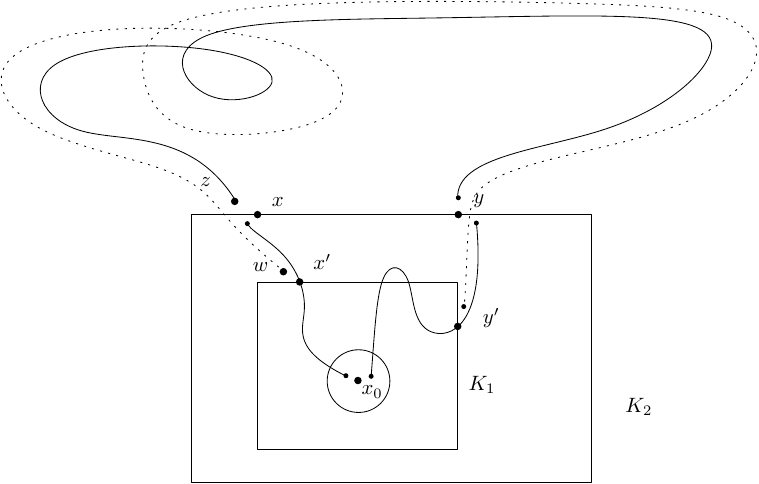}
    \caption{How to construct a chord connecting $x'$ to $y'$ from a chord connecting $x$ to $y$}
    \label{fig:comparaison_entropie_hors_compact}
\end{figure}
{\bf Step 1.} Setting the parameters.
Fix $0<\delta< \min(d(\partial K_1,\partial K_2),1)$ and $C>0$. Since the interior of $K_1$ is nonempty, we can fix $x_0$ and $\delta_0>0$ such that $B(x_0,\delta_0)\subset \inter{K_1}$. Fix $0<\alpha<\min(\delta/4,\delta_0)$. By the finite exact shadowing property, see Definition~\ref{weak shadowing property}, applied at the compact set $\overline{B(K_2,1)}$, $\delta=\alpha$ and $N=3$, we get some $0<\eta<\alpha$.

{\bf Step 2.} Comparing chords outside $K_1$ and $K_2$.
We now show that for every $x,y\in\partial K_2$ there exists $x',y'\in\partial K_1$ and $\ell(x'),\ell(y')>0$ such that
\[
\mathcal{N}_{\mathcal C}^{K_2^c}\left(x,y,\frac{\eta}{2}, T,T+C,\delta\right)\le \mathcal{N}_{\mathcal C}^{K_1^c}\left(x',y',\alpha,T+\ell(x')+\ell(y'),T+\ell(x')+\ell(y')+C,\frac{\delta}{2}\right)
\]
for any $T>0$.

{\bf Step 2a.} How to find $x'$ and $y'$?
Let $x,y\in\partial K_2$.
By Lemma~\ref{transitivite prop} applied at points $x,x_0\in K_2$ with $\delta=\frac{\eta}{2}>0$, there exists $t>0$ such that $\phi_{t}(B(x_0,\frac{\eta}{2}))\cap B(x,\frac{\eta}{2})\neq \emptyset$. Since $\frac{\eta}{2}<\delta_0$, we have $B\left(x_0,\frac{\eta}{2}\right)\subset \inter{K_1}$.
Therefore, there exist a point $x'\in\partial K_1$ and a time $0<\ell(x')\le t$ such that
\[
\phi_{\ell(x')}(x')\in B\left(x,\frac{\eta}{2}\right)\quad\text{and}\quad \phi_{[0,\ell(x')]}(x')\cap \inter{K_1}
=
\emptyset\, .
\]
By a similar argument, there exists a point $y'\in\partial K_1$ and a time $\ell(y')>0$ such that
\[
\phi_{-\ell(y')}(y')\in B\left(y,\frac{\eta}{2}\right)\quad\text{and}\quad \phi_{[-\ell(y'),0]}(y')\cap \inter{K_1}=\emptyset\, .
\]

{\bf Step 2b} Constructing a map $f$ from chords connecting $x$ to $y$, to chords connecting $x'$ to $y'$.
Let $E$ be a $E^{K_2^c}(x,y,\frac{\eta}{2},T,T+C,\delta)$-set of maximal cardinality.
We define a map
\[
f\colon E\to \mathcal{C}^{K_1^c}(x',y',\alpha, T+\ell(x')+\ell(y'),T+\ell(x')+\ell(y')+C)
\]
as follows. Let $z\in E$. In particular, there exists $\ell(z)\in [T,T+C]$ such that $z\in B(x,\frac{\eta}{2}), \phi_{\ell(z)}(z)\in B(y,\frac{\eta}{2})$ and $\phi_{[0,\ell(z)]}(z)\cap (K_2)_{-\frac{\eta}{2}}=\emptyset$. Observe that
\[
d(\phi_{\ell(x')}(x'),z)<\eta\quad\text{and}\quad d(\phi_{\ell(z)}(z),\phi_{-\ell(y')}(y'))<\eta\, .
\]
By the finite exact shadowing property, see Definition~\ref{weak shadowing property}, applied at $x', z, \phi_{-\ell(y')}(y')\in \overline{B(K_2,1)}$, we obtain a point $w$ such that
\begin{itemize}
    \item for all $s\in[0,\ell(x')]$, $d(\phi_s(w),\phi_s(x'))<\alpha$;
    \item for all $s\in[0,\ell(z)]$, $d(\phi_{\ell(x')+s}(w),\phi_s(z))<\alpha$;
    \item for all $s\in[0,\ell(y')]$, $d(\phi_{\ell(x')+\ell(z)+s}(w),\phi_{-\ell(y')+s}(y'))<\alpha$.
\end{itemize}
In particular, we have $w\in B(x',\alpha)$ and $\phi_{\ell(x')+\ell(z)+\ell(y')}(w)\in B(y',\alpha)$ with $\ell(x')+\ell(z)+\ell(y')\in[T+\ell(x')+\ell(y'),T+\ell(x')+\ell(y')+C]$.
Moreover, since $\phi_{[0,\ell(x')]}(x')$ and $\phi_{[-\ell(y'),0]}(y')$ do not intersect $\inter{K_1}$, both $\phi_{[0,\ell(x')]}(w)$ and $\phi_{[\ell(x')+\ell(z),\ell(x')+\ell(z)+\ell(y')]}(w)$ do not intersect $(K_1)_{-\alpha}$.
Additionally, since $\phi_{[0,\ell(z)]}(z)$ does not intersect $(K_2)_{-\frac{\eta}{2}}$ and $\eta/2<\delta< d(\partial K_1,\partial K_2)$, the arc $\phi_{[\ell(x'),\ell(x')+\ell(z)]}(w)$ does not intersect $(K_1)_{-\alpha}$.
This proves
\[w\in \mathcal{C}^{K_1^c}(x',y',\alpha, T+\ell(x')+\ell(y'),T+\ell(x')+\ell(y')+C).\]
Set $f(z)=w$.

{\bf Step 2c.} The map $f$ is injective.
Let now $z_1,z_2\in E$ and assume that $f(z_1)=f(z_2)=w$. By the construction of $f$ and since $\ell(z_1)\geq T$ and $\ell(z_2)\geq T$, for every $s\in [0,T]$, we have
\[
d(\phi_s(z_1),\phi_s(z_2))\leq d(\phi_s(z_1),\phi_{\ell(x')+s}(w))+d(\phi_s(z_2),\phi_{\ell(x')+s}(w))<2\alpha<\frac{\delta}{2}.
\]
Since the set $E$ is $(\delta, T)$-separating, we conclude that $z_1=z_2$, i.e., $f$ is injective.

{\bf Step 2d.} Consequences on the numbers of chords.
Observe that the set $f(E)$ is contained in $\mathcal{C}^{K_1^c}(x',y',\alpha,T+\ell(x')+\ell(y'),T+\ell(x')+\ell(y')+C)$.
Moreover, since $E$ is $(\delta,T)$-separating and $2\alpha<\frac{\delta}{2}$, the set $f(E)$ is $(\frac{\delta}{2}, T+\ell(x')+\ell(y'))$-separating.
Since $\# E=\# f(E)$ (as $f$ is injective) and by Fact~\ref{E set point un et deux alors trois outside compact}, we then conclude that
\[
\mathcal{N}_{\mathcal C}^{K_2^c}(x,y,\frac{\eta}{2}, T,T+C,\delta)\le \mathcal{N}_{\mathcal C}^{K_1^c}(x',y',\alpha,T+\ell(x')+\ell(y'),T+\ell(x')+\ell(y')+C,\frac{\delta}{2}).
\]

{\bf Step 3.} Conclusion.
The previous inequality implies that, for every $x,y\in \partial K_2$, there exists $x',y'\in\partial K_1$ such that
\[
h_{\mathcal{C}}^{K^c_2}(x,y,\frac{\eta}{2},\delta)\leq h_{\mathcal{C}}^{K^c_1}(x',y',\alpha,\frac{\delta}{2})\leq h^{K^c_1}_{\mathcal{C}}(\alpha,\frac{\delta}{2})\, ,
\]
where $\eta<\alpha<\delta$. Considering then the supremum over $x,y\in \partial K_2$, we obtain
\[
h_{\mathcal{C}}^{K^c_2}(\frac{\eta}{2},\delta)\leq h_{\mathcal{C}}^{K^c_1}(\alpha,\frac{\delta}{2})\, ;
\]
taking the limit as $\eta\to 0$ and then the limit as $\alpha\to 0$, we have
\[
\lim_{\eta\to 0} h_{\mathcal{C}}^{K^c_2}(\frac{\eta}{2},\delta)\leq \lim_{\alpha\to 0}h_{\mathcal{C}}^{K^c_1}(\alpha,\frac{\delta}{2})\,.
\]
We now let $\delta\to 0$ and obtain $h^{K_2^c}_{\mathcal{C}}(\phi)\leq h^{K_1^c}_{\mathcal{C}}(\phi)$, as required.
\end{proof}

\begin{rema}
The proof of Proposition~\ref{prop croissance chord entropy} relies only on the finite exact shadowing property and the transitivity of the flow.
\end{rema}


\subsection{Gurevic entropy at infinity and chords entropy at infinity coincide }\label{gur et chords infinity}

Our goal is now to show that counting the chords at infinity is the same as counting the periodic orbits at infinity, as presented in the following statement.

\begin{theo}\label{theorem gur infty egal chords infty} Let $\phi:M\to M$ be a $H$-flow. Then
 \[
 h_\mathrm{Gur}^\infty(\phi)=h_{\mathcal C}^\infty(\phi)\,.
 \]
\end{theo}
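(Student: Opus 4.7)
My plan is to prove the two inequalities $h_{\mathcal C}^\infty(\phi) \le h_\mathrm{Gur}^\infty(\phi)$ and $h_\mathrm{Gur}^\infty(\phi) \le h_{\mathcal C}^\infty(\phi)$ separately, in parallel with the proof of the ``global'' analogue $h_\mathrm{Gur}(\phi) = h_{\mathcal C}(\phi)$ established in Theorem~\ref{thm h cord=h gur}. The correspondence between chords and periodic orbits used in Propositions~\ref{chords et orbite periodique} and~\ref{orbite periodique et chords} is again the core mechanism; the new ingredient in each direction is a quantitative control of how much time the constructed objects spend in (or near) a given compact set.

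For $h_{\mathcal C}^\infty(\phi) \le h_\mathrm{Gur}^\infty(\phi)$, I would fix a compact $K_0$ with nonempty interior and take $K_1 \supset K_0$ with $K_0 \subset \inter{K_1}$ and $\eta > 0$ small enough that $K_0 \subset (K_1)_{-\eta}$, so that any chord of $\mathcal{C}^{K_1^c}(x,y,\eta,T,T+C)$ avoids $K_0$. Running the closing construction of Proposition~\ref{chords et orbite periodique} while arranging the transitivity arcs used to close the chord to pass through a fixed reference point $w \in \inter{K_0}$ produces, with polynomial-in-$T$ multiplicity, a periodic orbit $\gamma$ of length $T+O(1)$ that intersects $K_0$ and satisfies $\ell(\gamma \cap K_0) \le \sigma + O(\eta)$, where $\sigma$ is the bounded total length of the closing part. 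For any fixed $\alpha > 0$ this is at most $\alpha \ell(\gamma)$ once $T$ is large enough, so $\gamma \in \mathcal{P}_{K_0}^\alpha$. Passing to exponential rates and to the various limits and infima defining $h_{\mathcal C}^\infty$ and $h_\mathrm{Gur}^\infty$ yields the inequality.

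For $h_\mathrm{Gur}^\infty(\phi) \le h_{\mathcal C}^\infty(\phi)$, which is the harder direction, I would fix $K_1$ and choose $K_0 = \overline{B(K_1, R)}$ with $R > 0$ large. For $\gamma \in \mathcal{P}^\alpha_{K_0}(T, T+C)$, combining $\ell(\gamma \cap K_0) \le \alpha T$ with the lower bound~\eqref{eqn:minoration}---which forces any subarc of $\gamma$ crossing the annulus $K_0 \setminus K_1$ to consume at least $R/b$ units of time---the number of ``genuine excursions'' of $\gamma$ (maximal subarcs actually leaving $K_0$) is bounded by $\alpha T b/(2R)$, and their total length $L$ satisfies $L \ge (1-2\alpha)T$. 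Each genuine excursion can be extended to a chord outside $(K_1)_{-\eta}$ connecting two balls of a finite $\eta/2$-cover of $\partial K_1$, while between two consecutive genuine excursions $\gamma$ follows a ``filling'' arc contained in $K_0$. I would encode $\gamma$ by its ordered sequence of extended outside chords and fillings, use Lemma~\ref{lemma on same po} to ensure injectivity up to $\delta$ in the dynamical metric, and bound the number of choices of fillings (chords contained in the compact $K_0$) by $e^{H_{K_0}(T-L)}$ with $H_{K_0}<\infty$ via a variant of Lemma~\ref{lemme:entropie_gur_compact}. After absorbing polynomial combinatorial factors one obtains
\[
\#\mathcal{P}^\alpha_{K_0}(T, T+C) \le (\text{poly in } T)\cdot e^{h_{\mathcal C}^{K_1^c}(\eta,\delta)\, L + H_{K_0}(T-L)}\,,
\]
which, since $L\ge (1-2\alpha)T$, gives $h_\mathrm{Gur}^{K_0,\alpha}(\phi) \le (1-2\alpha)\, h_{\mathcal C}^{K_1^c}(\eta,\delta) + 2\alpha\, H_{K_0}$. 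Letting $\alpha \to 0$, then $\eta, \delta \to 0$, and taking infima over $K_1$ and $K_0$ yields the inequality.

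The main obstacle will be the injectivity control in this second direction: two distinct periodic orbits in $\mathcal{P}_{K_0}^\alpha$ may share the same sequence of (extended) outside chords, differing only in their fillings, so I have to show that the fillings contribute only a factor $e^{H_{K_0}(T-L)}$ with $H_{K_0}$ finite, and not a factor $e^{h_\mathrm{Gur}(\phi)(T-L)}$. The key point is that the fillings, being chords trapped in the compact set $K_0$, have growth controlled by the expansive dynamics restricted to $K_0$, whose topological complexity is finite by Lemma~\ref{lemme:entropie_gur_compact}; combining this with a separated-sets argument for the outside pieces should give the required bound.
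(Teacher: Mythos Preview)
Your approach is essentially the paper's: the easy direction is Proposition~\ref{easy-ineq}, and the hard direction is Theorem~\ref{difficult-technical} together with Lemma~\ref{lemme:comparaison_vairante_P_K_T} and Corollary~\ref{difficult-ineq}. The decomposition of a periodic orbit into ``large excursions'' (outside the inner compact, reaching past the outer one) alternating with ``fillings'' trapped in the outer compact, the bound $N\le b\alpha T/R$ on the number of excursions via the annulus-crossing time, and the use of Lemma~\ref{lemme:entropie_gur_compact} to control the fillings are all exactly what the paper does.

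There is, however, one genuine gap in your outline of the hard direction: the combinatorial overhead is \emph{not} polynomial in $T$. Once you have $N\sim \alpha T$ excursions, you must record (i) which pair of balls in the finite cover of $\partial K_1$ each excursion connects ($M^{2N}$ choices), and (ii) the integer lengths of the $2N$ pieces, i.e.\ an ordered composition of an integer of size $\sim T$ into $2N$ parts, which gives a factor $\binom{T+O(1)}{2N}$. With $N\sim \alpha T$ both of these are exponential in $T$; the binomial alone contributes a rate of order $\alpha\log(1/\alpha)$. So your displayed bound should read
\[
\#\mathcal P^\alpha_{K_0}(T,T+C)\le e^{\psi(\alpha)\,T}\cdot e^{h_{\mathcal C}^{K_1^c}(\eta,\delta)\,L + H_{K_0}(T-L)}
\]
with $\psi(\alpha)\to 0$ as $\alpha\to 0$, not ``poly in $T$''. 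This is harmless for the final conclusion (you let $\alpha\to 0$ anyway), but the paper tracks this $\psi(\alpha)$ explicitly in Theorem~\ref{difficult-technical}, and you should too. A smaller point: your decomposition implicitly assumes $\gamma$ meets the inner compact $K_1$, which $\gamma\in\mathcal P^\alpha_{K_0}$ does not guarantee; the paper inserts a short detour through $K_1$ first (Lemma~\ref{lemme:comparaison_vairante_P_K_T}) to reduce to that case.
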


The following corollary is immediate from the above theorem and corollary \ref{coro:chord-entropy-at-infinity-perturbation}.

\begin{coro}[Gurevic entropy at infinity is invariant under compact perturbations]  Let $\phi\colon M_1\to M_1$ and $\psi\colon M_2\to M_2$ be two $H$-flows such that there exist two compact sets $K_1\subset M_1$ and $K_2\subset M_2$ with $M_1\setminus K_1=M_2\setminus K_2$, $\phi(M_1\setminus K_1)=\psi(M_2\setminus K_2)$ and ${\phi}_{|M_1\setminus K_1}={\psi}_{|M_2\setminus K_2}$.
Then
\[
 h_\mathrm{Gur}^\infty(\phi)=
 h_\mathrm{Gur}^\infty(\psi)\,.
\]
\end{coro}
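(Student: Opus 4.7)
The proof is essentially a two-line chain of equalities, as the statement itself indicates. My plan is to string together the two results the excerpt tells us to use. Specifically, Theorem~\ref{theorem gur infty egal chords infty} identifies Gurevic entropy at infinity with chord entropy at infinity for any $H$-flow, and Corollary~\ref{coro:chord-entropy-at-infinity-perturbation} asserts that chord entropy at infinity is invariant under compact perturbations. So under the hypothesis that $\phi$ and $\psi$ agree off the compact sets $K_1$, $K_2$, I would simply write
\[
h_\mathrm{Gur}^\infty(\phi)\;=\;h_{\mathcal C}^\infty(\phi)\;=\;h_{\mathcal C}^\infty(\psi)\;=\;h_\mathrm{Gur}^\infty(\psi),
\]
applying Theorem~\ref{theorem gur infty egal chords infty} to $\phi$ at the first equality, Corollary~\ref{coro:chord-entropy-at-infinity-perturbation} at the second, and Theorem~\ref{theorem gur infty egal chords infty} again to $\psi$ at the third.

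There is essentially no obstacle here, since all the substantive work sits in the two prior results: the invariance of $h_{\mathcal C}^\infty$ under compact perturbations (whose geometric content, as noted in the remark following Proposition~\ref{prop croissance chord entropy}, uses only transitivity and finite exact shadowing applied inside the common exterior $M_1\setminus K_1 = M_2\setminus K_2$), and the comparison between chord and Gurevic counts at infinity. The one thing I would double-check while writing the proof is that both $\phi$ and $\psi$ are verified to be $H$-flows (which is given in the hypotheses) so that Theorem~\ref{theorem gur infty egal chords infty} applies to each. No additional construction, estimate, or compactness argument is needed beyond invoking these two results in sequence.
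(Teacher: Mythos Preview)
Your proposal is correct and matches the paper's approach exactly: the paper states that the corollary ``is immediate from the above theorem and corollary~\ref{coro:chord-entropy-at-infinity-perturbation},'' which is precisely the chain $h_\mathrm{Gur}^\infty(\phi)=h_{\mathcal C}^\infty(\phi)=h_{\mathcal C}^\infty(\psi)=h_\mathrm{Gur}^\infty(\psi)$ you wrote out.
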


The proof of the inequality
$h_{\mathcal C}^\infty(\varphi)\le h_\mathrm{Gur}^\infty(\phi)$ is easier and done in Proposition \ref{easy-ineq} below.
The hard inequality is $h_\mathrm{Gur}^\infty(\phi)\le h_{\mathcal C}^\infty(\phi)$. Indeed, we saw in section \ref{sec:chords-periodic} that chords and periodic orbits have  the same exponential growth rate. However, the Gurevic entropy at infinity  counts periodic orbits that spend a small proportion of  time in $K$, but an unbounded amount of time, whereas the chord entropy at infinity counts chords outside $K$, that can be closed into periodic orbits that spend a bounded amount of time in $K$. We follow the strategy developed in \cite{GST} and cut a periodic orbit that spends most of its time outside $K$ into successive excursions outside $K$. This is expressed in the technical Theorem~\ref{difficult-technical} whose Corollary~\ref{difficult-ineq} gives the desired inequality.

\begin{prop}\label{easy-ineq} Let $\phi:M\to M$ be a $H$-flow.
Then
\[
h_{\mathcal C}^\infty(\varphi)\le h_\mathrm{Gur}^\infty(\phi).
\]
\end{prop}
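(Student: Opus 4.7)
The plan is to show that for every compact $K\subset M$ and every $\alpha>0$ there exists a compact $K'\supset K$ such that
\[
h_{\mathcal C}^{K'^c}(\phi)\le h_\mathrm{Gur}^{K,\alpha}(\phi).
\]
Taking the infimum over $K'$ on the left and then the limit $\alpha\to 0$ and the infimum over $K$ on the right will yield the desired inequality.

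Fix $K$ and $\alpha>0$. Choose $K'$ compact with $K\subset \inter{K'}$, and fix $\eta_0>0$ small enough that $K\subset (K')_{-\eta_0}$, so that for every $0<\eta\le\eta_0$ a chord outside $(K')_{-\eta}$ avoids $K$ entirely. Fix two points $x,y\in\partial K'$ and small parameters $\eta\le\eta_0$ and $\delta$. The core construction mirrors the proof of Proposition~\ref{chords et orbite periodique}: given a chord $w\in\mathcal C^{K'^c}(x,y,\eta,T,T+C)$ of length $\ell(w)\in[T,T+C]$, the uniform multiple closing lemma (Lemma~\ref{petal separe}), applied with compact set $K'$, an arbitrary $\nu>0$, precision $\delta/3$ and $N=1$, produces a periodic orbit $\gamma = f(w)$ of period in $[\ell(w)+S-\tau_{K'}-\nu,\ell(w)+S+\tau_{K'}+\nu]$ which intersects $K'$ and shadows the arc $\phi_{[0,\ell(w)]}(w)$ at distance $\delta/3<\eta_0$; here $S\ge\sigma'$ is a closing constant independent of $T$.

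The key observation is that the shadowed piece of $\gamma$ stays at distance at most $\eta_0$ from the chord $w$, which lies in $M\setminus (K')_{-\eta}$, hence outside $K$ (by the choice of $\eta\le\eta_0$). The remaining portion of $\gamma$, of length at most $S+\tau_{K'}+\nu$, is independent of $T$. Therefore
\[
\ell(\gamma\cap K)\le S+\tau_{K'}+\nu =: B.
\]
Choosing $T$ so large that $B\le \alpha T$, we obtain $\gamma\in \mathcal P_K^\alpha(T+\sigma,T+\sigma+C)$ for a suitable $\sigma$ depending only on $K,K',\delta,C,\nu$. The preimage-counting argument of Proposition~\ref{chords et orbite periodique} (using $\eqref{eqn:minoration}$ and the separation of orbits) is entirely local and gives the same polynomial bound on $\#f^{-1}(\gamma)$, so that
\[
\mathcal N_{\mathcal C}^{K'^c}(x,y,\eta,T,T+C,\delta)\le D(T+\sigma+C)\,\#\mathcal P_K^{\alpha}(T+\sigma,T+\sigma+C).
\]
Taking $\limsup_{T\to\infty}\frac{1}{T}\log$, then sup over $x,y\in\partial K'$, then $\eta\to 0$ and $\delta\to 0$ yields $h_{\mathcal C}^{K'^c}(\phi)\le h_\mathrm{Gur}^{K,\alpha}(\phi)$, from which the proposition follows.

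The one mild subtlety, which is the main technical point, is to arrange that the closing and shadowing perturbations do not push the shadowing part of $\gamma$ inside $K$: this is exactly why we chose $\eta$ smaller than the distance from $K$ to $\partial K'$, so that a $\delta/3<\eta_0$ perturbation of a point outside $(K')_{-\eta}$ still avoids $K$.
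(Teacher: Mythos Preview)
Your approach follows the same idea as the paper's: close each separated chord outside a compact into a periodic orbit that shadows it, so the orbit spends only bounded time in the inner compact; then count preimages polynomially and take exponential growth rates. The paper works with a single compact $K$ and lands in $\mathcal P_{K_{-2\alpha}}^\rho$, while you use a pair $K\subset K'$ and target $\mathcal P_K^\alpha$; the paper builds the transit arc explicitly via Lemma~\ref{petal unis} with $N=2$ and a chosen orbit through a ball $B(x_0,\epsilon)\subset K$, whereas you invoke Lemma~\ref{petal separe} directly with $N=1$. These are cosmetic differences.

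There is, however, a real gap. Membership in $\mathcal P_K^\alpha$ requires $\gamma$ to \emph{intersect} $K$. In your construction, the shadowed arc of $\gamma$ avoids $K$ by design, and Lemma~\ref{petal separe} ``applied with compact set $K'$'' only guarantees the transit part passes through $\inter{K'}$, not through $K$. So as written $\gamma$ may lie in $\mathcal P_{K'}\setminus\mathcal P_K$, and the claimed inequality $\mathcal N_{\mathcal C}^{K'^c}\le D(T+\sigma+C)\,\#\mathcal P_K^\alpha$ is not justified. The fix is immediate: apply Lemma~\ref{petal separe} with \emph{inner} compact $K$ and outer compact, say, $\overline{B(K',1)}$; then $\gamma$ is forced through $\inter K$, the transit length is still bounded independently of $T$, and the rest of the argument goes through unchanged. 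This is exactly why the paper threads its transit arc through a fixed ball $B(x_0,\epsilon)$ lying well inside the compact.
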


\begin{proof}

The main idea of the proof is the following: the chord entropy at infinity can be approximated by counting separated chords outside a big compact $K$ which start in a neighborhood of $x\in\partial K$ and end in neighborhood of $y\in\partial K$. These orbits can be closed to obtain different periodic orbits which intersect $K$ but stay a finite amount of time in $K$. As these orbits contribute to the Gurevic entropy at infinity we obtain that the chord entropy at infinity is smaller than the Gurevic entropy at infinity.
We now give a detailed proof of the proposition.

First note that if $h_\mathcal{C}^\infty (\phi) = \infty$, then $h_\mathcal{C}(\phi) = h_\mathrm{Gur}(\phi) = \infty$ and, from Lemma~\ref{prop_entropies_Gur_finies_simultanement}, $h_\mathrm{Gur}^\infty(\phi) = \infty$: the proposition is proved.
Therefore, in the remaining part of the proof, we may assume $h_\mathcal{C}^\infty (\phi) < \infty$.

{\bf Step 1.} We quantitatively approximate the chord entropy at infinity by  counting chords from $x$ to $y$ outside a compact set.
Fix $\epsilon>0$ and $\sigma_0>0$. By the definition of $h_{\mathcal{C}}^\infty(\phi)$ (Definition~\ref{definition_chord_entropy_infinity}) and Proposition~\ref{prop croissance chord entropy}, there exists a compact set $K_0$ with nonempty interior such that, for every compact set $K$ for which $K_0\subset \inter{K}$, we have
\[
h_{\mathcal{C}}^{K^c}(\phi)\in \left[h_{\mathcal{C}}^\infty(\phi),h_{\mathcal{C}}^\infty(\phi)+\frac{\epsilon}{8}\right]\, .
\]
Fix a compact set $K$ such that $K_0\subset \inter{K}$.
Without loss of generality, we may assume there exists $x_0\in\inter{K}$ such that $B(x_0,2\epsilon)\subset K$. There exists $0<\delta<\epsilon$ so that
\[
\lim_{\eta\to 0}\sup_{x,y\in\partial K}\limsup_{T\to\infty}\dfrac 1 T \log \left(\mathcal{N}_{\mathcal{C}}^{K^c}(x,y,\eta, T,T+\sigma_0,\delta)\right)\in[h_{\mathcal{C}}^\infty(\phi)-\epsilon/4,h_{\mathcal{C}}^\infty(\phi)+\epsilon/4]\, .
\]
Fix $0<\alpha<\frac{\delta}{3}$.
Apply the multiple closing lemma (Lemma \ref{petal unis}) to the compact $\overline{B(K,\epsilon)}, \delta=\alpha$, $\nu=1$ and $N=2$ to obtain a time $T_\mathrm{min}>0$ and a parameter $0<\eta<\alpha$.
Coming back to the chord entropy, we can assume that $\eta<\delta$ is small enough such that
\[
\sup_{x,y\in\partial K}\limsup_{T\to\infty}\dfrac 1 T \log \left(\mathcal{N}_{\mathcal{C}}^{K^c}(x,y,\frac{\eta}{2}, T,T+\sigma_0,\delta)\right)\in[h_{\mathcal{C}}^\infty(\phi)-\epsilon/2,h_{\mathcal{C}}^\infty(\phi)+\epsilon/2]\, .
\]
Consider then $x,y\in \partial K$ such that
\begin{equation}\label{approx h cordes infty}
\limsup_{T\to\infty}\dfrac 1 T \log \left(\mathcal{N}_{\mathcal{C}}^{K^c}(x,y,\frac{\eta}{2}, T,T+\sigma_0,\delta)\right)\in[h_{\mathcal{C}}^\infty(\phi)-\epsilon,h_{\mathcal{C}}^\infty(\phi)+\epsilon]\, .
\end{equation}
Apply the uniform transitivity (Lemma~\ref{transitivite prop bis}) to the compact set $K$, the compact set $K'=\overline{B(x_0,\epsilon)}$ and $\delta=\frac{\eta}{2}$ and obtain a time $\sigma>0$. Without loss of generality, we may assume $\sigma-\tau_{K}\geq 1$.

\begin{figure}
    \centering
    \includegraphics{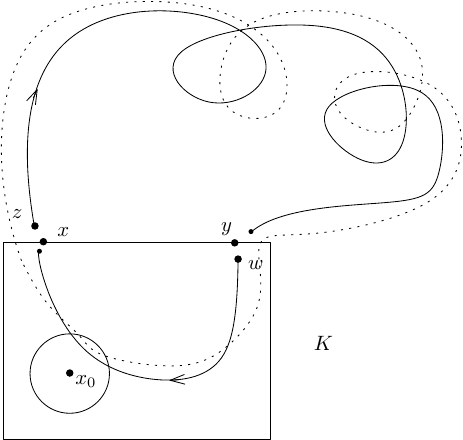}
    \caption{Construction of $f$}
    \label{fig:entropies_cordes_gur_infini}
\end{figure}

{\bf Step 2.} We construct a map $f$ from chords to periodic orbits (see Figure~\ref{fig:entropies_cordes_gur_infini})
Fix $\rho>0$.
Let
\begin{equation}\label{estimation T bonbon}T>\max\left(T_\mathrm{min}, \frac{\sigma+\tau_{K}+1}{\rho}, \sigma_0+\sigma+\tau_{K}+1\right)\, .\end{equation}
Consider a set $E$ that is a $E^{K^c}(x,y,\frac{\eta}{2} ,T ,T+\sigma_0 ,\delta)$-set of maximal cardinality, i.e.,
\[\#E = \mathcal{N}_{\mathcal{C}}^{K^c}(x, y, \frac{\eta}{2}, T, T+\sigma_0, \delta).\]
We now build a map
\[
f\colon E \to \mathcal{P}_{K_{-2\alpha}}^\rho(T, T+\sigma_0')
\]
where $\sigma_0'=\sigma_0+\sigma+\tau_{K}+1$ and $\mathcal{P}_{K_{-2\alpha}}^\rho(T, T+\sigma_0')$ is defined in Definition~\ref{definition_gurevic_infini}. We fix once for all a point $w\in B(y,\frac{\eta}{2})$ such that $\phi_{\ell(w)}(w)\in B(x,\frac{\eta}{2})$ for some $\ell(w)\in[\sigma-\tau_{K},\sigma+\tau_{K}]$ and $\phi_{[0,\ell(w)]}(w)\cap \overline{B(x_0,\epsilon)}\neq \emptyset$.
Such a point exists thanks to uniform transitivity (Lemma~\ref{transitivite prop bis}) applied at the points $x,y\in \partial K\subset K$.

Let $z\in E$.
In particular, $z\in B(x,\frac{\eta}{2})$ and $\phi_{\ell(z)}(z)\in B(y,\frac{\eta}{2})$ for some $\ell(z)\in [T,T+\sigma_0]$.
Moreover, $\phi_{[0,\ell(z)]}(z)\cap K_{-\frac{\eta}{2}}=\emptyset$ (this also comes from the definition of $E$, see Definition~\ref{defi E-set outside compact}).
By the multiple closing lemma (Lemma \ref{petal unis}) applied at the points $z$ and $w$ and times $\ell(z)$ and $\ell(w)$, we obtain a periodic orbit $\gamma$ of period $l(\gamma)\in [\ell(z)+\ell(w)-1,\ell(z)+\ell(w)+1]\subset [T,T+\sigma_0']$ such that
\begin{itemize}
    \item for all $s\in[0, \ell(z)]$, we have $d(\gamma(s), \phi_s(z))<\alpha$;
    \item for all $s\in[0, \ell(w)]$, we have $d(\gamma(\ell(z)+s), \phi_s(w))<\alpha$.
\end{itemize}
In particular, as $\phi_{[0,\ell(z)]}(z)\cap K_{-\frac{\eta}{2}}=\emptyset$, we have $\gamma([0,\ell(z)])\cap K_{-\frac{\eta}{2}-\alpha}=\emptyset$. As $\eta<\alpha$, we obtain
$\gamma([0,\ell(z)])\cap K_{-2\alpha}=\emptyset$.
Therefore
\[
\frac{\ell(\gamma\cap K_{-2\alpha})}{\ell(\gamma)}\leq \frac{\ell(\gamma)-\ell(z)}{T}\leq \frac{\ell(w)+1}{T}\leq \frac{\sigma+\tau_{K}+1}{T}<\rho
\]
(where the last inequality is satisfied as $T$ has been chosen big enough, according to \eqref{estimation T bonbon}).

We now prove that $\gamma$ intersects $K_{-2\alpha}$. As $\phi_{[\ell(z),\ell(z)+\ell(w)]}(w)\cap \overline{B(x_0,\epsilon)}\neq \emptyset$, we have that $\gamma([\ell(z),\ell(z)+\ell(w)])\cap B(x_0,\epsilon+\alpha)\neq \emptyset$. As $\alpha<\frac{\delta}{3}<\frac{\epsilon}{3}$ and $B(x_0,2\epsilon)\subset K$, we obtain $B(x_0,\epsilon+\alpha)\subset K_{-2\alpha}$ and $\gamma\cap K_{-2\alpha}\neq \emptyset$.
This proves $\gamma\in \mathcal{P}_{K_{-2\alpha}}^\rho(T, T+\sigma_0')$.
Let $f(z)=\gamma$.

{\bf Step 3.} The map $f$ is almost injective.
We now control the cardinality of the preimage by $f$ of every periodic orbit.
Let $z_1,z_2\in E$ be such that $f(z_1)=f(z_2)=\gamma$. Let $s_1$, resp. $s_2$, be the origin of $\gamma$ that comes with the construction of $f(z_1)$, resp. $f(z_2)$.
Without loss of generality me may assume $0\le s_1\leq s_2<l(\gamma)/2$.

We first prove $|s_2-s_1|\leq \sigma_0'$.
By construction, $\gamma([s_1,s_1+T])\cap K_{-2\alpha} = \emptyset $ and  $\gamma([s_2,s_2+T])\cap K_{-2\alpha} = \emptyset $.
Moreover $\gamma([s_1+T,s_1+\ell(\gamma)])\cap K_{-2\alpha}\neq \emptyset$ and  $\gamma([s_2+T,s_2+l(\gamma)])\cap K_{-2\alpha}\neq \emptyset$.
As $T> \sigma_0'$, again by \eqref{estimation T bonbon}, we have $T> \ell(\gamma)/2$.
Therefore, $[s_2+T,s_2+\ell(\gamma)]\subset [s_1, s_1+\ell(\gamma)+T]$ and, to ensure $\gamma([s_1+T,s_1+\ell(\gamma)])\cap K_{-2\alpha}\neq \emptyset$ and $\gamma([s_2+T,s_2+\ell(\gamma)])\cap K_{-2\alpha}\neq \emptyset$, we must have $[s_1+T,s_1+\ell(\gamma)]\cap [s_2+T,s_2+\ell(\gamma)]\neq \emptyset$, since $\gamma([s_1,s_1+T])=\gamma([s_1+\ell(\gamma),s_1+\ell(\gamma)+T])$ cannot intersect $K_{-2\alpha}$. Therefore $s_2-s_1\leq  \ell(\gamma)-T$.
Thus $0\leq s_2-s_1\leq \sigma_0'$.

Then, for all $\tau\in[0,T]$ we have, using \eqref{eqn:minoration},
\begin{align*}
d(\phi_{\tau}(z_1),\phi_{\tau}(z_2)) &\leq d(\phi_{\tau}(z_1),\gamma(s_1+\tau))+d(\gamma(s_1+\tau),\gamma(s_2+\tau))+d(\phi_{\tau}(z_2),\gamma(s_2+\tau)) \\
&<\alpha + b|s_2-s_1|+\alpha\, .
\end{align*}
If $|s_2-s_2|\leq \alpha/b$, then
\[ d(\phi_{\tau}(z_1),\phi_{\tau}(z_2)) \leq 3\alpha < \delta\, .\]
Since $E$ is a $(\delta, T)$-separating set, we conclude that $z_1=z_2$.
Therefore
\[ \# f^{-1}(\gamma)\leq \left\lceil\frac{b \sigma_0'}{\alpha}\right\rceil.\]

{\bf Step 4.} Conclusion.
We just proved that
\[
\mathcal{N}_{\mathcal{C}}^{K^c}(x,y,\frac{\eta}{2},T,T+\sigma_0,\delta)\leq \left\lceil\frac{b \sigma_0'}{\alpha}\right\rceil \#\mathcal{P}_{K_{-2\alpha}}^\rho(T,T+\sigma_0')\, .
\]
By taking the exponential growth rate in the previous inequality when $T\to +\infty$ we get
\[ \limsup_{T\to\infty}\dfrac 1 T \log \left(\mathcal{N}_{\mathcal{C}}^{K^c}(x,y,\frac{\eta}{2}, T,T+\sigma_0,\delta)\right)\leq \limsup_{T\to\infty}\dfrac 1 T \log \left(\#\mathcal{P}_{K_{-2\alpha}}^\rho(T,T+\sigma_0')\right)=h_{\mathrm{Gur}}^{K_{-2\alpha},\rho}(\phi)\]
and therefore, by \eqref{approx h cordes infty},
\[
h^\infty_{\mathcal{C}}(\phi)\leq \epsilon +h_{\mathrm{Gur}}^{K_{-2\alpha},\rho}(\phi).
\]

We now consider the limit $\rho\to 0$ to obtain
\[
h^\infty_{\mathcal{C}}(\phi)\leq \epsilon +\lim_{\rho\to 0} h_{\mathrm{Gur}}^{K_{-2\alpha},\rho}(\phi).
\]
We can now take the infimum over $K$ (as $K\mapsto \lim_{\rho\to 0} h_{\mathrm{Gur}}^{K,\rho}(\phi)$ is non-increasing with respect to inclusion (Fact~\ref{fait:h_Gur_K_epsilon}), taking the infimum means taking big compact sets $K$ and therefore is compatible with the conditions on $K$).
By the arbitrariness of $\epsilon$, we conclude that $h^\infty_{\mathcal{C}}(\phi)\leq h^{\infty}_{\mathrm{Gur}}(\phi)$.
\end{proof}

As said above, the other inequality is more difficult, but the proof follows closely the proof of \cite[Theorem 5.1]{GST}.

Given two compacts sets $K_1\subset K_2$, $\sigma_0>0$ and $0<\alpha\le 1$, we introduce for all $T>0$ the set of periodic orbits with length roughly $T$ that intersect $K_1$ and spend a small amount of time in $K_2$\,:
\[
\mathcal P(K_1,K_2,\alpha,T,T+\sigma_0)=\{\gamma\in\mathcal P_{K_1}(T,T+\sigma_0),\,\ell(\gamma\cap K_2)\le \alpha \ell(\gamma)\,\}\,.
\]
\label{p:P_K1_K2}
As in \cite{GST}, we need the following lemma.

\begin{lemm}\label{lemme:comparaison_vairante_P_K_T}
Let $\phi\colon M\to M$ be a $H$-flow.  Given any compact sets $K_1\subset K_2$ with $\inter{K_1}\neq\emptyset$, $\sigma_0>0$ and $\alpha>0$, there exists $0<\delta<1$ such that
\begin{eqnarray*}
\limsup_{T\to +\infty} \frac{1}{T}\log
\#\mathcal{P}_{K_2}^\alpha(T,T+\sigma_0)
&\leq&
\limsup_{T\to +\infty} \frac{1}{T}\log
\#\mathcal{P}(K_1,(K_2)_{-\delta},2\alpha,T,T+\sigma_0)\\
&\leq&
  \limsup_{T\to +\infty} \frac{1}{T}\log
\#\mathcal P_{(K_2)_{-\delta}}^{2\alpha}(T,T+\sigma_0)\, ,
\end{eqnarray*}
where $(K_2)_{-\delta}=K_2\setminus \bigcup_{x\in\partial K_2}B(x,\delta)$.
\end{lemm}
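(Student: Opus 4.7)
The plan is to prove each inequality by constructing an almost-injective map between the relevant sets of periodic orbits via a short detour through a ball contained in $K_1 \cap (K_2)_{-\delta}$, using the uniform multiple closing lemma (Lemma~\ref{petal separe}). Bounded length changes do not affect exponential growth rates, and a polynomial injectivity defect controlled by Lemma~\ref{lemma on same po}---as in the proofs of Propositions~\ref{prop subadd gen} and~\ref{chords et orbite periodique}---is likewise absorbed in the $\limsup$.

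Setup. Since $\inter{K_1}$ is open and contained in $K_2$, we have $\inter{K_1}\subset\inter{K_2}$. I would fix $x_0\in\inter{K_1}$ and pick $\delta\in(0,1)$ small enough that $\overline{B(x_0,3\delta)}\subset K_1$, which forces $\overline{B(x_0,\delta)}\subset (K_2)_{-\delta}\cap\inter{K_1}$. The shadowing precision $\eta$ will be chosen much smaller than $\delta$.

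For the first inequality, given $\gamma\in\mathcal{P}_{K_2}^\alpha(T,T+\sigma_0)$, I would fix $z\in\gamma\cap K_2$ and apply Lemma~\ref{petal separe} with $N=1$, basepoint $z$, $T_1=\ell(\gamma)$, target interior compact $\overline{B(x_0,\delta)}$, and shadowing precision $\eta$. This produces a periodic orbit $\tilde\gamma$ of length $\ell(\gamma)+S$ with $S$ uniformly bounded, which $\eta$-shadows $\gamma$ on $[0,\ell(\gamma)]$ and intersects $\overline{B(x_0,\delta)}\subset K_1$. Since $\eta<\delta$, whenever $\tilde\gamma(t)\in(K_2)_{-\delta}$ during the shadow phase, necessarily $\gamma(t)\in K_2$; hence that contribution to $\ell(\tilde\gamma\cap(K_2)_{-\delta})$ is at most $\ell(\gamma\cap K_2)<\alpha\ell(\gamma)$. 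The detour adds at most $S$ more, so $\ell(\tilde\gamma\cap(K_2)_{-\delta})<\alpha\ell(\gamma)+S\leq 2\alpha\ell(\tilde\gamma)$ for $T$ large. Thus $\tilde\gamma\in\mathcal{P}(K_1,(K_2)_{-\delta},2\alpha,T',T'+\sigma_0')$ for bounded shifts. Lemma~\ref{lemma on same po} will bound the injectivity defect of $\gamma\mapsto\tilde\gamma$ by a polynomial in $T$; taking $\limsup$ and invoking the independence of growth rate on the window width (as in Fact~\ref{ind gur entropy}) then yields the first inequality.

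For the second inequality, if $\gamma$ in the middle set already intersects $(K_2)_{-\delta}$ with strict inequality $\ell(\gamma\cap(K_2)_{-\delta})<2\alpha\ell(\gamma)$, then $\gamma\in\mathcal{P}_{(K_2)_{-\delta}}^{2\alpha}(T,T+\sigma_0)$; the boundary case of equality is nongeneric and contributes at most a multiplicative factor. Otherwise $\gamma\cap(K_2)_{-\delta}=\emptyset$, and I would apply Lemma~\ref{petal separe} analogously to insert a detour through $\overline{B(x_0,\delta)}\subset(K_2)_{-\delta}$, yielding $\tilde\gamma$ of length $\ell(\gamma)+O(1)$ intersecting $(K_2)_{-\delta}$. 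The main obstacle, and the most delicate step, is controlling $\ell(\tilde\gamma\cap(K_2)_{-\delta})$: a time $t$ where $\tilde\gamma(t)\in(K_2)_{-\delta}$ during the shadow phase forces $\gamma(t)$ into the thin shell $(K_2)_{-(\delta-\eta)}\setminus(K_2)_{-\delta}$, and since $\gamma$ avoids $(K_2)_{-\delta}$ entirely this shell time must be controlled directly using the uniform speed bounds~(\ref{eqn:minoration}) together with a careful choice of $\eta$ small compared to $\alpha\delta$. Once the shell-time estimate is in hand, it combines with the $O(1)$ detour contribution and is absorbed into the $2\alpha$-budget for $T$ large; injectivity is then controlled exactly as in the first inequality, completing the proof.
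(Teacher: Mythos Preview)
Your treatment of the first inequality is essentially the paper's: detour through a ball in $K_1$ via Lemma~\ref{petal separe}, control the time spent in $(K_2)_{-\delta}$ by shadowing, and bound the injectivity defect using Lemma~\ref{lemma on same po}. That part is fine.

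The second inequality, however, is where you go astray. In the paper this step is a one-line inclusion, not a construction. The paper chooses $\delta$ small enough that $d(\partial K_1,\partial K_2)>2\delta$ (implicitly using $K_1\subset\inter{K_2}$, which holds in the only application of the lemma). Then $K_1\subset (K_2)_{-\delta}$, so every $\gamma\in\mathcal P(K_1,(K_2)_{-\delta},2\alpha,T,T+\sigma_0)$ already intersects $(K_2)_{-\delta}$ through its intersection with $K_1$, and the inclusion
\[
\mathcal P(K_1,(K_2)_{-\delta},2\alpha,T,T+\sigma_0)\subset \mathcal P_{(K_2)_{-\delta}}^{2\alpha}(T,T+\sigma_0)
\]
holds (up to the harmless $\le$ versus $<$ boundary). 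Your case ``$\gamma\cap(K_2)_{-\delta}=\emptyset$'' simply cannot occur.

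Your alternative route through a detour is not just longer; it has a real gap. The ``shell-time estimate'' you flag as delicate cannot be extracted from the speed bounds~\eqref{eqn:minoration} alone: those bounds control how fast an orbit moves, not in which direction, and an orbit can slide tangentially inside the thin shell $(K_2)_{-(\delta-\eta)}\setminus (K_2)_{-\delta}$ for an unbounded amount of time. Nothing in the hypotheses prevents $\ell(\gamma\cap(K_2)_{-(\delta-\eta)})$ from being of order $\ell(\gamma)$ even when $\ell(\gamma\cap(K_2)_{-\delta})=0$. Choosing $\eta$ small compared to $\alpha\delta$ does not help: it thins the shell but does not bound the time spent there. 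So the second inequality in your scheme is not actually proved; recognizing that $K_1\subset(K_2)_{-\delta}$ removes the difficulty entirely.
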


\begin{proof}

The second inequality is easily obtained, thanks to the inclusion
\[
\mathcal{P}(K_1,(K_2)_{-\delta},2\alpha,T,T+\sigma_0)\subset \mathcal{P}_{(K_2)_{-\delta}}^{2\alpha}(T,T+\sigma_0).
\]

We now prove the first inequality.
The proof goes as follows. If $\gamma$ is a periodic orbit intersecting $K_2$ then, using transitivity and the multiple closing lemma, we can make it do a small detour to intersect $K_1$. As the detour is controlled, we still control the time spent in $K_2$  (in fact $(K_2)_{-\delta}$) as well as the period of the new periodic orbit. This leads to an inequality between the number of periodic orbits in $\mathcal{P}_{K_2}^\alpha(T,T+\sigma_0)$ and $\mathcal{P}(K_1,(K_2)_{-\delta},2\alpha,T,T+\sigma_0+\sigma_0')$, for some suitable $\sigma_0'>0$.
This proves
\begin{eqnarray*}
\limsup_{T\to +\infty} \frac{1}{T}\log
\#\mathcal{P}_{K_2}^\alpha(T,T+\sigma_0)
&\leq&
\limsup_{T\to +\infty} \frac{1}{T}\log
\#\mathcal{P}(K_1,(K_2)_{-\delta},2\alpha,T,T+\sigma_0+\sigma_0')\, .
\end{eqnarray*}
We then prove that the limit superior does not depend on $\sigma_0'$. See Figure~\ref{fig:entropies_gur_cordes_infini_1}.
We now give a detailed proof.

{\bf Step 1.} Setting the parameters.
Since $\inter{K_1}\neq \emptyset$, we can fix a point $x_0\in \inter{K_1}$ such that $B(x_0,\delta_0)\subset \inter{K_1}$ for some $\delta_0>0$.
Lemma~\ref{lemma on same po} (i.e. the separation of orbits) applied with $\nu=1$ and $\tau_1=1$ gives us $\tau_0>0$ and $\epsilon_1>0$. Fix $0<\delta<\frac{\epsilon_1}{3}$ small enough such that $d(\partial K_1,\partial K_2)>2\delta$ and $B(x_0,\delta_0+\delta)\subset \inter{K_1}$.
From Lemma~\ref{petal unis}  (i.e. the multiple closing lemma) applied at the compact set $\overline{B(K_2,1)}$, with $\delta>0$, $\nu=1$ and $N=2$, we obtain a time $T_\mathrm{min}>0$ and $\eta>0$.
From unifom transitivity, i.e., Lemma~\ref{transitivite prop bis}, applied at the compact sets $\overline{B(x_0,\delta_0)}\subset \overline{B(K_2,1)}$ with $\frac{\eta}{2}>0$, we obtain a time $\sigma>0$.

We now cover the compact set $\overline{B(K_2,1)}$ with $N$ balls $B(x_i, \frac{\eta}{2})$.
For all $1\leq i\leq N$, by transitivity applied at the center of the ball $B(x_i,\frac{\eta}{2})$, there exists a point $z_i\in B(x_i,\frac{\eta}{2})$ and a time $\ell(z_i) \in[\sigma+1,\sigma+2\tau_{K_2}+1]$ such that $\phi_{\ell(z_i)}(z_i)\in B(x_i,\frac{\eta}{2})$ and $\phi_{[0,\ell(z_i)]}(z_i)\cap \overline{B(x_0,\delta_0)}\neq \emptyset$.
Let $\sigma_0'=\sigma+2(\tau_{K_2}+1)$.

\begin{figure}
    \centering
    \includegraphics{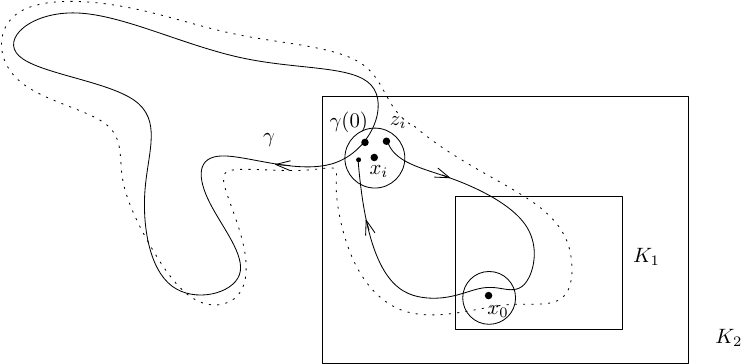 }
    \caption{Construction of a periodic orbit intersecting $K_1$.}
    \label{fig:entropies_gur_cordes_infini_1}
\end{figure}

{\bf Step 2.} We construct a map between our two sets of periodic orbits.
Fix
\begin{equation}\label{quel T pour pruve 5.25}
T \geq \max\left(T_\mathrm{min},\frac{\sigma_0'+\alpha \sigma_0}{\alpha}\right)\, ;
\end{equation}
we now construct a map
\[
f\colon \mathcal{P}_{K_2}^\alpha (T,T+\sigma_0)\to \mathcal{P}(K_1,(K_2)_{-\delta},2\alpha, T,T+\sigma_0+\sigma_0')\, .
\]
Let $\gamma\in \mathcal{P}_{K_2}^\alpha (T,T+\sigma_0)$ and assume, without loss of generality, that $\gamma(0)\in K_2$. Let $1\leq i\leq N$ be such that $\gamma(0)\in B(x_i,\frac{\eta}{2})$.
Notice that
\[
d(\phi_{\ell(z_i)}(z_i),\gamma(0))<\eta\qquad\text{and}\qquad d(\gamma(0),z_i)=d(\phi_{\ell(\gamma)}(\gamma(0)),z_i)<\eta.
\]
Therefore, also by \eqref{quel T pour pruve 5.25}, we can use Lemma~\ref{petal unis} with points $x_1=z_i$, $x_2=\gamma(0)$, and times $\ell(z_i)\in[\sigma+1,\sigma+2\tau_{K_2}+1]$, $\ell(\gamma)\in[T,T+\sigma_0]$ to obtain a periodic orbit $\beta$ of period $\ell(\beta)\in[\ell(\gamma)+\ell(z_i)-1,\ell(\gamma)+\ell(z_i)+1]\subset [T, T+\sigma_0+\sigma_0']$ which is $\delta$-close first to $\phi_{[0,\ell(z_i)]}(z_i)$ and then to $\gamma$.

The orbit $\beta$ intersects $K_1$ because $\phi_{[0,\ell(z_i)]}(z_i)$ meets $\overline{B(x_0,\delta_0)}$ and $B(x_0,\delta_0+\delta)\subset \inter{K_1}$.
Moreover, since $\ell(\gamma\cap K_2)\leq \alpha \ell(\gamma)$ and $\beta$ is $\delta$-close to $\gamma$ on $[\ell(z_i), \ell(z_i)+\ell(\gamma)]$, we have
\[ \ell(\beta([\ell(z_i), \ell(z_i)+\ell(\gamma)]))
\cap (K_2)_{-\delta})\leq \alpha(T+\sigma_0)\]
and therefore
\[
\ell(\beta\cap (K_2)_{-\delta})\leq \alpha(T+\sigma_0)+\ell(\beta)-\ell(\gamma)\leq \alpha(T+\sigma_0)+\sigma_0'\, .
\]
So
\[\frac{\ell(\beta\cap (K_2)_{-\delta})}{\ell(\beta)}\leq \frac{\alpha(T+\sigma_0)+\sigma_0'}{T}\leq 2\alpha\, ,\]
as $T$ has been chosen big enough, according to \eqref{quel T pour pruve 5.25}.
Thus, $\beta\in \mathcal{P}(K_1,(K_2)_{-\delta},2\alpha, T, T+\sigma_0+\sigma_0')$.

Set $f(\gamma)=\beta$.

{\bf Step 3.} The map $f$ is almost injective.
We want now to control the loss of injectivity of the function $f$ defined above. Let $\gamma_1,\gamma_2\in\mathcal{P}^\alpha_{K_2}(T,T+C)$ be such that $f(\gamma_1)=f(\gamma_2)=\beta$.
Assume that $0\leq \ell(\gamma_1)-\ell(\gamma_2)\leq \tau_0$ (where $\tau_0$ is given by Lemma~\ref{lemma on same po} as explained above).
The point $\gamma_1(0)$ belongs to $B(x_i, \frac{\eta}{2})$, for some $1\leq i\leq N$; additionally, let us assume that also $\gamma_2(0)$ belongs to the same ball $B(x_i, \frac{\eta}{2})$. Actually, for this second choice we have $N$ possibilities, and we will count them when considering the cardinality of the preimage of a periodic orbit.

The construction of $f(\gamma_i)$ provides an origin of the orbit $\beta$, that we denote $s_j$, for $j=1,2$. We can assume without loss of generality that $s_1=0$.
For any $s\in[0,\ell(\gamma_2)]$, one has
\begin{align*}
d(\gamma_1(s),\gamma_2(s))&\leq d(\gamma_1(s),\beta(\ell(z_i)+s))+d(\beta(\ell(z_i)+s),\beta(\ell(z_i)+s_2+s))+d(\beta(\ell(z_i)+s_2+s),\gamma_2(s))\\
&\le \delta +b\vert s_2\vert +\delta\, ,
\end{align*}
where $b$ comes from \eqref{eqn:minoration}.
If $|s_2|\le \frac{\delta}{b}$, then
\[d(\gamma_1(s),\gamma_2(s)) \le 3\delta <\epsilon_1\, . \]
We can then conclude, by Lemma \ref{lemma on same po}, that $\gamma_1=\gamma_2$. It follows that the cardinal of the preimage of $\beta$ by $f$ is bounded by $N\left\lceil \frac{\sigma_0}{\tau_0} \right\rceil \left\lceil \frac{(T+\sigma_0+\sigma_0')b}{\delta}\right\rceil$.
Consequently
\[
\#\mathcal{P}^\alpha_{K_2}(T,T+\sigma_0)\le \,N\left\lceil \frac{\sigma_0}{\tau_0} \right\rceil \left\lceil \frac{(T+\sigma_0+\sigma_0')b}{\delta}\right\rceil \#\mathcal{P}(K_1,(K_2)_{-\delta},2\alpha, T, T+\sigma_0+\sigma_0')\, .
\]
This proves
\begin{eqnarray}\label{final inequality with C et C'}
\limsup_{T\to +\infty} \frac{1}{T}\log
\#\mathcal{P}_{K_2}^\alpha(T,T+\sigma_0)
&\leq&
\limsup_{T\to +\infty} \frac{1}{T}\log
\#\mathcal{P}(K_1,(K_2)_{-\delta},2\alpha,T,T+\sigma_0+\sigma_0')\, .
\end{eqnarray}

{\bf Step 4.} The limit superior $\limsup_{T\to +\infty} \frac{1}{T}\log
\#\mathcal{P}(K_1,K_2,\alpha,T,T+\sigma_0)$ does not depend on $\sigma_0$.
Let $0<\sigma_1<\sigma_2$. Let $n = \floor{\sigma_2/\sigma_1}$.
Then
\[
\mathcal P(K_1,K_2,\alpha,T,T+\sigma_2)\subset \cup_{i=0}^n \mathcal P(K_1,K_2,\alpha,T + i\sigma_1,T+(i+1)\sigma_1)
\]
and
\begin{align*}
\#\mathcal P(K_1,K_2,\alpha,T,T+\sigma_1) \le & \#\mathcal P(K_1,K_2,\alpha,T,T+\sigma_2) \\
\le & (n+1)\max_{i=0\dots n}\# \mathcal P(K_1,K_2,\alpha,T + i\sigma_1,T+(i+1)\sigma_1)\, .
\end{align*}
Therefore,
\[\limsup_{T\to +\infty} \frac{1}{T}\log
\#\mathcal{P}(K_1,K_2,\alpha,T,T+\sigma_1) =
\limsup_{T\to +\infty} \frac{1}{T}\log
\#\mathcal{P}(K_1,K_2,\alpha,T,T+\sigma_2)\, .\]
Thus, from this equality and from \eqref{final inequality with C et C'}, we conclude
\begin{eqnarray*}
\limsup_{T\to +\infty} \frac{1}{T}\log
\#\mathcal{P}_{K_2}^\alpha(T,T+\sigma_0)
&\leq&
\limsup_{T\to +\infty} \frac{1}{T}\log
\#\mathcal{P}(K_1,(K_2)_{-\delta},2\alpha,T,T+\sigma_0)
\end{eqnarray*}
as required.
\end{proof}

We are now able to state and prove the analogue of \cite[thm 5.1]{GST} in our context.

\begin{theo}\label{difficult-technical} Let $\phi\colon M\to M$ be a $H$-flow. Let $K\subset M$ be a compact set with $\inter{K}\neq\emptyset$. Let $\epsilon>0$.
There exist a map $\alpha\in(0,1)\to\psi(\alpha)\in(0,\infty)$ converging to $0$ when $\alpha\to 0$ and $R\ge 1$ such that for all $0<\alpha <1$ and $S>0$
\[
\limsup_{T\to \infty}\frac{1}{T}\log \#\mathcal{P}(K,K_R,\alpha,T,T+S)\le  h_{\mathcal C}^{K^c}(\phi)\left(1+2\dfrac{b\alpha}{R}\right)+\epsilon+\psi(\alpha). \]
where $K_R$ is the $R$-neighbourhood of $K$.
\end{theo}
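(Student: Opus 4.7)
My plan is to follow the strategy outlined in the paragraph preceding the theorem, adapting \cite[Theorem 5.1]{GST}: cut each periodic orbit $\gamma \in \mathcal{P}(K, K_R, \alpha, T, T+S)$ into its successive excursions outside $K_R$ separated by inside pieces, then estimate the number of excursions using the chord entropy $h_{\mathcal C}^{K_R^c}(\phi)$, which by Proposition \ref{prop croissance chord entropy} can be made arbitrarily close to $h_{\mathcal C}^\infty(\phi)$ by taking $R$ large.

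First I would fix $\epsilon>0$ and $R\ge 1$ large enough that $h_{\mathcal C}^{K_R^c}(\phi) \le h_{\mathcal C}^\infty(\phi) + \epsilon/4$. Then for suitably small $\delta,\eta > 0$, I would cover $\partial K_R$ by finitely many balls $B_j=B(y_j,\eta)$, $j\le J$. Given $\gamma\in\mathcal{P}(K,K_R,\alpha,T,T+S)$, I write $\gamma$ as a cyclic concatenation of $N$ maximal excursions (outside $K_R$) of lengths $t_1,\dots,t_N$ and $N$ maximal inside pieces (in $K_R$) of lengths $a_1,\dots,a_N$, with $\sum a_i \le \alpha\,\ell(\gamma)$ and $\sum t_i=\ell(\gamma)-\sum a_i$. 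Each excursion $E_i$ is coded by: the indices of the balls $B_{j_i^-}, B_{j_i^+}$ containing its endpoints, the approximate length $\lfloor t_i\rfloor$, and a representative in a maximal $(\delta,t_i)$-separated set of chords from $y_{j_i^-}$ to $y_{j_i^+}$ outside $(K_R)_{-\eta}$; the number of such representatives is at most $e^{(h_{\mathcal C}^\infty+\epsilon/2)t_i}$ for $t_i$ large, by the definition of $h_{\mathcal C}^{K_R^c}$ combined with Proposition \ref{prop_chagement_points_cordes} to pass between pairs of base points. Each inside piece $I_i$ is coded analogously within $K_R$; by Lemma \ref{lemme:entropie_gur_compact}, the number of such codes is at most $e^{C_R a_i}$, with $C_R$ finite and depending only on $K_R$.

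By expansivity, the full sequence of codes, together with a base point up to $\eta$-precision, determines $\gamma$. Combining gives
\begin{equation*}
\#\mathcal{P}(K, K_R, \alpha, T, T+S) \,\le\, \mathrm{poly}(T) \cdot J^{2N} \cdot e^{(h_{\mathcal C}^\infty + \epsilon/2)\sum t_i + C_R \sum a_i}.
\end{equation*}
Using $\sum t_i \le \ell(\gamma)$, $\sum a_i \le \alpha\,\ell(\gamma)$, and bounding $N$ by $\alpha\ell(\gamma)\,b/(2R)$ (exploiting that each inside piece reaching $K$ has length at least $2R/b$ by \eqref{eqn:minoration}, with short inside pieces absorbed into the adjacent excursions), taking $\tfrac{1}{T}\log$ and $T\to\infty$ yields the bound $h_{\mathcal C}^\infty(\phi)\bigl(1+2b\alpha/R\bigr) + \epsilon + \psi(\alpha)$ with $\psi(\alpha)=C_R\alpha\to 0$.

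The main obstacle will be the precise accounting for short inside pieces (those not reaching $K$), whose number is a priori unbounded: they must be absorbed into the adjacent excursions in such a way that the resulting objects remain chords outside $(K_R)_{-\eta}$ and contribute at the rate $h_{\mathcal C}^{K_R^c}$, which may require using Proposition \ref{cordes et orbite periodique version compacte} to localize the chord count. This is where the width $R$ of the annulus $K_R\setminus K$, together with the uniform upper speed bound $b$, intervenes and produces the multiplicative correction $1+2b\alpha/R$.
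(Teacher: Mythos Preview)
Your overall strategy---decompose $\gamma$ into outside excursions and inside pieces, count each separately, bound the number of pieces via the annulus crossing time---is the same as the paper's. But the execution has a genuine gap, and it is precisely the one you flag as the ``main obstacle'' without resolving it.

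You cut at $\partial K_R$. This leaves the number $N$ of excursions unbounded, so you propose absorbing the short inside pieces (those not reaching $K$) into the adjacent excursions. That fix is correct in spirit and does give $N\le b\alpha\ell(\gamma)/(2R)$. But the resulting merged objects are \emph{not} chords outside $(K_R)_{-\eta}$: the absorbed short pieces can penetrate $K_R$ to any depth short of $\partial K$. What you obtain are chords outside $K$, with endpoints near $\partial K_R$. Their count is controlled by $h_{\mathcal C}^{K^c}(\phi)$, not by $h_{\mathcal C}^{K_R^c}(\phi)\approx h_{\mathcal C}^\infty(\phi)$. This is exactly why the theorem is stated with $h_{\mathcal C}^{K^c}(\phi)$ on the right-hand side and why no ``$R$ large'' argument to approach $h_{\mathcal C}^\infty$ appears. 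The paper sidesteps the whole issue by cutting at $\partial K$ from the outset: a \emph{large excursion} is a connected component of $\gamma\cap\overline{K^c}$ that meets $K_R^c$. Such an excursion is automatically a $K^c$-chord, and it crosses the annulus $K_R\setminus K$ twice, giving the bound $N\le b\alpha T/R$ directly. The complementary pieces (which absorb all the short components of $\gamma\setminus K$) then lie in $K_R$ by construction.

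Two secondary points. First, the combinatorial factor is not $\mathrm{poly}(T)$: with $N\sim b\alpha T/R$ pieces, the number of ways to choose the $2N$ integer lengths summing to $\approx T$ is $\binom{T+N}{2N}$, which is exponential in $T$; its rate $\sim \frac{b\alpha}{R}\log\frac{R}{b\alpha}$ goes into $\psi(\alpha)$, alongside the $J^{2N}$ contribution. Second, Lemma~\ref{lemme:entropie_gur_compact} bounds \emph{periodic orbits} contained in a compact set, not chords; the paper bridges this via Proposition~\ref{cordes et orbite periodique version compacte}, which converts chord counts inside $K_R$ into periodic-orbit counts inside $K_{R+1}$.
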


From Theorem \ref{difficult-technical}, we will deduce the following result.

\begin{coro}\label{difficult-ineq} Let $\phi\colon M\to M$ be a $H$-flow.
Then $h_\mathrm{Gur}^\infty(\phi)\le h_{\mathcal C}^\infty(\phi)$.
\end{coro}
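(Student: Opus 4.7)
The plan is to chain the two preceding results: Lemma~\ref{lemme:comparaison_vairante_P_K_T} converts a Gurevic-at-infinity count into a count of periodic orbits spending most of their time outside a compact set, and Theorem~\ref{difficult-technical} converts the latter into a chord count on the complement of a smaller compact set. The whole trick in the corollary is to choose these two nested compacts so that Lemma~\ref{lemme:comparaison_vairante_P_K_T}'s erosion $(K_2)_{-\delta}$ still contains Theorem~\ref{difficult-technical}'s reference set $K_{1,R}$.

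Assuming $h_{\mathcal C}^\infty(\phi)<\infty$ (otherwise there is nothing to prove), I would fix $\epsilon>0$ and select $K_1\subset M$ compact with nonempty interior satisfying $h_{\mathcal C}^{K_1^c}(\phi)\le h_{\mathcal C}^\infty(\phi)+\epsilon$. Feeding $K_1$ and $\epsilon$ into Theorem~\ref{difficult-technical} produces an integer $R\ge 1$ and a map $\psi$ with $\psi(\beta)\to 0$ as $\beta\to 0$, such that for all $\beta\in(0,1)$ and $S>0$,
\[
\limsup_{T\to\infty}\frac{1}{T}\log \#\mathcal P(K_1,K_{1,R},\beta,T,T+S)\le h_{\mathcal C}^{K_1^c}(\phi)\Bigl(1+\frac{2b\beta}{R}\Bigr)+\epsilon+\psi(\beta).
\]
Next I would pick $K_2=\overline{B(K_1,R+1)}$, so that $d(K_{1,R},\partial K_2)\ge 1$. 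For any $\alpha\in(0,1/2)$ and any fixed $\sigma_0>0$, Lemma~\ref{lemme:comparaison_vairante_P_K_T} applied to $K_1\subset K_2$ produces a constant $\delta\in(0,1)$, and the bound $\delta<1\le d(K_{1,R},\partial K_2)$ forces the inclusion $K_{1,R}\subset (K_2)_{-\delta}$.

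Putting everything together: the lemma gives
\[
h_\mathrm{Gur}^{K_2,\alpha}(\phi)\le \limsup_{T\to\infty}\frac{1}{T}\log \#\mathcal P(K_1,(K_2)_{-\delta},2\alpha,T,T+\sigma_0),
\]
the inclusion $K_{1,R}\subset (K_2)_{-\delta}$ upgrades this to a bound by the count $\#\mathcal P(K_1,K_{1,R},2\alpha,T,T+\sigma_0)$ (any orbit spending at most a $2\alpha$ proportion of time in a set spends no more in a subset), and the theorem applied with $\beta=2\alpha$ then yields
\[
h_\mathrm{Gur}^{K_2,\alpha}(\phi)\le h_{\mathcal C}^{K_1^c}(\phi)\Bigl(1+\frac{4b\alpha}{R}\Bigr)+\epsilon+\psi(2\alpha).
\]
Fact~\ref{fait:h_Gur_K_epsilon} shows that the left-hand side is non-decreasing in $\alpha$, so letting $\alpha\to 0$ produces $\lim_{\alpha\to 0} h_\mathrm{Gur}^{K_2,\alpha}(\phi)\le h_{\mathcal C}^\infty(\phi)+2\epsilon$. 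By the definition of $h_\mathrm{Gur}^\infty(\phi)$ as an infimum over compact sets, the same bound holds for $h_\mathrm{Gur}^\infty(\phi)$, and letting $\epsilon\to 0$ concludes.

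All the real work is absorbed into Theorem~\ref{difficult-technical}; within the corollary itself the only delicate point is to maintain the inclusion $K_{1,R}\subset(K_2)_{-\delta}$ uniformly in $\alpha$, which is why I would enforce a unit-thick shell in the definition of $K_2$: the universal upper bound $\delta<1$ from Lemma~\ref{lemme:comparaison_vairante_P_K_T} then makes the argument go through for every $\alpha$ without having to track how $\delta$ depends on the other parameters.
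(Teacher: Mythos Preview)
Your proposal is correct and follows essentially the same approach as the paper: both chain Lemma~\ref{lemme:comparaison_vairante_P_K_T} (applied with $K_1\subset K_2=K_{1,R+1}$) into Theorem~\ref{difficult-technical}, using the inclusion $K_{1,R}\subset (K_2)_{-\delta}$ guaranteed by $\delta<1$. The only cosmetic difference is that the paper fixes a single sufficiently small $\alpha$ upfront (so that $2b\alpha/R<\epsilon$ and $\psi(2\alpha)<\epsilon$), whereas you derive the bound for every $\alpha$ and then let $\alpha\to 0$; your version is in fact more explicit about the role of the unit-thick shell in making the inclusion uniform in $\alpha$.
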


\begin{proof}
If $h_{\mathcal{C}}^\infty(\phi)=\infty$, then there is nothing to prove. Assume now $h_{\mathcal{C}}^\infty(\phi)<\infty$.
Let $\epsilon>0$.
Fix $K$ a compact subset with nonempty interior such that
\[h_\mathcal C^{K^c}(\phi)\le h_\mathcal C^\infty(\phi)+\epsilon\, .\]
Use Theorem~\ref{difficult-technical} to obtain $\psi$ and $R$.
As $\lim_{\alpha\to 0} h^{K_{R+1},\alpha}_\mathrm{Gur}(\phi)\ge h_\mathrm{Gur}^\infty(\phi)$ and since, by Fact~\ref{fait:h_Gur_K_epsilon}, the function $\alpha\mapsto h_{\mathrm{Gur}}^{K_{R+1},\alpha}(\phi)$ is non increasing, then for all $\alpha>0$ one has
\[h^{K_{R+1},\alpha}_\mathrm{Gur}(\phi)\ge h_\mathrm{Gur}^\infty(\phi).\]
Choose $\alpha$ such that $\alpha<1/2$, $2b\alpha/R<\epsilon$ and $\psi(2\alpha)<\epsilon$.
From Lemma~\ref{lemme:comparaison_vairante_P_K_T} with parameters $K\subset K_{R+1}$ and $\alpha$ we have
\begin{eqnarray*}
h_{\mathrm{Gur}}^{K_{R+1},\alpha}(\phi)=\limsup_{T\to +\infty} \frac{1}{T}\log
\#\mathcal{P}_{K_{R+1}}^\alpha(T,T+S)
&\leq&
\limsup_{T\to +\infty} \frac{1}{T}\log
\#\mathcal{P}(K,K_R,2\alpha,T,T+S)\, .
\end{eqnarray*}
We now use Theorem~\ref{difficult-technical} to obtain
\[
\limsup_{T\to \infty}\frac{1}{T}\log \#\mathcal{P}(K,K_R,2\alpha,T,T+S)\le  h_{\mathcal C}^{K^c}(\phi)(1+\epsilon)+\epsilon+\psi(2\alpha). \]
Therefore
\[
h_\mathrm{Gur}^\infty(\phi) \leq  h_{\mathcal C}^{\infty}(\phi)(1+\epsilon)+3\epsilon\, . \]
As $\epsilon$ can be choose arbitrarily small, we obtain  $h_\mathrm{Gur}^\infty(\phi)\le h_{\mathcal C}^\infty(\phi)$.
\end{proof}

We will prove Theorem~\ref{difficult-technical} by adapting the arguments of \cite{GST} to our context. We provide details when necessary, and refer to \cite{GST} for complements.

\begin{proof}[Proof of Theorem~\ref{difficult-technical}  ]

If $h_{\mathcal{C}}^{K^c}(\phi)=\infty$, then there is nothing to prove. Assume now that $h_{\mathcal{C}}^{K^c}(\phi)<\infty$. We start a given compact set $K\subset M$ with $\inter{K}\neq \emptyset$ and a given $\epsilon>0$.

The heuristic of the proof is the following.
We cut every periodic orbit $\gamma\in \mathcal{P}(K,K_R,\alpha,T,T+S)$ into chords joining   points of $\partial K$, in such a way that  a chord is either a \textit{large excursion } outside $K_R$, i.e. a connected component of $\gamma\setminus K$ that intersects $(K_R)^c$, or a chord between points of $\partial K$ staying inside $K_R$, between two large excursions.
This decomposition will give a bound on the number of periodic orbits in terms of number of chords.
More precisely, the number of large excursions, i.e. the chords going outside $K_R$ will be controlled by the chord entropy at infinity, while the number of other chords, i.e. those remaining inside $K_R$, will be controlled by the Gurevic entropy.

The set of useful chords is quite involved and bounding its cardinal will require some work.
This will be the main technical part of the proof.

{\bf Step 1.} We quantitatively approximate chord entropy at infinity.

Apply Lemma~\ref{lemma on same po} with $\nu=1$ and $\tau_1=S$: we obtain $\bar\tau_0>0$ and $\bar\epsilon>0$. Recall that $lip(\phi)\ge 1$ is the Lipschitz constant of the map $\phi_{\tau}$, for all $\tau\in[-1,1]$ (see point 1 of Definition~\ref{def:H-flow}).
By the definition of $h_{\mathcal{C}}^{K^c}(\phi)$ and since this quantity is finite, there exists $\overline{\delta}>0$ such that $\overline{\delta}<\overline{\epsilon}/(3lip(\phi))$, $\overline{\delta}\leq b/2$ and, for all $0<\delta\le\overline{\delta}$,
\[
\lim_{\eta \to 0} \sup_{x,y\in\partial K}\limsup_{T\to +\infty}\dfrac{\log \mathcal{N}_{\mathcal{C}}^{K^c}(x,y,\eta,T,T+S,\delta)}{T}\in \left(h_{\mathcal{C}}^{K^c}(\varphi)-\frac{\epsilon}{4},h_{\mathcal{C}}^{K^c}(\varphi)+\frac{\epsilon}{4}\right).
\]
Choose $\delta=\overline{\delta}$. There exists $\overline{\eta}<\overline{\delta}$ such that, for all $0<\eta\le\overline{\eta}$,
\begin{equation}\label{h corde K senza delta e eta}
\sup_{x,y\in\partial K}\limsup_{T\to +\infty}\dfrac{\log \mathcal{N}_{\mathcal{C}}^{K^c}(x,y,\eta,T,T+S,\overline{\delta})}{T}\in \left(h_{\mathcal{C}}^{K^c}(\varphi)-\frac{\epsilon}{2},h_{\mathcal{C}}^{K^c}(\varphi)+\frac{\epsilon}{2}\right)\, .
\end{equation}
Choose $0<\eta\leq\overline{\eta}$. As above, by the same argument as in the proof of Theorem \ref{theo:Gurevic}, the above limsup does not depend on $S$. Therefore, the quantifiers   $\eta$ and $\delta$ do not depend either on $S$.

{\bf Step 2.} We divide a periodic orbit in excursions.

Choose $M=M(\eta)\in\mathbb{N}$ a (minimal) number of balls  $B(x_i,\eta)$ centered at points  $x_i\in\partial K$ for $i=1,\dots ,M$ so that
\[
\partial K\subset \bigcup_{i=1}^MB(x_i,\eta).
\]
Fix an arbitrary $R\geq 1$, whose value will be determined later on the proof, and some $\alpha\in(0,1)$.
We will divide each periodic orbit of $\mathcal{P}(K,K_R,\alpha,T,T+S)$ into suitable chords.

\begin{figure}
    \centering
    \includegraphics{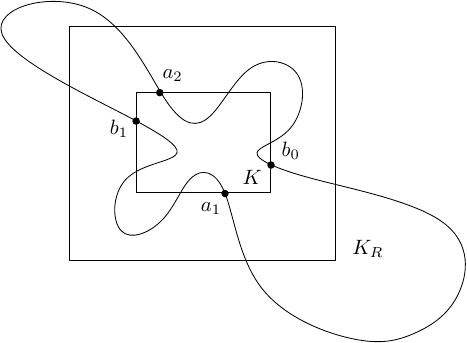}
    \caption{larges excursions}
    \label{fig:excursions}
\end{figure}

{\bf Step 2.a.} Large excursions.

Let $\gamma\in \mathcal{P}(K,K_R,\alpha,T,T+S)$. A \textit{large excursion} of $\gamma$  outside $K_R$ is a connected component of $\gamma\cap\overline{K^c}$ starting from $\partial K$, staying outside $K$ except at the endpoints of the interval, and intersecting $(K_R)^c$. We will divide $\gamma$ into $2N$ connected components, with $N$ large excursions separated by $N$ pieces of orbit that stay inside $K_R$.
Choose a point $b_0=b_{N}$ on $\gamma$ at the beginning of such a large excursion. Following \cite{GST}, we denote by $(a_i)_{1\le i\le N}$  on $\gamma\cap \partial K$ the endpoints of the large excursions, and starting points of components inside $K_R$, and $(b_i)_{0\le i\le N-1}$  on $\gamma\cap \partial K$, the starting points of the large excursions. With these notations, every large excursion goes from $b_{i-1}$ to $a_{i}$ for some $1\le i\le N$, and each complement goes from $a_i$ to $b_i$, for some $1\le i\le N$.
Parametrize $\gamma=(\gamma(t))_{t\in [0,\ell(\gamma)]}$   so that $b_0=b_N=\gamma(0)$, $b_i=\gamma(\tau_i)$, with $\tau_0=0$ and $\tau_N=\ell(\gamma)$, and $a_i=\gamma(\sigma_i)$.
We get a decomposition of $[0,\ell(\gamma)]$ into $2N$ intervals corresponding to $N$ large excursions $(\tau_i,\sigma_{i+1})$ for $i=0,\dots, N-1$, and their $N$ complements $(\sigma_i,\tau_i)$ for $i=1,\dots, N$.

{\bf Step 2.b.} Elementary observations on excursions.
As in \cite{GST}, let us do the following elementary but crucial observations.
\begin{enumerate}
\item\label{punto 1} By definition of  $ \mathcal{P}(K,K_R,\alpha,T,T+S)$, as $\ell(\gamma)\in [T,T+S]$,  \[
\ell(\gamma\cap K)\le \sum_{i=1}^N (\tau_i-\sigma_i)\le \ell(\gamma\cap K_R)< \alpha\ell(\gamma)\le \alpha T+\alpha S\,.
\]
\item\label{punto 2} For $1\le i\le N$, $\gamma([\tau_{i-1}, \sigma_{i}])$ lies outside $ K$, except at points $\gamma(\tau_{i-1})$ and $\gamma(\sigma_i)$ which belong to $\partial K$.
Moreover, by the right hand side in \eqref{eqn:minoration}, each large excursion spends a time at least $\frac{2R}{b}$ inside $K_R\setminus K$, where $b$ is the constant in \eqref{eqn:minoration}.
Thus, since there are $N$ large excursions, we get $\ell (\gamma\cap (K_R\setminus K))\ge \frac{2R}{b} N$. By definition, as $\gamma\in\mathcal{P}(K,K_R,\alpha,T,T+S)$, we know that $\ell(\gamma\cap K_R^c)\geq (1-\alpha)T$. Thus, we deduce that
\begin{equation}\label{dans thm diff bound sur N}
(1-\alpha)T+\frac{2R}{b}N\le \ell(\gamma\cap K_R^c)+\ell(\gamma\cap(K_R\setminus K))= \ell(\gamma\cap K^c)\le
\ell(\gamma)\le  T+S\,.
\end{equation}
Set
\[
\hat\nu = \hat{\nu}_{\alpha,R,T}= \frac{b\alpha}{R}T.
\]
From \eqref{dans thm diff bound sur N}, we get $N\le \frac{b(\alpha T+S)}{2R}$.
Moreover, when $T\ge \frac{S}{\alpha}$, then $\frac{b(\alpha T+S)}{2R} \le \hat\nu $, so that
\[
N\le \hat\nu \,.
\]
\end{enumerate}
We set $t_0=\tau_0=0$ and for $i=1,\dots, N$, $t_i=\lfloor \tau_i\rfloor$ and $s_i=\lfloor \sigma_i\rfloor$.
\begin{enumerate}[resume]
\item\label{punto 3}
When $T\ge\max(\frac{S}{\alpha},\frac{ RS}{\alpha b})$, we get  $\hat\nu \ge  S\ge \alpha S$ and by point \ref{punto 2},   $\hat\nu \ge N$ . Therefore,  by points \ref{punto 1} and \ref{punto 2}, we have
\[
 \sum_{i=1}^N(t_i-s_i)\le \sum_{i=1}^N (\tau_i-\sigma_i)+N\le \alpha T+\alpha S+N\le \alpha T+2\hat\nu \,.
\]
\item\label{punto 4}
For $T\ge\max(\frac{S}{\alpha},\frac{ RS}{\alpha b})$, we deduce from point \ref{punto 2}  that
\begin{equation*}
\sum_{i=1}^{N}(s_i-t_{i-1}) \le \sum_{i=1}^N (\sigma_i-\tau_{i-1})+N
\le  T+S+N\le T+ 2\hat\nu  \, .
\end{equation*}
\item\label{punto 5}
As $\tau_N = \ell(\gamma)$, we have
\[ T-1 \leq t_N \leq T+S.\]
\end{enumerate}

{\bf Step 2.c.} Construction of a map from periodic orbits to a set of excursions.
We first define our set of excursions.
Let $\mathcal E$ be defined as

\begin{align*}
\bigcup_{N=1}^{\hat\nu }\bigcup_{\substack{(t_1,\dots,t_N)\in\mathbb{N}^N \\ (s_1,\dots,s_N)\in\mathbb{N}^N\\ \vert t_N -T\vert\le S+1 \\ 0<s_1\le t_1<s_2\le \dots <s_N\le t_N}}\bigcup_{\substack{(j_i)_{i=1}^N\in\{1,\dots,M\}^N \\ (l_i)_{i=1}^N\in\{1,\dots,M\}^N}} \prod_{i=1}^N & E_{K_R}\left(x_{j_i},x_{l_i},\eta, t_i-s_i-1,t_i-s_i+1,\delta \right) \\ &\times E^{K^c}\left(x_{l_i},x_{j_{i+1}},\eta, s_i-t_{i-1}-1,s_i-t_{i-1}+1,\delta \right)
\end{align*}
where each $E_{K_R}\left(x_{j_i},x_{l_i},\eta, t_i-s_i-1,t_i-s_i+1,\delta \right)$ is a $E(x_{j_i},x_{l_i},\eta,t_i-s_i-1,t_i-s_i+1,\delta)$-set of chords contained in $K_R$ of maximal cardinality, i.e.,
\[\# E_{K_R}\left(x_{j_i},x_{l_i},\eta, t_i-s_i-1,t_i-s_i+1,\delta \right) = \mathcal{N}_{\mathcal{C},K_R}\left(x_{j_i},x_{l_i},t_i-s_i-1,t_i-s_i+1,\delta\right)\, ,\]
while each $E^{K^c}\left(x_{l_i},x_{j_{i+1}},\eta, s_i-t_{i-1}-1,s_i-t_{i-1}+1,\delta \right)$ is a $E^{K^c}(x_{l_i},x_{j_{i+1}},\eta, s_i-t_{i-1}-1,s_i-t_{i-1}+1,\delta)$-set of maximal cardinality, i.e.
\[\# E^{K^c}\left(x_{l_i},x_{j_{i+1}},\eta, s_i-t_{i-1}-1,s_i-t_{i-1}+1,\delta \right)=\mathcal{N}^{K^c}_{\mathcal{C}}\left(x_{l_i},x_{j_{i+1}},\eta, s_i-t_{i-1}-1,s_i-t_{i-1}+1,\delta\right).\]
\color{black}

We construct $f : \mathcal{P}(K,K_R,\alpha,T,T+S)\to\mathcal E$ as follows.
With the above decomposition, we associate to the orbit $\gamma$ with an arbitrary choice of origin such that $\gamma(0)\in\partial K$ the following family of $2N$ chords: the
$N$ chords of respective lengths $\tau_i-\sigma_i\in \left[t_i-s_i-1, t_i-s_i+1\right]$ from $a_i\in\partial K$ to $b_i\in\partial K$ and the $N$ chords of respective lengths $\sigma_{i}-\tau_{i-1}\in\left[s_i-t_{i-1}-1, s_i-t_{i-1}+1\right]$ from $b_i$ to $a_{i+1}$ outside $K$.
Each $a_i$ belongs to some ball $B(x_{j_i},\eta)$, while each $b_i$ belongs to some ball $B(x_{l_i},\eta)$. In particular, each $\gamma([\sigma_i,\tau_i])$ defines a point in $E_{K_R}\left(x_{j_i},x_{l_i},\eta, t_i-s_i-1,t_i-s_i+1,\delta \right)$, while each $\gamma([\tau_{i},\sigma_{i+1}])$ defines a point in $E^{K^c}\left(x_{l_i},x_{j_{i+1}},\eta, s_i-t_{i-1}-1,s_i-t_{i-1}+1,\delta\right)$.
Note that, if $R\geq 3b/2$, for every $1\le i\le N$, we have $s_i-t_{i-1}\geq  \sigma_i-\tau_{i-1}-1 \geq 1$. This shows that the image of $f$ is indeed contained in $\mathcal E$.

{\bf Step 2.d.} The map $f$ is almost injective.

For every $\gamma_0\in \mathcal{P}(K,K_R,\alpha,T,T+S)$, we need to bound the cardinality of  $f^{-1}(f(\gamma_0))$.

Assume that $\gamma,\tilde\gamma$ are in $f^{-1}(f(\gamma_0))$, i.e. they are orbits in $\mathcal{P}(K,K_R,\alpha,T,T+S)$ such that $f(\gamma)=f(\tilde\gamma)$.
By definition of $f$, these orbits lead to   the same $2N$-tuple of integers $(s_1,t_1,\dots,s_N,t_N)$, and the same $2N$-tuple of balls $B(x_{j_i},\eta)$ and $B(x_{l_i},\eta)$, and the same $2N$ tuple of chords.

We will need refined informations. Therefore, we partition $f^{-1}(\gamma_0)$ accordingly to the precise length of $\gamma$ and to the precise length of the chords.  In other words, we assume that $|\ell(\gamma)-\ell(\tilde\gamma)|\le \bar\tau_0$, where $\bar\tau_0$ is given by Lemma \ref{lemma on same po}. Denote by $(\sigma_i)_{i=1,\dots, N}, (\tau_i)_{i=0,\dots, N-1}$, resp.   $(\tilde\sigma_i)_{i=1,\dots, N}, (\tilde\tau_i)_{i=0,\dots, N-1}$, the
lengths of chords in the construction  of $f(\gamma)$ (resp. $f(\tilde \gamma)$).
We also assume that for every $1\le i\le N$,  we have $\left\vert \sigma_i-\tilde\sigma_i\right\vert <\frac{\delta}{b}$
and for every  $0\le i\le  N-1$, we have $\left\vert \tau_i-\tilde\tau_i\right\vert <\frac{\delta}{b}$, where  $b$ is the constant in \eqref{eqn:minoration}
Recall that $\tau_0=\tilde\tau_0=0$.

We will show that for every $s\in[0,T]$, $d(\gamma(s),\tilde\gamma(s))\leq \overline{\epsilon}$. Thanks to lemma \ref{lemma on same po}, we will deduce the desired result $\gamma=\tilde\gamma$.
To do so, we divide $[0,T]$ into sub-intervals where we control the distance between $\gamma$ and $\tilde\gamma$.
For $i=1,\dots, N$, set
\[
S_i = [\sigma_i,\tau_i]\cap [\tilde\sigma_i,\tilde\tau_i] = [\max(\sigma_i,\tilde\sigma_i),\min(\tau_i,\tilde\tau_i)]
\]
and for   $i=0, \dots, N-1$, set
\[
B_i = [\tau_i,\sigma_{i+1}]\cap [\tilde\tau_i,\tilde\sigma_{i+1}] = [\max(\tau_i,\tilde\tau_i),\min(\sigma_{i+1},\tilde\sigma_{i+1})].
\]
As every large excursion has  length $\tau_i-\sigma_i$ (resp $\tilde \tau_i-\tilde \sigma_i$ at least $2R/b$, for $R> \delta/2$, we know that $B_i\neq\emptyset$. Nonetheless, $S_i$ may be empty if $\sigma_i\leq\tau_i<\tilde\sigma_i \leq\tilde\tau_i$ or $\tilde\sigma_i\leq\tilde\tau_i<\sigma_i \leq\tau_i$.
In this case, we have $\max(\tau_i,\tilde\tau_i)-\min(\sigma_i,\tilde\sigma_i )\leq 2\frac{\delta}{b}$.
Let $(R_j)_{j\in J}$ be the connected components of
\[ [0,T]\setminus\left(\bigcup_{i=1}^N S_i \cup \bigcup_{i=0}^{N-1} B_i \right).\]
By construction, $R_j$ can be of the following forms
\begin{align*}
    R_j&=(\min(\tau_i,\tilde\tau_i), \max(\tau_i,\tilde\tau_i) )\\
    R_j&=(\min(\sigma_{i},\tilde\sigma_{i}),\max(\sigma_i,\tilde\sigma_i))\\
    R_j &= (\min(\sigma_{i},\tilde\sigma_{i}),\max(\tau_i,\tilde\tau_i) )
\end{align*}
and the last case can only happen if $S_i=\emptyset$. We then always have $\mathrm{Leb}(R_j)\leq 2\frac{\delta}{b} \leq 1$.

Let $s\in[0,T]$, then
\begin{itemize}
    \item if $s\in S_i$, as $f(\gamma)=f(\tilde \gamma)$, the points $\gamma(\sigma_i)$ and $\tilde \gamma (\tilde \sigma_i)$ belong to the same ball $B(x_{j_i},\eta)$ and lead to the same chord, so that  $d(\gamma(s),\tilde\gamma(
    s))\leq 2\delta + b\vert \sigma_i-\tilde\sigma_i\vert <3\delta\leq\overline{\epsilon}$
    \item if $s\in B_i$, by the same argument, we have
    $d(\gamma(s),\tilde\gamma(s))\leq 2\delta + b\vert \tau_i-\tilde\tau_i\vert <3\delta\leq\overline{\epsilon}$
    \item if $s\in R_j = (r_j^-,r_j^+)$, as $r_j^+-r_j^-\leq 1$, we have $d(\gamma(
    s),\tilde\gamma(s))\leq lip(\phi) (\gamma(
    r_j^-),\tilde\gamma(r_j^+))< lip(\phi) 3\delta \leq\overline{\epsilon}$.
\end{itemize}

By Lemma~\ref{lemma on same po}, we conclude that $\gamma=\tilde\gamma$.
Therefore, we proved that  $f(\gamma)=f(\gamma')$, $|\ell(\gamma)-\ell(\gamma')|\leq\bar\tau_0$, $\left\vert \sigma_i-\tilde\sigma_i\right\vert <\frac{\delta}{b}$ and $\left\vert \tau_i-\tilde\tau_i\right\vert <\frac{\delta}{b}$ for every $i$ implies $\gamma=\gamma'$.
As two elements $\gamma$ and $\gamma'$ in $\mathcal{P}(K,K_R,\alpha,T,T+S)$ with $f(\gamma)=f(\gamma')$ are such that $|\ell(\gamma)-\ell(\gamma')|\leq S$, $\left\vert \sigma_i-\tilde\sigma_i\right\vert \leq 2$ and $\left\vert \tau_i-\tilde\tau_i\right\vert \leq 2$ for all $i$, we have at most $ \left\lceil\frac{S}{\bar\tau_0}\, \left(\frac{2b}{\delta}\right)^{2 N}\right\rceil$ elements in $f^{-1}(f(\gamma))$.
We deduce that
\[ \# \mathcal{P}(K,K_R,\alpha,T,T+S)\le \left\lceil\frac{S}{\bar\tau_0}\, \left(\frac{2b}{\delta}\right)^{2\hat\nu }\right\rceil \#\mathcal E\, .\]

{\bf Step 3.} Bound on $\#\mathcal E$.

{\bf Step 3.a.} Bound on the number of chords.
By the initial choice of $\eta,\delta$ and by \eqref{h corde K senza delta e eta}, for every $x_j,x_l$ there exists $T_{j,l}>0$ such that for every $T\geq T_{j,l}$ we have
\[
\mathcal{N}^{K^c}_{\mathcal{C}}(x_j,x_l,\eta, T,T+2,\delta)\leq e^{(h_{\mathcal{C}}^{K^c}(\varphi)+\epsilon)T}\, .
\]
Let $T_{\max}=\max_{j,l}T_{j,l}+1>0$.
Note that $T_\mathrm{max}$ does not depend on $\alpha$.

Choose $R\ge \max\left(\frac{ T_{\max}b}{2}, R_{\min},\delta/2, 3b/2\right)$, where $R_{\min}$ is given by Proposition~\ref{cordes et orbite periodique version compacte} applied with $K$, $\delta$.
Since the length of every large excursion satisfies $s_i-t_{i-1}\geq \frac{2R}{b}$ for every $i$, we have   $s_i-t_{i-1}-1\geq T_{\max}$ for every $i=1,\dots, N$.
Thus, for every $j,l=1,\dots, M$, we obtain

\[
\mathcal{N}_{\mathcal{C}}^{K^c}(x_l,x_j,\eta, s_i-t_{i-1}-1,s_i-t_{i-1}+1,\delta)\leq e^{(h_{\mathcal{C}}^{K^c}(\varphi)+\epsilon)(s_i-t_{i-1})}\, .
\]
Let
\[h =
\limsup_{T\to\infty}\frac{1}{T}\log \#\{\gamma\in\mathcal P_{K}(T,T+S), \gamma\subset K_{R+1}\} <\infty\, ,\]
where we use Lemma \ref{lemme:entropie_gur_compact} to guarantee that $h$ is finite.
By the same argument as used in the proof of Theorem \ref{theo:Gurevic}, the above limsup does not depend on $S$. Therefore,  with $S=3\tau_K$, we also have
\[
h=\limsup_{T\to\infty}\frac 1 T \log\left( (T+3\tau_K)\#\{ \gamma\in\mathcal P_{K}(T,T+3\tau_K), \gamma\subset K_{R+1} \}\right)<\infty\, .
\]
Thus there exists $D'>0$ such that  for every $T>0$
\begin{equation}\label{aaa brutta}
(T + 3\tau_K)\#\{\gamma\in\mathcal P_{K}(T,T+3\tau_K), \gamma\subset K_{R+1}\} \leq D'e^{(h+\epsilon)T}\, .
\end{equation}

By Proposition~\ref{cordes et orbite periodique version compacte}, there exists $D>0$ and $\sigma>0$ such that when  $t_i-s_i$  is large enough
\begin{align*}
\mathcal{N}_{\mathcal{C},K_R}(x_{j_i},x_{l_i},\eta,t_i-s_i-1,t_i-s_i+1,\delta)
\leq& \mathcal{N}_{\mathcal{C},K_R}(x_{j_i},x_{l_i},\eta,t_i-s_i-1,t_i-s_i-1+3\tau_K,\delta)\\
\leq& D (t_i-s_i + \sigma + 3\tau_K)\\ &\#\{\gamma\in\mathcal P_{K}(t_i-s_i+\sigma,s_i+\sigma+3\tau_K), \gamma\subset K_{R+1}\}\\
\leq& D D' e^{(t_i-s_i+\sigma)(h+\epsilon)}.
\end{align*}
Up to increasing $D$, we may   assume that the second inequality is also satisfied for small $t_i-s_i$
Then we obtain, for all $t_i-s_i$ 
\[
\mathcal{N}_{\mathcal{C},K_R}(x_{j_i},x_{l_i},\eta,t_i-s_i-1,t_i-s_i+1,\delta)
\leq D D' e^{(t_i-s_i+\sigma)(h+\epsilon)} \leq D_1 e^{(t_i-s_i)(h+\epsilon)}.
\]

{\bf Step 3.b.} Bound on $\#\mathcal E$.
From the previous step and by points \ref{punto 3} and \ref{punto 4} in Step 2.b, we have
\begin{align*}
\# \prod_{i=1}^N E_{K_R}(x_{j_i},x_{l_i},\eta, t_i-s_i-1,& t_i-s_i+1,\delta )\times E^{K^c}(x_{l_i},x_{j_{i+1}},\eta, s_i-t_{i-1}-1,s_i-t_{i-1}+1,\delta ) \\
\le
& D_1^N \exp\left((h_{\mathcal{C}}^{K^c}(\phi)+\epsilon)\sum_{i=1}^N(s_i-t_{i-1}) + (h+\epsilon)\sum_{i=1}^N (t_i-s_i)\right)\\
\le & D_1^N  \exp\left(h_{\mathcal{C}}^{K^c}(\varphi)(T+2\hat\nu  ) + h(\alpha T+2\hat\nu ) + \epsilon (T(1+\alpha)+4\hat\nu )\right).
\end{align*}
Therefore $\# \mathcal E$ is bounded by
\begin{equation*}\label{bound degolas}
 \sum_{N=1}^{\hat\nu }  \sum_{\substack{(t_1,\dots, t_N)\in\mathbb{N}^N \\ (s_1,\dots,s_N)\in\mathbb{N}^N\\ \vert t_N-T\vert\le S+1 \\ 0<s_1\le t_1<s_2\le\dots<s_N\le t_N}}  M^{2N}D_1^N \exp\left(h_{\mathcal{C}}^{K^c}(\varphi)(T+2\hat\nu ) + h(\alpha T+2\hat\nu ) + \epsilon (T(1+\alpha)+4\hat\nu )\right)\, .
\end{equation*}

As in \cite[Lemma 5.5]{GST}, the number of terms in the second sum in the previous equation is bounded from above by the number of ordered integer decompositions of $T+S+1$ of length $2N$, ie $\binom{T+S+1}{2N}$.
As $N\leq \hat\nu $ (as soon as $T\ge S/\alpha$) and $1+S\leq \hat\nu $ (as soon as $T\geq R(S+1)/b\alpha$)
we have
\[
\binom{T+S+1}{2N}\le \binom{T+\hat\nu }{ 2N}.
\]\color{black}

Therefore we obtain
\[\#\mathcal E\leq (M^{2}D_1)^{\hat\nu }\hat\nu \binom{T+\hat\nu }{2\hat\nu }\, \exp\left(h_{\mathcal{C}}^{K^c}(\varphi)(T+2\hat\nu ) + h(\alpha T+2\hat\nu ) + \epsilon (T(1+\alpha)+4\hat\nu )\right)\, .\]

{\bf Step 4.} Conclusion.
From the previous step (recall that $\hat\nu  = b\alpha T/R$), we obtain
\begin{align*}
\limsup_{T\to\infty}\dfrac 1 T \log \#\mathcal{P}(K,K_R,\alpha,T,T+S)\le& h_{\mathcal{C}}^{K^c}(\phi)\left(1+2\dfrac{b\alpha}{R}\right) + h\left(\alpha +2\dfrac{b\alpha}{R}\right) + \epsilon \left(1+\alpha +4\dfrac{b\alpha}{R}\right)\\
&+ \dfrac{b\alpha}{R}\left[\log(M^2D_1)+2\log \left(\frac{b}{\delta}\right)\right] \\
&+ \limsup_{T\to\infty}\frac 1 T \log\left(\binom{T+\hat\nu }{2\hat\nu }\right)\, .
\end{align*}
We then get
\[
\limsup_{T\to\infty}\dfrac 1 T \log \#\mathcal{P}(K,K_R,\alpha,T,T+S)\leq h_{\mathcal{C}}^{K^c}(\phi)\left(1+2\dfrac{b\alpha}{R}\right)+\epsilon  + \psi(\alpha)\, ,
\]
with
\[
\psi(\alpha)=\alpha h + \alpha\epsilon+ \frac{b\alpha}{R}\left[4\epsilon + 2h+  \log(M^2D_1)+2\log \left(\frac{2b}{\delta}\right)\right]+\limsup_{T\to\infty}\frac 1 T \log\left(\binom{T+\hat\nu }{2\hat\nu } \right)\, .
\]
As $\binom{n}{k}\le \left( \frac{n e }{k}\right)^k$, we have
\[
\frac 1 T \log\left(\binom{T+\hat\nu}{2\hat\nu}\right) \le 2 b \frac{\alpha}{R}\log\left(\frac{e(1+b\frac{\alpha}{R})}{2b \frac{\alpha}{R}}\right)\, ,
\]
so $\limsup_{T\to\infty}\frac 1 T \log\left(\binom{T+\hat\nu }{2\hat\nu } \right)$ converges to $0$ as $\alpha\to 0$. This, together with the fact that $h$ is finite, proves that $\psi$ converges to $0$ as $\alpha\to 0$.
\end{proof}


\section{Subadditivity properties}\label{sec:subadditivity}

In Section~\ref{section_periodic_measures}, we construct the probability measure $m_\mathrm{max}$ that is the candidate to satisfy the conclusion of Theorem \ref{theo:main}. The existence of such a measure is implied by the $h_{\mathrm{Gur}}$-strongly positive recurrent hypothesis (see Definition~\ref{def:SPR})\,: $h_{\mathrm{Gur}}^\infty(\phi)<h_{\mathrm{Gur}}(\phi)$.
The SPR assumption is a sufficient but \textit{a priori} not necessary condition for the existence of such a measure.

We go on with subadditivity statements, Propositions~\ref{prop cirm 1} and \ref{prop cirm 2}, that will be crucial in the proof of Theorem~\ref{theo:main}.
Proposition~\ref{prop cirm 1}  does not require either the construction of the measure nor the SPR assumption $h_{\mathrm{Gur}}^\infty(\phi)<h_{\mathrm{Gur}}(\phi)$, and could have been proven in Section~\ref{sec:entropies}. However, both statements are more relevant together: this is the reason why it is stated and proven here.  Proposition \ref{prop cirm 2} is much more subtle, and requires the existence of the measure $m_{\mathrm{max}}$. The end of the section is devoted to the proof of Proposition~\ref{prop cirm 2}.


\subsection{Construction of the measures \texorpdfstring{$m_\infty$}{TEXT} and \texorpdfstring{$m_{\mathrm{max}}$}{TEXT} through periodic measures}\label{construction-mesure}
\label{defi SPR}\label{section_periodic_measures}

For every periodic orbit
$\gamma=((\phi_t(x))_{t\in\mathbb{R}},T)\in\mathrm{Per}(\phi_t)$, let $\mu_\gamma$ be the $\phi$-invariant probability measure obtained by push-forward of the normalized Lebesgue-measure of the circle.

Assume that the $H$-flow $\phi$ is $h_{\mathrm{Gur}}$-strongly positively recurrent, i.e., $h_\mathrm{Gur}^\infty(\phi)<h_\mathrm{Gur}(\phi)$. By Definition~\ref{definition_gurevic_infini}, for every small $\alpha>0$, we can find  a  large compact set $K_0$, and $\epsilon>0$ small enough   so that
\[
\left|\limsup_{L\to\infty}\frac{1}{L}\log \#\mathcal P^\epsilon_{K_0}(L,L+5\tau_{K_0})
-h_{\mathrm Gur}^\infty(\phi)\right|\le \alpha\,.
\]
By
Theorem~\ref{theo:Gurevic}, we have that
\[
h_{\mathrm Gur}(\phi)=\lim_{L\to \infty}\frac{1}{L}\log\#\mathcal{P}_{K_0}(L,L+5\tau_{K_0})\, .
\]
By Definition~\ref{def:SPR}, choose $0<\alpha<h_{\mathrm{Gur}}(\phi)-h_{\mathrm{Gur}}^\infty(\phi)$.
It follows that
\begin{eqnarray*}
\limsup_{L\to\infty}\frac{1}{L}\log \#\mathcal P^\epsilon_{K_0}(L,L+5\tau_{K_0}) \,
&\le&
h_\mathrm{Gur}^\infty(\phi)+\alpha\nonumber\\ &{\bf <}& h_\mathrm{Gur}(\phi)\nonumber\\ &=& \lim_{L\to\infty}\frac{1}{L}\log \#\mathcal P_{K_0}(L,L+5{\tau_{K_0}})\,.\quad
\end{eqnarray*}
We get a strict inequality
\begin{equation}\label{ineg-stricte}
\limsup_{L\to\infty}\frac{1}{L}\log \#\{\gamma\in\mathcal P_{K_0}(L,L+5{\tau_{K_0}}),\ell(\gamma\cap K_0)<\varepsilon \ell(\gamma)\}
\,<\,
\lim_{L\to\infty}\frac{1}{L}\log \#\mathcal P_{K_0}(L,L+5{\tau_{K_0}})\, .
\end{equation}
Let
\begin{equation}\label{eqn:m_{K,L}}
m_{K_0,L}
=
\frac{1}{\# \mathcal{P}_{K_0}(L,L+5{\tau_{K_0}})}\sum_{\gamma\in \mathcal{P}_{K_0}(L,L+5{\tau_{K_0}})}\mu_\gamma
\end{equation}
be the invariant probability measure supported on the periodic orbits of $\mathcal P_{K_0}(L,L+5{\tau_{K_0}})$.

A sequence $(\mu_n)_{n\in\N}$ of finite Borel measures on $M$ converges to $\mu_\infty$ in the vague topology if, for every continuous map $f\colon M\to\mathbb{R}$ with compact support, one has $\int f\,d\mu_n\to\int f\,d\mu$, as $n\to+\infty$.
Recall that the set $\mathcal{M}^{\le 1}(\phi)$ of $\phi$-invariant measures $\mu$ such that $\mu(M)\leq 1$ is compact for the vague topology. Indeed, let $(\mu_n)_n$ be a sequence$(M,d)$ such that $\mu_n(M)\leq 1$.
As $M$ is a locally compact metric space, there exists a countable family of functions $(\phi_k)_{k\in\mathbb{N}}$ that are dense in the set $C_c(M)$ of continuous functions with compact support. Denote by $\Vert\phi_k\Vert$ the maximum over $M$ of the function $\phi_k$. By compactness of $[-1,1]$, one can find, for every fixed $k\in \N$, using a recursive definition, a strictly increasing map $\psi_k\colon\mathbb{N}\to\mathbb{N}$ such that, considering the subsequence $(\psi_1\circ \psi_2\circ \dots \psi_k(n))_{n\in\mathbb{N}}$, the quantity $\int_M \frac{\phi_k}{\Vert\phi_k\Vert}\,d\mu_{\psi_1\circ \psi_2\circ \dots \psi_k(n)}$ converges to some limit denoted by $\int_M\frac{\phi_k}{\Vert\phi_k\Vert}\,d\mu_\infty$. By Cantor's diagonal argument, defining $\psi(n):=\psi_1\circ \psi_{2}\circ \cdots\circ \psi_n(n)$, we get a subsequence $(\mu_{\psi(n)})_{n\in\mathbb{N}}$ such that for every $k\in\N$, $\int_M\frac{\phi_k}{\Vert\phi_k\Vert}\,d\mu_{\psi(n)} \to \int_M\frac{\phi_k}{\Vert\phi_k\Vert}\,d\mu_\infty$. By a standard density argument, for every $\phi\in C_c(M)$, we show that $\int_M\frac{\phi}{\Vert\phi\Vert}\,d\mu_{\psi(n)}$ is a Cauchy sequence, and therefore converges towards a limit, denoted by $\int_M\frac{\phi}{\Vert\phi\Vert}\,d\mu_\infty$. Thus, for every $\phi\in C_c(M)$, we obtain that
\[
\int_M \phi\, d\mu_{\psi(n)}=\Vert\phi\Vert \int_M \frac{\phi}{\Vert\phi\Vert}\, d\mu_{\psi(n)}\to \Vert\phi\Vert \int_M \frac{\phi}{\Vert\phi\Vert}\, d\mu_{\infty}=\int_M \phi\, d\mu_{\infty}\, .\]
If every $\mu_n$ is $\phi$-invariant, then  the limit $\mu_{\infty}$ of the converging subsequence  will be $\phi$-invariant. Moreover, we can check that $\mu_\infty(M)\le 1$.
Therefore, $(m_{K_0,L})_{L>0}$ has at least one accumulation point, and all its  accumulation points are $\phi$-invariant and of mass at most $1$.

\begin{prop}\label{limit measure SPR}
Let  $\phi:M\to M$ be a H-flow being $h_{\mathrm{Gur}}$-strongly positive recurrent. Then for every compact set $K_0$ big enough, any accumulation point $m_\infty$ of the family $(m_{K_0,L})_{L>0}$ when $L\to +\infty$ is a nonzero finite invariant measure.
\end{prop}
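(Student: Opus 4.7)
The plan is to combine the strict inequality~\eqref{ineg-stricte} with a tautological computation of the mass of $K_0$ under the periodic measure $m_{K_0,L}$, and then pass to the limit using a suitable bump function. The heart of the argument is that, under SPR, only an exponentially negligible proportion of the long periodic orbits intersecting $K_0$ can be of the ``bad'' type that spends little time in $K_0$; hence a definite fraction of mass must stay in $K_0$ uniformly in $L$.

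More precisely, first I would partition $\mathcal{P}_{K_0}(L,L+5\tau_{K_0})$ into the bad set $\mathcal{B}_L=\mathcal{P}^{\epsilon}_{K_0}(L,L+5\tau_{K_0})$ of orbits with $\ell(\gamma\cap K_0)<\epsilon\ell(\gamma)$, and the good set $\mathcal{G}_L$ of orbits with $\ell(\gamma\cap K_0)\geq \epsilon\ell(\gamma)$. The strict inequality~\eqref{ineg-stricte} (which comes from SPR together with the choice of $\alpha$ and $K_0$) yields a gap between the exponential growth rates:
\[
\limsup_{L\to\infty}\frac{1}{L}\log\#\mathcal{B}_L \;<\; \lim_{L\to\infty}\frac{1}{L}\log\#\mathcal{P}_{K_0}(L,L+5\tau_{K_0})=h_{\mathrm{Gur}}(\phi),
\]
so $\#\mathcal{B}_L/\#\mathcal{P}_{K_0}(L,L+5\tau_{K_0})\to 0$ and hence $\#\mathcal{G}_L/\#\mathcal{P}_{K_0}(L,L+5\tau_{K_0})\to 1$ as $L\to\infty$.

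Second, I would compute directly, using $\mu_\gamma(K_0)=\ell(\gamma\cap K_0)/\ell(\gamma)$,
\[
m_{K_0,L}(K_0) \;=\; \frac{1}{\#\mathcal{P}_{K_0}(L,L+5\tau_{K_0})}\sum_{\gamma}\frac{\ell(\gamma\cap K_0)}{\ell(\gamma)} \;\geq\; \epsilon\,\frac{\#\mathcal{G}_L}{\#\mathcal{P}_{K_0}(L,L+5\tau_{K_0})},
\]
so $m_{K_0,L}(K_0)\geq \epsilon/2$ for all $L$ large enough.

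Finally, I would pass to the limit. By Urysohn's lemma, since $M$ is locally compact and $K_0$ compact, I can pick $f\in C_c(M)$ with $0\leq f\leq 1$ and $f\equiv 1$ on $K_0$. Let $m_\infty$ be any vague accumulation point along a subsequence $L_n\to\infty$. Then
\[
m_\infty(M)\;\geq\;\int f\,dm_\infty \;=\; \lim_{n\to\infty}\int f\,dm_{K_0,L_n}\;\geq\;\liminf_{n\to\infty} m_{K_0,L_n}(K_0)\;\geq\;\frac{\epsilon}{2}>0,
\]
so $m_\infty$ is nonzero; being a vague limit of invariant probability measures, it is automatically $\phi$-invariant and of mass at most $1$, hence finite. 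There is no real obstacle here — the only delicate point is ensuring that vague convergence is compatible with the lower bound, which is handled by inserting the bump function $f$ so that $\int f\,dm_{K_0,L_n}\geq m_{K_0,L_n}(K_0)$; the crux is truly the strict inequality furnished by the SPR hypothesis.
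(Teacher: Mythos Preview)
Your proof is correct and follows essentially the same approach as the paper: split the periodic orbits into good and bad according to the proportion of time spent in $K_0$, use the strict inequality~\eqref{ineg-stricte} to show the bad orbits are exponentially negligible, deduce a uniform lower bound on $m_{K_0,L}(K_0)$, and pass to the vague limit via a compactly supported bump function dominating $1_{K_0}$. The paper obtains $\liminf m_{K_0,L}(K_0)\geq\epsilon$ rather than your $\epsilon/2$, but this is an immaterial difference in bookkeeping.
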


\begin{proof} Fix a compact set $K_0$ and $\epsilon>0$ such that \eqref{ineg-stricte} is satisfied. That is
\[
\limsup_{L\to \infty} \dfrac 1 L \log \#\mathcal{P}^\epsilon_{K_0}(L,L+5\tau_{K_0})<\lim_{L\to \infty}\dfrac 1 L \log\# \mathcal{P}_{K_0}(L,L+5\tau_{K_0})\,.
\]
Let $L_n\to \infty$ be a sequence such that $(m_{K_0,L_n})_n$ converges to $m_\infty$ in the vague topology. Let us show that
\[
\liminf_{L\to \infty} m_{K_0,L}(K_0)
\ge
\epsilon\,.
\]
Split $\mathcal P_{K_0}(L,L+5{\tau_{K_0}})$ into
\[
\mathcal P_{K_0}^{good}(L,L+5{\tau_{K_0}})=\{\gamma\in\mathcal P_{K_0}(L,L+5{\tau_{K_0}}),\,\,\ell(\gamma\cap K_0)\ge \epsilon \ell(\gamma)\}
\]
 and
\[
\mathcal P_{K_0}^{bad}(L,L+5{\tau_{K_0}})=
\mathcal P_{K_0}(L,L+5{\tau_{K_0}})\setminus \mathcal P^{good}_{K_0}(L,L+5{\tau_{K_0}})=\mathcal{P}_{K_0}^\epsilon(L,L+5\tau_{K_0})\,.
\]
Inequality (\ref{ineg-stricte}) ensures that  $\limsup_{L\to\infty}\frac{1}{L}\log\left(\frac{\#\mathcal P_{K_0}^{bad}(L,L+5\tau_{K_0})}{\#\mathcal P_{K_0} (L,L+5\tau_{K_0})}\right)\,<\,0$, so that
\begin{equation}\label{only-place-SPR}
\lim_{L\to+\infty}\frac{\#\mathcal  P_{K_0}^{bad}(L,L+5{\tau_{K_0}})}{\#\mathcal P_{K_0}(L,L+5\tau_{K_0})}
=0\,.
\end{equation}
We deduce easily that
\begin{eqnarray*}
m_{K_0,L}({K_0})
&=&
\frac{1}{\#\mathcal P_{K_0}(L,L+5{\tau_{K_0}})}\left(\sum_{\gamma\in\mathcal P_{K_0}^{good}(L,L+5{\tau_{K_0}})} \frac{\ell(\gamma\cap K_0)}{\ell(\gamma)}+\sum_{\gamma\in \mathcal P_{K_0}^{bad}(L,L+5{\tau_{K_0}})} \frac{\ell(\gamma\cap K_0)}{\ell(\gamma)}\right)\\
&\ge& \varepsilon \frac{\#\mathcal P_{K_0}^{good}(L,L+5{\tau_{K_0}})}{\#\mathcal P_{K_0}(L,L+5{\tau_{K_0}})} \, ;
\end{eqnarray*}
thus, we obtain
\[
\liminf_{L\to +\infty} m_{K_0,L}(K_0)
\ge
\varepsilon\,.
\]
For every continuous map with compact support $\phi\ge 1_{K_0}$, by definition of vague convergence, we get $\int\varphi dm_\infty=\lim_{n\to \infty} \int \varphi dm_{K_0,L_n}\ge \liminf_{n\to \infty} m_{K_0,L_n}(K_0)\ge \varepsilon$. Choosing a decreasing sequence $\phi_k$ of such maps, with $\lim_{k\to \infty}\phi_k=1_{K_0}$, and using the decreasing version of the monotone convergence theorem we obtain $m_\infty(K)=\lim \int\phi_k dm_\infty\ge \varepsilon$.
The proposition follows.
\end{proof}

\begin{notation}[Measures]\label{def:measures}
From now, we denote by   $m_\infty$   an arbitrary fixed accumulation point of the family $(m_{K_0,L})_{L>0}$, and by $m_\mathrm{max}$  the invariant probability measure obtained by renormalizing $m_\infty$.
\end{notation}

\begin{rema}\rm The $h_{\mathrm{Gur}}$-strongly positive recurrent assumption $h_{\mathrm{Gur}}^\infty(\phi)<h_{\mathrm Gur}(\phi)$ is natural, and leads to many natural examples, as shown in \cite{ST19, GST} in the case of geodesic flows, or in \cite{FSV2}. Moreover, it is stable under perturbations on compact sets.
\end{rema}


\subsection{The subadditivity statements}

We will need rigorous versions of the following heuristic statements.

First, given two chords of respective lengths $T_0$ and $T_1$, we can concatenate them, and use the closing lemma to get a periodic orbit of length roughly $T_0+T_1$.
This process is essentially injective: we obtain a lower bound on the number of periodic orbits of length $T_0+T_1$ in terms of the number of chords of lengths $T_0$ and $T_1$. This lower bound is clearly still valid when we add a linear term $T_0+T_1$ on the right. This is properly stated in Proposition \ref{prop cirm 1}.

Second, we wish to say that a periodic orbit of length  $T_0+T_1$  can be divided in two pieces of orbits of respective lengths $T_0$ and $T_1$, and therefore, bound the number of periodic orbits of length $T_0+T_1$ in terms of the numbers of chords of lengths $T_0$ and $T_1$, respectively. This is much more subtle, for two deep reasons. First, due to the lack of compactness, there is absolutely no reason that  an arbitrary periodic orbit  return in a compact set at the time $T_0$. The measure constructed in definition \ref{def:measures} in section \ref{section_periodic_measures} will be crucial in the argument, and allow us to say that ``most'', or more precisely a positive proportion, of periodic orbits of length $T_0+T_1$ come back to $K_0$. The second difficulty is that we need a strong subadditivity, with a linear term on the left\,: $T_0+T_1$ times the number of periodic orbits of length $T_0+T_1$ should be smaller than the number of chords of length $T_0$ times the number of chords of length $T_1$. This requires precise statements on the number of returns of a typical periodic orbits, see Lemmas~\ref{lemme_pre_coding_1} and \ref{lemme_pre_coding_2} and Proposition~\ref{prop cirm 2}.


\begin{prop}[Easy subadditivity]\label{prop cirm 1} Let $\phi\colon M\to M$ be a $H$-flow. Let $K_0$ be a compact set with nonempty interior.
For every compact set $K\supset K_0$, every $0<\delta<1$ and every $0<\eta<1$, there exist $T_{\mathrm{ min}}>0$, $S_1>0$ and $D_1>0$ such that for every $S\ge S_1$,  all $x_0,y_0,z_0$ in $K$, and all $T_0,T_1\ge T_{\mathrm{min}}$, we have\,:
\begin{align*}
    \mathcal{N}_C(x_0,y_0,\eta,T_0,T_0+5\tau_{K_0},\delta)\,\times \,& \mathcal{N}_C(y_0,z_0,\eta,T_1,T_1+5{\tau_{K_0}},\delta)\leq \\ & D_1\,\times\,(T_0+T_1+S)\,\times\, \#\mathcal{P}_{K_0}(T_0+T_1+S,T_0+T_1+S+5{\tau_{K_0}} )\, .
\end{align*}
\end{prop}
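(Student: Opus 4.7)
The plan is to adapt the strategy of Proposition~\ref{prop subadd gen}, combined with ideas from Proposition~\ref{chords et orbite periodique}, to build a map $f$ that sends a pair of chords to a periodic orbit, and then control the default of injectivity of $f$ by a linear factor in $T_0+T_1+S$. The key input is the uniform multiple closing lemma (Lemma~\ref{petal separe}), which transforms an almost-closed pseudo-orbit into an honest periodic orbit intersecting $K_0$, together with the separation of orbits result (Lemma~\ref{lemma on same po}) to control injectivity.

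First I would fix the parameters. Choose $\nu=1$ and $\tau_1 = T_0+T_1+5\tau_{K_0}$ (or rather a bound depending only on the constants of the problem, using the uniform Lipschitz control to transfer the comparison to a fixed time interval) in Lemma~\ref{lemma on same po}, producing constants $\tau_0>0$ and $\epsilon_1>0$. Then apply Lemma~\ref{petal separe} to the compact set $K_0\subset \inter{K}$, with precision $\delta' = \min(\delta, \eta, \epsilon_1)/4$, $N=2$, $\nu=1$, to obtain constants $T_{\min}>0$ and $\sigma>0$. Set $S_1=\sigma$.

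Next I would construct the map. Fix an $E(x_0,y_0,\eta,T_0,T_0+5\tau_{K_0},\delta)$-set $E_1$ of maximal cardinality and an $E(y_0,z_0,\eta,T_1,T_1+5\tau_{K_0},\delta)$-set $E_2$ of maximal cardinality. Given $(w_1,w_2)\in E_1\times E_2$, let $\ell(w_i)$ denote the length of the associated chord. Since $\phi_{\ell(w_1)}(w_1)\in B(y_0,\eta)$, $w_2\in B(y_0,\eta)$, $\phi_{\ell(w_2)}(w_2)\in B(z_0,\eta)$, and one can append a transition back toward $x_0$ of length between $S/2\pm\tau_{K_0}$ via uniform transitivity (Lemma~\ref{transitivite prop}), Lemma~\ref{petal separe} with $N=2$ applied to the couple of orbit pieces $(w_1,w_2)$ produces a periodic orbit $\gamma=f(w_1,w_2)$ intersecting $K_0$, with period in $[T_0+T_1+S,T_0+T_1+S+5\tau_{K_0}]$ (after absorbing the $2\tau_{K_0}+\nu$ error terms in the $5\tau_{K_0}$ window and choosing $S_1$ accordingly), and $\delta'$-shadowing $w_1$ on $[0,\ell(w_1)]$ and $w_2$ on the translated interval.

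The main obstacle, as usual, is the injectivity control. Suppose $f(w_1,w_2)=f(\tilde w_1,\tilde w_2)=\gamma$. The two constructions assign two possibly different origins to $\gamma$, differing by some shift $s_0\in[0,\ell(\gamma)]$; moreover $\ell(w_1)$ and $\ell(\tilde w_1)$ may differ by at most $5\tau_{K_0}$, and similarly for the second chord. I would partition the set of preimages of $\gamma$ according to: (i) the floor $\lfloor \ell(w_1)/\tau_0\rfloor$ and $\lfloor \ell(w_2)/\tau_0\rfloor$ (at most $\lceil 5\tau_{K_0}/\tau_0\rceil^2$ classes), and (ii) the integer part $\lfloor s_0 b/\delta'\rfloor$ of the origin shift (at most $\lceil (T_0+T_1+S+5\tau_{K_0})\cdot b/\delta'\rceil$ classes, by the upper bound in \eqref{eqn:minoration}). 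For two preimages in the same class, the triangular estimate
\[
d(\phi_s(w_1),\phi_s(\tilde w_1))\le d(\phi_s(w_1),\gamma(s+s_1))+d(\gamma(s+s_1),\gamma(s+s_1'))+d(\gamma(s+s_1'),\phi_s(\tilde w_1))<2\delta'+b|s_1-s_1'|<\epsilon_1
\]
valid on $[0,\min(\ell(w_1),\ell(\tilde w_1))]$, combined with the matching length condition and Lemma~\ref{lemma on same po}, forces $w_1=\tilde w_1$; the same argument applied on the second piece yields $w_2=\tilde w_2$. Thus each class contains at most one preimage, and the total number of preimages is bounded by $D_1(T_0+T_1+S)$ for a constant $D_1$ depending on $\delta,\eta,K_0$ and the flow constants. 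Since $\#E_1\cdot\#E_2 = \mathcal N_{\mathcal C}(x_0,y_0,\eta,T_0,T_0+5\tau_{K_0},\delta)\cdot \mathcal N_{\mathcal C}(y_0,z_0,\eta,T_1,T_1+5\tau_{K_0},\delta)$ and $f(E_1\times E_2)\subset\mathcal P_{K_0}(T_0+T_1+S,T_0+T_1+S+5\tau_{K_0})$, the desired inequality follows.
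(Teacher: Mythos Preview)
Your overall strategy matches the paper's: build a map from pairs of chords to periodic orbits via Lemma~\ref{petal separe}, then bound the fibers by a linear factor using origin shifts. The execution, however, has a genuine error in the injectivity step.

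You invoke Lemma~\ref{lemma on same po} to conclude $w_1=\tilde w_1$. That lemma applies only to \emph{periodic} points, and your chords $w_1,\tilde w_1\in E_1$ are not periodic. The correct tool is already in your hands: $E_1$ is by definition a $(\delta,T_0)$-separating set. Your triangular estimate actually gives $d(\phi_s(w_1),\phi_s(\tilde w_1))<2\delta'+b|s_1-s_1'|$, and with $\delta'\le\delta/4$ and the shift class constraint $|s_1-s_1'|<\delta'/b$, the right-hand side is $<\delta$. So $w_1$ and $\tilde w_1$ lie in the same $(\delta,T_0)$-dynamical ball, hence coincide by the separating property of $E_1$. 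No appeal to periodicity or to the constant $\epsilon_1$ is needed, and the partition of lengths into $\tau_0$-blocks (with $\tau_0$ coming from Lemma~\ref{lemma on same po}) is irrelevant here. This is exactly how the paper argues.

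A related symptom: your choice $\tau_1=T_0+T_1+5\tau_{K_0}$ in Lemma~\ref{lemma on same po} would make $\epsilon_1$ depend on $T_0,T_1$, which you yourself flag as problematic. Once you drop Lemma~\ref{lemma on same po} entirely from this proof, that circularity disappears. For the second chord the paper handles the extra time-shift $\tau$ (coming from the gap in Lemma~\ref{petal separe}) by an additional partition on $|\tau-\tau'|$, again concluding via the $(\delta,T_1)$-separation of $E_2$ rather than any periodic-orbit separation.
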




The following proposition is more difficult. It is stated under the assumption of existence of a nonzero measure $m_\mathrm{max}$ as in definition \ref{def:measures}. This assumption is satisfied as soon as the flow is $h_{\mathrm{Gur}}$-strongly positive recurrent, i.e., $h_\mathrm{Gur}^\infty(\phi)<h_\mathrm{Gur}(\phi)$, by Proposition \ref{limit measure SPR}.

\begin{prop}[Hard subadditivity]\label{prop cirm 2}Let $\phi\colon M\to M$ be a $H$-flow that is $h_{\mathrm{Gur}}$-strongly positive recurrent. Let $K_0$ be a compact set with nonempty interior that satisfies inequality (\ref{only-place-SPR}).
Let $L_n\to +\infty$ be a sequence such that the sequence of measures $(m_{K_0,L_n})_n$ converges in the vague topology to a non zero measure $m_\infty$.
Let $K$ be a compact subset such that $\overset{\circ}{K}\supset K_0$ and   $m_\infty(\inter{K})\geq \frac 3 4 m_\infty(M)$.
There exists $\epsilon_1>0$ such that, for all $0<\eta<\delta<\frac{\varepsilon_1}{4}$
there exist constants
$S_2>0$ and $D_2>0$ such that for every $T > 5\tau_{K_0}$, there exists $k_0\in \mathbb{N}$, such that for every integer $n\ge k_0$, for all quadruples of points $x_0$, $y_0$, $x_1$ and $y_1$ in $K$, and every $S\ge S_2$,
we have
\begin{align*}
   L_n\,\times\,\#\mathcal{P}_{K_0}(L_n, L_n+5{\tau_{K_0}})\,\leq  &D_2\,\times\, \mathcal{N}_C(x_0,y_0,\eta, T +S, T +S+5{\tau_{K_0}},\delta)\\ &\times\mathcal{N}_C(x_1,y_1,\eta, L_n-T +S, L_n-T +S+5{\tau_{K_0}},\delta)\,.
\end{align*}
\end{prop}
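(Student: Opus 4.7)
The strategy, similar in spirit to Proposition~\ref{prop subadd gen} but in the reverse direction, is to build a correspondence, injective up to a bounded multiplicity, from a suitable collection of triples $(\gamma,s,t)$ --- where $\gamma\in\mathcal{P}_{K_0}(L_n,L_n+5\tau_{K_0})$ is a periodic orbit, $\gamma(s)\in K$ is a marked reparametrization point, and $t\in[T,T+C]$ is a cut time with $\gamma(s+t)\in K$ --- to pairs of chords in the product
\[
\mathcal{C}(x_0,y_0,\eta,T+S,T+S+5\tau_{K_0})\times \mathcal{C}(x_1,y_1,\eta,L_n-T+S,L_n-T+S+5\tau_{K_0}).
\]
After reparametrizing $\gamma$ so that $\gamma(0)=\gamma(s)$, the orbit is cut at time $t$ into two pieces of lengths roughly $T$ and $L_n-T$; each piece is then completed into one of the output chords by attaching transition arcs of length about $S/2$ at each of its two endpoints. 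The factor $L_n$ on the left-hand side of the inequality will arise from the fact that, for most $\gamma$, the number of valid marked points $s$ (discretely $\tau_0$-separated) is $\gtrsim L_n$, while the number of valid $t$ per $s$ is simply bounded.

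The crucial preliminary step is to show that for $n$ large enough, most orbits $\gamma\in\mathcal{P}_{K_0}(L_n,L_n+5\tau_{K_0})$ spend a fixed fraction of their length inside $K$ with visits sufficiently well distributed. The quantitative time in $K$ is handled by the vague convergence $m_{K_0,L_n}\to m_\infty$ together with the hypothesis $m_\infty(\inter{K})\ge \tfrac34 m_\infty(M)$: the portmanteau inequality gives
\[
\liminf_{n\to\infty} m_{K_0,L_n}(\inter{K})\ge \tfrac34 m_\infty(M),
\]
and Markov's inequality then yields a positive proportion of orbits with $\ell(\gamma\cap K)\ge \tfrac12 m_\infty(M)\,\ell(\gamma)$. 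The ``well-distributed'' part is exactly what is provided by Proposition~\ref{prop_pre_coding}: up to removing a further vanishing proportion of bad orbits, the return times of $\gamma$ to $K$ are almost regularly distributed along $[0,\ell(\gamma)]$, so that there exists a constant $C=C(K,K_0)$ for which $\gtrsim L_n$ discretely $\tau_0$-separated values of $s$ satisfy both $\gamma(s)\in K$ and $\gamma(s+t)\in K$ for some $t\in[T,T+C]$.

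For each good triple $(\gamma,s,t)$, the chord pair is built exactly as in the proof of Proposition~\ref{prop subadd gen}. Uniform transitivity (Lemma~\ref{transitivite prop}) supplies transition arcs of length in $[S/2-\tau_K,S/2+\tau_K]$ at each endpoint of each piece; these lengths are carefully tuned (as in Lemma~\ref{petal separe}) so that the total length of each concatenated chord falls inside the target interval $[T+S,T+S+5\tau_{K_0}]$ resp.\ $[L_n-T+S,L_n-T+S+5\tau_{K_0}]$. The finite exact shadowing property of Definition~\ref{weak shadowing property} then glues the triple transition--orbit piece--transition into a single chord that $\delta/2$-shadows all three sub-arcs, producing an element of the prescribed chord space.

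The main obstacle will be controlling the multiplicity of this assignment: for each chord pair $(c_0,c_1)$ the number of preimages $(\gamma,s,t)$ must be bounded by a constant depending only on $K$, $K_0$, $\delta$, $\eta$, $S$, independent of $n$ and $T$. This is achieved exactly as in step~2 of the proof of Proposition~\ref{prop subadd gen}: two distinct values of $s$ on the same orbit differing by more than $\tau_0$ would, combined with the $\delta/2$-shadowing, produce two chords $\delta$-close on $[0,T+S]$, contradicting the $(\delta,T+S)$-separation of any maximal $E$-set via Lemma~\ref{lemma on same po}; distinct orbits yield separated chord pairs by the same expansivity mechanism. Dividing by this bounded multiplicity, the product of the cardinalities of maximal $E$-sets is $\gtrsim L_n\cdot \#\mathcal{P}_{K_0}(L_n,L_n+5\tau_{K_0})$, which is the claimed inequality. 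The dependence on $T$ enters only through the threshold $k_0=k_0(T)$ above which $L_n$ is large enough for all the asymptotic ``most orbits'' statements to apply and for $L_n-T$ to exceed the minimal times from the transitivity and shadowing lemmas.
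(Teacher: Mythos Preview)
Your overall architecture (cut each periodic orbit at well-chosen times into two pieces with endpoints in $K$, then extend by transition arcs into chords, and control multiplicity via separation) is right, and matches the paper. But there is a genuine gap in the key preliminary step.

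You write that Proposition~\ref{prop_pre_coding} guarantees that ``the return times of $\gamma$ to $K$ are almost regularly distributed along $[0,\ell(\gamma)]$'', and hence that for most $\gamma$ one finds $\gtrsim L_n$ values of $s$ with $\gamma(s)\in K$ and $\gamma(s+t)\in K$ for some $t\in[T,T+C]$. This is not what Proposition~\ref{prop_pre_coding} says, and it is false in general: an orbit can satisfy $\mu_\gamma(K)\ge \alpha'$ while having all its visits to $K$ concentrated in an interval of length $\alpha'\ell(\gamma)$, in which case for $T$ larger than that interval there are \emph{no} such $s$ at all. Proposition~\ref{prop_pre_coding} only asserts that, for a fixed Borel set $A$, some discretization of step $T_0$ has $\gtrsim \alpha' L_n/T_0$ points in $A$; it says nothing about the spacing of these points.

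The paper's fix is to apply Proposition~\ref{prop_pre_coding} not to $A=K$ but to $A=K\cap\phi_{-T}(K)$. The hypothesis $m_\infty(\inter{K})\ge\tfrac34 m_\infty(M)$ together with invariance of $m_\infty$ gives $m_\infty(A)>\tfrac12 m_\infty(M)>0$, hence $m_{K_0,L_n}(A)\ge\alpha>0$ for $n\ge k_0(T)$. Now a point $\gamma(iT_0)\in A$ automatically satisfies \emph{both} $\gamma(iT_0)\in K$ and $\gamma(iT_0+T)\in K$, so one cuts at the \emph{exact} time $T$ (no interval $[T,T+C]$ is needed). This is also the real reason $k_0$ depends on $T$: the set $A$ itself depends on $T$, so the vague-convergence threshold does too. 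Once you make this substitution, the remainder of your outline (construction of the map to chord pairs, multiplicity bound via Lemma~\ref{lemma on same po}) goes through essentially as in the paper.
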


\begin{rema}\rm We could have considered periodic orbits of periods $T_0$ and $T_1$, and stated a variant of the above propositions involving respectively $\#\mathcal P _{K_0}(T_0,T_0+5{\tau_{K_0}})$ and $\#\mathcal P _{K_0}(T_1,T_1+5{\tau_{K_0}})$ instead of the numbers of chords on the left side of the   inequality of Proposition \ref{prop cirm 1}, and $\mathcal{P}_{K_0}(  T_0+S, T_0+S+5{\tau_{K_0}} )$, $\mathcal{P}_{K_0}(  L_n-T_0+S, L_n-T_0+S+5{\tau_{K_0}} )$  on the right side of the   inequality of Proposition \ref{prop cirm 1}.
\end{rema}


\subsection{Proof of Proposition~\ref{prop cirm 1}}

{\bf Step 1.} Choice of appropriate parameters and notations.

Let $K$, $K_0$, $\delta$ and $\eta$ be as in the statement of the Proposition.
Let $K_1=\overline{B(K,1)}$. The proof will follow from the application of
Lemma \ref{petal separe} with parameters $K_0\subset K_1$, with $\nu=\tau_{K_0}/2$, $\delta/4$ and  $N=2$. This lemma gives constants $\sigma>0$ and $T_{\mathrm{min}}>0$.
Set $S_1=2\sigma+10\tau_{K_0}$.
Let $x_0$, $y_0$ and $z_0$ be in $K$. Let $T_0,T_1\geq T_{\mathrm{min}}$.
Let $E_0$ and $E_1$ be respectively  a $E(x_0,y_0,\eta,T_0,T_0+5\tau_{K_0},\delta)$-set  and  a $E(y_0,z_0,\eta,T_1,T_1+5\tau_{K_0},\delta)$-set  of maximal cardinality. In particular
\[
\# E_0 = \mathcal{N}_{\mathcal{C}}(x_0,y_0,\eta,T_0,T_0+5\tau_{K_0},\delta)\quad\text{and}\quad \# E_1 =\mathcal{N}_{\mathcal{C}}(y_0,z_0,\eta, T_1,T_1+5\tau_{K_0},\delta)\, .
\]
Throughout the proof, we will see the elements in $E_0$ and $E_1$ as points in $B(x_0,\eta)$ and $B(y_0,\eta)$ or chords with their initial points in $B(x_0,\eta)$ and $B(y_0,\eta)$. In particular, the initial and final points of every chord is in $K_1$, since $\eta<1$.

{\bf Step 2.} Construction of a map $f$ from chords to periodic orbits.

As $T_0\ge T_{\mathrm{min}}$ and $T_1\ge T_{\mathrm{min}}$, for every pair of chords $\beta_0\in E_0$ and $\beta_1\in E_1$ with lengths $\ell(\beta_0)\ge T_0 $ and   $\ell(\beta_1)\ge T_1 $, and every $\hat S\ge \sigma$, Lemma~\ref{petal separe} provides a periodic orbit $\gamma$ with length in
\[
\left[\ell(\beta_0)+\ell(\beta_1)+2\hat S-\tau_{K_1}-\frac{\tau_{K_0}}{2},\ell(\beta_0)+\ell(\beta_1)+2\hat S+\tau_{K_1}+\frac{\tau_{K_0}}{2}\right]
\] that intersects the interior of $K_0$.
Recall that, since $K_0\subset K_1$, we have $\tau_{K_1}\le\tau_{K_0}$. Thus, the periodic orbit $\gamma$ has length in
\begin{equation*}
\left[\ell(\beta_0)+\ell(\beta_1)+2\hat S-\frac 3 2 \tau_{K_0},\ell(\beta_0)+\ell(\beta_1)+2\hat S+\frac 3 2 \tau_{K_0}\right]\, .
\end{equation*}
In particular, for
\[
\hat S=\frac 1 2 \left( S  + (T_0-\ell(\beta_0))+(T_1-\ell(\beta_1))+\frac 3 2 \tau_{K_0}\right)
\]
(notice that $\hat S \geq \sigma$ as $S\geq S_1=2\sigma +10\tau_{K_0}$, $T_0-\ell(\beta_0)\geq -5\tau_{K_0}$  and $T_1-\ell(\beta_1)\geq -5\tau_{K_0}$)
we have
\begin{equation*}
T_0+T_1+S\le \ell(\beta_0)+\ell(\beta_1)+2\hat S-\frac 3 2\tau_{K_0}
\end{equation*}
and
\begin{equation*}
\ell(\beta_0)+\ell(\beta_1)+2\hat S+\frac 3 2\tau_{K_0}\le  T_0+T_1+S+5\tau_{K_0}\,.
\end{equation*}
Then $\gamma\in \mathcal P_{K_0}(T_0+T_1+S, T_0+T_1+S+5\tau_{K_0})$.

Thus for every $S\ge S_1$, the above construction defines a map
\[
f:E_0\times E_1\to \mathcal{P}_{K_0}(T_0+T_1+S, T_0+T_1+S+5\tau_{K_0})\,.
\]
Observe that the above construction gives a parametrization of $\gamma$ with an origin $s_0$.
More precisely, by Lemma \ref{petal separe}, there exists $s_0\in \mathbb{R}$ and $\tau\in[\hat S-\tau_{K_1},\hat S+\tau_{K_1}]$ such that,
\begin{itemize}
    \item for all $s\in[0,\ell(\beta_0)]$, we have $d(\gamma(s_0+s),\phi_s(\beta_0))<\frac{\delta}{4}$,
    \item for all $s\in[0,\ell(\beta_1)]$, we have $d(\gamma(s_0+s+\ell(\beta_0)+\tau),\phi_s(\beta_1))<\frac{\delta}{4}$.
\end{itemize}

{\bf Step 3.} Bound on $f^{-1}(\gamma)$.

Consider $\beta_0,\beta'_0\in E_0, \beta_1,\beta'_1\in E_1$ such that $(\beta_0,\beta_1)\neq (\beta'_0,\beta'_1)$   and $f(\beta_0,\beta_1)=f(\beta'_0,\beta'_1) =\gamma$.
Let $s_0$ (resp. $s'_0$) be the origin of $\gamma$ from the construction $\gamma = f(\beta_0,\beta_1)$ (resp. $\gamma = f(\beta'_0,\beta'_1)$).
Assume as a first case that $\beta_0\neq \beta'_0$. Since they belong to a $E(x_0,y_0,\eta, T_0,T_0+5\tau_{K_0},\delta)$-set, there exist $0\leq u\leq T_0\le \min(\ell(\beta_0),\ell(\beta_1))$, such that \[
d(\phi_u(\beta_0),\phi_u(\beta'_0))
\geq
\delta.
\]
However, by construction, $\gamma$ satisfies
\[
d(\gamma(s_0+u),\phi_u(\beta_0))\leq \delta/4
\quad \mbox{and}\quad
d(\gamma(s'_0+u),\phi_u(\beta'_0))\leq \delta/4\, .
\]
Therefore
\[
d(\gamma(s_0+u),\gamma(s'_0+u))\geq \delta/2\, .
\]
As the flow satisfies~(\ref{eqn:minoration}), we have
\[
d(\gamma(s_0+u),\gamma(s'_0+u))\leq b|s'_0-s_0|\, ,
\]
so that
\[
|s'_0-s_0|\geq \frac{\delta}{2b}\,.
\]
As the length of $\gamma$ is at most $T_0+T_1+S+5\tau_{K_0}$, it follows that
\[
\# \{ (\beta'_0,\beta'_1) ,  f(\beta'_0,\beta'_1) = f(\beta_0,\beta_1) \text{ and } \beta'_0\neq \beta_0\}\leq \,(T_0+T_1+S+5\tau_{K_0})\times \frac{2b}{\delta}\, .
\]
Assume now that $\beta_0=\beta'_0$ and $\beta_1\neq \beta'_1$. The construction of the periodic orbit $\gamma$ gives us two constants $\tau$ and $\tau'$ in $[\hat S-\tau_{K_0},\hat S+\tau_{K_0}]$ and $[\hat S'-\tau_{K_0},\hat S'+\tau_{K_0}]$. Observe that, as $|\hat S-\hat S'|=\frac 1 2 \vert \ell(\beta_1)-\ell(\beta'_1)\vert \leq 5\tau_{K_0}/2$, we have $\vert \tau'-\tau\vert \leq 5\tau_{K_0}$.

Assume that $\vert \tau'-\tau\vert \leq \frac{\delta}{4 b}$. Since $E_1$ is a $E(y_0,z_0,\eta, T_1,T_1+5\tau_{K_0},\delta)$-set, there exists $0\leq u\leq T_1$ such that
\[
d(\phi_u(\beta_1),\phi_u(\beta'_1))\geq \delta\, .
\]
Arguing as in the first case, we obtain that
\[
d(\gamma(s_0+\ell(\beta_0)+\tau+u),\gamma(s_0'+\ell(\beta_0)+\tau'+u))\geq \frac{\delta}{2}\, ,
\]
and so, using~\eqref{eqn:minoration}, we get
\[
d(\gamma(s_0+\ell(\beta_0)+\tau+u),\gamma(s_0'+\ell(\beta_0)+\tau+u))\geq \frac{\delta}{2}-b\vert \tau'-\tau\vert\geq \frac{\delta}{4}\, .
\]
Using again~\eqref{eqn:minoration}, we deduce that
\[
\vert s_0'-s_0\vert \geq \frac{\delta}{4b}\,.
\]
We then conclude that
\[
\# \{ (\beta'_0,\beta'_1) ,  f(\beta'_0,\beta'_1) = f(\beta_0,\beta_1) \text{ and } \beta'_1\neq \beta_1\}\leq \,(T_0+T_1+S+5\tau_{K_0})\times \frac{4b}{\delta}\times 5\tau_{K_0} \times \frac{4b}{\delta}.
\]
Thus
\[
\# f^{-1}(\gamma)\leq \,(T_0+T_1+S+5\tau_{K_0})\times \left(\frac{2b}{\delta}+5\tau_{K_0}\left(\frac{4b}{\delta}\right)^2\right)\, .
\]
By choosing $D_1>0$ such that
\[(T_0+T_1+S+5\tau_{K_0})\times \left(\frac{2b}{\delta}+5\tau_{K_0}\left(\frac{4b}{\delta}\right)^2\right)\le D_1 (T_0+T_1+S)\, ,\]
this concludes the proof of Proposition~\ref{prop cirm 1}.

\subsection{Proof of Proposition \ref{prop cirm 2}}

\subsubsection{Strategy of the proof}

As usual, we want to construct an almost injective map, specifically a map from periodic orbits of period $L_n$ intersecting $K_0$ to chords of length (almost) $T$ and endpoints $x_0, y_0$ and chords of length (almost) $L_n-T$ and endpoints $x_1,y_1$.

For every periodic orbit of length $L$ that intersects $K_0$, the naive idea is to consider a piece of orbit of length $T$ starting and ending in $K_0$ and to cut the periodic orbit at the beginning and the end of this piece of orbit.
We then obtain two arcs, one of length $T$ and one of length $L-T$.
Using transitivity and the shadowing property, we obtain arcs from $x_0$ to $y_0$ and from $x_1$ to $y_1$.

Unfortunately, there is no guarantee that such arcs exist. The difficulty of the proof consists in finding enough periodic orbits on which enough points of $K_0$ return to $K_0$ after a time $T$.
To find these arcs, we will choose a compact set $K\supset K_0$ with large measure $m_\mathrm{max}(K)>3/4$ so that for every $T>0$, $m_\mathrm{max}(K\cap\phi_{-T}(K))>1/2$. Set $A=K\cap \phi_{-T}(K)$. We want to find enough intersections between $A$ and $\mathcal{P}_{K_0}(L,L+5\tau_{K_0})$.

As $m_\infty(A)>0$, for $L$ large enough, a positive proportion of periodic orbits in $\mathcal P_{K_0}(L,L+5\tau_{K_0})$ spend a positive proportion of their time in $A$.
This is proved in Lemma~\ref{lemme_pre_coding_1}.

In Lemma~\ref{lemme_pre_coding_2}, we prove that if $\mu_\gamma(A)\geq\alpha'>0$ then we control the number of points in $A$ for some appropriate discretization of $\gamma$ of length $T_0$.

Lemmas~\ref{lemme_pre_coding_1} and \ref{lemme_pre_coding_2} give us Proposition~\ref{prop_pre_coding} where we prove that there are indeed enough periodic orbits with enough point in $K_0$.

\subsubsection{How often a typical periodic orbit comes back}


The existence of a nonzero measure $m_\mathrm{max}$ leads to the first important lemma.

\begin{lemm}\label{lemme_pre_coding_1}
Let $\phi\colon M\to M$ be a flow.
Let $K_0$ be a compact set with nonempty interior.
Let $A$ be a Borel set, $L>0$ and $\alpha>0$
such that  $m_{K_0,L}(A)\ge \alpha>0$.
For every $0<\alpha'<\alpha$,
we have
\[
\#\,\left\{\gamma\in \mathcal P_{K_0}(L,L+5\tau_{K_0}): \ \mu_\gamma(A)\geq \alpha' \right\}
\,\geq\,
\frac{\alpha-\alpha'}{1-\alpha'}\times \,\#\mathcal P_{K_0}(L,L+5\tau_{K_0})\, .
\]
\end{lemm}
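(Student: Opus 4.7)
The plan is to apply a simple reverse-Markov style pigeonhole argument directly to the definition of $m_{K_0,L}$ in \eqref{eqn:m_{K,L}}. Write $N = \#\mathcal P_{K_0}(L,L+5\tau_{K_0})$ and split the set of periodic orbits in $\mathcal{P}_{K_0}(L,L+5\tau_{K_0})$ into
\[
\mathcal{G} = \{\gamma : \mu_\gamma(A) \geq \alpha'\}, \qquad \mathcal{B} = \{\gamma : \mu_\gamma(A) < \alpha'\},
\]
so that the quantity we want to bound below is $\#\mathcal{G}$.

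From the definition of $m_{K_0,L}$ we have
\[
N\, m_{K_0,L}(A) = \sum_{\gamma \in \mathcal{G}} \mu_\gamma(A) + \sum_{\gamma \in \mathcal{B}} \mu_\gamma(A).
\]
Bounding $\mu_\gamma(A) \leq 1$ on $\mathcal{G}$ and $\mu_\gamma(A) < \alpha'$ on $\mathcal{B}$, together with $m_{K_0,L}(A) \geq \alpha$, yields
\[
N\alpha \;\leq\; \#\mathcal{G} + \alpha'(N - \#\mathcal{G}).
\]
Solving for $\#\mathcal{G}$ gives $\#\mathcal{G} \geq N\cdot\frac{\alpha - \alpha'}{1-\alpha'}$, which is exactly the desired inequality.

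There is no substantive obstacle here: the statement is purely a bookkeeping consequence of the definition of $m_{K_0,L}$ as a normalized sum of the periodic measures $\mu_\gamma$, with no appeal to the $H$-flow hypotheses beyond the existence of the periodic orbits being counted. The proof should be a few lines.
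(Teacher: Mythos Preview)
Your proof is correct and is essentially identical to the paper's own argument: both split the orbits into those with $\mu_\gamma(A)\ge\alpha'$ and those with $\mu_\gamma(A)<\alpha'$, bound the sum defining $m_{K_0,L}(A)$ by $1$ on the former and $\alpha'$ on the latter, and solve the resulting linear inequality. The only cosmetic difference is that the paper bounds the size of the bad set from above and then complements, whereas you bound the good set directly.
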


\begin{proof}
Let $n =\#\left\{\gamma\in \mathcal P_{K_0}(L,L+5\tau_{K_0}), \mu_\gamma(A)< \alpha' \right\}$.
Then
\[
0< \alpha
\leq
m_{K_0,L}(A)
\leq
\frac{\alpha' n + \# \mathcal P_{K_0}(L,L+5\tau_{K_0})-n}{\# \mathcal P_{K_0}(L,L+5\tau_{K_0})}\,.
\]
Therefore
\[
\alpha\# \mathcal P_{K_0}(L,L+5\tau_{K_0})
\leq
\# \mathcal P_{K_0}(L,L+5\tau_{K_0})-(1-\alpha')n\,.
\]
so that $n\leq\frac{1-\alpha}{1-\alpha'}\,\# \mathcal P_{K_0}(L,L+5\tau_{K_0})$. The result follows.
\end{proof}

\begin{lemm}\label{lemme_pre_coding_2}
Let $\phi\colon M\to M$ be a flow.
Let $K_0$ be a compact set with nonempty interior.
Let $A$ be a Borel set and $\alpha'>0$.  For all $L>T_0>5\tau_{K_0}>0$, for every periodic orbit  $\gamma\in \mathcal{P}_{K_0}(L,L+5\tau_{K_0})$   such that  $\mu_\gamma(A)\geq \alpha'$, for any parametrization $\gamma:[0,\ell(\gamma)]\to M$ of $\gamma$ there exists a real number $s\in [0,T_0]$ such that
\[
\#\left\{i\in\left\{0,\dots,\left\lfloor \frac{L}{T_0}\right\rfloor-1\right\},\gamma(s+iT_0)\in A\right\}
\geq
\left\lfloor  \alpha' \left\lfloor \frac{L}{T_0}\right\rfloor\right\rfloor-3 \, .
\]
\end{lemm}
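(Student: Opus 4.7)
The plan is to use a straightforward averaging (Fubini) argument on the parameter $s \in [0, T_0]$. Set $N = \lfloor L/T_0 \rfloor$ and define
\[
f(s) = \#\left\{i \in \{0, \ldots, N-1\} : \gamma(s + iT_0) \in A\right\}, \qquad s \in [0, T_0].
\]
Then $f$ is an integer-valued measurable function, and the result will follow by showing its average over $[0, T_0]$ is at least $\alpha' N - 2$, which forces some value $s^*$ to satisfy $f(s^*) \geq \lfloor \alpha' N \rfloor - 2$ (which is stronger than the claim).

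First I would compute the average via Fubini: by a change of variables on each interval $[iT_0, (i+1)T_0]$,
\[
\int_0^{T_0} f(s)\, ds \;=\; \sum_{i=0}^{N-1}\int_0^{T_0} \mathbf{1}_A(\gamma(s+iT_0))\, ds \;=\; \int_0^{NT_0} \mathbf{1}_A(\gamma(t))\, dt.
\]
Next I would bound this integral from below using $\mu_\gamma(A) \geq \alpha'$, namely $\int_0^{\ell(\gamma)} \mathbf{1}_A(\gamma(t))\, dt = \mu_\gamma(A)\,\ell(\gamma) \geq \alpha'\ell(\gamma) \geq \alpha' L \geq \alpha' N T_0$. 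The discrepancy between $NT_0$ and $\ell(\gamma)$ is controlled by the elementary bound $\ell(\gamma) - NT_0 \leq (L+5\tau_{K_0}) - (L-T_0) = T_0 + 5\tau_{K_0} < 2T_0$, where the last inequality uses the hypothesis $T_0 > 5\tau_{K_0}$. Hence
\[
\int_0^{NT_0}\mathbf{1}_A(\gamma(t))\, dt \;\geq\; \alpha' N T_0 - 2T_0.
\]

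Dividing by $T_0$, the average value of $f$ on $[0, T_0]$ is at least $\alpha' N - 2$. Since $f$ is integer-valued, if $f(s) \leq \lfloor \alpha' N\rfloor - 3$ for every $s$ then the average would be at most $\lfloor \alpha' N\rfloor - 3 \leq \alpha' N - 3 < \alpha' N - 2$, a contradiction. Therefore there exists $s \in [0, T_0]$ with $f(s) \geq \lfloor \alpha' N \rfloor - 2 \geq \lfloor \alpha' \lfloor L/T_0 \rfloor\rfloor - 3$, as required.

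There is no real obstacle here: the statement is essentially an averaging trick, and the only item requiring a bit of attention is the comparison between $\int_0^{NT_0}$ and $\int_0^{\ell(\gamma)}$, which is handled by the assumption $T_0 > 5\tau_{K_0}$ guaranteeing that the tail $[NT_0, \ell(\gamma)]$ has length at most $2T_0$. The slack $-3$ in the conclusion (rather than the sharper $-2$) is convenient because it makes the statement robust even without worrying about whether $\alpha' N$ is an integer.
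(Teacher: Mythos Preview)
Your proof is correct and is essentially the same averaging/Fubini argument as the paper's: both compute (or bound) $\int_0^{T_0} f(s)\,ds = \int_0^{NT_0}\mathbf{1}_A(\gamma(t))\,dt$ and compare it with $\alpha'\ell(\gamma)$ minus the tail of length $\ell(\gamma)-NT_0<2T_0$. The paper phrases the Fubini step as a partition of $[0,NT_0]$ according to the set $J(s)=\{i:\gamma(s+iT_0)\in A\}$ and argues by contradiction from an assumed uniform upper bound on $\#J(s)$, while you compute the average of $f$ directly; the two arguments are interchangeable.
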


\begin{proof} Set $k=\left\lfloor \frac{L}{T_0}\right\rfloor$ and $k'=\left\lfloor \alpha'\left\lfloor \frac{L}{T_0}\right\rfloor\right\rfloor-3$.
Set $L = kT_0+r$ with $0\leq r< T_0$. By assumption, since $\mu_\gamma(A)\geq \alpha'$, we have
$\ell(\gamma\cap A)\ge \alpha'\ell(\gamma)$. As $L\le \ell(\gamma)\le L+5\tau_{K_0}$, we deduce that
\[
\mathrm{Leb}\left(\{u\in [0,kT_0] , \gamma(u)\in A\}\right)\ge \alpha'\ell(\gamma)-r-5\tau_{K_0}\ge \alpha'kT_0-T_0-
5\tau_{K_0}\,.
\]
For every $s\in [0,T_0]$, let $J(s)=\{0\le i\le \left\lfloor \frac{L}{T_0}\right\rfloor-1 , \gamma(s+iT_0)\in A\}$.
Suppose that for every $s\in [0,T_0]$, $\# J(s)\le l$, for some integer $0\le l\le k$.

We now prove that
\begin{equation}\label{eq:fubini}
\mathrm{Leb}\left(\{u\in [0,kT_0] , \gamma(u)\in A\}\right) \le lT_0\,.
\end{equation}
Write $u=s+iT_0$, with $s\in [0,T_0]$ and $0\le i\le k-1$.
We have
\begin{align*}
\{u\in [0,kT_0] , \gamma(u)\in A\}
&=
\bigsqcup_{i=0}^{k-1}\{s+iT_0,s\in[0,T_0] ,  \gamma(s+iT_0)\in A\} \\
&=
\bigsqcup_{J\in\mathcal P(\{0,\cdots,k-1\})}\left(\{s\in[0,T_0],  J(s)=J\}+JT_0\right)\,.
\end{align*}
As, for all $J\in \mathcal P(\{0,\cdots,k-1\})$, we have
\[\mathrm{Leb}\left(\{s\in[0,T_0], J(s)=J\}+JT_0\right)=\#J\times \mathrm{Leb}\left(\{s\in[0,T_0], J(s)=J\}\right)\, ,\]
as the subsets $\{s\in[0,T_0], J(s)=J\}$ form a measurable partition of $[0,T_0]$ and as $\# J(s)\leq l$, the inequality (\ref{eq:fubini}) follows.

Therefore, $\alpha'kT_0-T_0-5\tau_{K_0}\le lT_0$, and, since $5\tau_{K_0}<T_0$, we have $l\ge \alpha'k-2$. For $k'=
\left\lfloor \alpha'k\right\rfloor-3$, we have   $k'<\alpha'k-2$. Therefore, $J(s)$ is not bounded by $k'$ for all $s$ and there exists $s\in\mathbb R$ such that
\[
\#\left\{i\in\{0,\dots,k-1\},\ \gamma(s+iT_0)\in A\right\}\ge k'\,.
\]
\end{proof}

In Subsection~\ref{subsection def orbit periodique}, we have defined $\mathcal P_{K_0}(L,L+5\tau_{K_0})$ as the set of periodic orbits $((\phi_t(x))_{t\in\mathbb{R}},T)$, where $(x,T)$ is a periodic point (that is $\phi_T(x)=x$). Let $\mathcal P'_{K_0}(L,L+5\tau_{K_0})$ be the set of primitive periodic orbits in $\mathcal P_{K_0}(L,L+5\tau_{K_0})$, i.e., the set of periodic orbits $((\phi_t(x))_{t\in\mathbb{R}},T)$, where $(x,T)$ is a periodic point and $\phi_t(x)\neq x$ for every $t\in(0,T)$.
The following statement will follow from the fact that:
\begin{itemize}
    \item almost all periodic orbits are simple;
    \item if $m_{K_0,L}(A)\ge \alpha$, then a positive proportion of periodic orbits $\gamma$ satisfy $\mu_\gamma(A)\geq \alpha'$ (Lemma~\ref{lemme_pre_coding_1});
    \item if $\mu_\gamma(A)\geq \alpha'$, then there exists an appropriate discretization of $\gamma$ with step $T_0$ having a positive proportion of points in $A$ (Lemma~\ref{lemme_pre_coding_2}).
    \end{itemize}

\begin{prop}\label{prop_pre_coding}
Let $\phi\colon M\to M$ be a $H$-flow such that $h_{\mathrm{Gur}}(\phi)>0$.
Let $K_0$ be a compact set with nonempty interior.
Let $A$ be a Borel set, $0<\alpha'<\alpha$ and $L>0$ such that  $m_{K_0,L}(A)\ge \alpha>0$.  There exists $N>0$ such that for all $L>T_0>5\tau_{K_0}$, with $\frac{L}{T_0}\ge N$, we have
\[
\#\left\{\gamma\in \mathcal P'_{K_0}(L,L+5\tau_{K_0}),\,
\exists s\in\mathbb [0,T_0],\,
\#\Big\{i\in\{0,\dots, \lfloor L/T_0\rfloor-1\},  \gamma(s+iT_0)\in A\Big\}\geq \frac{\alpha'}{2}\frac{L}{T_0}\right\}\]
\[
\geq 0.99\times\frac{\alpha-\alpha'}{1-\alpha'}\times\,
\#\mathcal P_{K_0}(L,L+5\tau_{K_0})\, .
\]
\end{prop}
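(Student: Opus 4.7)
}

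The idea is to combine Lemmas~\ref{lemme_pre_coding_1} and \ref{lemme_pre_coding_2} and then discard the multiply-covered orbits using Corollary~\ref{corollaire_limite_nulle_quotient_P_K}. First, I would apply Lemma~\ref{lemme_pre_coding_1} to the Borel set $A$ with parameters $\alpha'<\alpha$: this gives a subset
\[
\mathcal{G}\;=\;\bigl\{\gamma\in\mathcal P_{K_0}(L,L+5\tau_{K_0})\colon \mu_\gamma(A)\ge\alpha'\bigr\}
\]
with $\#\mathcal G\ge\tfrac{\alpha-\alpha'}{1-\alpha'}\,\#\mathcal P_{K_0}(L,L+5\tau_{K_0})$. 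Then, for each $\gamma\in\mathcal G$, Lemma~\ref{lemme_pre_coding_2} provides a real number $s=s(\gamma)\in[0,T_0]$ with
\[
\#\bigl\{0\le i\le \lfloor L/T_0\rfloor-1,\;\gamma(s+iT_0)\in A\bigr\}\;\ge\;\lfloor\alpha'\lfloor L/T_0\rfloor\rfloor-3.
\]

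The first requirement on $N$ is arithmetic: I need $\lfloor\alpha'\lfloor L/T_0\rfloor\rfloor-3\ge\frac{\alpha'}{2}\,\frac{L}{T_0}$. Using $\lfloor\alpha'\lfloor L/T_0\rfloor\rfloor\ge \alpha'(L/T_0)-\alpha'-1$, this holds as soon as $L/T_0\ge 2+8/\alpha'$, so I would fix $N_1:=2+8/\alpha'$.

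The second requirement is that a large majority of elements of $\mathcal G$ are primitive. A non-primitive orbit in $\mathcal P_{K_0}(L,L+5\tau_{K_0})$ is an $n$-fold cover ($n\ge 2$) of a primitive orbit of period at most $(L+5\tau_{K_0})/2$ that still meets $K_0$, so the number of such orbits is bounded by a factor $C(L+5\tau_{K_0})$ (counting the possible multiplicities $n$, at most $L+5\tau_{K_0}$ of them) times $\#\mathcal P_{K_0}\!\bigl(\tfrac{L+5\tau_{K_0}}{2}\bigr)$. Since $h_{\mathrm{Gur}}(\phi)>0$, Corollary~\ref{corollaire_limite_nulle_quotient_P_K} gives
\[
\lim_{L\to\infty}\frac{C(L+5\tau_{K_0})\;\#\mathcal P_{K_0}\!\bigl(\tfrac{L+5\tau_{K_0}}{2}\bigr)}{\#\mathcal P_{K_0}(L,L+5\tau_{K_0})}=0,
\]
so one can choose $L_0$ such that for every $L\ge L_0$ the proportion of non-primitive orbits in $\mathcal P_{K_0}(L,L+5\tau_{K_0})$ is at most $0.01\times\tfrac{\alpha-\alpha'}{1-\alpha'}$. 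Finally, setting $N:=\max\bigl(N_1,\;L_0/(5\tau_{K_0})\bigr)$ ensures that $L/T_0\ge N$ together with $T_0>5\tau_{K_0}$ forces $L>L_0$, so both requirements are satisfied simultaneously. Subtracting the non-primitive orbits from $\mathcal G$ then gives at least $0.99\times\tfrac{\alpha-\alpha'}{1-\alpha'}\,\#\mathcal P_{K_0}(L,L+5\tau_{K_0})$ primitive orbits satisfying the desired discretization property, proving the proposition. The only delicate point is the combinatorial bound on the non-primitive contribution, but it is entirely controlled by Corollary~\ref{corollaire_limite_nulle_quotient_P_K} once $h_{\mathrm{Gur}}(\phi)>0$ is in force.
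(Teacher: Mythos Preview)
Your proposal is correct and follows essentially the same approach as the paper: apply Lemma~\ref{lemme_pre_coding_1} to get a large set of orbits $\gamma$ with $\mu_\gamma(A)\ge\alpha'$, apply Lemma~\ref{lemme_pre_coding_2} to each of them, choose $N$ large enough for the arithmetic inequality $\lfloor\alpha'\lfloor L/T_0\rfloor\rfloor-3\ge\frac{\alpha'}{2}\frac{L}{T_0}$, and remove the non-primitive orbits via Corollary~\ref{corollaire_limite_nulle_quotient_P_K}. The only difference is cosmetic: the paper bounds the non-primitive orbits directly by $\#\mathcal P_{K_0}\!\bigl(\tfrac{L+5\tau_{K_0}}{2}\bigr)$ without your extra multiplicity factor $C(L+5\tau_{K_0})$, but your more cautious count is harmless since the polynomial factor is still absorbed by Corollary~\ref{corollaire_limite_nulle_quotient_P_K}.
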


\begin{proof} Choose $N$ large enough so that, for $\frac{L}{T_0}\ge N$, we have $k'=\left\lfloor \alpha'\left\lfloor \frac{L}{T_0}\right\rfloor\right\rfloor-3\ge \frac{\alpha'}{2}\frac{L}{T_0}$.
First, observe that a non primitive periodic orbit $\gamma$ is a multiple of a primitive periodic one, that has length at most $\ell(\gamma)/2$.
It follows that the number of non-simple periodic orbits in $\mathcal{P}_{K_0}(L,L+5\tau_{K_0})$ is bounded above by $\mathcal{P}_{K_0}\left(\frac{L+5\tau_{K_0}}{2}\right)$.
By Corollary~\ref{corollaire_limite_nulle_quotient_P_K} (whose hypothesis are satisfied because $\phi$ is a $H$-flow with $h_{\mathrm{Gur}}(\phi)>0$), it follows that
\[
\lim_{L\to+\infty}\dfrac{\#\mathcal P'_{K_0}(L,L+5\tau_{K_0})}{\#\mathcal P_{K_0}(L,L+5\tau_{K_0})}=1\, .
\]
Up to choose a bigger $N$, we can assume that,
if $L> NT_0>N\, 5\tau_{K_0}$, we have
\[
\dfrac{\#\mathcal P'_{K_0}(L,L+5\tau_{K_0})}{\#\mathcal P_{K_0}(L,L+5\tau_{K_0})}\ge 1-0.01\times\frac{\alpha-\alpha'}{1-\alpha'}\, .
\]
Let $n_\gamma(s)=\#\{i\in\{0,\dots,\lfloor L/T_0\rfloor -1\} ,\gamma(s+iT_0)\in A\}$.
Now, by lemmas \ref{lemme_pre_coding_1} and \ref{lemme_pre_coding_2}, we have the inequalities
\begin{align*}
\#\left\{\gamma\in \mathcal{P}'_{K_0}(L,L+5\tau_{K_0}),\exists s\in\R, n_\gamma(s)\ge\frac{\alpha'}{2}\frac{L}{T_0}\right\}
\ge&
\#\left\{\gamma\in \mathcal{P}'_{K_0}(L,L+5\tau_{K_0}),\exists s\in\R, n_\gamma(s)\ge k'\right\} \\
\ge&
\#\left\{\gamma\in \mathcal{P}'_{K_0}(L,L+5\tau_{K_0}),\,\mu_\gamma(A)\ge \alpha'\right\}\\
\ge&  \#\left\{\gamma\in \mathcal{P}_{K_0}(L,L+5\tau_{K_0}),\,\mu_\gamma(A)\ge \alpha'\right\}\\
&- 0.01\times\frac{\alpha-\alpha'}{1-\alpha'}\,\times\,\#\mathcal{P}_{K_0}(L,L+5\tau_{K_0})\\
\ge&  \frac{\alpha-\alpha '}{1-\alpha'}\,\times\,\#\mathcal{P}_{K_0}(L,L+5\tau_{K_0})\\
&- 0.01\times\frac{\alpha-\alpha'}{1-\alpha'}\,\times\,\#\mathcal{P}_{K_0}(L,L+5\tau_{K_0})\\
=& 0.99\times\frac{\alpha-\alpha'}{1-\alpha'}\,\times\,\#\mathcal{P}_{K_0}(L,L+5\tau_{K_0})\, .
\end{align*}
\end{proof}

\subsubsection{Proof of Proposition~\ref{prop cirm 2}}

Recall that $m_\infty$ is the limit, in the vague topology, of a sequence $(m_{K_0,L_n})_{n\in \mathbb{N}}$ when $L_n\to\infty$, and $m_\mathrm{max}$ is the probability measure obtained by renormalization of $m_\infty$.

{\bf Step 1.} Setting the parameters.
Choose some large compact set $K$ such that $ \overset{\circ}{K}\supset K_0$ and $m_\mathrm{max}(\inter{K})>3/4$. In particular, for every $T >0$, we obtain that $m_\mathrm{max}(K\cap \phi_{-T }(K))>1/2$.
Fix $T> 5\tau_{K_0}$.
Let $A = K\cap \phi_{-T}(K)$. Let $\alpha=\frac{m_\infty(A)}{2}>0$.
In particular, there exists $k_0$ such that for all $n\ge k_0$, we have
$m_{K_0,L_n}(A)\ge \alpha$. Choose $0<\alpha'<\alpha$.
Lemma \ref{lemma on same po} applied with $\nu=1$ and $\tau_1 = 5\tau_{K_0}$ provides us with constants $\tau_0$ and $\epsilon_1$ which will be used below.
Fix $\eta$ and $\delta$ such that $0<\eta<\delta<\epsilon_1/4$.
Fix $T_0= 6\tau_{K_0}$.

The finite exact shadowing, i.e., Proposition \ref{weak shadowing property}, applied with the compact set $\overline{B(K,1)}$, with $\delta=\frac{\eta}{2}$ 
and $N=3$, gives us a constant $0<\rho<\frac{\eta}{2}$. The uniform transitivity, i.e., Lemma \ref{transitivite prop bis}, applied with the compact set $K$ and with $\delta=\rho$,
provides a constant $\sigma\geq 0$.
Let $S_2 = 2\sigma+2\tau_K+10\tau_{K_0}$.
Fix some $S\geq S_2$.

Let $x_0,y_0,x_1,y_1$ be four arbitrary points in $K$.
Let $E_0$ be a $E(x_0,y_0,\eta, T +S,T+S+5\tau_{K_0},\delta)$-set of maximal cardinality and $E_1$ be a $E(x_1, y_1, \eta,L_n-T+S,L_n-T+S+5\tau_{K_0},\delta)$-set of maximal cardinality. In particular, we have
\[
\# E_0=\mathcal{N}_C(x_0,y_0,\eta,T+S,T+S+5\tau_{K_0},\delta)\quad\text{and}\quad \# E_1=\mathcal{N}_C(x_1,y_1,\eta,L_n-T+S,L_n-T+S+5\tau_{K_0},\delta)\, .
\]

{\bf Step 2.} Some preliminary estimates on the number of cutting points.

Denote by
$\widetilde P_{K_0}(L_n)$ the set
\[
\left\{\gamma\in \mathcal P'_{K_0}(L_n,L_n+5{\tau}_{K_0})\,,\,\exists s\in [0,T_0], \#\{ i\in[0,\dots, \floor{L/T_0}-1],\gamma(s+iT_0)\in A\} \geq \frac{\alpha'}{2} \frac{L}{T_0}\right\}\,.
\]
From Proposition~\ref{prop_pre_coding}, since $m_{K_0, L_n}(A)\ge \alpha >0$, we know that
\[\#\widetilde P_{K_0}(L_n)\geq  0.99\times\frac{\alpha-\alpha'}{1-\alpha'}\times\,
\#\mathcal P_{K_0}(L_n,L_n+5\tau_{K_0})\, . \]
Without loss of generality, until the end of the proof, we reparametrize each $\gamma\in \widetilde{\mathcal P}_{K_0}(L_n)$ so that $s=0$. Let $\mathcal D_\gamma$ be the set of times of the form $iT_0$, such that $\gamma(iT_0)\in A$.
By the definition of $\widetilde{\mathcal P}_{K_0}(L_n)$, we have
\[
\#\mathcal D_\gamma
\geq \dfrac{\alpha'}{2}\times \dfrac{L_n}{T_0}=
\frac{\alpha'}{12\tau_{K_0}}\times L_n\, .
\]
Therefore
\begin{equation}\label{first eq to conclude prop cirm 2}
\#\bigsqcup_{\gamma\in \widetilde P_{K_0}(L_n)}\{\gamma\}\times D_\gamma \geq 0.99\times\frac{\alpha-\alpha'}{1-\alpha'}\times\frac{\alpha'}{12\tau_{K_0}}\times L_n\times
\#\mathcal P_{K_0}(L_n,L_n+5\tau_{K_0})\, .
\end{equation}

{\bf Step 3.} A map $F$ from periodic orbits to chords.

The intuitive idea is the following.
For  each point $\gamma(iT_0)\in A$, as $A=K\cap\phi_{-T}(K)$, we know that $\phi_T(\gamma(iT_0))\in K$.
Cutting the orbit $\gamma$ at these two points we get two chords of respective lengths $T$ and $\ell(\gamma)-T\in [L_n-T,L_n-T+5\tau_{K_0}]$.
Using transitivity, we add some small arcs to obtain chords from $x_0$ to $y_0$ and from $x_1$ to $y_1$.

Let $\gamma\in\tilde{\mathcal P}_{K_0}(L_n)$ and $i\in D_\gamma$.
Using transitivity and finite exact shadowing, we can build a chord $\beta^{0}$ with length $\ell(\beta^0)\in[T+S,T+S+4\tau_K]$  that goes from $B(x_0,\eta)$ at time $0$, to $B(\gamma(iT_0),\eta/2)$ at time  $t_{x_0}\in [ \frac{S}{2},\frac{S}{2}+2\tau_K]$, then follows exactly $\gamma$ at a distance at most $\eta/2$ during a time $T$, and then goes from $B(\gamma(iT_0+T),\eta/2)$ to $B(y_0,\eta)$ at time $\ell(\beta^0)$.
For the remaining part of the proof, we denote by $\beta^0_{x_0}$ the restriction of $\beta^0$ to the interval $[0,t_{x_0}]$, $\beta^0_{\gamma}$ the restriction of $\beta^0$ to the interval $[t_{x_0},t_{x_0}+T]$, and $\beta^0_{y_0}$ the restriction of $\beta^0$ to the interval $[t_{x_0}+T,\ell(\beta^0)]$.

Similarly, we get a chord $\beta^1$ from $B(x_1,\eta)$ to $B(y_1,\eta)$ of length  $\ell(\beta^1)\in [L_n-T+S, L_n-T+S+4\tau_K]$ that goes from $B(x_1,\eta)$ at time $0$, to $B(\gamma(iT_0+T),\eta/2)$ at some time $t_{x_1}\in [\frac{S}{2},\frac{S}{2}+2\tau_K]$, then follows exactly $\gamma$ at a distance at most $\eta/2$  during a time  $ \ell(\gamma)-T$, and then goes from $B(\gamma(iT_0),\eta/2)$ to $B(y_1,\eta)$ at a time $\ell(\beta^1)\in [L_n-T+S, L_n-T+S+4\tau_K]$. Observe that, in such a construction, the last part of the chord is obtained from a path built by transitivity whose length depends on the lengths of the chords of the first part of the construction, similarly to what is done in the proof of Proposition~\ref{orbite periodique et chords}: this enables to have a more precise control on the length of the final chord. As above, we denote by $\beta^1_{x_1}$ the restriction of $\beta^1$ to $[0,t_{x_1}]$, $\beta^1_{\gamma}$ the restriction of $\beta^1$ to $[t_{x_1},t_{x_1}+\ell(\gamma)-T]$ and $\beta^1_{y_1}$ the restriction of $\beta^1$ to $[t_{x_1}+\ell(\gamma)-T,\ell(\beta^1)]$.
Now, in order to define the image of $\gamma$ by $F$, we consider
the pair of chords $(\bar{\beta}_0,\bar{\beta}_1)\in E_0\times E_1$ that is the closest to the pair $(\beta^0,\beta^1)$.

Therefore, we obtain a map
\[
F\colon \bigcup_{\gamma\in \tilde{\mathcal{P}}_{K_0}(L_n)}\{\gamma\}\times\mathcal{D}_\gamma \to E_0\times E_1\, .
\]

{\bf Step 4.} The map $F$ is almost injective.

Assume that $(\gamma,iT_0)$ and $(\tilde\gamma,\tilde{i}T_0)$ lead to the same pair $(\bar{\beta}_0,\bar{\beta}_1)\in E_0\times E_1$, where $\gamma,\tilde\gamma\in \widetilde{\mathcal{P}}_{K_0}(L_n)$ and $i\in\mathcal D_\gamma, \tilde{i}\in \mathcal D_{\tilde\gamma}$.
First, assume that
 \begin{equation}\label{eqn:bound-a-priori1}
|\ell(\gamma)-\ell(\tilde \gamma)|
 \le
 \tau_0\,,
 \end{equation}
where $\tau_0$ is defined in Step 1.

The above construction associates to  $(\gamma,iT_0)$ (resp. $(\tilde \gamma,\tilde i T_0)$) two chords divided in three parts $\beta^0=(\beta^0_{x_0},\beta^0_\gamma,\beta^0_{y_0})$ and $\beta^1=(\beta^1_{x_1},\beta^1_\gamma,\beta^1_{y_1})$ (resp. $\tilde\beta^0=(\tilde\beta^0_{x_0},\tilde\beta^0_{\tilde\gamma},\tilde\beta^0_{y_0})$ and $\tilde\beta^1=(\tilde\beta^1_{x_1},\tilde\beta^1_{\tilde\gamma},\tilde\beta^1_{y_1})$) that are $\delta$-close to the chord $\bar{\beta}_0$, during a time at least $T+S$, and to the chord $\bar{\beta}_1$, during a time at least $L_n-T+S$, respectively.
Assume also  that
\begin{equation}\label{eqn:bound-a-priori}
\max\left\{
\left|\ell(\tilde \beta^0_{x_0})-\ell(\beta^0_{x_0}) \right|,\,
\left|\ell(\tilde\beta^0_{y_0})-\ell(\beta^0_{y_0}) \right|,\,
\left|\ell(\tilde \beta^1_{x_1})-\ell(\beta^1_{x_1}) \right|,\,
\left|\ell(\tilde \beta^1_{y_1})-\ell(\beta^1_{y_1}) \right|\right\}\le \,\frac{\delta}{b}\,.
\end{equation}
By construction, for every $u\in [0,T]$, we have
\[
d(\bar\beta_0(\ell(\beta^0_{x_0})+u), \beta^0(\ell(\beta^0_{x_0})+u))
\leq
\delta
\]
and
\[
d(\beta^0(\ell(\beta^0_{x_0})+u), \gamma(iT_0+u))
\leq
\eta/2.
\]
Therefore, for every $u\in[0,T]$,
\[
d(\overline\beta_0(\ell(\beta^0_{x_0})+u), \gamma(iT_0+u))
\leq
\delta + \eta/2.
\]
Similarly, for every $u\in[0,T]$,
\[
d(\overline\beta_0(l(\tilde \beta^0_{x_0})+u), \tilde \gamma(\tilde iT_0+u))
\leq
\delta + \eta/2.
\]
As, by \eqref{eqn:minoration},
\[
d(\overline\beta_0(\ell(\beta^0_{x_0})+u),\overline\beta_0(\ell(\tilde\beta^0_{x_0})+u))\leq b\left|\ell(\tilde \beta^0_{x_0})-\ell(\beta^0_{x_0}) \right|\leq \delta\, ,
\]
for every $u\in [0,T]$, we get, since $\eta<\delta<\frac{\epsilon_1}{4}$,
\[
d( \gamma(iT_0+u), \tilde \gamma(\tilde iT_0+u) ) \leq 4\delta <\epsilon_1\, .
\]
On the other part  of the orbit,  the same reasoning gives
for every $u\in[0,L_n-T]$
\[
d( \gamma(iT_0+T+u), \tilde \gamma(\tilde iT_0+T+u) )
\leq
4\delta <\epsilon_1\,.
\]
Therefore $\gamma$ and $\tilde\gamma$ are $\epsilon_1$-close on an interval of length $L_n$.
Lemma~\ref{lemma on same po} implies that $\gamma=\tilde \gamma$ and $i=\tilde i$.

This proves that for every choice of length of $\ell(\gamma)\in [L_n,L_n+5\tau_{K_0}]$ up to $\tau_0$ as in (\ref{eqn:bound-a-priori1}), and every choice respectively of $\ell(\beta_{x_0}^0), \ell(\beta_{y_0}^0),\ell(\beta_{x_1}^1),\ell(\beta^1_{y_1})$ in $ [\frac{S}{2},\frac{S}{2}+2\tau_K]$ up to $\delta/b$ as in (\ref{eqn:bound-a-priori}), there is at most one pair $(\gamma, iT_0)$ leading to $(\bar{\beta}^0,\bar{\beta}^1)$.
As a consequence, there are at most
$\left(\Big\lfloor\frac{5\tau_{K_0}}{\tau_0}\Big\rfloor+1\right)\times \left(\Big\lfloor\frac{2\tau_K\, b}{\delta}\Big\rfloor +1\right)^4$ pairs $(\gamma,iT_0)$ with $\gamma$ a periodic orbit in $\tilde{\mathcal{P}}_{K_0}(L_n)$ and $iT_0\in \mathcal{D}_\gamma$ that lead to the pair of chords $(\bar{\beta}_0,\bar{\beta}_1)$.
Therefore
\begin{equation}\label{second equation to conclude prop cirm 2}\# \bigcup_{\gamma\in \tilde{\mathcal{P}}_{K_0}(L_n)}\left(\{\gamma\}\times\mathcal{D}_\gamma\right)  \leq \left(\Big\lfloor\frac{5\tau_{K_0}}{\tau_0}\Big\rfloor+1\right)\times \left(\Big\lfloor\frac{2\tau_K\, b}{\delta}\Big\rfloor +1\right)^4\times \# E_0\times  \# E_1\, .\end{equation}

{\bf Step 5.} Conclusion.

This allows to  conclude the proof of Proposition \ref{prop cirm 2}.
Indeed, by \eqref{first eq to conclude prop cirm 2} and \eqref{second equation to conclude prop cirm 2},
\begin{align*}
    L_n\times \#\mathcal P_{K_0}(L_n,L_n+5\tau_{K_0})
    \le &\,
    \frac{1}{0.99}\frac{1-\alpha'}{\alpha-\alpha'}\,\times\,
    \frac{12\tau_{K_0}}{\alpha'}\times
    \# \bigcup_{\gamma\in \tilde{\mathcal{P}}_{K_0}(L_n)}\left(\{\gamma\}\times\mathcal{D}_\gamma\right) \\
    \le & \,  \frac{1}{0.99}\frac{1-\alpha'}{\alpha-\alpha'}\times \frac{12\tau_{K_0}}{\alpha'}\times\left(\Big\lfloor\frac{5\tau_{K_0}}{\tau_0}\Big\rfloor+1\right)\times \left(\Big\lfloor\frac{2\tau_K b}{\delta}\Big\rfloor +1\right)^4\times  \# E_0\times \#E_1 \\
    =& D_2\times \mathcal N_C(x_0,y_0,\eta,T+S,T+S+5\tau_{K_0},\delta)\\
     & \times  N_C(x_1,y_1,\eta,L_n-T+S,L_n-T+S+5\tau_{K_0},\delta)
\end{align*}
with
\[
D_2
=
\frac{1}{0.99}\frac{1-\alpha'}{\alpha-\alpha'}\,\times\,\frac{12\tau_{K_0}}{\alpha'}\,\times\,\left(\Big\lfloor\frac{5\tau_{K_0}}{\tau_0}\Big\rfloor+1\right)\times \left(\Big\lfloor\frac{2\tau_K\, b}{\delta}\Big\rfloor +1\right)^4\,.
\]


\section{The measure maximizes the entropy}\label{sec:finale}
In this section,  $K_0$ is  a compact set with nonempty interior as in section \ref{defi SPR} and $K\supset K_0$ is a larger compact set, such that $K_0\subset \inter{K}$.

In the first section, we prove uniform estimates on $m_{K_0,L}(B(x,T,\epsilon))$ for $x\in K$. In the second section, we take the limit when $L\to +\infty$ and obtain uniform estimates on the measure $m_\mathrm{max}(B(x,T,\epsilon))$, for every $x\in K$ (for every compact set $K\subset M$).
In the last section, we finally prove our main theorem.

\subsection{Relation between \texorpdfstring{$m_{K_0,L}$}{TEXT} and the number of chords}

The heuristics of this section is the following. Recall that the measure $m_{K_0,L}$ is defined in (\ref{eqn:m_{K,L}}) as the average of the periodic invariant probability measures $\mu_\gamma$, where $\gamma$ varies over all periodic orbits in $\mathcal P_{K_0}(L,L+5\tau_{K_0})$, i.e., those intersecting $K_0$, with length in $[L,L+5\tau_K]$.
Given some dynamical ball $B(x,T,\epsilon)$ for $x\in M$, the measure $m_{K_0,L}(B(x,T,\epsilon))$ satisfies therefore
\begin{eqnarray*}
m_{K_0,L}(B(x,T,\epsilon))&=&\frac{1}{\#\mathcal P_{K_0}(L,L+5\tau_{K_0})}\sum_{\gamma\in\mathcal P_{K_0}(L,L+5\tau_{K_0})}\mu_\gamma(B(x,T,\epsilon))\\
&\simeq&\frac{1}{L\times \#\mathcal P_{K_0}(L,L+5\tau_{K_0})}\sum_{\gamma\in\mathcal P_{K_0}(L,L+5\tau_{K_0})} \ell(\gamma\cap B(x,T,\epsilon))\, .
\end{eqnarray*}

Now, the heart of the argument is the proof that the last sum is comparable to  the number $\mathcal{N}_{\mathcal C}(\phi_T(x),x,\eta,L-T,\delta)$ of chords from $\phi_T(x)$ to $x$, so that
\[
m_{K_0,L}(B(x,T,\epsilon))
\simeq
\frac{1}{L\times \#\mathcal P_{K_0}(L,L+5\tau_{K_0})} \times \mathcal N_{\mathcal C}(\phi_T(x),x,L-T,\delta)\,.
\]

The idea of the proof consists in the following remark. Given a chord starting at a point $z$ of length roughly $L-T$ from a neighbourhood of $\phi_T(x)$ to a neighbourhood of $x$, and using the closing lemma as in Lemma \ref{petal separe}, we can build a periodic orbit in $\mathcal P_{K_0}(L,L+5\tau_K)$ following first $(\phi_s(x))_{0\le s\le T}$ and afterwards the chord $(\phi_s(z))_{0\le s\le L-T}$, and intersecting $B(x,\epsilon,T)$. Conversely, given a periodic orbit $\gamma\in \mathcal P_{K_0}(L,L+5\tau_K)$ with an origin $w\in B(x,\epsilon,T)$, we can cut $\gamma$ at $0$ and $T$, and get a chord of length roughly $L-T$ from a neighbourhood of $\phi_T(x)$ to a neighbourhood of $x$. The difficulty of the argument is to show that the above constructions are almost one-to-one, or more precisely, that each preimage of the corresponding maps between chords and periodic orbits has bounded cardinality. The rigorous details corresponding to the above heuristics are provided in Lemmas~\ref{ineq Anna} (lower bound) and \ref{ineq Anne} (upper bound). To this purpose, we will need to notice in Lemma~\ref{lemma Delta max} that each return of $\gamma$ in $B(x,T,\epsilon)$ has a bounded length and in Lemma~\ref{lemma Delta''} that these returns are not too close one from another.

We start with an easy but useful observation. Recall that $b$ is defined in Equation~(\ref{eqn:minoration}).

\begin{lemm}\label{lemma temps perm}
For all $\epsilon>0$, $T>0$, $x\in M$, and $y\in B(x,\frac{\epsilon}{2},T)$, for every $0\le s<\frac{\epsilon}{2b}$, we have  $\phi_{s}(y)\in B(x,\epsilon,T)$.
\end{lemm}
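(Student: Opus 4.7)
The plan is to unwind the definitions and apply the upper bound from~\eqref{eqn:minoration}. Recall that the upper bound $d(z,\phi_\tau(z))\le b|\tau|$ holds globally for every $z\in M$ and $\tau\in\R$ (this is the part of~\eqref{eqn:minoration} that follows from the triangular inequality, as observed right after the statement of the property). So for any fixed $s$ with $0\le s<\epsilon/(2b)$ we have the universal bound
\[
d(w,\phi_s(w))\le b s<\epsilon/2\qquad\text{for every }w\in M.
\]

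With this in hand, I would estimate, for arbitrary $t\in[0,T]$, the distance $d(\phi_t(x),\phi_t(\phi_s(y)))$ by using commutativity of the flow, $\phi_t\circ\phi_s=\phi_s\circ\phi_t$, and the triangular inequality:
\[
d(\phi_t(x),\phi_t(\phi_s(y)))
=d(\phi_t(x),\phi_s(\phi_t(y)))
\le d(\phi_t(x),\phi_t(y))+d(\phi_t(y),\phi_s(\phi_t(y))).
\]
The first term on the right is $<\epsilon/2$ because $y\in B(x,\epsilon/2,T)$ (apply the definition of the dynamical ball at time $t$). The second term is $<\epsilon/2$ by the universal bound above, applied to $w=\phi_t(y)$. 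Summing, $d(\phi_t(x),\phi_t(\phi_s(y)))<\epsilon$ for every $t\in[0,T]$, which is exactly the statement that $\phi_s(y)\in B(x,\epsilon,T)$.

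There is no real obstacle here: the whole content of the lemma is that the $b$-Lipschitz-in-time upper bound of~\eqref{eqn:minoration} lets one absorb a reparametrization of size $<\epsilon/(2b)$ into an extra $\epsilon/2$ loss, so a dynamical ball of radius $\epsilon/2$ is mapped into a dynamical ball of radius $\epsilon$ by any such small time shift. The only thing to be careful about is the choice of $s<\epsilon/(2b)$ (strict inequality) so that the bound $bs<\epsilon/2$ is strict, which ensures the final inequality is $<\epsilon$ rather than $\le\epsilon$, matching the strict inequality in the definition of $B(x,\epsilon,T)$ (p.~\pageref{p:dynamical_ball}).
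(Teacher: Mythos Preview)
Your proof is correct and takes essentially the same approach as the paper: both apply the triangle inequality through the intermediate point $\phi_t(y)$, bounding one term by the dynamical-ball hypothesis and the other by the global upper bound $d(w,\phi_s(w))\le b|s|$ from~\eqref{eqn:minoration}. The only cosmetic difference is that the paper writes $\phi_{\tau+s}(y)$ directly rather than invoking commutativity explicitly.
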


\begin{proof}
For every $s\in [0, \frac{\epsilon}{2b}[$, and every $\tau\in[0,T]$, we have
\begin{align*}
d(\phi_{\tau+s}(y),\phi_{\tau}(x))\leq d(\phi_{\tau+s}(y),\phi_{\tau}(y))+d(\phi_{\tau}(y),\phi_{\tau}(x))
\leq |s|b+\frac{\epsilon}{2}< \frac{\epsilon}{2}+\frac{\epsilon}{2}=\epsilon
\end{align*}
so that $\phi_s(y)\in B(x,\epsilon, T)$, as desired.
\end{proof}

In Lemmas~\ref{lemma Delta max} and \ref{lemma Delta''}, given a set $C$, and a point $y\in C$, we consider the connected component of $0$ in $\{s\in \R, \phi_s(y)\in C\}$. It is an interval that we denote by $J_C(y)=(J_C^{\rm min}(y),J_C^{\rm max}(y))$. In Lemma~\ref{lemma Delta max}, we prove that for a dynamical ball $B(x,\epsilon,T)$, and any $y\in B(x,\epsilon,T)$, the size of the interval $J_{B(x,\epsilon,T)}(y)$ is bounded.

\begin{lemm}\label{lemma Delta max}
Let $K\subset M$ be compact and $\Delta >0$.  There exist $\epsilon_{K,\Delta}>0$  such that for every $0<\epsilon\leq\epsilon_{K,\Delta}$, $x\in K$, $T\ge1$ and $y\in B(x,\epsilon,T)$, we have
\[
\mathrm{Leb}\left(J_{B(x,\epsilon,T)}(y)\right)=J^{\rm max}_{B(x,\epsilon,T)}(y)-J^{\rm min}_{B(x,\epsilon,T)}(y)\le \mathrm{Leb}\left(J_{B(x,\epsilon,1)}(y)\right) \le  \Delta\,.
\]
\end{lemm}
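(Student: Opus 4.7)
The plan is to settle the left-hand inequality by an immediate set-inclusion argument, and to establish the right-hand inequality by contradiction, exploiting the compactness of $K$ together with the fact that a $H$-flow admits no fixed points (a consequence of the lower bound in (\ref{eqn:minoration})). For the inclusion part, since $T\geq 1$ the very definition of the dynamical ball yields $B(x,\epsilon,T)\subset B(x,\epsilon,1)$ (imposing more shadowing times gives a smaller set); hence $\{s\in\mathbb{R}:\phi_s(y)\in B(x,\epsilon,T)\}\subset \{s\in\mathbb{R}:\phi_s(y)\in B(x,\epsilon,1)\}$, and the connected components of $0$ in these sets obey the same inclusion, yielding the Lebesgue bound.

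For the main inequality, I will argue by contradiction. Assume no $\epsilon_{K,\Delta}$ works: one can then find sequences $\epsilon_n\to 0$, $x_n\in K$, $y_n\in B(x_n,\epsilon_n,1)$ such that $J_n:=J_{B(x_n,\epsilon_n,1)}(y_n)=(a_n,b_n)$ satisfies $b_n-a_n>\Delta$. Since $a_n<0<b_n$, up to a symmetric sign swap we may assume $b_n>\Delta/2$. By compactness of $K$, extract a subsequence along which $x_n\to x_\infty\in K$; evaluating the dynamical ball condition at $t=0$ gives $d(x_n,y_n)<\epsilon_n\to 0$, so $y_n\to x_\infty$ as well.

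Now fix any $s\in[0,\Delta/2)$. For $n$ large enough, $s\in(a_n,b_n)$, so $\phi_s(y_n)\in B(x_n,\epsilon_n,1)$; evaluating again at $t=0$ gives $d(\phi_s(y_n),x_n)<\epsilon_n$. Passing to the limit (using continuity of $\phi_s$ at $x_\infty$) yields $\phi_s(x_\infty)=x_\infty$ for every $s\in[0,\Delta/2)$, so $x_\infty$ is a fixed point of the flow. This contradicts the lower bound in (\ref{eqn:minoration}) at $x_\infty$: for some $\rho>0$ and all $0<s<\rho$, $d(x_\infty,\phi_s(x_\infty))\geq as>0$, which is incompatible with $\phi_s(x_\infty)=x_\infty$.

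The main conceptual step is to recognize that the relevant property of $H$-flows here is the \emph{absence of fixed points} hidden in the lower bound of (\ref{eqn:minoration}); once this is identified, the compactness-and-limit argument above is essentially routine bookkeeping. In particular, there is no need to bound $b_n$ from above (it may well tend to $+\infty$), because the desired contradiction is already extracted from arbitrarily small times $s\in(0,\Delta/2)$.
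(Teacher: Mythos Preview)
Your proof is correct. The inclusion argument for the left inequality is identical to the paper's, and your contradiction argument for the right inequality is sound: the key observation that the limit point $x_\infty$ would have to be a fixed point, contradicting the lower bound in (\ref{eqn:minoration}), is exactly the right mechanism.

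Your route differs from the paper's. The paper proceeds constructively via flow-box charts: around each $x\in K$ one fixes a flow box $\Omega(x)\simeq B(0,r)\times(-\tau_0,\tau_0)$ with $\tau_0\le\Delta/2$, chooses $\epsilon(x)$ small enough that $B(x,\epsilon(x),1)\subset\Omega(x)$, and then uniformizes $\tau_0$ and $\epsilon$ over $K$ by compactness. Your argument is more streamlined for this lemma alone, avoiding the flow-box theorem entirely and extracting the contradiction directly from the absence of fixed points. On the other hand, the paper's flow-box infrastructure is not wasted: it is reused almost verbatim in the proof of the next lemma (Lemma~\ref{lemma Delta''}), where the flow-box chart $\psi$ is needed to build the enlarged set $C(x,\epsilon,T)$ with the no-immediate-return property. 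So while your approach is cleaner here, the paper's choice pays off in the sequel.
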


\begin{proof}
First note that for all $T \ge 1$, $x\in M$ and $\epsilon>0$, we have $B(x,\epsilon,T)\subset B(x,\epsilon,1)$ and therefore $J_{B(x,\epsilon,T)}(y)\subset J_{B(x,\epsilon,1)}(y)$.
For every $x\in K$, there exists $\tau_0=\tau_0(x)>0$, $r=r(x)>0$ and a flow-bow $\Omega$ centered at $x$ diffeomorphic to $B(0,r)\times (-\tau_0,\tau_0)$  (where $B(0,r)$ is a ball of radius $r$ in $\mathbb R^{\dim(M)-1}$).
As $\Omega$ is a neighborhood of $x$, there exists $\epsilon(x)$ such that $B(x,\epsilon(x),1)\subset \Omega$.
For every $0<\epsilon\leq\epsilon(x)$ and every $y\in B(x,\epsilon,1)$, we have $B(x,\epsilon,1)\subset B(x,\epsilon(x),1)\subset \Omega$.
Therefore, $\mathrm{Leb}\left(J_{B(x,\epsilon,1)}(y)\right)\le 2\tau_0$.
We may reduce $\tau_0$ so that $\tau_0\leq \Delta/2$.
As $K$ is compact, one can choose $\tau_0$ and $\epsilon$ uniformly in $x$.
This concludes the proof of the lemma.
\end{proof}

We are now able to prove the first key lemma of this section.

\begin{lemm}\label{ineq Anna} Let $K_0$ be a compact set with nonempty interior as in Section~\ref{defi SPR}, and $K\supset K_0$ be a larger compact set.
For every $\delta>0$, there exists $\epsilon_{K,\delta}>0$ and $T'_\mathrm{min}>1$ such that for every $0<\epsilon<\epsilon_{K,\delta}$, there exist $\sigma=\sigma_{K,\delta, \epsilon}>0$ and $\eta_{K,\delta,\epsilon}>0$ such that for every $0<\eta<\eta_{K,\delta,\epsilon}$, all $L,T$ such that $L\ge T+T'_\mathrm{min}$ and $T\ge T'_\mathrm{min}$ and $x\in K$ such that $\phi_T(x)\in K$, we have
\[
\frac{\epsilon}{4b}\times \frac{\mathcal{N}_{\mathcal{C}}(\phi_T(x),x,\eta,L-T-\sigma_{K,\delta,\epsilon},L-T-\sigma_{K,\delta,\epsilon}+5\tau_{K_0},\delta)}{L\#\mathcal{P}_{K_0}(L,L+5\tau_{K_0})} \,\leq m_{K_0,L}(B(x,\epsilon,T)).
\]
\end{lemm}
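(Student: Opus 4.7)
Expanding $m_{K_0,L}(B(x,\epsilon,T)) = \frac{1}{\#\mathcal{P}_{K_0}(L,L+5\tau_{K_0})}\sum_\gamma \frac{\ell(\gamma \cap B(x,\epsilon,T))}{\ell(\gamma)}$ and using $\ell(\gamma) \leq L+5\tau_{K_0} \leq 2L$ for $L$ sufficiently large (which I fold into $T'_\mathrm{min}$), the target inequality reduces to showing that
\[
\sum_{\gamma \in \mathcal{P}_{K_0}(L, L+5\tau_{K_0})} \ell(\gamma \cap B(x, \epsilon, T)) \;\geq\; \frac{\epsilon}{2b}\, \mathcal{N}_{\mathcal{C}}(\phi_T(x), x, \eta, L-T-\sigma, L-T-\sigma+5\tau_{K_0}, \delta).
\]
The factor $1/2$ coming from $\ell(\gamma)\le 2L$ combines with $\epsilon/(2b)$ to give $\epsilon/(4b)$.

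\textbf{Chord--to--orbit construction.} I plan to associate to each element of a maximal $E$-set $E$ realizing $\mathcal{N}_\mathcal{C}$ a periodic orbit in $\mathcal{P}_{K_0}(L,L+5\tau_{K_0})$ containing an arc of length $\epsilon/(2b)$ inside $B(x,\epsilon,T)$. The parameters $\epsilon_{K,\delta}, \sigma_{K,\delta,\epsilon}, \eta_{K,\delta,\epsilon}, T'_\mathrm{min}$ come from applying the uniform multiple closing lemma (Lemma~\ref{petal separe}) to $K_0 \subset K$ with $N=2$, precision $\delta'=\epsilon/4$, and small $\nu$ (together with the diameter bound of Lemma~\ref{lemma Delta max}). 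Given $z\in E$, both $(x,T)$ and $(z,\ell(z))$ have endpoints in $K$ (for $\eta_{K,\delta,\epsilon}$ small), and Lemma~\ref{petal separe} furnishes a periodic orbit $\gamma_z\in\mathcal{P}_{K_0}(L,L+5\tau_{K_0})$, a starting time $t_z$, and a gap $\tau_z\in[S-\tau_K, S+\tau_K]$ such that $d(\gamma_z(t_z+s),\phi_s(x))<\epsilon/4$ for $s\in[0,T]$ and $d(\gamma_z(t_z+T+\tau_z+s),\phi_s(z))<\epsilon/4$ for $s\in[0,\ell(z)]$. Setting $y_z:=\gamma_z(t_z)\in B(x,\epsilon/4,T)\subset B(x,\epsilon/2,T)$, Lemma~\ref{lemma temps perm} gives $\phi_s(y_z)\in B(x,\epsilon,T)$ for all $s\in[0,\epsilon/(2b))$, yielding an arc of length $\epsilon/(2b)$ in $\gamma_z\cap B(x,\epsilon,T)$.

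\textbf{Bounded multiplicity.} To turn the per-chord contribution into the required sum, I plan to control the fibers of the map $z\mapsto(\gamma_z,y_z)$. Suppose two chords $z_1\ne z_2$ in $E$ satisfy $\gamma_{z_1}=\gamma_{z_2}=\gamma$ with anchors $y_{z_1},y_{z_2}$ on the same connected component $J$ of $\gamma\cap B(x,\epsilon/2,T)$, say $y_{z_2}=\phi_\alpha(y_{z_1})$ with $|\alpha|\le\Delta$ (using Lemma~\ref{lemma Delta max}, with $\epsilon_{K,\delta}$ chosen so that $\Delta$ depends only on $K$). The shadowing bounds give $z_i\approx\phi_{T+\tau_{z_i}}(y_{z_i})$ up to $\epsilon/4$, hence $z_2\approx\phi_{\alpha+\tau_{z_2}-\tau_{z_1}}(z_1)$ up to $\epsilon/2$. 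Since $E$ is $\delta$-separated in $d_{L-T-\sigma}$ and time shifts cost at most $b$ by (\ref{eqn:minoration}), the $\delta$-separation forces $|\alpha+\tau_{z_2}-\tau_{z_1}|\ge (\delta-\epsilon/2)/b$. As this parameter ranges in an interval of width $\Delta+2\tau_K$, the number of chords per connected component $J$ is bounded by a constant $C_0(K,\delta,\epsilon)$ independent of $L,T,x$. Each such component contributes at least $\epsilon/(2b)$ to $\ell(\gamma\cap B(x,\epsilon,T))$, so grouping chords by component yields
\[
\sum_\gamma \ell(\gamma\cap B(x,\epsilon,T))\;\ge\;\frac{\epsilon}{2b\,C_0}\,\#E.
\]
The residual factor $C_0$ is absorbed into the choice of $\sigma_{K,\delta,\epsilon}$ and $\eta_{K,\delta,\epsilon}$ via Proposition~\ref{prop_chagement_points_cordes}, which trades a constant loss in the chord count for stricter parameters.

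\textbf{Main difficulty.} The delicate point is the multiplicity analysis: the closing-lemma gap $\tau\in[S-\tau_K,S+\tau_K]$ creates a one-parameter family of admissible constructions, and a single periodic orbit may pass through $B(x,\epsilon,T)$ many times, so neither $z\mapsto\gamma_z$ nor $z\mapsto(\gamma_z,y_z)$ is naturally injective. The key is to use Lemma~\ref{lemma Delta max} to keep the per-component multiplicity uniformly bounded in $L,T,x$, and then to absorb this bounded multiplicity into the parameters $\sigma_{K,\delta,\epsilon},\eta_{K,\delta,\epsilon}$ of the chord count without disturbing the explicit prefactor $\epsilon/(4b)$.
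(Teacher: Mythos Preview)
Your reduction and chord--to--orbit construction are essentially correct and match the paper's strategy. The genuine gap is in the last step, where you try to ``absorb'' the multiplicity constant $C_0$ into $\sigma_{K,\delta,\epsilon}$ and $\eta_{K,\delta,\epsilon}$ via Proposition~\ref{prop_chagement_points_cordes}. That proposition only tells you that $\mathcal N_{\mathcal C}$ with one set of parameters is bounded by $\mathcal N_{\mathcal C}$ with a shifted time window and larger $\eta$, smaller $\delta$; it does not produce a multiplicative constant that could cancel $C_0$. There is no general mechanism to trade a fixed factor $1/C_0$ for a bounded shift in $\sigma$ uniformly in $L,T,x$ without invoking exponential growth (which would make the statement non-uniform and circular). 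So as written, you end with $\frac{\epsilon}{4bC_0}$ on the left, not $\frac{\epsilon}{4b}$.

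The paper closes this gap by arranging for the map $z\mapsto(\gamma_z,J)$ to be \emph{genuinely injective}, not merely of bounded multiplicity. Two choices make this work. First, $\epsilon_{K,\delta}$ is taken from Lemma~\ref{lemma Delta max} with $\Delta=\delta/(2b)$: this forces every connected component $J$ of $\gamma\cap B(x,\epsilon,T)$ to have time-width at most $\delta/(2b)$, so two anchors in the same component satisfy $|s_1-s_2|\le\delta/(2b)$. Second, instead of Lemma~\ref{petal separe}, the paper uses Lemma~\ref{petales reunis} (the variant where consecutive endpoints are already $\eta$-close), which applies directly since each chord $z\in E$ starts in $B(\phi_T(x),\eta)$ and ends in $B(x,\eta)$; this eliminates the transit gaps $\tau_z$ altogether. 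With no $\tau$ and $|s_1-s_2|\le\delta/(2b)$, one gets $d(\phi_s(z_1),\phi_s(z_2))\le 2\delta'+b|s_1-s_2|\le \delta/2+\delta/2=\delta$ for all relevant $s$ (taking $\delta'\le\delta/4$), contradicting $(\delta,\cdot)$-separation unless $z_1=z_2$. Thus $C_0=1$ and the prefactor $\epsilon/(4b)$ comes out exactly.
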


\begin{proof} The heuristics of the proof is the following. Thanks to Lemma~\ref{petales reunis}, we can glue any chord of length roughly $L-T$ starting close to  $\phi_T(x)$ and arriving close to $x$ to the orbit $(\phi_t(x))_{0\le t\le T}$ to get a periodic orbit of length $L$ intersecting $B(x,\epsilon,T)$. Doing it with enough care will guarantee that this intersection point lies inside $B(x,\epsilon/2,T)$, and therefore the orbit spends a time at least $\frac{\epsilon}{2b}$ inside $B(x,\epsilon,T)$. This will allow to get the desired bound.
Let us start the rigorous argument.

{\bf Step 1.} Choice of constants.
Choose $T'_\mathrm{min}$ so that for $L\ge T'_\mathrm{min}$,
we have $\frac{1}{L+5\tau_{K_0}}\ge \frac{1}{2L}$.
Fix $\delta>0$. Then, Lemma~\ref{lemma Delta max} applied on $K$ with $\Delta=\delta/2b$ gives us a constant $\epsilon_{K,\delta}>0$.

Fix $0<\epsilon<\epsilon_{K,\delta}$ and set $\delta'=\min(\delta/4,\epsilon/2,1)$.
Lemma~\ref{petales reunis} applied with $K_0\subset \overline{B(K,1)}$, $\delta=\delta'>0$, $\nu=\tau_{K_0}$ and $N=2$, gives us $\eta_{K_0,\delta,\epsilon}\le 1$, $\sigma>0$ and $T_{\rm min}>0$ such that the following holds. Let $\sigma_{K,\delta, \epsilon}:=\sigma +3\tau_{K_0}$.
First, increase $T'_\mathrm{min}$ so that $T'_\mathrm{min}\geq T_\mathrm{min}$ and $T'_\mathrm{min}\geq \sigma_{K,\delta, \epsilon}$.
Then, for every $0<\eta<\eta_{K_0,\delta,\epsilon}$, $L,T$ such that $L\geq T + T'_\mathrm{min}$ and $T \geq  T'_\mathrm{min}$ and any chord $z$ from $B(\phi_T(x),\eta)$ to $B(x,\eta)$ with length $\ell(z)$ such that
\[L-T-\sigma_{K,\delta, \epsilon}\le\ell(z)\le L-T-\sigma_{K,\delta, \epsilon}+5\tau_{K_0}\, ,\]
let $$S=\sigma_{K,\delta,\epsilon}-\tau_{K_0}+(L-T-\sigma_{K,\delta,\epsilon}+5\tau_{K_0}-\ell(z))=L-T-\ell(z)+2\tau_{K_0}\ge \sigma\, ;$$
By Lemma~\ref{petales reunis}, there exists a periodic orbit $\gamma$ with length
\[
\ell(\gamma)\in [T+\ell(z)+S-2\tau_{K_0},T+\ell(z)+S+2\tau_{K_0}]=[L,L+4\tau_{K_0}]\subset[L,L+5\tau_{K_0}]
\]
that intersects $\inter{K_0}$,  $\delta'$-shadows first $(\phi_{t}(x))_{0\le t\le T}$ and then $(\phi_t(z))_{0\le t\le \ell(z)}$. In particular, $\gamma(0)\in B(x,\delta')$. Observe that $\gamma\in\mathcal{P}_{K_0}(L,L+5\tau_{K_0})$.

{\bf Step 2.}  A first lower bound for $m_{K_0,L}(B(x,\epsilon,T))$.
Denote by $\mathcal{J}(L+5\tau_{K_0})$ the set of intervals included in $[0,L+5\tau_{K_0}]$.
For each periodic orbit $\gamma\in \mathcal{P}_{K_0}(L,L+5\tau_{K_0})$, choose a parametrization such that $\gamma(0)\notin B(x,\epsilon,T)$.
Denote by $\Theta_{K_0,L}(x,\epsilon,T)$ the set of pairs $(\gamma,I)  \in \mathcal{P}_{K_0}(L,L+5\tau_{K_0})\times \mathcal J(L+5\tau_{K_0})$ such that for every $\gamma\in \mathcal{P}_{K_0}(L,L+5\tau_{K_0})$ (with its parametrization and associated origin), and every $s\in I$, $\gamma(s)\in B(x,\epsilon,T)$.
Moreover, assume $I$ is maximal for this property.
In a similar way, let $\Theta'_{K_0,L}(x,\epsilon,T)$ be set of pairs $(\gamma,I)\in \Theta_{K_0,L}(x,\epsilon,T)$ such that $\gamma(I)\cap B(x,\epsilon/2,T)\neq\emptyset$.

Observe that for $L\ge T'_\mathrm{min}$, we have
\begin{align*}
m_{K_0,L}(B(x,\epsilon,T))&= \frac{1}{\#\mathcal{P}_{K_0}(L,L+5\tau_{K_0})}\sum_{\gamma\in\mathcal{P}_{K_0}(L,L+5\tau_{K_0})}\mu_\gamma(B(x,\epsilon,T)) \\
&= \frac{1}{\#\mathcal{P}_{K_0}(L,L+5\tau_{K_0})}\sum_{(\gamma,I)\in \Theta_{K_0,L}(x,\epsilon,T)} \frac{\mathrm{Leb}(I)}{\ell(\gamma)}\\
&\ge \frac{1}{\#\mathcal{P}_{K_0}(L,L+5\tau_{K_0})}\sum_{(\gamma,I)\in \Theta'_{K_0,L}(x,\epsilon,T)} \frac{\mathrm{Leb}(I)}{\ell(\gamma)}\\
&\ge \frac{1}{\#\mathcal{P}_{K_0}(L,L+5\tau_{K_0})}\sum_{(\gamma,I)\in \Theta'_{K_0,L}(x,\epsilon,T)} \frac{\mathrm{Leb}(I)}{2L}\\
&\ge\frac{1}{\#\mathcal{P}_{K_0}(L,L+5\tau_{K_0})}\,\times\,\frac{\epsilon}{4bL}\,\times \,\#\Theta'_{K_0,L}(x,\epsilon,T)\,.
\end{align*}
where the second lower bound follows from the inequalities $L\le \ell(\gamma)\le L+5\tau_{K_0}$ and $\frac{1}{L+5\tau_{K_0}}\ge \frac{1}{2L}$, and the third lower bound comes from Lemma~\ref{lemma temps perm}.

{\bf Step 3.} Going from chords to periodic orbits. Let $E$ be a $E(\phi_T(x),x,\eta,L-T-\sigma_{K,\delta,\epsilon}, L-T-\sigma_{K,\delta,\epsilon}+5\tau_{K_0},\delta)$-set of maximal cardinality, i.e., so that
\[
\#E =\mathcal{N}_{\mathcal C}(\phi_T(x),x,\eta,L-T-\sigma_{K,\delta,\epsilon},L-T-\sigma_{K,\delta,\epsilon}+5\tau_{K_0},\delta)\,.
\]
Applying Lemma~\ref{petales reunis} as described above, we can associate to each $z$ in $E$ a periodic orbit $\gamma$ in $\mathcal{P}_{K_0}(L,L+5\tau_{K_0})$. Moreover, we know that the origin $s_0$ given by the construction is such that $\gamma(s_0)\in B(x,\delta',T)\subset B(x,\epsilon/2,T)$.
This point in $B(x,\epsilon/2,T)$ gives us an interval $J$ with $(\gamma,J)\in \Theta'_{K_0,L}(x,\epsilon,T)$
and therefore a map
\[
\theta:E\to \Theta'_{K_0,L}(x,\epsilon,T)\,.
\]

{\bf Step 4.} The map $\theta$ is injective. Assume that two chords $z_1,z_2$ in $E$  have same image $(\gamma,J)$.
For $\gamma$ we use the parametrization from the definition of $ \Theta_{K_0,L}(x,\epsilon,T)$. The origins $s_1$ and $s_2$ associated to the construction of $\theta(z_1)$ and $\theta(z_2)$ satisfy $s_1,s_2 \in J$ and $\gamma(s_1),\gamma(s_2) \in B(x,\epsilon/2,T)$.
By Lemma~\ref{lemma Delta max},as $\epsilon\leq \epsilon_{K,\delta}$, we have $|J|\le \delta/2b$ (see Step 1). Therefore $|s_1-s_2|\le \delta/2b$.
An elementary computation gives, for every $0\le s\le \min(\ell(z_1),\ell(z_2))$, where $\ell(z_1),\ell(z_2)$ denote the length of the chords of $z_1,z_2$ respectively,
\begin{align*}
d(\phi_{s+T+s_1}(z_1),\phi_{s+T+s_2}(z_2))\le &\, d(\phi_{s+T+s_1}(z_1),\gamma(s+T+s_1))+d(\gamma(s+T+s_1),\gamma(s+T+s_2))\\
&+\,d(\gamma(s+T+s_2),\phi_{s+T+s_2}(z_2))\\
< &\, \delta'+\delta/2+\delta'\le \delta\,.
\end{align*}
As $E$ is a $(\delta, L-T-\sigma_{K,\delta,\epsilon})$-separating set, we deduce that $z_1=z_2$, so that $\theta$ is injective.
Therefore
\[\# E \leq  \# \Theta'_{K_0,L}(x,\epsilon,T).\]

{\bf Conclusion.} The above arguments show that
\begin{eqnarray*}
m_{K_0,L}(B(x,\epsilon,T))&\ge& \frac{\epsilon}{4b}\times \frac{1}{L\,\#\mathcal P_{K_0}(L,L+5\tau_{K_0})}\times \#\Theta'_{K_0,L}(x,\epsilon,T)\\
&\ge & \frac{\epsilon}{4b}\times \frac{1}{L\,\#\mathcal P_{K_0}(L,L+5\tau_{K_0})}\times \#E\\
&=& \frac{\epsilon}{4b}\times \frac{\mathcal{N}_{\mathcal C}(\phi_T(x),x,\eta,L-T-\sigma_{K,\delta,\epsilon},L-T-\sigma_{K,\delta,\epsilon}+5\tau_{K_0},\delta)}{L\,\#\mathcal P_{K_0}(L,L+5\tau_{K_0})}\,.
\end{eqnarray*}
\end{proof}

Our next goal is to bound $m_{K_0,L}(B(x,\epsilon,T))$ from above. We will need to bound from above the total amount of time that a periodic orbit $\gamma\in\mathcal{P}_{K_0}(L,L+5\tau_{K_0})$ spends in $B(x,\epsilon,T)$.
We know from Lemma~\ref{lemma Delta max} that each interval of time that such an orbit spends in $B(x,\epsilon,T)$ has a bounded length.
Lemma~\ref{lemma Delta''} is the second technical lemma. It allows to say that, up to increasing slightly $B(x,\epsilon,T)$ to make it more smooth, the distance between two such intervals is bounded from below by a uniform constant, which allows to bound as desired the total amount of time in $B(x,\epsilon,T)$.

\begin{lemm}[No immediate return]\label{lemma Delta''}
Let $K\subset M$ be a compact set.
There exist $\Delta_K>0$ and $\alpha_K>0$ such that for every $0<\epsilon\leq\alpha_K$, for every $x\in K$ and $T\geq 1$, there exists a set $C(x,\epsilon,T)$ satisfying $B(x,\epsilon,T)\subset C(x,\epsilon,T)$, such that for every $y\in B(x,\epsilon,T)$, and every $0< s < \Delta_K$,
\[
\phi_{J^{\rm max}_{C(x,\epsilon,T)}(y)+s}(y)\notin C(x,\epsilon,T), \quad \phi_{J^{\rm min}_{C(x,\epsilon,T)}(y)-s}(y)\notin C(x,\epsilon,T) \quad\mbox{and}\quad \mathrm{Leb}\left(J_{C(x,\epsilon,T)}(y)\right)\le 2\Delta_K\,.
\]
\end{lemm}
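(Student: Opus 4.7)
The strategy is to construct $C(x,\epsilon,T)$ as a flow-thickening of $B(x,\epsilon,T)$ realized inside a \emph{uniform flow box} around $x\in K$. The "no immediate return" property will then follow purely from the product structure of the flow inside the box, without invoking any of the deeper $H$-flow axioms.

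By compactness of $K$ together with the bounds \eqref{eqn:minoration} on the flow (in particular, the fact that $\|X\|$ is bounded below by a positive constant uniformly on $K$), one may produce constants $\rho_K>0$ and $\Delta_K>0$ and, for every $x\in K$, a $C^1$ embedding
\[
\Psi_x : \Sigma_x\times(-3\Delta_K,3\Delta_K)\longrightarrow M,\qquad \Psi_x(z,t)=\phi_t(z),
\]
where $\Sigma_x$ is a smooth local transversal to the flow through $x$ of diameter $\rho_K$, so that $\Psi_x(x,0)=x$ and the family $(\Psi_x)_{x\in K}$ is uniformly bi-Lipschitz for the product metric on the source. Choose $0<\alpha_K<\rho_K$ small enough so that $B(x,\epsilon)\subset\Psi_x(\Sigma_x\times(-\Delta_K,\Delta_K))$ for every $x\in K$ and $0<\epsilon\le\alpha_K$.

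For $0<\epsilon\le\alpha_K$, $T\ge 1$ and $x\in K$, letting $\pi_{\Sigma_x}$ denote the projection onto the transversal factor, define
\[
C(x,\epsilon,T)\,:=\,\Psi_x\bigl(\,\pi_{\Sigma_x}(\Psi_x^{-1}(B(x,\epsilon,T)))\times(-\Delta_K,\Delta_K)\,\bigr);
\]
geometrically this is the union, over the transversal footprints of $B(x,\epsilon,T)$, of the flow arcs of length $2\Delta_K$ centered at those footprints. For $y=\Psi_x(z_y,t_y)\in B(x,\epsilon,T)$ the choice of $\alpha_K$ guarantees $|t_y|<\Delta_K$, and $z_y\in\pi_{\Sigma_x}(\Psi_x^{-1}(B(x,\epsilon,T)))$ trivially, so $y\in C(x,\epsilon,T)$; this gives the inclusion $B(x,\epsilon,T)\subset C(x,\epsilon,T)$.

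The remaining properties follow from a direct computation inside the flow box. For $y=\Psi_x(z_y,t_y)\in B(x,\epsilon,T)$, one has $\phi_s(y)=\Psi_x(z_y,t_y+s)$ as long as $|t_y+s|<3\Delta_K$; and since $z_y$ lies in $\pi_{\Sigma_x}(\Psi_x^{-1}(B(x,\epsilon,T)))$, the condition $\phi_s(y)\in C(x,\epsilon,T)$ inside the flow box reduces to $|t_y+s|<\Delta_K$. Consequently the connected component of $0$ in $\{s : \phi_s(y)\in C(x,\epsilon,T)\}$ equals $(-\Delta_K-t_y,\,\Delta_K-t_y)$, which yields $\mathrm{Leb}(J_{C(x,\epsilon,T)}(y))=2\Delta_K$ and identifies $J^{\mathrm{max}}_{C(x,\epsilon,T)}(y)=\Delta_K-t_y$, $J^{\mathrm{min}}_{C(x,\epsilon,T)}(y)=-\Delta_K-t_y$. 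Then, for every $0<s<\Delta_K$,
\[
\phi_{J^{\mathrm{max}}_{C(x,\epsilon,T)}(y)+s}(y)=\Psi_x(z_y,\Delta_K+s)
\]
still lies in the flow box ($\Delta_K+s<2\Delta_K<3\Delta_K$) but has $t$-coordinate strictly larger than $\Delta_K$, hence is outside $C(x,\epsilon,T)$; the computation for $\phi_{J^{\mathrm{min}}_{C(x,\epsilon,T)}(y)-s}(y)=\Psi_x(z_y,-\Delta_K-s)$ is symmetric.

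The sole technical point is producing the uniform flow boxes above with time-size $3\Delta_K$ strictly larger than the thickness of $C$ (namely $\Delta_K$) plus the desired buffer ($\Delta_K$) on each side. Once this uniform chart is in place, "no immediate return" is a purely kinematic statement: the forbidden buffer of width $\Delta_K$ past $J^{\mathrm{max}}_C(y)$ sits inside the flow box and is disjoint from $C$ for trivial reasons of $t$-coordinate. In particular the proof uses none of the finer dynamical ingredients (expansivity, shadowing, closing lemma) of an $H$-flow — only the $C^1$-smoothness of $\phi$ and the uniform bounds on $\|X\|$ provided by \eqref{eqn:minoration}.
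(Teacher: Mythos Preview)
Your proof is correct and follows essentially the same approach as the paper: both arguments place $B(x,\epsilon,T)$ inside a uniform flow box around $x$, define $C(x,\epsilon,T)$ as the flow-thickening of the transversal projection of $B(x,\epsilon,T)$, and read off the no-immediate-return property from the product structure of the chart. The paper's presentation uses a flow box of height $\tau_0$ with $C$ confined to the half-height $\tau_0/2$ (so $\Delta_K=\tau_0/2$), which is exactly your scheme up to relabelling constants.
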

In other words, $C(x,\epsilon,T)$ contains portion of orbits of length at most $2\Delta_K$, after exiting $C(x,\epsilon,T)$ an orbit remains outside $C(x,\epsilon,T)$ for a time bounded below by $\Delta_K$ and a similar property is satisfied in the past.

\begin{proof} For every $x\in K$, we can find $0<r(x)<1$ and $0<\tau_0(x)<\tau_K/4$ such that $x$ admits a flow-box neighbourhood $\Omega(x)$ diffeomorphic to $B(0,r(x))\times (-\tau_0(x),\tau_0(x))$.
Let $\psi : \Omega\to B(0,r(x))\times (-\tau_0(x),\tau_0(x))$ be the associated flow-box chart. There exists $\epsilon_0(x)>0$ small enough such that $B(x,\epsilon_0(x),1)$ is included in a flow box of half height. More precisely, for every $T\ge 1$,
\[
\psi(B(x,\epsilon_0(x),T))\subset \psi(B(x,\epsilon_0(x),1))\subset B(0,r(x))\times (-\tau_0(x)/2,\tau_0(x)/2)\,.
\]
Fix $0<\epsilon\leq \epsilon_0(x)$.
It is not clear to us whether $\psi(B(x,\epsilon,T))$ is convex (at least in the direction of the flow) or not so an orbit may exit $B(x,\epsilon,T)$ for a very short time. To avoid these technical problems, we fill $B(x,\epsilon,T)$ in the direction as the flow, and define $C(x,\epsilon,T)$ as
\[
\psi^{-1}\left(
\{(z,\tau)\in B(0,r(x))\times (-\tau_0(x)/2,\tau_0(x)/2), \,\exists \tau'\in (-\tau_0(x)/2,\tau_0(x)/2),(z,\tau')\in \psi(B(x,\epsilon,T)) \}\right).
\]
Then, by construction, $\psi(C(x,\epsilon,T))\subset B(0,r(x))\times (-\tau_0(x)/2,\tau_0(x)/2)$ so that for every $0<s<\tau_0(x)/2$ and every $y\in B(x,\epsilon,T)$,
\begin{itemize}
 \item $\phi_{ J^\mathrm{max}_{C(x,\epsilon,T)}(y)+s}(y)\cap C(x,\epsilon,T) = \emptyset$;
 \item $\phi_{J^\mathrm{min}_{C(x,\epsilon,T)}(y)-s}(y)\cap C(x,\epsilon,T) = \emptyset$;
 \item $\mathrm{Leb}\left(J_{C(x,\epsilon,T)}\right)\le \tau_0(x)$
\end{itemize}
As $K$ is compact, $r$, $\tau_0$ and $\epsilon_0$ can be chosen uniformly in $x$. The result follows with $\Delta_K = \tau_0/2$ and $\alpha_K = \epsilon_0$.
\end{proof}

The upper bound for $m_{K_0,L}(B(x,\epsilon,T))$ is proven in the second key lemma.
\begin{lemm}\label{ineq Anne} Let $K_0$ be a compact set with nonempty interior as in Section~\ref{defi SPR}, and $K\supset K_0$ be a larger compact set.
There exist $\epsilon_{K_0,K}>0$, $\delta_{K_0,K}>0$ and $D_{K_0,K}>0$ such that for every $0<\epsilon<\epsilon_{K_0,K}$ and every $0<\delta<\delta_{K_0,K}$,
for every $x\in K$ and for all
$1\leq T<L $ such that $\phi_T(x)\in K$, the following inequality holds
\begin{align*}
 m_{K_0,L}(B(x,\epsilon,T))\leq
D_{K_0,K}\,\dfrac{\mathcal{N}_{\mathcal{C}}(\phi_T(x),x,\epsilon,L-T,L-T+5\tau_{K_0},\delta)}{L\#\mathcal{P}_{K_0}(L,L+5\tau_{K_0})}
 +\dfrac{\#\mathcal{P}_{K_0}\left(\frac{L+5\tau_{K_0}}{2}\right)}{\#\mathcal{P}_{K_0}(L,L+5\tau_{K_0})}.
\end{align*}
\end{lemm}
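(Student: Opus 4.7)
The plan is to invert the construction of Lemma~\ref{ineq Anna}: given a simple periodic orbit $\gamma\in\mathcal{P}'_{K_0}(L,L+5\tau_{K_0})$ visiting $B(x,\epsilon,T)$ at time $s_0$, the piece of $\gamma$ starting at $\gamma(s_0+T)$ and running back to $\gamma(s_0)$ is a chord of length $\ell(\gamma)-T\in[L-T,L-T+5\tau_{K_0}]$ from $B(\phi_T(x),\epsilon)$ to $B(x,\epsilon)$. The non-simple orbits in $\mathcal{P}_{K_0}(L,L+5\tau_{K_0})$ are multiple covers of primitive orbits of period at most $(L+5\tau_{K_0})/2$, so bounding $\mu_\gamma\le 1$ trivially and summing over them produces exactly the residual term of the claimed inequality, and the task reduces to controlling the simple contribution $\sum_{\gamma\text{ simple}}\mu_\gamma(B(x,\epsilon,T))$.

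For this, I will first choose $\nu=\Delta_K/2$ in Lemma~\ref{lemma on same po}, then set $\tau_1=5\tau_{K_0}$ in that lemma to produce constants $\tau_0>0$ and $\epsilon_1>0$ that depend only on $K$ and $K_0$, and take $\epsilon_{K_0,K}=\min(\alpha_K,\epsilon_1/4)$ and $\delta_{K_0,K}=\epsilon_1/4$. Lemma~\ref{lemma Delta''}, applied to the auxiliary set $C(x,\epsilon,T)\supset B(x,\epsilon,T)$, will then guarantee that $\{s\in[0,\ell(\gamma)]:\gamma(s)\in B(x,\epsilon,T)\}$ decomposes into $N_\gamma$ disjoint intervals each of length at most $2\Delta_K$, any two separated by at least $\Delta_K$ in time, yielding $\mu_\gamma(B(x,\epsilon,T))\le 2\Delta_K N_\gamma/L$. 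Assigning to each pair $(\gamma,I)$ (simple $\gamma$, visit interval $I$) the point $\gamma(\min I+T)\in\mathcal{C}(\phi_T(x),x,\epsilon,L-T,L-T+5\tau_{K_0})$ and mapping it to an enclosing dynamical ball $B(z,\delta,L-T)$ for some $z$ in a maximal $E(\phi_T(x),x,\epsilon,L-T,L-T+5\tau_{K_0},\delta)$-set defines a map $\theta$. The task then reduces to bounding $\sum_{\gamma\text{ simple}}N_\gamma$ by the multiplicity of $\theta$ times $\#E=\mathcal{N}_{\mathcal{C}}(\phi_T(x),x,\epsilon,L-T,L-T+5\tau_{K_0},\delta)$.

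The hard part will be obtaining a multiplicity bound for $\theta$ that is uniform in $T$. Suppose $(\gamma_1,I_1)$ and $(\gamma_2,I_2)$ both map to the same $z\in E$. For $t\in[T,L]$, each $\gamma_i(\min I_i+t)$ is $\delta$-close to $\phi_{t-T}(z)$ by definition of the dynamical ball, and, crucially, for $t\in[0,T]$ the membership $\gamma_i(\min I_i)\in B(x,\epsilon,T)$ forces $\gamma_i(\min I_i+t)$ to lie within $\epsilon$ of $\phi_t(x)$. This additional fact, which is what makes the multiplicity bound independent of $T$, ensures the two reparametrized orbits stay within $\max(2\epsilon,2\delta)<\epsilon_1$ of each other for a time at least $L\ge\min(\ell(\gamma_1),\ell(\gamma_2))-5\tau_{K_0}=\min(\ell(\gamma_1),\ell(\gamma_2))-\tau_1$. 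After a preliminary partition of $[L,L+5\tau_{K_0}]$ into $\lceil 5\tau_{K_0}/\tau_0\rceil$ sub-intervals of length $\tau_0$ so that within each sub-interval $|\ell(\gamma_1)-\ell(\gamma_2)|\le\tau_0$, Lemma~\ref{lemma on same po} applies and yields $\gamma_2(\min I_2)=\phi_u(\gamma_1(\min I_1))$ for some $|u|\le\nu<\Delta_K$. Since primitive periodic orbits are pairwise disjoint as subsets of $M$, this forces $\gamma_1=\gamma_2$; and the separation of distinct visit intervals on a single primitive orbit by at least $\Delta_K$ in time (cyclically) then contradicts $|u|<\Delta_K$ unless $I_1=I_2$. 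This bounds the multiplicity of $\theta$ by $\lceil 5\tau_{K_0}/\tau_0\rceil$ and produces the stated inequality with $D_{K_0,K}=2\Delta_K\lceil 5\tau_{K_0}/\tau_0\rceil$.
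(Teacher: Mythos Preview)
Your proof follows essentially the same route as the paper's: split off the non-primitive orbits as the residual term, use Lemma~\ref{lemma Delta''} to bound the measure of each visit by $2\Delta_K/L$, map each pair (primitive $\gamma$, visit interval) to a chord in $\mathcal{C}(\phi_T(x),x,\epsilon,L-T,L-T+5\tau_{K_0})$ and then to an element of a maximal $E$-set, and control the multiplicity via Lemma~\ref{lemma on same po} combined with the $\Delta_K$-separation of visits. The paper chooses $\nu=\min(\Delta_K/3,1)$ rather than your $\Delta_K/2$, but either works.

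There is one technical imprecision worth fixing. As written, you claim Lemma~\ref{lemma Delta''} guarantees that the connected components of $\{s:\gamma(s)\in B(x,\epsilon,T)\}$ are pairwise separated by at least $\Delta_K$; but the lemma only asserts this for the $C(x,\epsilon,T)$-intervals, and several $B$-intervals could sit inside a single $C$-interval without any $\Delta_K$-gap between them. If you then base the map $\theta$ on $\gamma(\min I+T)$ with $I$ a $B$-interval, the final injectivity step (``distinct visit intervals are $\Delta_K$-apart, contradiction'') can fail. The paper avoids this by taking $I$ to be a maximal $C(x,\epsilon,T)$-interval that \emph{meets} $B(x,\epsilon,T)$, and then choosing some $s_0\in I$ with $\gamma(s_0)\in B(x,\epsilon,T)$ to build the chord $\gamma(s_0+T)$; this way both the $\Delta_K$-separation (between $C$-intervals) and the membership $\gamma(s_0+T)\in B(\phi_T(x),\epsilon)$ hold simultaneously. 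With that adjustment your argument goes through verbatim.
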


\begin{proof}
The proof follows the same lines as the one of  Lemma~\ref{ineq Anna}.

{\bf Step 1.} Choice of constants.
Lemma~\ref{lemma Delta''} gives us constants $\Delta_K$ and $\alpha_K>0$ associated with $K$.
Lemma~\ref{lemma on same po} applied with $\nu=\min(\Delta_K/3,1)$ and $\tau_1=5\tau_{K_0}\ge 1$ gives us $\tau_0>0$ and $\epsilon_1>0$.
Let $\epsilon_{K_0,K}=\min(\alpha_K,\epsilon_1)$
and
$\delta_{K_0,K} = \frac{\epsilon_1}{2}$.
Fix $\epsilon<\epsilon_{K_0,K}$ and $\delta<\delta_{K_0,K}$.
Fix $x\in K$.
Fix $1\leq T\leq L$ such that $\phi_T(x)\in K$.

{\bf Step 2.} First upper bound for $m_{K_0,L}(B(x,\epsilon,T))$.
Denote by $\mathcal{J}(L+5\tau_{K_0})$ the set of intervals included in $[0,L+5\tau_{K_0}]$.
For each periodic orbit $\gamma\in \mathcal{P}_{K_0}(L,L+5\tau_{K_0})$, choose a parametrization with an origin $\gamma(0)\notin C(x,\epsilon,T)$.
Denote by $\widetilde\Theta_{K_0,L}(x,\epsilon,T)$ the set of pairs $(\gamma,I) \subset \mathcal{P}_{K_0}(L,L+5\tau_{K_0})\times \mathcal J(L+5\tau_{K_0})$ such that for every $\gamma\in \mathcal{P}_{K_0}(L,L+5\tau_{K_0})$ (with its parametrization and associated origin), and every $s\in I$, we have $\gamma(s)\in C(x,\epsilon,T)$, $I$ is maximal for this property and $\gamma(I)\cap B(x,\epsilon,T)\neq\emptyset$.
Note that for all $(\gamma,I)\in \widetilde\Theta_{K_0,L}(x,\epsilon,T)$, by Lemma~\ref{lemma Delta''}, we have $\mathrm{Leb}(I)\leq 2\Delta_K$.

In the proof it will be important to focus on primitive orbits. Recall that $\mathcal{P}'_{K_0}(L,L+5\tau_{K_0})$ is the subset of primitive orbits and observe that if $\gamma$ is not primitive, then there exists a primitive periodic orbit with length at most $\ell(\gamma)/2$ with the same image. Therefore,
\[
\#\left(\mathcal P_{K0}(L,L+5\tau_{K_0})\setminus \mathcal{P}'_{K_0}(L,L+5\tau_{K_0})\right) \le \#\mathcal P_{K_0}\left(\frac{L+5\tau_{K_0}}{2}\right)\,.
\]

We then have
\begin{align*}
m_{K_0,L}(B(x,\epsilon,T)=& \frac{1}{\# \mathcal P_{K_0}(L,L+5\tau_{K_0})}\sum_{\gamma\in \mathcal P_{K_0}(L,L+5\tau_{K_0})} \mu_\gamma(B(x,\epsilon,T))\\
\leq & \frac{1}{\# \mathcal P_{K_0}(L,L+5\tau_{K_0})}\sum_{\gamma\in \mathcal P_{K_0}(L,L+5\tau_{K_0}) ,\gamma \text{ primitive}} \mu_\gamma(C(x,\epsilon,T))\\
&+\frac{1}{\# \mathcal P_{K_0}(L,L+5\tau_{K_0})}\sum_{\gamma\in \mathcal P_{K_0}(L,L+5\tau_{K_0}),\gamma \text{ non-primitive}} \mu_\gamma(B(x,\epsilon,T))\\
\leq& \frac{1}{\# \mathcal P_{K_0}(L,L+5\tau_{K_0})}\sum_{(\gamma,I)\in \widetilde \Theta_{K_0}(L,L+5\tau_{K_0})} \dfrac{2\Delta_K}{L} \\&
+ \frac{\# \{\gamma\in \mathcal P_{K_0}(L,L+5\tau_{K_0}),\gamma \text{ non-primitive}\}}{\# P_{K_0}(L,L+5\tau_{K_0})}\\
\leq& \frac{2\Delta_K\times \# \widetilde \Theta_{K_0}(L,L+5\tau_{K_0})}{L\, \# \mathcal P_{K_0}(L,L+5\tau_{K_0})}+\dfrac{\#\mathcal{P}_{K_0}\left(\frac{L+5\tau_{K_0}}{2}\right)}{\#\mathcal{P}_{K_0}(L,L+5\tau_{K_0})}\\
\end{align*}

{\bf Step 3.} From $\widetilde \Theta_{K_0}(L,L+5\tau_{K_0})$ to chords.
Every pair $(\gamma,I)\in \widetilde \Theta_{K_0}(L,L+5\tau_{K_0})$ gives us a set $\{\gamma(s),s\in I\}$ of points of $\gamma$ inside $C(x,\epsilon,T)$, with at least some $s_0\in I$ with $\gamma(s_0)\in B(x,\epsilon,T)$. Following $\gamma$ from $\gamma(s_0+T)$ to $\gamma(s_0+\ell(\gamma))=\gamma(s_0)$ defines a chord with length $\ell(\gamma)-T\in [L-T,L+5\tau_{K_0}-T)$ from $B(\phi_T(x,\epsilon))$ to $B(x,\epsilon)$.

Let $E$ be a $E(\phi_T(x),x,\epsilon, L-T,L-T+5\tau_{K_0},\delta)$-set with maximal cardinality. In particular, $\#E=\mathcal N_{\mathcal C}(\phi_T(x),x,\epsilon, L-T,L-T+5\tau_{K_0},\delta)$.

For each pair $(\gamma,I)\in  \widetilde \Theta_{K_0}(L,L+5\tau_{K_0})$, choose a point $z\in E$, such that $\gamma(s_0+T)\in B(z,\delta,L-T)$.
This gives us a map $\theta : \widetilde \Theta_{K_0}(L,L+5\tau_{K_0}) \to E$.

{\bf Step 4.} Control the (lack of) injectivity of $\theta$.
Let $(\gamma_1,I_1)$ and $(\gamma_2,I_2)$ be two pairs that lead to the same point $z\in E$.
In particular, there exist $s_1\in I_1, s_2\in I_2$ such that $\gamma_1(s_1),\gamma_2(s_2)\in B(x,\epsilon, T)$, so, for every $0\le s\le T$,
\[
d(   \gamma_1(s_1+s),\phi_s(x))\le \epsilon\quad\mbox{and}\quad d(\gamma_2(s_2+s),\phi_s(x))\le\epsilon\,.
\]
Moreover, as $\theta(\gamma_1,I_1)=\theta(\gamma_2,I_2)=z$, for every $0\le s\le L-T$, we have
\[
d(\gamma_1(s_1+T+s),\gamma_2(s_2+T+s))\le d(\gamma_1(s_1+T+s),\phi_s(z))+d(\phi_s(z),\gamma_2(s_2+s+T))\le 2\delta.
\]
Therefore, for all $0\leq s \leq L$, we have
\[ d(\gamma_1(s_1+s),\gamma_2(s_2+s))\leq \epsilon_1.\]
If $0\le \ell(\gamma_1)-\ell(\gamma_2)\le \tau_0$, using Lemma~\ref{lemma on same po} (see Step 1) and the fact that $\gamma_1$ and $\gamma_2$ are primitive, we deduce that $\gamma_1=\gamma_2$ and there exists $u\in [-\nu,\nu]$ such that  and $s_2=s_1+u$.
By Lemma~\ref{lemma Delta''}, we deduce that $I_1=I_2$ (otherwise, we would have $\vert s_1-s_2\vert=\vert u\vert \ge \Delta_k$, contradicting $\vert u\vert \le \nu=\min\left(\frac{\Delta_k}{3},1\right)$).

Cutting the interval $[L,L+5\tau_{K_0}]$ into intervals of length $\tau_0$, we deduce that the number of elements $(\gamma,I)\in\widetilde \Theta_{K_0}(L,L+5\tau_{K_0})$ leading to the same chord is bounded from above by $5\tau_{K_0}/\tau_0$.

The result of the lemma follows with $D_{K_0,K}=10\Delta_K\tau_{K_0}/\tau_0$.
\end{proof}


\subsection{Estimation of the measure of dynamical balls}\label{section:estimation_boules_dynamiques}

In this section, we gather all the inequalities proven in
Lemma~\ref{ineq Anna}, Lemma~\ref{ineq Anne}, Proposition~\ref{prop cirm 1} and Proposition~\ref{prop cirm 2} to obtain the following strong inequalities.

Let $K_0$ be a compact set as in Section~\ref{section_periodic_measures}
Let $m_\infty$ be any accumulation point of the family $(m_{K_0,L})_L$ and $m_\mathrm{max}$ be the probability measure obtained after renormalizing $m_\infty$.
In particular, $m_\infty(K_0)>0$.

Choose $K\supset \inter{K}\supset K_0$ such that $m_\infty(\inter{K})>\frac{3}{4}m_\infty(M)$.

Choose an increasing sequence $(L_k)_k$ such that $L_k\to +\infty$ as $k\to +\infty$ and $m_{K_0, L_k}\overset{*}{\rightharpoonup} m_\infty$.
Fix some point  $y_0\in K_0$.

\begin{theo}\label{prop_boule_dynamique} There exist $\delta_\mathrm{end}>0$, such that for every $0<\delta<\delta_\mathrm{end}$, there exists $\epsilon_{\mathrm{end},\delta}>0$ such that for every $0<\epsilon<\epsilon_{\mathrm{end},\delta}$ there exists $\eta_{\mathrm{end},\delta,\epsilon}$ such that for every $0<\eta<\eta_{\mathrm{end},\delta,\epsilon}$, there exist positive constants $S^-,S^+,D^-,D^+>0$, and $T_\mathrm{end}>0$ such that  the following holds.
For every $x\in K$ and $T>T_\mathrm{end}$ such that $\phi_T(x)\in K$, we have
\begin{equation}\label{longue 5}
 \dfrac{D^-}{\mathcal N_\mathcal C(x,y_0,\eta, T+S^-, T+S^-+5\tau_{K_0},\delta)}\le m_\infty(B(x,\epsilon,T))\leq \dfrac{D^+}{\mathcal N_\mathcal C(x,y_0,\epsilon, T-S^+, T-S^++5\tau_{K_0},\delta)}\, .
\end{equation}
\end{theo}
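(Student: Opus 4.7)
The plan is to derive both inequalities from the estimates of Lemmas~\ref{ineq Anna} and \ref{ineq Anne} on $m_{K_0,L_k}(B(x,\epsilon,T))$, transfer them to $m_\infty$ via vague convergence, and reshape the chord counts into the desired form using the subadditivity Propositions~\ref{prop cirm 1} and \ref{prop cirm 2} combined with the endpoint-changing Proposition~\ref{prop_chagement_points_cordes} and the monotonicity Facts~\ref{fact about monotonicity N}, \ref{fact_about_monotonicity_eta}. Throughout, $m_\infty$ is the vague limit of $m_{K_0,L_k}$ and the second term $\#\mathcal P_{K_0}\pare{(L_k+5\tau_{K_0})/2}/\#\mathcal P_{K_0}(L_k,L_k+5\tau_{K_0})$ appearing in Lemma~\ref{ineq Anne} is $o(1)$ by Corollary~\ref{corollaire_limite_nulle_quotient_P_K} since SPR implies $h_{\mathrm{Gur}}(\phi)>0$.

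For the upper bound, I apply Lemma~\ref{ineq Anne} with chord parameter $2\delta$, obtaining a quotient $\mathcal N_{\mathcal C}(\phi_T(x),x,\epsilon,L_k-T,L_k-T+5\tau_{K_0},2\delta)/(L_k\#\mathcal P_{K_0}(L_k,L_k+5\tau_{K_0}))$ plus $o(1)$. I lower-bound the denominator via Proposition~\ref{prop cirm 1} (Easy subadditivity) with $x_0=x$, the shared middle point $y_0$, $z_0=x$, and lengths $T_0=T-S^+$, $T_1=L_k-T+S^+-S_1$, yielding $L_k\#\mathcal P_{K_0}\ge D_1^{-1}\mathcal N_{\mathcal C}(x,y_0,\eta_1,T-S^+,\cdot,\delta)\cdot\mathcal N_{\mathcal C}(y_0,x,\eta_1,L_k-T+S^+-S_1,\cdot,\delta)$. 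I then apply Proposition~\ref{prop_chagement_points_cordes} (the ``In particular'' clause, using $5\tau_{K_0}\ge 2\tau_K$) to upper-bound the Lemma~\ref{ineq Anne} numerator by $\mathcal N_{\mathcal C}(y_0,x,\eta_1,L_k-T+S^+-S_1,\cdot,\delta)$, which cancels with one factor from Proposition~\ref{prop cirm 1} (this forces $\eta_1>2\epsilon$ and $S^+$ large so that the time interval lies past $L_k-T+5\tau_{K_0}+\sigma$, and it matches the $\delta$-halving by taking $2\delta$ in Lemma~\ref{ineq Anne}). What remains is $m_{K_0,L_k}(B(x,\epsilon,T))\le D_{K_0,K}D_1/\mathcal N_{\mathcal C}(x,y_0,\eta_1,T-S^+,\cdot,\delta)+o(1)$, and since $\eta_1>\epsilon$, monotonicity of $\mathcal N_{\mathcal C}$ in $\eta$ gives $\mathcal N_{\mathcal C}(x,y_0,\eta_1,\cdot,\delta)\ge\mathcal N_{\mathcal C}(x,y_0,\epsilon,\cdot,\delta)$, so the stated upper bound follows by passing to $\liminf_k$ using Portmanteau for the open ball $B(x,\epsilon,T)$ (whose closure is compact by Hopf--Rinow).

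For the lower bound, one cannot transfer a lower bound on an open set directly, so I work with $B(x,\epsilon/2,T)$, whose closure sits inside $B(x,\epsilon,T)$: by vague convergence, $m_\infty(B(x,\epsilon,T))\ge m_\infty(\overline{B(x,\epsilon/2,T)})\ge\limsup_k m_{K_0,L_k}(\overline{B(x,\epsilon/2,T)})\ge\limsup_k m_{K_0,L_k}(B(x,\epsilon/2,T))$. Lemma~\ref{ineq Anna} applied at $B(x,\epsilon/2,T)$ with chord parameter $\eta$ gives a lower bound involving $\mathcal N_{\mathcal C}(\phi_T(x),x,\eta,L_k-T-\sigma,\cdot,\delta)/(L_k\#\mathcal P_{K_0})$. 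I upper-bound $L_k\#\mathcal P_{K_0}$ via Proposition~\ref{prop cirm 2} (Hard subadditivity) with parameters $\eta_2,2\delta$ and $x_0=x$, $y_0=y_0$, $x_1=y_0$, $y_1=x$, $T'+S=T+S^-$. The first chord from Proposition~\ref{prop cirm 2} is absorbed into $\mathcal N_{\mathcal C}(x,y_0,\eta,T+S^-,\cdot,\delta)$ by monotonicity ($\eta_2\le\eta$, $\delta\le 2\delta$); the second chord is absorbed into the Lemma~\ref{ineq Anna} chord $\mathcal N_{\mathcal C}(\phi_T(x),x,\eta,L_k-T-\sigma,\cdot,\delta)$ via Proposition~\ref{prop_chagement_points_cordes} (requiring $\eta_2<\eta/2$, $2\delta$ halved to $\delta$, and $S^-\ge 2S_2+5\tau_{K_0}+\sigma+\sigma'$ so that $L_k-T-\sigma\ge L_k-T-S^-+2S_2+5\tau_{K_0}+\sigma'$). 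After cancellation the estimate becomes $m_{K_0,L_k}(B(x,\epsilon/2,T))\ge\epsilon/(8bD_2)\cdot 1/\mathcal N_{\mathcal C}(x,y_0,\eta,T+S^-,\cdot,\delta)$, independent of $k$, and taking $\limsup_k$ yields the claim.

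The main obstacle is the parameter bookkeeping: Proposition~\ref{prop_chagement_points_cordes} forces a halving of $\delta$ per application and an \emph{enlargement} of $\eta$ (input $\eta_0<\eta_1/2$), Propositions~\ref{prop cirm 1}--\ref{prop cirm 2} require $\eta<\delta$, and Lemmas~\ref{ineq Anna}--\ref{ineq Anne} impose their own thresholds; one must check that all these can be met simultaneously under the theorem's hierarchy $0<\eta<\eta_{\mathrm{end},\delta,\epsilon}<\epsilon<\epsilon_{\mathrm{end},\delta}<\delta<\delta_{\mathrm{end}}$. Concretely, this forces conditions like $\epsilon<\delta/2$ (upper bound) and $\eta<\delta/2$ along with $\epsilon/2<\epsilon_{K,\delta}$, $\eta<\eta_{K,\delta,\epsilon/2}$, $2\delta<\min(\delta_{K_0,K},\epsilon_1/4)$; all constraints yield the required thresholds $\delta_{\mathrm{end}},\epsilon_{\mathrm{end},\delta},\eta_{\mathrm{end},\delta,\epsilon}$. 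A minor additional point is that Proposition~\ref{prop cirm 2} produces $k_0$ depending on $T$, so for each fixed $T>T_{\mathrm{end}}$ one takes $k\to\infty$ with $k\ge k_0(T)$, which is compatible with passage to the $\liminf/\limsup$.
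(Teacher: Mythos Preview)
Your overall architecture is correct and matches the paper's: combine Lemma~\ref{ineq Anne} with Proposition~\ref{prop cirm 1} for the upper bound, Lemma~\ref{ineq Anna} (at radius $\epsilon/2$) with Proposition~\ref{prop cirm 2} for the lower bound, and pass to the vague limit using Corollary~\ref{corollaire_limite_nulle_quotient_P_K} to kill the non-primitive term. The handling of $\liminf/\limsup$ on open sets versus their closures is also exactly as in the paper.

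Where your route diverges is in the \emph{cancellation step}. You introduce auxiliary endpoints $(y_0,x)$ in Proposition~\ref{prop cirm 1} and then invoke Proposition~\ref{prop_chagement_points_cordes} to morph the chord $\mathcal N_{\mathcal C}(\phi_T(x),x,\epsilon,\cdot,2\delta)$ from Lemma~\ref{ineq Anne} into $\mathcal N_{\mathcal C}(y_0,x,\eta_1,\cdot,\delta)$ so that it cancels; similarly on the lower-bound side. This works, but it is unnecessary. The paper exploits the freedom in Propositions~\ref{prop cirm 1} and~\ref{prop cirm 2} to choose the endpoints \emph{directly} so that one of the two chord factors is literally the chord produced by Lemma~\ref{ineq Anna} or~\ref{ineq Anne}. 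Concretely, for the upper bound the paper applies Proposition~\ref{prop cirm 1} with $(x_0,y_0,z_0)=(\phi_T(x),x,y_0)$, $\eta=\epsilon$, $T_0=L_k-T$, $T_1=T-S_1$, $S=S_1$; the first factor is then \emph{exactly} $\mathcal N_{\mathcal C}(\phi_T(x),x,\epsilon,L_k-T,\cdot,\delta)$ and the second is the target $\mathcal N_{\mathcal C}(x,y_0,\epsilon,T-S_1,\cdot,\delta)$, so the division is immediate and yields $S^+=S_1$. For the lower bound it applies Proposition~\ref{prop cirm 2} with $(x_0,y_0,x_1,y_1)=(x,y_0,\phi_T(x),x)$, $S=S_2$, and $T$-parameter $T+S_2+\sigma_{K,\delta,\epsilon/2}$; then the second factor equals $\mathcal N_{\mathcal C}(\phi_T(x),x,\eta,L_k-T-\sigma_{K,\delta,\epsilon/2},\cdot,\delta)$, which is precisely the chord count from Lemma~\ref{ineq Anna}, and one reads off $S^-=2S_2+\sigma_{K,\delta,\epsilon/2}$.

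The payoff of the paper's choice is that Proposition~\ref{prop_chagement_points_cordes} is never used here, so there is no $\delta$-halving, no auxiliary scales $\eta_1,\eta_2$, and the constraint list collapses to $\delta<\min(\epsilon_1/4,\delta_{K_0,K},1)$, $\epsilon<\min(\epsilon_{K_0,K},\epsilon_{K,\delta})$, $\eta<\min(\delta,\eta_{K,\delta,\epsilon/2},1)$. Your approach is valid but buys nothing and costs a page of bookkeeping; the observation you are missing is simply that both subadditivity propositions already allow arbitrary endpoints in $K$.
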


\rm Before proving it, let us emphasize the strength of this statement. Usually, in ergodic theory, invariant measures satisfy almost sure properties. The above inequalities hold for {\bf every} $x\in K$, and are therefore more geometric than ergodic. This strong uniform property will allow us to conclude that $m_\mathrm{max}$ is a measure of maximal entropy.

\begin{proof} The proof follows easily from the preceding work, and in particular from Propositions \ref{prop cirm 1} and \ref{prop cirm 2}, and from Lemmas \ref{ineq Anna} and \ref{ineq Anne}, as soon as parameters are carefully chosen. We start with this choice.

{\bf Step 1.} Choice of constants.
Set $\delta_\mathrm{end}=\min(\frac{\epsilon_1}{4},\delta_{K_0,K},1)$ where $\epsilon_1$ is given by Proposition~\ref{prop cirm 2} and $\delta_{K_0,K}$ by Lemma~\ref{ineq Anne}.
Choose an arbitrary $0<\delta<\delta_\mathrm{end}$.

Set $\epsilon_{\mathrm{end},\delta}=\min(\epsilon_{K_0,K},\epsilon_{K,\delta})$, where $\epsilon_{K_0,K}$ is given by Lemma~\ref{ineq Anne} and $\epsilon_{K,\delta}$ by Lemma~\ref{ineq Anna} applied with parameter $\delta$.
Fix $0<\epsilon<\epsilon_{\mathrm{end},\delta}$.

Lemma~\ref{ineq Anna} applied with $\epsilon/2$ gives constants $\sigma_{K,\delta,\epsilon/2}$ and $\eta_{K,\delta,\epsilon/2}$. Set $\eta_{\mathrm{end},\delta,\epsilon}=\min(\delta,\eta_{K,\delta,\epsilon/2},1)$.
Choose $0<\eta<\eta_{\mathrm{end},\delta,\epsilon}$.

Propositions~\ref{prop cirm 1} (with parameters $\epsilon$ and $\delta$) and \ref{prop cirm 2} (with parameters $\eta$ and $\delta$) give constants $S_1,D_1,S_2,D_2>0$. Set $T_\mathrm{end}=\max(T_{\rm min}+S_1, 5\tau_{K_0},T'_{\rm min})$, where $T_{\rm min}$ is given by Proposition~\ref{prop cirm 1} and $T'_{\rm min}$ by Lemma \ref{ineq Anna}.
Set $S^+=S_1$ and $S^-=2S_2+\sigma_{K,\delta,\epsilon/2}$, $D^+=D_{K_0,K}D_1$ and $D^-=\frac{\epsilon}{8bD_2}$.

Let $T\geq T_\mathrm{end}$. Let $x,y_0\in K$.

\noindent{\bf Step 2.} Upper bound.
By Lemma~\ref{ineq Anne} and Proposition~\ref{prop cirm 1} applied with $T_0=L_k-T$, $T_1=T-S_1$ and $S=S_1$, as soon as $L_k>T+T_{\rm min}$, we have
\begin{align*}
m_{K_0,L_k}(B(x,\epsilon,T)) &\le & D_{K_0,K} \frac{\mathcal N_{\mathcal C } (\phi_T (x),x,\epsilon,L_k-T, L_k-T+5\tau_{K_0},\delta) }{L_k\#\mathcal P_{K_0}(L_k,L_k+5\tau_{K_0})}+\frac{\#\mathcal P_{K_0}(\frac{L_k+5\tau_{K_0}}{2})}{\#\mathcal P_{K_0}(L_k,L_k+5\tau_{K_0})}\\
&\le & \frac{D_{K_0,K}\,D_1}{\mathcal{N}_{\mathcal C}(x,y_0,T-S_1,T-S_1+5\tau_{K_0})}+\frac{\#\mathcal P_{K_0}(\frac{L_k+5\tau_{K_0}}{2})}{\#\mathcal P_{K_0}(L_k,L_k+5\tau_{K_0})}\, .
\end{align*}
Using that $B(x,\epsilon,T)$ is open and from Corollary~\ref{corollaire_limite_nulle_quotient_P_K},
we get
\[
m_\infty(B(x,\epsilon,T))\le \liminf_{k\to +\infty}m_{K_0,L_k}(B(x,\epsilon,T))\le \frac{D_{K_0,K}\,D_1}{\mathcal{N}_{\mathcal C}(x,y_0,T-S_1,T-S_1+5\tau_{K_0})}\,.
\]
The upper bound follows with $D^+=D_{K_0,K}D_1$.

{\bf Step 3.} Lower bound.
Apply Lemma~\ref{ineq Anna} with parameter $\epsilon/2$ and Proposition~\ref{prop cirm 2} with $S= S_2$ and $T = T + S_2 + \sigma_{K,\delta,\epsilon/2}>5\tau_{K_0}$, and $x_1=\phi_T(x),y_1=x_0=x$. Then,
we get, for $k$ big enough,
\begin{align*}
m_{K_0,L_k}(B(x,\epsilon/2,T))
&\ge \frac{\epsilon}{8b}\times\dfrac{\mathcal N_{\mathcal C}(\phi_T(x),x,\eta,L_k-T-\sigma_{K,\delta,\epsilon/2},L_k-T-\sigma_{K,\delta,\epsilon/2}+5\tau_{K_0},\delta)}{L_k\#\mathcal P_{K_0}(L_k,L_k+5\tau_{K_0})} \\
&\ge  \frac{\epsilon}{8b}\times \frac{1}{D_2}\times \dfrac{1}{\mathcal N_{\mathcal C}(x,y_0,\eta,T+2S_2+\sigma_{K,\delta,\epsilon/2},T+2S_2+\sigma_{K,\delta,\epsilon/2}+5\tau_{K_0},\delta)}\, .
\end{align*}
As $\limsup_{L_k\to \infty}m_{K_0,L_k}(B(x,\epsilon/2,T))\le m_\infty(\overline{B(x,\epsilon/2,T)})\le m_\infty(B(x,\epsilon,T))$, the  lower bound follows with $D^-=\frac{\epsilon}{8bD_2}$ and $S^-=2S_2+\sigma_{K,\delta,\epsilon/2}$.
\end{proof}

\subsection{Computation of entropies of \texorpdfstring{$m_{\mathrm{max}}$}{TEXT}}

\begin{theo}\label{theo-computation-entropy}Let $\varphi$ be a H-flow on $M$ such that $h^\infty_{\mathrm{Gur}}(\varphi)<h_{\mathrm{Gur}}(\varphi)$. Let $m_{\max}$ be the probability measure obtained after renormalizing an accumulation point of the family of measures distributed on periodic orbits of increasing length, see Section \ref{section_periodic_measures}. Then
\[
\underline{h}_\mathrm{BK}(m_\mathrm{max})=\overline{h}_\mathrm{BK}(m_\mathrm{max})=h_\mathrm{KS}(m_\mathrm{max})=h_\mathrm{Kat}(m_\mathrm{max})=h_\mathrm{Gur}(\phi)\, .
\]
\end{theo}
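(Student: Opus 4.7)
My approach starts from the uniform two--sided bound
\[
\frac{D^-}{\mathcal{N}_{\mathcal{C}}(x,y_0,\eta, T+S^-, T+S^-+5\tau_{K_0},\delta)} \,\le\, m_\infty(B(x,\epsilon,T)) \,\le\, \frac{D^+}{\mathcal{N}_{\mathcal{C}}(x,y_0,\epsilon, T-S^+, T-S^++5\tau_{K_0},\delta)}
\]
provided by Theorem~\ref{prop_boule_dynamique} whenever $x\in K$ and $\phi_T(x)\in K$, for $T$ large enough and sufficiently small $\epsilon,\eta,\delta$. Since by Theorem~\ref{thm h cord=h gur} the quantity $\frac{1}{T}\log\mathcal{N}_{\mathcal{C}}(x,y_0,\cdot,T,T+5\tau_{K_0},\delta)$ converges to $h_{\mathrm{Gur}}(\phi)$ as a \emph{true} limit, taking $-\frac{1}{T}\log$ above and using $m_\mathrm{max}=m_\infty/m_\infty(M)$ yields the crucial pointwise equality
\[
\lim_{\substack{T\to+\infty\\ \phi_T(x)\in K}} -\frac{1}{T}\log m_\mathrm{max}(B(x,\epsilon,T)) \,=\, h_{\mathrm{Gur}}(\phi)
\]
at \emph{every} point $x\in K$, not just at $m_\mathrm{max}$-almost every such point. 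By tightness of the probability measure $m_\mathrm{max}$, the compact $K$ from the construction of $m_\infty$ may be chosen to contain any prescribed compact subset while preserving $m_\infty(\inter{K})>\frac{3}{4}m_\infty(M)$, so this uniform pointwise control is effectively available on all of $M$.

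The Brin--Katok entropies then follow directly. For any compact $K'\subset M$ with $m_\mathrm{max}(K')>0$, enlarge $K$ to contain $K'$; Poincaré recurrence ensures that $\phi_T(x)\in K'$ infinitely often for $m_\mathrm{max}$-a.e. $x\in K'$, so applying the pointwise limit above along this sub-sequence of times gives $\underline{h}_\mathrm{loc}(m_\mathrm{max},K')=\overline{h}_\mathrm{loc}(m_\mathrm{max},K')=h_{\mathrm{Gur}}(\phi)$. Compact sets $K'$ with $m_\mathrm{max}(K')=0$ do not affect either extremum (their $\essinf$ is $+\infty$ and their $\esssup$ is $-\infty$), so taking infimum and supremum over compact subsets yields $\underline{h}_\mathrm{BK}(m_\mathrm{max})=\overline{h}_\mathrm{BK}(m_\mathrm{max})=h_{\mathrm{Gur}}(\phi)$.

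For the Kolmogorov--Sinai entropy, Proposition~\ref{Katok-noncompact} combined with the pointwise limit gives $m_\mathrm{max}(K)\cdot h_{\mathrm{Gur}}(\phi)\le h_\mathrm{KS}(m_\mathrm{max})$; letting $m_\mathrm{max}(K)\nearrow 1$ and combining with Corollary~\ref{entropie-var} (which provides $h_\mathrm{KS}(m_\mathrm{max})\le h_\mathrm{var}(\phi)\le h_{\mathrm{Gur}}(\phi)$) produces $h_\mathrm{KS}(m_\mathrm{max})=h_{\mathrm{Gur}}(\phi)$. The upper bound $h_\mathrm{Kat}(m_\mathrm{max})\le h_{\mathrm{Gur}}(\phi)$ is Theorem~\ref{comparison-Kat-BK}. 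For the reverse Katok inequality, given $\alpha>0$ small I would choose $K$ with $m_\mathrm{max}(K)>1-\alpha/4$, and given any $(T,\epsilon,\alpha,m_\mathrm{max})$-spanning set $E$, restrict to centers whose dynamical ball meets $A_T:=K\cap\phi_{-T}(K)$ (a set of $m_\mathrm{max}$-measure at least $1-\alpha/2$); in each retained ball I pick a point $y_x\in A_T$, enlarge to $B(y_x,2\epsilon,T)$ and apply the upper bound of Theorem~\ref{prop_boule_dynamique}, with a lower bound on $\mathcal{N}_{\mathcal{C}}(y_x,y_0,2\epsilon,T-S^+,\ldots,\delta)$ uniform in $y_x\in K$ coming from Proposition~\ref{orbite periodique et chords}. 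This yields $\#E\gtrsim e^{T(h_{\mathrm{Gur}}(\phi)-\epsilon')}$, hence $h_\mathrm{Kat}(m_\mathrm{max})\ge h_{\mathrm{Gur}}(\phi)$.

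The heavy lifting has already been carried out in Sections~\ref{sec:subadditivity} and \ref{section:estimation_boules_dynamiques}: the real difficulty is not the present computation, but the establishment of the \emph{pointwise} (rather than $m_\mathrm{max}$-almost sure) uniform estimate of Theorem~\ref{prop_boule_dynamique}. It is precisely this pointwise character that is indispensable here, since $m_\mathrm{max}$ is not known to be ergodic and its ergodic decomposition could a priori contain components of much smaller entropy, which would prevent a purely measure-theoretic argument from pinning down $\underline{h}_\mathrm{BK}$ at the top value. The remaining care is to coordinate the parameters $\epsilon$, $\eta$, $\delta$, in particular to choose $\epsilon$ below $\epsilon_{\mathrm{end},\delta}/2$ so that the $2\epsilon$-enlargement needed in the Katok argument stays inside the range of validity of the upper estimate.
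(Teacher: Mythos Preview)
Your proposal is correct and follows essentially the same approach as the paper's proof: you derive the pointwise local entropy limit from Theorem~\ref{prop_boule_dynamique} combined with Theorem~\ref{thm h cord=h gur}, handle $h_\mathrm{KS}$ via Proposition~\ref{Katok-noncompact} and Corollary~\ref{entropie-var}, and obtain the Katok lower bound by relocating spanning-set centers into $K\cap\phi_{-T}(K)$ (the paper does this through Lemma~\ref{lemma replace}) before applying the upper dynamical-ball estimate. Your closing paragraph correctly identifies why the \emph{pointwise} character of Theorem~\ref{prop_boule_dynamique} is the crux, and the only cosmetic difference is that the paper invokes Theorem~\ref{thm h cord=h gur} directly for the uniform lower bound on $\mathcal{N}_{\mathcal C}$ rather than Proposition~\ref{orbite periodique et chords}.
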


\begin{proof} As at the beginning of Section~\ref{section:estimation_boules_dynamiques}, choose $K_0$, and $L_n\to \infty$ such that $m_{K_0,L_n}\overset{*}{\rightharpoonup} m_\infty$, and $K\supset\inter{K}\supset K_0$ such that $m_\infty(\inter{K})>\frac{3}{4}m_\infty(M)$.
Fix $\delta<\min(\alpha_0/2,\delta_\mathrm{end})$ where $\delta_\mathrm{end}$ comes from Theorem~\ref{prop_boule_dynamique} and $\alpha_0$ from Proposition~\ref{thm h cord=h gur}.
Fix $\epsilon<\min(\epsilon_{\mathrm{end},\delta},\alpha_0/4)$, where $\epsilon_{\mathrm{end},\delta}$ comes from Theorem~\ref{prop_boule_dynamique} wih parameter $\delta$, such that $\epsilon$ is small enough compatibly with
Proposition~\ref{Katok-noncompact},.
Let $x\in K$.

Theorems~\ref{prop_boule_dynamique} and Proposition~\ref{thm h cord=h gur} give us
\[\limsup_{\substack{T\to\infty \\ \phi_T(x)\in K}}\,-\frac{1}{T}\log m_\infty\left(B(x,T,\epsilon)\right) = \liminf_{\substack{T\to\infty \\ \phi_T(x)\in K}}\,-\frac{1}{T}\log m_\infty\left(B(x,T,\epsilon)\right) = h_\mathrm{Gur}(\phi).\]
Therefore
\[
\underline{h}_{BK}(m_\mathrm{max})
=\overline{h}_{BK}(m_\mathrm{max})=
h_\mathrm{Gur}(\phi).
\]
From Proposition~\ref{Katok-noncompact} we also have
\[
\int_K\limsup_{\substack{T\to+\infty\\ \phi_T(x)\in K}}-\dfrac 1 T \log(m_{\infty}(B(x,\varepsilon, T))\, dm_{\max} \leq h_{\mathrm{KS}}(m_{\max})\, .
\]
Thus we obtain the lower bound $h_\mathrm{KS}(m_\mathrm{max})\ge h_\mathrm{Gur}(\phi)$.

Corollary~\ref{entropie-var} gives the inequality $h_\mathrm{KS}(m_\mathrm{max})\le h_\mathrm{Gur}(\phi)$ so that $h_\mathrm{KS}(m_\mathrm{max})= h_\mathrm{Gur}(\phi)$.
We know by Theorem \ref{comparison-Kat-BK} that $h_\mathrm{Kat}(m_\mathrm{max})\le h_\mathrm{Gur}(\phi)$.

It only remains to prove the inequality $h_\mathrm{Kat}(m_\mathrm{max})\ge h_\mathrm{Gur}(\phi)$.
By definition of Katok entropy, for every $0<\nu<1$, there exist  $\alpha_\nu>0$, $\epsilon_\nu>0$  and $T_\nu>0$ such that, for every $0<\alpha<\alpha_\nu$, $0<\epsilon<\epsilon_\nu$ and $T\ge T_\nu$, the minimal cardinality $M(T,\epsilon,\alpha, m_\mathrm{max})$ of a set of $(\epsilon,T)$-dynamical balls covering a set of measure at least $\alpha$ satisfies
\[
M(T,\epsilon,\alpha,m_{\mathrm max})\le e^{(h_\mathrm{Kat}(m_\mathrm{max})+\nu)T}\,.
\]
Even if it means to increase $K$, we can assume that $m_\mathrm{max}(K)\ge 1-\frac{\alpha}{4}$.
By invariance of $m_\mathrm{max}$, we get $m_\mathrm{max}(K\cap \phi_{-T}(K))\ge 1-\frac{\alpha}{2}$.
Without loss of generality, we can assume that $T$ is large enough and $\epsilon>0$ small enough so that Theorem~\ref{prop_boule_dynamique} holds with parameters $K$, $\delta$ (where $\delta$ is some small enough constant), $2\epsilon$ and $T$.
We may also assume that Proposition~\ref{thm h cord=h gur} holds with parameters $K$, $C=5\tau_{K_0}$, $\delta$ and $\eta = 2\epsilon$.

Since $m_\mathrm{\max}(K\cap \phi_{-T}(K))\ge 1-\frac{\alpha}{2}$, we can use Lemma~\ref{lemma replace} to obtain a separating $(T,2\epsilon,\frac{\alpha}{2},m_\mathrm{max})$-spanning set $E'\subset K\cap\phi_{-T}(K)$ with $\#E'\le M(T,\epsilon,\alpha,m_\mathrm{max})$.
Therefore
\begin{align*}
\frac{\alpha}{2}\le m_\mathrm{max}\left(\cup_{x\in E'} B(x,2\epsilon,T)\right) &\le \sum_{x\in E'} m_\mathrm{max}(B(x,2\epsilon,T))\\
&\le M(T,\epsilon,\alpha,m_\mathrm{max})\times \max_{x\in E'} m_\mathrm{max}(B(x,2\epsilon,T)) \\
&\le e^{(h_\mathrm{Kat}(m_\mathrm{max})+\nu)T}\times \max_{x\in E'} m_\mathrm{max}(B(x,2\epsilon,T))\,.
\end{align*}
Using Theorem~\ref{prop_boule_dynamique} we get
\[
m_{\mathrm max}(B(x,2\epsilon, T))=\dfrac{1}{m_{\infty}(M)}m_{\infty}(B(x,2\epsilon, T))\leq \dfrac{1}{m_{\infty}(M)}\dfrac{D^+}{\mathcal{N}_{\mathcal{C}}(x,y_0,2\epsilon, T-S^+,T-S^++5\tau_{K_0},\delta)}\,,
\]
so that
\[
\frac{\alpha}{2D^+}\times m_\infty(M)\times  \mathcal{N}_{\mathcal{C}}(x,y_0,2\epsilon, T-S^+,T-S^++5\tau_{K_0},\delta)\le   e^{(h_\mathrm{Kat}(m_\mathrm{max})+\nu)T}\,.
\]
Thus,
\[
\dfrac 1 T \log\left(\dfrac{\alpha}{2D^+}\times m_{\infty}(M)\right)+\dfrac 1 T \log(\mathcal{N}_{\mathcal{C}}(x,y_0,2\epsilon, T-S^+,T-S^++5\tau_{K_0},\delta))\leq h_{\mathrm{Kat}}(m_\mathrm{max})+\nu\, .
\]
Taking the limit superior when $T\to+\infty$ of the above inequality, by Proposition~\ref{thm h cord=h gur}, we obtain $h_{\mathrm{Gur}}(\varphi)\leq h_{\mathrm{Kat}}(m_{\mathrm max})+\nu$. As $\nu$ can be chose arbitrarily small, we have $h_{\mathrm{Gur}}(\varphi)\leq h_{\mathrm{Kat}}(m_{\mathrm max})$.
This concludes the proof of the theorem.
\end{proof}

\section{Notations}

\begin{multicols}{2}

\begin{itemize}
    \item $B(x,\epsilon,T)$ - p. \pageref{p:dynamical_ball}
    \item $\mathcal C(x,y,\eta,T^-, T^+)$ - p. \pageref{p:C(c,y,eta,T-,T+)}
    \item $\mathcal C^{K^c}(x,y,\eta,T^-, T^+)$ - p. \pageref{p:cordes_exterieur_K}
    \item $E(x,y,\eta,T^-,T^+,\delta)$ - p. \pageref{defi E-set}
    \item $E^{K^c}(x,y,\eta, T^-,T^+,\delta)$-set - p. \pageref{defi E-set outside compact}
    \item $E_K(x,y,\eta,T^-,T^+,\delta)$-set - p. \pageref{cordes et orbite periodique version compacte}
    \item $h_{\mathcal C}(x,y,\eta,\delta)$ - p. \pageref{def:hC_x_y_eta_delta}
    \item $h_{\mathcal C} (\phi)$ - p. \pageref{def:chord-entropy}
    \item $h_{\mathcal C}^{K^c}(x,y,\eta,\delta)$ - p. \pageref{p:chord_entropy_outside_K}
    \item $h_{\mathcal C}^{K^c}(\eta,\delta)$ - p. \pageref{p:chord_entropy_outside_K}
    \item $h_{\mathcal C}^{K^c}(\phi)$ - p. \pageref{p:chord_entropy_outside_K}
    \item $h_\mathcal C^\infty(\phi)$ - p. \pageref{def:chord-entropy-at-infinity}
    \item $\underline{h}_\mathrm{BK}(\mu)$ - p. \pageref{p:BK_entropy}
    \item $\overline{h}_\mathrm{BK}(\mu)$ - p. \pageref{p:BK_entropy}
    \item $h_\mathrm{Gur}(\phi)$ - p. \pageref{theo:Gurevic}
    \item $h_\mathrm{Kat}(\mu)$ - p. \pageref{Katok entropy}
    \item $h_\mathrm{KS}(\mu)$ - p. \pageref{KS entropy}
    \item $h_\mathrm{var}(\phi)$ - p. \pageref{variational entropy}
    \item $h_\mathrm{Gur}^{K,\alpha}$ - p. \pageref{definition_gurevic_infini}
    \item $h_\mathrm{Gur}^\infty$ - p. \pageref{definition_gurevic_infini}
    \item $h_\mathrm{var}^\infty$ - p. \pageref{variational infty entropy}
    \item $K_{-\eta}$ - p. \pageref{p:eta_interior_compact_set}
    \item $\ell$ - p. \pageref{p:ell}
    \item $m_{K,L}$ - p. \pageref{eqn:m_{K,L}}
    \item $m_\mathrm{max}$ - p. \pageref{def:measures}
    \item $m_\infty$ - p. \pageref{def:measures}
    \item $M(T,\epsilon,\alpha,\mu)$ - p. \pageref{fact:M_Katok_entropy}
    \item $M'(T,\epsilon,\alpha,\mu)$ - p. \pageref{p:M'}
    \item $\mathcal M_\phi$ - p. \pageref{p:M_phi_M_erg_phi}
    \item $\mathcal M^\mathrm{erg}_\phi$ - p. \pageref{p:M_phi_M_erg_phi}
    \item $\mathcal{N}_{\mathcal{C}}(x,y,\eta,T^-,T^+,\delta)$ - p. \pageref{defi E-set}
    \item $\mathcal{N}^{K^c}_{\mathcal{C}}(x,y,\eta,T^-,T^+,\delta)$ - p. \pageref{defi E-set outside compact}
    \item $\mathcal{P}_K(T_0)$ - p. \pageref{p:P_K_T}
    \item $\mathcal{P}_K(T_0,T_1)$ - p. \pageref{p:P_K_T}
    \item $\mathcal{P}_K^\alpha(L,L+C)$ - p. \pageref{definition_gurevic_infini}
    \item $\mathcal P(K_1,K_2,\alpha,T,T+C)$ - p. \pageref{p:P_K1_K2}
    \item $\tau_K$ - p. \pageref{p:tau_K}
\end{itemize}

\end{multicols}

\bibliographystyle{alpha}
\bibliography{BiblioAAB.bib}

\begin{thebibliography}{DLRW13}

\bibitem[ACT15]{ACT15}
G.~N. Arzhantseva, C.~H. Cashen, and J.~Tao.
\newblock Growth tight actions.
\newblock {\em Pacific J. Math.}, 278(1):1--49, 2015.

\bibitem[AKM65]{AKW}
R.~L. Adler, A.~G. Konheim, and M.~H. McAndrew.
\newblock Topological entropy.
\newblock {\em Trans. Amer. Math. Soc.}, 114:309--319, 1965.

\bibitem[Ano69]{AnosovBook}
D.~V. Anosov.
\newblock {\em Geodesic flows on closed {R}iemann manifolds with negative
  curvature}, volume No. 90 (1967) of {\em Proceedings of the Steklov Institute
  of Mathematics}.
\newblock American Mathematical Society, Providence, RI, 1969.
\newblock Translated from the Russian by S. Feder.

\bibitem[Bal95]{Ballmann}
W.~Ballmann.
\newblock {\em Lectures on spaces of nonpositive curvature}, volume~25 of {\em
  DMV Seminar}.
\newblock Birkh\"auser Verlag, Basel, 1995.
\newblock With an appendix by Misha Brin.

\bibitem[BBG06]{BBG06}
M.~Boyle, J.~Buzzi, and R.~G\'omez.
\newblock Almost isomorphism for countable state {M}arkov shifts.
\newblock {\em J. Reine Angew. Math.}, 592:23--47, 2006.

\bibitem[BBG14]{BBG}
M.~Boyle, J.~Buzzi, and R.~G\'omez.
\newblock Borel isomorphism of {SPR} {M}arkov shifts.
\newblock {\em Colloq. Math.}, 137(1):127--136, 2014.

\bibitem[BCLR07]{BCLR}
F.~B\'eguin, S.~Crovisier, and F.~Le~Roux.
\newblock Construction of curious minimal uniquely ergodic homeomorphisms on
  manifolds: the {D}enjoy-{R}ees technique.
\newblock {\em Ann. Sci. \'Ecole Norm. Sup. (4)}, 40(2):251--308, 2007.

\bibitem[BCS25]{BuzCroSar}
J.~Buzzi, S.~Crovisier, and O.~Sarig.
\newblock Strong positive recurrence and exponential mixing for
  diffeomorphisms, 2025.
\newblock arXiv:2501.07455.

\bibitem[Ber19]{Berger}
P.~Berger.
\newblock Properties of the maximal entropy measure and geometry of {H}\'enon
  attractors.
\newblock {\em J. Eur. Math. Soc. (JEMS)}, 21(8):2233--2299, 2019.

\bibitem[BFM25]{Mann}
T.~Barthelm\'e, S.~Frankel, and K.~Mann.
\newblock Orbit equivalences of pseudo-{A}nosov flows.
\newblock {\em Invent. Math.}, 240(3):1119--1192, 2025.

\bibitem[BK81]{buserkarcher}
P.~Buser and H.~Karcher.
\newblock Gromov's almost flat manifolds.
\newblock {\em Astérisque}, 81, 1981.

\bibitem[BK83]{BK}
M.~Brin and A.~Katok.
\newblock On local entropy.
\newblock In {\em Geometric dynamics ({R}io de {J}aneiro, 1981)}, volume 1007
  of {\em Lecture Notes in Math.}, pages 30--38. Springer, Berlin, 1983.

\bibitem[Bow72]{Bowen72}
R.~Bowen.
\newblock Periodic orbits for hyperbolic flows.
\newblock {\em Amer. J. Math.}, 94:1--30, 1972.

\bibitem[Bow75]{Bowen}
R.~Bowen.
\newblock {\em Equilibrium states and the ergodic theory of {A}nosov
  diffeomorphisms}, volume Vol. 470 of {\em Lecture Notes in Mathematics}.
\newblock Springer-Verlag, Berlin-New York, 1975.

\bibitem[BR75]{Bowen-Ruelle}
Rufus Bowen and David Ruelle.
\newblock The ergodic theory of {A}xiom {A} flows.
\newblock {\em Invent. Math.}, 29(3):181--202, 1975.

\bibitem[BW72]{BowWal}
R.~Bowen and P.~Walters.
\newblock Expansive one-parameter flows.
\newblock {\em J. Differential Equations}, 12:180--193, 1972.

\bibitem[CS09]{CyrSar}
V.~Cyr and O.~Sarig.
\newblock Spectral gap and transience for {R}uelle operators on countable
  {M}arkov shifts.
\newblock {\em Comm. Math. Phys.}, 292(3):637--666, 2009.

\bibitem[CS10]{CS10}
Y.~Coudene and B.~Schapira.
\newblock Generic measures for hyperbolic flows on non-compact spaces.
\newblock {\em Israel J. Math.}, 179:157--172, 2010.

\bibitem[dBE52]{Erdos}
N.~G. de~Bruijn and P.~Erd\"{o}s.
\newblock Some linear and some quadratic recursion formulas. {II}.
\newblock {\em Indag. Math.}, 14:152--163, 1952.
\newblock Nederl. Akad. Wetensch. Proc. Ser. A {\bf 55}.

\bibitem[DLRW13]{Das-etc}
T.~Das, K.~Lee, D.~Richeson, and J.~Wiseman.
\newblock Spectral decomposition for topologically {A}nosov homeomorphisms on
  noncompact and non-metrizable spaces.
\newblock {\em Topology Appl.}, 160(1):149--158, 2013.

\bibitem[Ebe96]{Eberlein}
P.~B. Eberlein.
\newblock {\em Geometry of nonpositively curved manifolds}.
\newblock Chicago Lectures in Mathematics. University of Chicago Press,
  Chicago, IL, 1996.

\bibitem[FH13]{FouHas}
P.~Foulon and B.~Hasselblatt.
\newblock Contact {A}nosov flows on hyperbolic 3-manifolds.
\newblock {\em Geom. Topol.}, 17(2):1225--1252, 2013.

\bibitem[FH19]{Fisher-Hasselblatt}
T.~Fisher and B.~Hasselblatt.
\newblock {\em Hyperbolic flows}.
\newblock Zurich Lectures in Advanced Mathematics. EMS Publishing House,
  Berlin, [2019] \copyright2019.

\bibitem[FHV21]{FouHasVau}
P.~Foulon, B.~Hasselblatt, and A.~Vaugon.
\newblock Orbit growth of contact structures after surgery.
\newblock {\em Ann. H. Lebesgue}, 4:1103--1141, 2021.

\bibitem[FSV25]{FSV2}
A.~Florio, B.~Schapira, and A.~Vaugon.
\newblock Examples of {$H$}-flows.
\newblock Work in progress, 2025.

\bibitem[GHL90]{GHL}
S.~Gallot, D.~Hulin, and J.~Lafontaine.
\newblock {\em Riemannian geometry}, volume~2.
\newblock Springer, 1990.

\bibitem[GST23]{GST}
S.~Gou\"{e}zel, B.~Schapira, and S.~Tapie.
\newblock Pressure at infinity and strong positive recurrence in negative
  curvature.
\newblock {\em Comment. Math. Helv.}, 98(3):431--508, 2023.
\newblock With an appendix by Felipe Riquelme.

\bibitem[HK95a]{HanKit}
M.~Handel and B.~Kitchens.
\newblock Metrics and entropy for non-compact spaces.
\newblock {\em Israel J. Math.}, 91(1-3):253--271, 1995.
\newblock With an appendix by Daniel J. Rudolph.

\bibitem[HK95b]{HK}
M.~Handel and B.~Kitchens.
\newblock Metrics and entropy for non-compact spaces.
\newblock {\em Israel J. Math.}, 91(1-3):253--271, 1995.
\newblock With an appendix by Daniel J. Rudolph.

\bibitem[Kat80]{Katok}
A.~Katok.
\newblock Lyapunov exponents, entropy and periodic orbits for diffeomorphisms.
\newblock {\em Inst. Hautes \'Etudes Sci. Publ. Math.}, 51:137--173, 1980.

\bibitem[Kre85]{Krengel}
U.~Krengel.
\newblock {\em Ergodic theorems}, volume~6 of {\em De Gruyter Studies in
  Mathematics}.
\newblock Walter de Gruyter \& Co., Berlin, 1985.
\newblock With a supplement by Antoine Brunel.

\bibitem[Led13]{Ledrappier}
F.~Ledrappier.
\newblock Entropie et principe variationnel pour le flot g\'eod\'esique en
  courbure n\'egative pinc\'ee.
\newblock In {\em G\'eom\'etrie ergodique}, volume~43 of {\em Monogr. Enseign.
  Math.}, pages 117--144. Enseignement Math., Geneva, 2013.

\bibitem[Mar69]{Margulis}
G.~A. Margulis.
\newblock Certain applications of ergodic theory to the investigation of
  manifolds of negative curvature.
\newblock {\em Funkcional. Anal. i Prilo\v zen.}, 3(4):89--90, 1969.

\bibitem[Mar04]{Margulis2}
G.~A. Margulis.
\newblock {\em On some aspects of the theory of {A}nosov systems}.
\newblock Springer Monographs in Mathematics. Springer-Verlag, Berlin, 2004.
\newblock With a survey by Richard Sharp: Periodic orbits of hyperbolic flows,
  Translated from the Russian by Valentina Vladimirovna Szulikowska.

\bibitem[Mn87]{Mane}
R.~Ma\~n\'e.
\newblock {\em Ergodic theory and differentiable dynamics}, volume~8 of {\em
  Ergebnisse der Mathematik und ihrer Grenzgebiete (3) [Results in Mathematics
  and Related Areas (3)]}.
\newblock Springer-Verlag, Berlin, 1987.
\newblock Translated from the Portuguese by Silvio Levy.

\bibitem[MU01]{MauUrb}
D.~Mauldin and M.~Urba\'nski.
\newblock Gibbs states on the symbolic space over an infinite alphabet.
\newblock {\em Israel J. Math.}, 125:93--130, 2001.

\bibitem[OP04]{OP}
J.-P. Otal and M.~Peign\'e.
\newblock Principe variationnel et groupes kleiniens.
\newblock {\em Duke Math. J.}, 125(1):15--44, 2004.

\bibitem[Pet83]{Petersen}
K.~Petersen.
\newblock {\em Ergodic theory}, volume~2 of {\em Cambridge Studies in Advanced
  Mathematics}.
\newblock Cambridge University Press, Cambridge, 1983.

\bibitem[PPS15]{PPS}
F.~Paulin, M.~Pollicott, and B.~Schapira.
\newblock Equilibrium states in negative curvature.
\newblock {\em Ast\'erisque}, 1(373):viii+281, 2015.

\bibitem[Ree81]{Rees}
M.~Rees.
\newblock A minimal positive entropy homeomorphism of the {$2$}-torus.
\newblock {\em J. London Math. Soc. (2)}, 23(3):537--550, 1981.

\bibitem[Riq18]{Riquelme}
F.~Riquelme.
\newblock Ruelle's inequality in negative curvature.
\newblock {\em Discrete Contin. Dyn. Syst.}, 38(6):2809--2825, 2018.

\bibitem[Sar99]{Sarig99}
O.~Sarig.
\newblock Thermodynamic formalism for countable {M}arkov shifts.
\newblock {\em Ergodic Theory Dynam. Systems}, 19(6):1565--1593, 1999.

\bibitem[Sar03]{Sarig03}
O.~Sarig.
\newblock Existence of {G}ibbs measures for countable {M}arkov shifts.
\newblock {\em Proc. Amer. Math. Soc.}, 131(6):1751--1758, 2003.

\bibitem[ST21]{ST19}
B.~Schapira and S.~Tapie.
\newblock Regularity of entropy, geodesic currents and entropy at infinity.
\newblock {\em Ann. Sci. \'{E}c. Norm. Sup\'{e}r. (4)}, 54(1):1--68, 2021.

\bibitem[Sul79]{Sullivan}
D.~Sullivan.
\newblock The density at infinity of a discrete group of hyperbolic motions.
\newblock {\em Inst. Hautes \'Etudes Sci. Publ. Math.}, 50:171--202, 1979.

\bibitem[VJ67]{VereJones}
D.~Vere-Jones.
\newblock Ergodic properties of nonnegative matrices. {I}.
\newblock {\em Pacific J. Math.}, 22:361--386, 1967.

\bibitem[Wal82]{Walters}
P.~Walters.
\newblock {\em An introduction to ergodic theory}, volume~79 of {\em Graduate
  Texts in Mathematics}.
\newblock Springer-Verlag, New York-Berlin, 1982.

\bibitem[Yan19]{Yang}
W.~Yang.
\newblock Statistically convex-cocompact actions of groups with contracting
  elements.
\newblock {\em Int. Math. Res. Not. IMRN}, 2019(23):7259--7323, 2019.

\end{thebibliography}

\end{document}